\DeclareMathAlphabet{\mathbbm}{U}{bbm}{m}{n}
\begin{document}
\newtheoremstyle{all}
  {11pt}
  {11pt}
  {\slshape}
  {}
  {\bfseries}
  {}
  {.5em}
  {}


\theoremstyle{all}

\newtheorem{itheorem}{Theorem}
\newtheorem{theorem}{Theorem}[section]
\newtheorem*{theoremfourfive}{Theorem 4.5}
\newtheorem*{proposition*}{Proposition}
\newtheorem{proposition}[theorem]{Proposition}
\newtheorem{corollary}[theorem]{Corollary}
\newtheorem{lemma}[theorem]{Lemma}
\newtheorem{assumption}[theorem]{Assumption}
\newtheorem{definition}[theorem]{Definition}
\newtheorem{ques}[theorem]{Question}
\newtheorem{conj}[theorem]{Conjecture}

\theoremstyle{remark}
\newtheorem{remark}[theorem]{Remark}
\newtheorem{example}{Example}[section]
\renewcommand{\theexample}{{\arabic{section}.\roman{example}}}
\newcommand{\nc}{\newcommand}
\newcommand{\renc}{\renewcommand}
\numberwithin{equation}{section}
\renc{\theequation}{\arabic{section}.\arabic{equation}}

\newcounter{subeqn}
\renewcommand{\thesubeqn}{\theequation\alph{subeqn}}
\newcommand{\subeqn}{%
  \refstepcounter{subeqn}
  \tag{\thesubeqn}
}\makeatletter
\@addtoreset{subeqn}{equation}
\newcommand{\newseq}{%
  \refstepcounter{equation}
}
  \nc{\kac}{\kappa^C}
\nc{\alg}{T}
\nc{\Lco}{L_{\la}}
\nc{\qD}{q^{\nicefrac 1D}}
\nc{\ocL}{M_{\la}}
\nc{\excise}[1]{}
\nc{\Dbe}{D^{\uparrow}}
\nc{\Dfg}{D^{\mathsf{fg}}}
\nc{\KLRwei}{\EuScript{W}}
\nc{\coset}{\EuScript{W}}
\nc{\zero}{o}
\nc{\defr}{\operatorname{def}}
\nc{\op}{\operatorname{op}}
\nc{\Sym}{\operatorname{Sym}}
\nc{\Symt}{S}
\nc{\hatD}{\widehat{\Delta}}
\nc{\tr}{\operatorname{tr}}
\newcommand{\Mirkovic}{Mirkovi\'c\xspace}
\nc{\tla}{\mathsf{t}_\la}
\nc{\llrr}{\langle\la,\rho\rangle}
\nc{\lllr}{\langle\la,\la\rangle}
\nc{\K}{\mathbbm{k}}
\nc{\Stosic}{Sto{\v{s}}i{\'c}\xspace}
\nc{\cd}{\mathcal{D}}
\nc{\cT}{\mathcal{T}}
\nc{\vd}{\mathbb{D}}
\nc{\lift}{\gamma}
\nc{\cox}{h}
\nc{\Aut}{\operatorname{Aut}}
\nc{\R}{\mathbb{R}}
\renc{\wr}{\operatorname{wr}}
  \nc{\Lam}[3]{\La^{#1}_{#2,#3}}
  \nc{\Lab}[2]{\La^{#1}_{#2}}
  \nc{\Lamvwy}{\Lam\Bv\Bw\By}
  \nc{\Labwv}{\Lab\Bw\Bv}
  \nc{\nak}[3]{\mathcal{N}(#1,#2,#3)}
  \nc{\hw}{highest weight\xspace}
  \nc{\al}{\alpha}

\newcommand{\dgmod}{\operatorname{-dg-mod}}
  \nc{\be}{\beta}
  \nc{\bM}{\mathbf{m}}
  \nc{\Bu}{\mathbf{u}}

  \nc{\bkh}{\backslash}
  \nc{\Bi}{\mathbf{i}}
  \nc{\Bm}{\mathbf{m}}
  \nc{\Bj}{\mathbf{j}}
 \nc{\Bk}{\mathbf{k}}
\newcommand{\bS}{\mathbb{S}}
\newcommand{\bT}{\mathbb{T}}
\newcommand{\bt}{\mathbbm{t}}

\nc{\bd}{\mathbf{d}}
\nc{\second}{\tau}
\nc{\D}{\mathcal{D}}
\nc{\mmod}{\operatorname{-mod}}  
\newcommand{\red}{\mathfrak{r}}

\nc{\RAA}{R^\A_A}
  \nc{\Bv}{\mathbf{v}}
  \nc{\Bw}{\mathbf{w}}
\nc{\Id}{\operatorname{Id}}
\nc{\Cth}{S_h}
\nc{\Cft}{S_1}
\def\MHM{{\operatorname{MHM}}}

\newcommand{\cD}{\mathcal{D}}
\newcommand{\LCP}{\operatorname{LCP}}
  \nc{\By}{\mathbf{y}}
\nc{\eE}{\EuScript{E}}
  \nc{\Bz}{\mathbf{z}}
  \nc{\coker}{\mathrm{coker}\,}
  \nc{\C}{\mathbb{C}}
\nc{\ab}{{\operatorname{ab}}}
\nc{\wall}{\mathbbm{w}}
  \nc{\ch}{\mathrm{ch}}
  \nc{\de}{\delta}
  \nc{\ep}{\epsilon}
  \nc{\Rep}[2]{\mathsf{Rep}_{#1}^{#2}}
  \nc{\Ev}[2]{E_{#1}^{#2}}
  \nc{\fr}[1]{\mathfrak{#1}}
  \nc{\fp}{\fr p}
  \nc{\fq}{\fr q}
  \nc{\fl}{\fr l}
  \nc{\fgl}{\fr{gl}}
\nc{\rad}{\operatorname{rad}}
\nc{\Ind}{\operatorname{Ind}}
\newcommand{\sS}{\mathsf{S}} 
\newcommand{\sJ}{\mathsf{J}} 
\newcommand{\sT}{\mathsf{T}}
\newcommand{\cP}{\mathcal{P}}
\newcommand{\cellb}{a}
  \nc{\GL}{\mathrm{GL}}
\newcommand{\arxiv}[1]{\href{http://arxiv.org/abs/#1}{\tt arXiv:\nolinkurl{#1}}}
  \nc{\Hom}{\mathrm{Hom}}
  \nc{\im}{\mathrm{im}\,}
  \nc{\La}{\Lambda}
  \nc{\la}{\lambda}
  \nc{\mult}{b^{\mu}_{\la_0}\!}
  \nc{\mc}[1]{\mathcal{#1}}
  \nc{\om}{\omega}
\nc{\gl}{\mathfrak{gl}}
  \nc{\cF}{\mathcal{F}}
 \nc{\cC}{\mathcal{C}}
  \nc{\Mor}{\mathsf{Mor}}
  \nc{\HOM}{\operatorname{HOM}}
  \nc{\Ob}{\mathsf{Ob}}
  \nc{\Vect}{\mathsf{Vect}}
\nc{\gVect}{\mathsf{gVect}}
  \nc{\modu}{\mathsf{-mod}}
\nc{\pmodu}{\mathsf{-pmod}}
  \nc{\qvw}[1]{\La(#1 \Bv,\Bw)}
  \nc{\van}[1]{\nu_{#1}}
  \nc{\Rperp}{R^\vee(X_0)^{\perp}}
  \nc{\si}{\sigma}
\nc{\sgns}{{\boldsymbol{\sigma}}}
  \nc{\croot}[1]{\al^\vee_{#1}}
\nc{\di}{\mathbf{d}}
  \nc{\SL}[1]{\mathrm{SL}_{#1}}
  \nc{\Th}{\theta}
  \nc{\vp}{\varphi}
  \nc{\wt}{\mathrm{wt}}
\nc{\te}{\tilde{e}}
\nc{\tf}{\tilde{f}}
\nc{\hwo}{\mathbb{V}}
\nc{\soc}{\operatorname{soc}}
\nc{\cosoc}{\operatorname{cosoc}}
 \nc{\Q}{\mathbb{Q}}
\nc{\LPC}{\mathsf{LPC}}
  \nc{\Z}{\mathbb{Z}}
  \nc{\Znn}{\Z_{\geq 0}}
  \nc{\ver}{\EuScript{V}}
  \nc{\Res}{\operatorname{Res}}
  \nc{\simples}{X}
  \nc{\edge}{\EuScript{E}}
  \nc{\Spec}{\mathrm{Spec}}
  \nc{\tie}{\EuScript{T}}
  \nc{\ml}[1]{\mathbb{D}^{#1}}
  \nc{\fQ}{\mathfrak{Q}}
        \nc{\fg}{\mathfrak{g}}
        \nc{\ft}{\mathfrak{t}}
  \nc{\Uq}{U_q(\fg)}
        \nc{\bom}{\boldsymbol{\omega}}
\nc{\bla}{{\underline{\boldsymbol{\la}}}}
\nc{\bmu}{{\underline{\boldsymbol{\mu}}}}
\nc{\bal}{{\boldsymbol{\al}}}
\nc{\bet}{{\boldsymbol{\eta}}}
\nc{\rola}{X}
\nc{\wela}{Y}
\nc{\fM}{\mathfrak{M}}
\nc{\fX}{\mathfrak{X}}
\nc{\fH}{\mathfrak{H}}
\nc{\fE}{\mathfrak{E}}
\nc{\fF}{\mathfrak{F}}
\nc{\fI}{\mathfrak{I}}
\nc{\qui}[2]{\fM_{#1}^{#2}}
\nc{\cL}{\mathcal{L}}
\nc{\ca}[2]{\fQ_{#1}^{#2}}
\nc{\cat}{\mathcal{V}}
\nc{\cata}{\mathfrak{V}}
\nc{\catf}{\mathscr{V}}
\nc{\hl}{\mathcal{X}}
\nc{\hld}{\EuScript{X}}
\nc{\hldbK}{\EuScript{X}^{\bla}_{\bar{\mathbb{K}}}}
\nc{\Iwahori}{\EuScript{I}}
\nc{\WC}{\EuScript{C}}

\nc{\pil}{{\boldsymbol{\pi}}^L}
\nc{\pir}{{\boldsymbol{\pi}}^R}
\nc{\cO}{\mathcal{O}}
\nc{\Ko}{\text{\Denarius}}
\nc{\Ei}{\fE_i}
\nc{\Fi}{\fF_i}
\nc{\fil}{\mathcal{H}}
\nc{\brr}[2]{\beta^R_{#1,#2}}
\nc{\brl}[2]{\beta^L_{#1,#2}}
\nc{\so}[2]{\EuScript{Q}^{#1}_{#2}}
\nc{\EW}{\mathbf{W}}
\nc{\rma}[2]{\mathbf{R}_{#1,#2}}
\nc{\Dif}{\EuScript{D}}
\nc{\MDif}{\EuScript{E}}
\renc{\mod}{\mathsf{mod}}
\nc{\modg}{\mathsf{mod}^g}
\nc{\fmod}{\mathsf{mod}^{fd}}
\nc{\id}{\operatorname{id}}
\nc{\compat}{\EuScript{K}}
\nc{\DR}{\mathbf{DR}}
\nc{\End}{\operatorname{End}}
\nc{\Fun}{\operatorname{Fun}}
\nc{\Ext}{\operatorname{Ext}}
\nc{\tw}{\tau}
\nc{\A}{\EuScript{A}}
\nc{\Loc}{\mathsf{Loc}}
\nc{\eF}{\EuScript{F}}
\nc{\LAA}{\Loc^{\A}_{A}}
\nc{\perv}{\mathsf{Perv}}
\nc{\gfq}[2]{B_{#1}^{#2}}
\nc{\qgf}[1]{A_{#1}}
\nc{\qgr}{\qgf\rho}
\nc{\tqgf}{\tilde A}
\nc{\Tr}{\operatorname{Tr}}
\nc{\Tor}{\operatorname{Tor}}
\nc{\cQ}{\mathcal{Q}}
\nc{\st}[1]{\Delta(#1)}
\nc{\cst}[1]{\nabla(#1)}
\nc{\ei}{\mathbf{e}_i}
\nc{\Be}{\mathbf{e}}
\nc{\Hck}{\mathfrak{H}}
\renc{\P}{\mathbb{P}}
\nc{\bbB}{\mathbb{B}}
\nc{\ssy}{\mathsf{y}}
\nc{\cI}{\mathcal{I}}
\nc{\cG}{\mathcal{G}}
\nc{\cH}{\mathcal{H}}
\nc{\coe}{\mathfrak{K}}
\nc{\pr}{\operatorname{pr}}
\nc{\bra}{\mathfrak{B}}
\nc{\rcl}{\rho^\vee(\la)}
\nc{\tU}{\mathcal{U}}
\nc{\wgmod}{\operatorname{-wgmod}}
\nc{\dU}{{\stackon[8pt]{\tU}{\cdot}}}
\nc{\dT}{{\stackon[8pt]{\cT}{\cdot}}}

\nc{\RHom}{\mathrm{RHom}}
\nc{\tcO}{\tilde{\cO}}
\nc{\Yon}{\mathscr{Y}}
\nc{\sI}{{\mathsf{I}}}
\nc{\sptc}{\ft_{1,\Z}}
\nc{\spt}{\ft_{1,\R}}
\nc{\Bpsi}{u}
\nc{\acham}{\eta}
\nc{\hyper}{\mathsf{H}}
\nc{\AF}{\EuScript{F}\ell}
\nc{\VB}{\EuScript{X}}
\nc{\OHiggs}{\cO_{\operatorname{Higgs}}}
\nc{\OCoulomb}{\cO_{\operatorname{Coulomb}}}
\nc{\tOHiggs}{\tilde\cO_{\operatorname{Higgs}}}
\nc{\tOCoulomb}{\tilde\cO_{\operatorname{Coulomb}}}
\nc{\indx}{\mathcal{I}}
\nc{\redu}{K}
\nc{\Ba}{\mathbf{a}}
\nc{\Bb}{\mathbf{b}}
\nc{\Lotimes}{\overset{L}{\otimes}}
\nc{\AC}{C}
\nc{\ideal}{\mathscr{I}}
\nc{\ACs}{\mathscr{C}}
\nc{\Stein}{\mathscr{X}}
\newcommand{\cOg}{\mathcal{O}_{\!\operatorname{g}}}
\newcommand{\tcOg}{\mathcal{\tilde O}_{\!\operatorname{g}}}
\newcommand{\dOg}{D_{\cOg}}
\newcommand{\preO}{p\cOg}
\newcommand{\dpreO}{D_{p\cOg}}
\nc{\No}{H}
\nc{\pairs}{\mathcal{SP}}
\nc{\To}{Q}
\nc{\tNo}{\tilde{H}}
\nc{\tTo}{\tilde{Q}}
\nc{\flav}{\phi}
\nc{\tF}{\tilde{F}}
\nc{\auto}{\alpha}
\nc{\zetal}{\zeta}
\nc{\zetap}{\beta}
\nc{\DO}{\mathsf{U}}
\nc{\Wei}{\EuScript{W}}
\nc{\bQ}{\mathbf{Q}}
\nc{\lp}{\nicefrac{\ell}{p}}
\newcommand{\CC}{\mathbb{C}}
\newcommand{\RR}{\mathbb{R}}
\newcommand{\ZZ}{\mathbb{Z}}
\newcommand{\cM}{\mathcal{M}}
\newcommand{\cN}{\mathcal{N}}
\newcommand{\sK}{\mathscr{K}}
\newcommand{\sL}{\mathscr{L}}
\newcommand{\sM}{\mathscr{M}}
\newcommand{\sP}{\mathscr{P}}
\newcommand{\sU}{\mathscr{U}}
\newcommand{\T}{\mathbf{T}}
\newcommand{\U}{\mathbf{U}}
\newcommand{\diag}{\mathrm{diag}}
\newcommand{\Frac}{\mathrm{Frac}}
\nc{\tWei}{\widetilde{\EuScript{W}}}
\nc{\tl}{\mathsf{t}_{(\ell)}}
\nc{\tn}{\mathsf{t}_{(n)}}
\nc{\cg}{g}
\nc{\KZ}{\mathsf{KZ}}
\setcounter{tocdepth}{2}
\newcommand{\thetitle}{Rational Cherednik algebras of $G(\ell,p,n)$\\ from the Coulomb perspective}
\newcommand{\theshorttitle}{Rational Cherednik algebras of $G(\ell,p,n)$ from the Coulomb perspective}

\renc{\theitheorem}{\Alph{itheorem}}
\newcommand{\btodo}{\todo[inline,color=blue!20]}
\newcommand{\etodo}{\todo[inline,color=red!20]}

\baselineskip=1.1\baselineskip

 \usetikzlibrary{decorations.pathreplacing,backgrounds,decorations.markings,shapes.geometric,fit}
 \tikzset{tstar/.style={fill=white,draw,star,star points=5,star point ratio=0.45,inner
		sep=3pt}}
\tikzset{ucircle/.style={fill=black,circle,inner sep=2pt}}

\tikzset{wei/.style={draw=red,double=red!40!white,double distance=1.5pt,thin}}
\tikzset{awei/.style={draw=blue,double=blue!40!white,double distance=1.5pt,thin}}
\tikzset{bdot/.style={fill,circle,color=blue,inner sep=3pt,outer
    sep=0}}
\tikzset{dir/.style={postaction={decorate,decoration={markings,
    mark=at position .8 with {\arrow[scale=1.3]{>}}}}}}
\tikzset{rdir/.style={postaction={decorate,decoration={markings,
    mark=at position .8 with {\arrow[scale=1.3]{<}}}}}}
\tikzset{edir/.style={postaction={decorate,decoration={markings,
    mark=at position .2 with {\arrow[scale=1.3]{<}}}}}}\begin{center}
\noindent {\large  \bf \thetitle}
\bigskip

  \begin{tabular}{c@{\hspace{10mm}}c}
     {\sc\large Elise LePage}&{\sc\large Ben Webster}\\
 \it Department of Physics,&   \it Department of Pure Mathematics,\\ 
   \it  University of California, Berkeley&  \it 
   University of Waterloo \&\\
 \it Berkeley, CA, USA& \it Perimeter Institute for Mathematical Physics \\
 & \it Waterloo, ON, Canada\\
{\tt elepage@berkeley.edu }&{\tt ben.webster@uwaterloo.ca}
 \end{tabular}
\end{center}
\bigskip
{\small
\begin{quote}
\noindent {\em Abstract.}
We prove a number of results on the structure and representation theory of the rational Cherednik algebra of the imprimitive reflection group $G(\ell,p,n)$.   In particular, we:
(1) show a relationship to the Coulomb branch construction of Braverman, Finkelberg and Nakajima, and 3-dimensional quantum field theory; (2) show that the spherical Cherednik algebra carries the structure of a principal Galois order; (3) construct a graded lift of category $\mathcal{O}$ and the larger category of Dunkl-Opdam modules, whose simple modules have the properties of a dual canonical basis and (4) give the first classification of simple Dunkl-Opdam modules for the rational Cherednik algebra of the imprimitive reflection group $G(\ell,p,n)$.
\end{quote}
}
\section{Introduction}

Our goal in this paper is to extend a number of results in the theory of rational Cherednik algebras $\mathsf{H}_1$ for the group $G(\ell, 1, n)$ of monomial $n\times n$ matrices with $\ell$th root of unity entries to the same algebras $\mathsf{H}_p$  attached to the imprimitive subgroups $G(\ell,p,n)$ for some $p\mid \ell$.  These results include:
\begin{enumerate}
    \item Work of Kodera and Nakajima \cite{KoNa}, expanded on by Braverman, Etingof \cite{BEF} and the second author \cite{Webalt}, interpreting the spherical subalgebra of the Cherednik algebra $\mathsf{H}_1$ as the Coulomb branch of a $\mathcal{N}=4$ supersymmetric gauge theory.  We find that the spherical subalgebra of $\mathsf{H}_p$  appears in the same theory after one incorporates certain non-trivial line defects (Theorem \ref{thm:lines-cherednik}).
    \item Work of the second author \cite[Prop 4.2]{WebGT} shows that the spherical subalgebra $\mathsf{H}_1^{\operatorname{sph}}$ (and more generally, any BFN Coulomb branch) is a principal Galois order (in the sense of \cite[Def. 2.24]{Hartwig}).  We show that $\mathsf{H}_p^{\operatorname{sph}}$ has the same property, which is unsurprising given the previous results, but is not a direct consequence of them.  
    \item Work of the second author \cite{WebRou,Webalt} and parallel results of Rouquier, Shan, Varagnolo and Vasserot \cite{RSVV, SVV} showing that category $\cO$ for these Cherednik algebras have a graded lift with a number of desirable properties, including Koszulity and a positivity property called ``mixedness'' which allows us to interpret decomposition numbers in these categories as the coefficients of a canonical basis.  All of these same properties hold in $G(\ell,p,n)$, (Corollary \ref{cor:smash-Koszul}) though we do not have as clean an interpretation of the Grothendieck group in this case as the Fock space provided for $G(\ell, 1, n)$ 
    \item Work of the second author (which generalizes work of Suzuki \cite{SuzukiCherednik} from the cyclotomic Cherednik algebra of $S_n$) classifying the Dunkl-Opdam modules for the Cherednik algebra.  Straightforward application of Clifford theory allows us to extend this to $G(\ell,p,n)$ (Theorem \ref{thm:DO-classification}).  
\end{enumerate}
These results show that the Cherednik algebras for all non-exceptional complex reflection groups have a common package of results regulating their representation theory.  It remains an interesting question whether these extend in any systematic way to the exceptional cases, though certainly very different techniques would be required.

From a converse perspective, the construction we give to extract an algebra from a 3d quantum field theory (based on the existence of a certain cyclic symmetry of the matter content) seems to be novel in the physics literature and it seems worth exploring whether this can be used to net other interesting non-commutative algebras.  
\subsection*{Acknowledgements}

We thank Stephen Griffeths and Arun Ram for many useful discussions on these subjects.

B. W. is supported by an NSERC Discovery Grant. This research was supported in part by Perimeter Institute for Theoretical Physics. Research at Perimeter Institute is supported by the Government of Canada through the Department of Innovation, Science and Economic Development Canada and by the Province of Ontario through the Ministry of Research, Innovation and Science.
\section{Background}
\subsection{Algebras with a cyclic group action}

Let $p$ be a (possible composite) integer and $\K$ be a field such that the polynomial $x^p-1$ is split and separable; i.e. the group $\mu_p(\K)$ is a cyclic group of order $p$.  Note, this means that $p$ is invertible in $\K$.
Consider a $\K$-algebra $A$ equipped with an automorphism $\auto$ of order $p$.  Let $C$ denote the subgroup of algebra automorphisms generated by $\auto$ and for any $A$-module $M$, let $M^{\auto}$ be the same underlying vector space with its $A$-action twisted by $\auto$: that is $a\cdot m=a^{\auto}m$. Since $x^p-1$ splits, we have that $A$ decomposes into isotypic components under the action of $C$, that is, the eigenspaces of $\auto$: \[A_\rho=\{a\in A | a^{\auto}=\rho a\}\]  with eigenvalue $\rho$ necessarily in $\mu_p(\K)$. The subspace $A_\rho$ is the image of the projection $\pi_\rho =\frac{1}p(1+\rho^{-1}\auto+\cdots \rho^{1-p} \auto^{p-1}) $ acting on $A$.
\begin{definition}
We call an action of $C$ on $A$ {\bf Morita} if for every $\rho\in \mu_p(\K)$, we have an equality $A_{\rho}A_{\rho^{-1}} =A^C$ for all $\rho$.
\end{definition}

Let $\tilde{A}=A\#C$ be the smash product algebra: this is isomorphic to $A\otimes_{\K}\K[C]$ as a vector space, with the commutation rule $\auto a \auto^{-1}=a^{\auto}$.  
\begin{lemma}\label{lem:Morita}
The algebras $A^C$ and $\tilde{A}$ are Morita equivalent via the bimodule $A$ if and only if the action of $C$ is Morita.
\end{lemma}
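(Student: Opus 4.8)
The plan is to exploit the standard criterion for Morita equivalence via bimodules: two rings $R$ and $S$ are Morita equivalent via an $(R,S)$-bimodule $P$ and an $(S,R)$-bimodule $Q$ precisely when there are isomorphisms $P\otimes_S Q\cong R$ and $Q\otimes_R P\cong S$ as bimodules, compatible with the multiplication maps. Here I would take $R=\tilde A=A\#C$, $S=A^C$, and $P=A$ regarded as a $(\tilde A, A^C)$-bimodule in the obvious way (with $\tilde A$ acting on the left through the smash product, i.e. $\auto$ acting by the automorphism, and $A^C$ acting on the right by multiplication). The natural candidate for the bimodule going the other way is again $A$, now as an $(A^C,\tilde A)$-bimodule. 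So everything reduces to analyzing the two multiplication maps $A\otimes_{A^C} A\to \tilde A$ and $A\otimes_{\tilde A} A\to A^C$.

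First I would treat the easier composite. The map $A\otimes_{\tilde A} A\to A^C$ should be $a\otimes b\mapsto \sum_\rho \pi_\rho(ab)$ — wait, more precisely one uses the trace-type map: the image of ordinary multiplication $A\otimes A\to A$ followed by the projection $\pi_1$ onto $A^C$, i.e. $a\otimes b \mapsto \pi_1(ab) = \frac1p\sum_{i}\auto^i(ab)$. One checks this descends to $A\otimes_{\tilde A}A$ because $a\auto\otimes b$ and $a\otimes \auto b$ have the same image (both give $\pi_1$ of a sum over the same $C$-orbit), and similarly for $\auto^{-1}$; it is then manifestly an $(A^C,A^C)$-bimodule map. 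Surjectivity is automatic since $1\otimes 1\mapsto 1$. For injectivity one decomposes $A=\bigoplus_\rho A_\rho$ and notes $A\otimes_{\tilde A}A \cong \bigoplus_\rho A_\rho\otimes_{A^C}A_{\rho^{-1}}$ (the other cross-terms $A_\rho\otimes A_{\sigma}$ with $\rho\sigma\ne 1$ are killed because one can slide an element of $C$ across to scale them differently) — and here the hypothesis that the action is Morita enters: $A_\rho A_{\rho^{-1}}=A^C$ is exactly what makes each summand map isomorphically. Conversely, if the bimodule $A$ does induce a Morita equivalence, then this multiplication map must be surjective on each graded piece, forcing $A_\rho A_{\rho^{-1}}=A^C$; this gives the "only if" direction.

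For the other composite, $A\otimes_{A^C}A\to \tilde A=A\otimes_\K \K[C]$, the correct map sends $a\otimes b\mapsto \sum_{i=0}^{p-1} a\,\auto^i(b)\,\auto^{-i} = \sum_i a\cdot b\cdot$(appropriate $C$-element), which I would rewrite using the decomposition $b=\sum_\rho b_\rho$ as $\sum_\rho (a b_\rho)\otimes e_\rho$ where $e_\rho=\sum_i\rho^{-i}\auto^i$ is the idempotent picking out the $\rho$-eigencomponent; one verifies this is $A^C$-balanced and a $(\tilde A,\tilde A)$-bimodule map. Using the $C$-action to split both sides into their graded pieces, surjectivity onto the $\rho$-component of $\tilde A$ again comes down to $A_\rho A_{\rho^{-1}}=A^C$ (so that $1\in A^C$, hence all of $A_\rho$ after multiplying, lies in the image), and injectivity follows by the same eigenspace bookkeeping.

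\textbf{The main obstacle.} The routine part is writing down the two candidate bimodule maps and checking they are bimodule homomorphisms; the genuinely substantive point — and the place where the "Morita" hypothesis is used in an essential, non-formal way — is establishing that $A\otimes_{\tilde A}A\cong\bigoplus_\rho A_\rho\otimes_{A^C}A_{\rho^{-1}}$ and that each graded multiplication map $A_\rho\otimes_{A^C}A_{\rho^{-1}}\to A^C$ is a bijection. Surjectivity of this map is \emph{precisely} the condition $A_\rho A_{\rho^{-1}}=A^C$, and I expect injectivity to follow formally once surjectivity of \emph{all} the relevant multiplication maps is in hand, by a counting/idempotent argument (or by invoking that a surjection of the relevant modules which becomes an isomorphism after the composite in the other direction must itself be an isomorphism). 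Care is needed because $A$ need not be commutative, so one must keep track of left versus right factors throughout; but no deeper input than Lemma-level bookkeeping and the hypothesis itself should be required.
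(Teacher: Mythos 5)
Your overall strategy (directly exhibiting the Morita context, i.e.\ checking that the two multiplication maps $A\otimes_{A^C}A\to \tilde A$ and $A\otimes_{\tilde A}A\to A^C$ are isomorphisms) differs from the paper, which instead invokes the criterion that the equivalence holds if and only if $\pi_1$ generates $\tilde A$ as a two-sided ideal and then translates that condition into $A_\rho A_{\rho^{-1}}=A^C$ by hitting $1_{\tilde A}=\sum a_i\pi_1 b_i$ with $\pi_\rho$ on both sides. The strategy is viable, but there is a genuine error in where you locate the content. Identifying $A\cong \tilde A\pi_1$ and $A\cong\pi_1\tilde A$ as bimodules, the composite you call ``the easier one'' is $\pi_1\tilde A\otimes_{\tilde A}\tilde A\pi_1\to \pi_1\tilde A\pi_1\cong A^C$, and this is an isomorphism for \emph{any} idempotent in \emph{any} ring, with no hypothesis on the action. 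Correspondingly, your claimed decomposition $A\otimes_{\tilde A}A\cong\bigoplus_\rho A_\rho\otimes_{A^C}A_{\rho^{-1}}$ is false: you have only imposed the balancing relations for $\auto$ and for $A_1=A^C$, but tensoring over $\tilde A$ also imposes $xa\otimes y=x\otimes ay$ for $a\in A_\sigma$ with $\sigma\neq 1$, and these relations glue the putative summands together; the whole space collapses to the $C$-coinvariants of $A$, which is $A^C$ since $p$ is invertible. In particular your ``only if'' direction, which you extract from surjectivity of this map ``on each graded piece,'' proves nothing, because the map is surjective unconditionally.

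All of the content of the lemma sits in the other composite, $\tilde A\pi_1\otimes_{\pi_1\tilde A\pi_1}\pi_1\tilde A\to\tilde A$, whose image is the two-sided ideal $\tilde A\pi_1\tilde A$. Surjectivity there is the statement $\tilde A\pi_1\tilde A=\tilde A$, and injectivity then follows formally (write $1=\sum a_i\pi_1 b_i$ and use it to build a splitting $r\mapsto\sum ra_i\pi_1\otimes\pi_1 b_i$), so your closing remark about deducing injectivity from surjectivity is fine \emph{for this map}. What remains — and what your proposal never actually carries out — is the equivalence between $\tilde A\pi_1\tilde A=\tilde A$ and the condition $A_\rho A_{\rho^{-1}}=A^C$ for all $\rho$. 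That is precisely the paper's computation: in one direction, from $1=\sum c_id_i$ with $c_i\in A_\rho$, $d_i\in A_{\rho^{-1}}$ one gets $\pi_\rho=\sum c_i\pi_1 d_i\in\tilde A\pi_1\tilde A$ and sums over $\rho$; in the other, one multiplies $1_{\tilde A}=\sum a_i\pi_1 b_i$ by $\pi_\rho$ on both sides and projects $a_i,b_i$ to $A_\rho$ and $A_{\rho^{-1}}$. I would rewrite your proof so that both directions run through this single composite, and delete the eigenspace analysis of $A\otimes_{\tilde A}A$.
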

\begin{proof}
By \cite[A.9]{RamRam}, this Morita equivalence holds if and only if $\pi_1$ generates $\tilde{A}$ as a 2-sided ideal.  In this case, we can write $1_{\tilde{A}}=\sum_{i=1}^m a_i\pi_1b_i$.  Multiplying this equation on both sides by the idempotent $\pi_\rho$, we see that:
\[\pi_\rho =\sum_{i=1}^m \pi_{\rho} a_i\pi_1  b_i\pi_{\rho}.\] Projecting $a_i$ and $b_i$ to the appropriate eigenspaces, we obtain $a_i'\in A_\rho, b'_i\in A_{\rho^{-1}}$ 
such that \[ \pi_{\rho} a_i\pi_1 = \pi_{\rho} a'_i=a_i'\pi_1\qquad \pi_1  b_i\pi_{\rho}=\pi_1  b_i'=  b_i'\pi_1\] 
so we have that $1_A=\sum_{i=1}^m a_i'b_i'$, and as desired, $A_{\rho}A_{\rho^{-1}} =A^C.$

On the other hand, if $A_{\rho}A_{\rho^{-1}} =A^C,$ then we can run this proof backwards: we have $1=\sum c_id_i$ for $c_i\in A_\rho, d_i\in A_{\rho^{-1}}$, so 
\[ \pi_{\rho} = \sum \pi_{\rho} c_id_i\pi_{\rho} =\sum c_i\pi_{1} d_i.\]
Thus $\pi_{\rho}\in \tilde{A}\pi_1\tilde{A}$ for all $\rho$.   Since $1_{\tilde{A}}=\sum_{\rho\in \mu_p(\C)} \pi_\rho$, this shows that $\tilde{A}=\tilde{A}\pi_1\tilde{A}$, proving the Morita equivalence.
\end{proof}

The relationship between the representation theory of $A, A^C$ and $\tilde{A}$ is a version of Clifford theory, studied in \cite{RamRam}.  Note that in the case of a Morita action, the Morita equivalence intertwines the pairs of functors \[\Ind_{A^C}^A\colon A^C\mmod \to A\mmod \qquad \Res_{A}^{\tilde{A}}\colon \tilde{A}\mmod \to A\mmod\]
\[\Ind_{A}^{\tilde{A}}\colon \tilde{A}\mmod \to A\mmod \qquad \Res_{A^C}^{A} \colon A\mmod \to A^C\mmod\]
by the commutative diagram
  \[\tikz[->,thick]{
\matrix[row sep=15mm,column sep=30mm,ampersand replacement=\&]{
\&\node (a) {$A^C\mmod $}; \&\\
\node (b)
{$A\mmod $}; \& \& 
\node (c)
{$A\mmod $};\\
 \& \node (d) {$\tilde{A}\mmod$}; \&  \\
};
\draw (a) -- (b) node [midway,above left]{$\Ind_{A^C}^A$}; 
\draw (c) -- (a) node[above right,midway]{$\Res_{A^C}^{A}$} ; 
\draw (d) -- (b) node[ below left,midway]{$\Res_{A}^{\tilde{A}}$}; 
\draw (c) -- (d) node[below right,midway]{$\Ind_{A}^{\tilde{A}}$}; 
\draw[<->] (a) --(d) node[left, midway]{$\tilde{A}\pi_1\otimes_{A^C}-$} node[right, midway]{$(-)^C$};
}\]
Fix a Serre subcategory $\mathcal{C}$ of $A\mmod$ closed under twisting by $\auto$, and $\mathcal{C}^C$ and $\tilde{\mathcal{C}}$ the equivalent subcategories of $A^C\mmod$ and $\tilde{A}\mmod$ which lie in $\mathcal{C}$ after induction/restriction to $A$.  Let $\simples$ be the set of isomorphism classes of simple modules in this category with $V_x$ for $x\in \simples$ a distinguished representative.  Let $\auto\colon \simples \to \simples$ denote the induced map; let $p_x$ be the smallest integer such that $\auto^{p_x}(x)=x.$ A {\bf scaffolding} of this category is a system of isomorphisms $q_x\colon V_x^{\auto}\to V_{\auto{x}}$ such that $q_{\auto^{p-1}(x)}\cdots q_{\auto(x)}q_x=1$.  Note that if we only think of the underlying vector spaces, $q_x$ is an automorphism, but $a\cdot q_x(v)=q_x(a^{\auto}v)$.  

Of course, $Q_x=q_{\auto^{p_x-1}(x)}\cdots q_{\auto(x)}q_x\colon V_x^{\auto^{p_x}}\to V_x$ induces an isomorphism of $A$-modules, which we are at pains to emphasize is not the identity, though the scaffolding condition assures that $Q_x^{p/p_x}=1_{V_x}$. 

Of course, we can think of this as an action of the stabilizer $C_x$, and together with $A$, it forms an action of the smash product $A\# C_x$; for a character $\nu\colon C_x\to \mu_{p/p_x}(\K)$, we can twist this action by the character to form $p/p_x$ different module structures $V_{x}^{(\nu)}$, where $\alpha^{p_x}$ acts by $\nu^{-1}(\alpha^{p_x})Q_x$.  We can thus consider the induced module $\tilde{A}\otimes_{A\otimes C_x}V_{x}^{(\nu)}$; we can view these as the summands of $\Ind^{\tilde{A}}_{A}V_x$ obtained by diagonalizing the action of $\auto^{p_x}\otimes Q_x\colon \Ind^{\tilde{A}}_{A}V_x\to \Ind^{\tilde{A}}_{A}V_x$, which induces an action of $C_x$ on this module, and in fact an isomorphism $\End(\Ind^{\tilde{A}}_{A}V_x)\cong \C[C_x].$

Furthermore, since $Q_x$
 intertwines the original module structure and its twist by a power of $\alpha$, it commutes with the action of $A^C$.  Thus, each isotypic component of $V_x$ under the action of $Q_x$ is an $A^C$ module; these modules only depend on the orbit of $x\in \simples$ since $q_x$ gives an isomorphism between them.  In particular, $Q_{\alpha(x)}=q_xQ_xq_{x}^{-1}$.  
\begin{theorem}\label{thm:clifford}
Assume that $A$ and $C$ define a Morita action, and that $\mathcal{C}$ is a Serre subcategory  of $A\mmod$ with scaffolding $q_x$.  For each  $x\in \simples$ and each character $\nu\colon C_x\to \mu_{p/p_x}(\K)$, the corresponding isotypic component $V_{x,\nu}$ of $V_x$ under the action of $Q_x$ is a simple $A^C$-module (in particular, non-zero), and every simple $\mathcal{C}^C$ is of this form for a unique $C$-orbit $[x]\subset \simples$ and character $\nu$.  Equivalently, the simple $\tilde{A}$ modules in $\tilde{\mathcal{C}}$ are of the form $\tilde{A}\otimes_{A\# C_x}V_{x}^{(\nu)}$.
\end{theorem}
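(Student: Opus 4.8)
The plan is to classify the simple objects of $\tilde{\mathcal{C}}$ first --- proving the ``equivalently'' clause --- and then transfer the answer to $A^C\mmod$ through the Morita equivalence of Lemma~\ref{lem:Morita}. The whole argument rests on the observation that, since $p$ is invertible in $\K$, the inclusion $A\subset\tilde A$ is a separable ring extension, with separability element $\tfrac1p\sum_{g\in C}g\otimes_A g^{-1}\in\tilde A\otimes_A\tilde A$. This gives three things at once: $\Ind^{\tilde A}_A=\tilde A\otimes_A-$ and $\Res^{\tilde A}_A$ are exact and biadjoint; averaging an $A$-linear projection over $C$ turns it into a $\tilde A$-linear one, so a $\tilde A$-module is semisimple as soon as its restriction to $A$ is; and for a simple $\tilde A$-module $L$, picking any simple $A$-submodule $V_x\subseteq\Res^{\tilde A}_A L$ and noting that $\sum_{g\in C}gV_x$ is a nonzero $\tilde A$-submodule, hence all of $L$, shows that $\Res^{\tilde A}_A L$ is semisimple with all constituents lying in the single $C$-orbit $[x]\subset\simples$.

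Granting the structures assembled before the theorem, the classification over $\tilde A$ is then bookkeeping. For $x\in\simples$ the scaffolding makes $V_x$ a module over $A\#C_x$ with $\auto^{p_x}$ acting by $Q_x$, and the tensor identity gives $\Ind^{A\#C_x}_A V_x\cong\bigoplus_{\nu}V_x^{(\nu)}$, hence $\Ind^{\tilde A}_A V_x\cong\bigoplus_{\nu}\tilde A\otimes_{A\#C_x}V_x^{(\nu)}$, the sum over characters $\nu\colon C_x\to\mu_{p/p_x}(\K)$. By the computation recorded above, $\End_{\tilde A}(\Ind^{\tilde A}_A V_x)\cong\K[C_x]$, which --- since $\mu_p(\K)$, and so $\mu_{p/p_x}(\K)$, is full --- is a product of $p/p_x$ copies of $\K$; because $\Ind^{\tilde A}_A V_x$ is semisimple, each summand $\tilde A\otimes_{A\#C_x}V_x^{(\nu)}$ must be simple and nonzero, and the $p/p_x$ of them pairwise non-isomorphic. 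Conversely a simple $L\in\tilde{\mathcal{C}}$ restricts to a nonzero semisimple object of $\mathcal{C}$ and so contains some $V_x$, whereupon $0\ne\Hom_A(V_x,\Res^{\tilde A}_A L)=\Hom_{\tilde A}(\Ind^{\tilde A}_A V_x,L)$ exhibits $L$ as a summand of $\Ind^{\tilde A}_A V_x$, so $L\cong\tilde A\otimes_{A\#C_x}V_x^{(\nu)}$. Distinctness is then immediate: restricting $\tilde A\otimes_{A\#C_x}V_x^{(\nu)}$ to $A$ recovers $\bigoplus_{y\in[x]}V_y$ and hence pins down the orbit, while within a fixed orbit the summands were just seen to be non-isomorphic; the relation $Q_{\auto(x)}=q_xQ_xq_x^{-1}$ makes the identification of $C_x$ and of its characters along an orbit canonical.

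It remains to push this through the Morita equivalence $A^C\mmod\simeq\tilde A\mmod$ of Lemma~\ref{lem:Morita}, implemented by $L\mapsto\pi_1 L$ with $\pi_1\tilde A\pi_1\cong A^C$, which carries $\tilde{\mathcal{C}}$ onto $\mathcal{C}^C$; since it preserves simplicity it suffices to identify $\pi_1\bigl(\tilde A\otimes_{A\#C_x}V_x^{(\nu)}\bigr)$ with $V_{x,\nu}$. Writing $\tilde A=\bigoplus_{j=0}^{p_x-1}\auto^j(A\#C_x)$ as a right $A\#C_x$-module gives $\tilde A\otimes_{A\#C_x}V_x^{(\nu)}=\bigoplus_{j}\auto^j\otimes V_x^{(\nu)}$, and a short computation of the $\auto$-action shows that the elements fixed by $\pi_1$ are exactly those of the form $\sum_j\auto^j\otimes v$ with $Q_xv=\nu(\auto^{p_x})v$; so this $A^C$-module is precisely the $\nu$-isotypic component $V_{x,\nu}$ of $V_x$ under $Q_x$. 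Combining the three steps gives the theorem, and since $V_x=\bigoplus_\nu V_{x,\nu}$ is manifestly compatible with these identifications no summand is overlooked. The step I expect to require the most care is the middle one --- reconciling the plain group algebra $\K[C_x]$ with the decomposition of $\Ind^{\tilde A}_A V_x$ into the $V_x^{(\nu)}$-pieces, and keeping the orbit/stabilizer/character indexing straight --- although the genuine input there, that the scaffolding trivializes the obstruction cocycle so that this endomorphism algebra is untwisted, has already been arranged in the setup.
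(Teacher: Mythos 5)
Your proposal is correct, but it takes a genuinely different route from the paper: the paper's entire proof is a citation of Clifford-theory results of Ram--Ramagge ([RamRam, Th.~A.6 \& 13]) plus the single observation that the Morita equivalence of Lemma~\ref{lem:Morita} forces $V_{x,\nu}=(\tilde{A}\otimes_{A\# C_x}V_x^{(\nu)})^C$ to be non-zero, whereas you reprove the cited classification from scratch. Your mechanism --- the separability element $\tfrac1p\sum_g g\otimes g^{-1}$ giving semisimplicity of $\Ind_A^{\tilde A}V_x$, the count of simple summands against $\End_{\tilde A}(\Ind_A^{\tilde A}V_x)\cong\K[C_x]\cong\K^{p/p_x}$, and the explicit computation of $\pi_1\bigl(\tilde A\otimes_{A\#C_x}V_x^{(\nu)}\bigr)$ as the $\nu(\auto^{p_x})$-eigenspace of $Q_x$ --- is sound and has the merit of making visible exactly where the hypotheses enter ($p$ invertible, $\mu_p(\K)$ full so that $\K[C_x]$ splits, the scaffolding trivializing the $2$-cocycle so the endomorphism algebra is the untwisted group algebra), and of actually verifying the identification $V_{x,\nu}=(\tilde{A}\otimes_{A\# C_x}V_{x}^{(\nu)})^C$ that the paper only asserts. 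The one place where you lean on something unstated is the existence of a simple $A$-submodule of $\Res^{\tilde A}_A L$ for a simple $L\in\tilde{\mathcal C}$ (and, in the endomorphism computation, that $\End_A(V_x)=\K$); neither holds for an arbitrary Serre subcategory of $A\mmod$, but both are implicit in the paper's setup (the scaffolding condition already presupposes scalar endomorphisms) and hold in every category to which the theorem is applied, and the submodule step can in any case be replaced by a simple-quotient argument using that $\Ind$ and the coinduction agree for this Frobenius extension. So I would record your argument as a legitimate self-contained alternative to the citation, not as a gap.
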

\begin{proof}
This is effectively a restatement of \cite[Th. A.6  \& 13]{RamRam}; the only place where we strengthen their results is that since taking $C$-invariants induces a Morita equivalence $\tilde{A}\mmod\cong A^C\mmod$, we must have $V_{x,\nu}=(\tilde{A}\otimes_{A\# C_x}V_{x}^{(\nu)})^C$ is non-zero.  
\end{proof}

We note an important consequence of this fact (which does not depend on the action being Morita):
\begin{lemma}\label{lem:semi-simple}
If $A$ is finite dimensional and semi-simple, then $\tilde{A}$ is as well.
\end{lemma}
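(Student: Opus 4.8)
The plan is to prove a Maschke-style statement: the hypothesis guarantees that $p=|C|$ is invertible in $\K$ (indeed $x^p-1$ is separable over $\K$), so we may average over $C$. Since $\dim_\K\tilde{A}=p\cdot\dim_\K A<\infty$, it suffices to show that every submodule of every $\tilde{A}$-module is a direct summand; applying this with $\tilde{A}$ acting on itself by left multiplication then shows that $\tilde{A}$ is a semisimple ring.

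So the main step is: fix an $\tilde{A}$-module $M$ and an $\tilde{A}$-submodule $N\subseteq M$. Restricting the action to $A\subseteq\tilde{A}$ and using that $A$ is semisimple, $N$ is a direct summand of $M$ as an $A$-module, so there is an $A$-linear projection $\pi\colon M\to N$. Since $M$ is in particular a $C$-module (with $\auto$ acting invertibly on it), I would form the averaged map
\[
\tilde{\pi}\;=\;\frac{1}{p}\sum_{i=0}^{p-1}\auto^{i}\circ\pi\circ\auto^{-i}\colon M\longrightarrow M .
\]
One checks that $\tilde{\pi}$ still has image in $N$ (as $N$ is $\auto$-stable) and restricts to the identity on $N$ (this is where the factor $\tfrac1p$ enters), so it is again a projection onto $N$; and now $\tilde{\pi}$ commutes with the full $\tilde{A}$-action, so it splits the inclusion $N\hookrightarrow M$ in $\tilde{A}\mmod$, giving $M=N\oplus\ker\tilde{\pi}$.

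The only part requiring care — and the reason this does not depend on the action being Morita — is verifying that $\tilde{\pi}$ is $\tilde{A}$-linear. The $\auto$-equivariance is the reindexing $i\mapsto i+1$ modulo $p$. The $A$-linearity uses the commutation relation $\auto a\auto^{-1}=a^{\auto}$ inside $\tilde{A}$: for $a\in A$, $m\in M$ one has $\auto^{i}\bigl(a^{\auto^{-i}}x\bigr)=a\,(\auto^{i}x)$, and combining this with $A$-linearity of $\pi$ shows that each conjugate $\auto^{i}\circ\pi\circ\auto^{-i}$ is itself $A$-linear, hence so is the average. All that was used is that $p$ is a unit in $\K$. (Alternatively, one could observe that $e=\tfrac1p\sum_{g\in C}g\otimes g^{-1}$ is a separability idempotent for $\tilde{A}$ over $A$ and invoke transitivity of semisimplicity along separable ring extensions, but the averaging argument above is entirely self-contained.)
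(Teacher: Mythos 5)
Your proof is correct, but it takes a different route from the paper's. You give a direct Maschke-style argument: restrict to $A$, use semisimplicity of $A$ to split any $\tilde{A}$-submodule $A$-linearly, and then average the projection over $C$, using that $p$ is invertible in $\K$ (which the paper's standing hypothesis on $x^p-1$ guarantees). The verification that each conjugate $\auto^{i}\circ\pi\circ\auto^{-i}$ is $A$-linear via $\auto^{-i}a=a^{\auto^{-i}}\auto^{-i}$ is exactly the point that needs checking, and you do it. The paper instead leans on the Clifford-theoretic machinery it has just set up: it observes that $\Ind_{A}^{\tilde{A}}(V_x)\cong\bigoplus_{\nu}\tilde{A}\otimes_{A\#C_x}V_x^{(\nu)}$ is semisimple, that induction of the projective $A$-module $V_x$ is projective over $\tilde{A}$, and hence that every simple $\tilde{A}$-module is projective. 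Your argument is more elementary and self-contained — it does not invoke the decomposition of induced modules or the scaffolding formalism, and it makes explicit where invertibility of $p$ enters — while the paper's proof is shorter given the apparatus already in place and exhibits the simples of $\tilde{A}$ concretely as summands of inductions, which is information the paper reuses later. Both arguments correctly avoid any appeal to the action being Morita.
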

\begin{proof}
Note that we have that $\Ind_{A}^{\tilde{A}}(V_x)\cong \oplus_{\nu} \tilde{A}\otimes_{A\# C_x}V_{x}^{(\nu)}$ is semi-simple.  Furthermore, if $V_x$ is projective as an $A$-module, $\Ind_{A}^{\tilde{A}}(V_x)$ must be projective as a $\tilde{A}$-module.  This shows that every simple $\tilde{A}$-module is projective, so $\tilde{A}$ is semi-simple.
\end{proof}
\excise{
\subsection{Cellular algebras with a cyclic group action}
Recall the definition of a graded cellular algebra, as given in \cite[Def. 2.1]{HM}:
\begin{definition}\label{def:cellular}
  A {\bf graded cellular $\K$-algebra} is an associative graded unital $\K$-algebra
  $A$, free of finite rank, 
  together with a {\bf cell datum} $(\cP, M, \cellb, \deg)$ such
  that \begin{itemize} 
  \item[(0)] $\deg\colon M(\xi)\to \Z$ is a map such that the element $\cellb_{\sS,\sT}$ is homogeneous of degree $\deg \sT+\deg\sT$;
  \item[(1)] $\cP$ is a partially ordered set and
    $M(\xi)$ is a finite set for each $\xi \in \cP$; \item[(2)]
    $\cellb:\dot\bigsqcup_{\xi \in \cP} M(\xi) \times M(\xi) \rightarrow A,
    (\sT,\sS) \mapsto \cellb^\xi_{\sT,\sS}$ is an injective map whose image is a
    basis for $A$;
    \item[(4)] if
    $\xi \in \cP$ and $\sS, \sT \in M(\xi)$ then for any $x \in
    A$ we have that $$ x \cellb_{\sS,\sT}^\xi \equiv \sum_{\sS'
      \in M(\xi)} r_x(\sS',\sS) \cellb_{\sS',\sT}^\xi \pmod{A(>
      \xi)}$$ where the scalar $r_x(\sS',\sS)$ is independent of
    $\sT$ and $A(> \mu)$ denotes the subspace of $A$ generated by
    $\{\cellb_{\sS'',\sT''}^q\mid q > \xi, \sS'',\sT'' \in
    M(q)\}$;
  \item[(3)] the map $*:A \rightarrow A$ sending $(\cellb^\xi_{\sT,\sS})^* = \cellb_{\sS,\sT}^\xi$ is an algebra
    anti-automorphism . \end{itemize} The $\K$-basis consisting of the vectors $\cellb^\xi_{\sS,\sT}$
  is then a {\bf cellular basis} of $A $.
\end{definition}
Recall that if $A$ is an algebra with cellular basis, there is a
natural {\bf cell representation} $\Delta_\xi$ of $A$ for each $\xi\in \cP$ which is
freely generated over $\K$ by symbols $c_{\sT}$ for each $\sT\in
M(\xi)$, with the action rule $xc_{\sT}=\sum_{\sS\in M(\xi)}r_x(\sS,\sT)c_{\sS}$.  This is graded with the degree $\deg(c_{\sT})=\deg \sT$.  Every cell module is equipped with a unique  $*$-Hermitian bilinear form $\langle -,-\rangle$ such that \[\cellb_{\sS',\sT}^\xi\cellb_{\sS,\sT'}^\xi=\langle c_{\sT},c_{\sS}\rangle_{\xi} \cellb_{\sS',\sT'}^\xi \pmod{A(>
      \xi)},\] or equivalently
      \[\cellb_{\sS',\sT}^\xi c_{\sS}=\langle c_{\sT},c_{\sS}\rangle_{\xi}c_{\sS'} \]
      Let $\cP^+=\{\xi\in \cP| \langle -,- \rangle_{\xi}\neq 0\},$ and for $\xi \in\cP^+$, let $L(\xi)$ be the quotient of $\Delta_{\xi}$ by the radical of $\langle -, - \rangle$.  This is a naturally a graded module.
\begin{theorem}[\mbox{\cite[Th. 3.4]{GLcell}, \cite[Prop. 2.18]{HM}}]\hfill
\begin{enumerate}
    \item The quotients $L(\xi)$ for $\xi \in\cP^+$ are a complete irredundant list of the simple $A$-modules, up to grading shift.
    \item With the induced grading, $L(\xi)$ is isomorphic to its contragredient $L(\xi)^\circledast$. 
\end{enumerate}  
\end{theorem}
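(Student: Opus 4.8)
The final statement is the standard structure theory of graded cellular algebras (the Graham--Lehrer argument, with the grading bookkeeping of Hu--Mathas), so the plan is to reproduce that proof, organized around the $\cP$-filtration of $A$ by the two-sided ideals $A(\geq\xi)=\operatorname{span}\{\cellb^\zeta_{\sS,\sT}:\zeta\geq\xi\}$ (and $A(>\xi)=\sum_{\zeta>\xi}A(\geq\zeta)$). \emph{First}, I would record that $\rad_\xi:=\rad\langle-,-\rangle_\xi$ is a graded $A$-submodule of $\Delta_\xi$, so that $L(\xi)=\Delta_\xi/\rad_\xi$ is a well-defined graded module: the submodule property follows from the $*$-associativity $\langle ac_\sT,c_\sS\rangle_\xi=\langle c_\sT,a^*c_\sS\rangle_\xi$, obtained by applying the anti-automorphism $*$ to axiom~(4) and comparing with $\cellb^\xi_{\sS',\sT}c_\sS=\langle c_\sT,c_\sS\rangle_\xi c_{\sS'}$; homogeneity of the form (hence of $\rad_\xi$) follows by comparing degrees in that same relation, which moreover shows $\langle c_\sT,c_\sS\rangle_\xi\neq0$ only if $\deg\sT+\deg\sS=0$, i.e.\ the form pairs $(\Delta_\xi)_d$ with $(\Delta_\xi)_{-d}$.

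\emph{Next}, I would show $\Delta_\xi$ has simple head $L(\xi)$: if $N\subsetneq\Delta_\xi$ is a submodule and $v\in N\setminus\rad_\xi$, then $\langle c_\sT,v\rangle_\xi\neq0$ for some $\sT$, so $\cellb^\xi_{\sS',\sT}v=\langle c_\sT,v\rangle_\xi c_{\sS'}$ forces $c_{\sS'}\in Av$ for every $\sS'$, hence $N=\Delta_\xi$; thus $\rad_\xi$ is the unique maximal submodule and $L(\xi)$ is simple precisely when $\xi\in\cP^+$. \emph{Then}, for completeness: given a graded simple $S$, take $\xi$ maximal with $A(\geq\xi)S\neq0$; maximality forces $A(>\xi)S=0$, and the same reasoning shows $A(>\xi)$ annihilates every $\Delta_\xi$. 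Choosing $v,\sT$ with $\cellb^\xi_{\sS,\sT}v\neq0$, the map $c_{\sS'}\mapsto\cellb^\xi_{\sS',\sT}v$ is a well-defined (by axiom~(4) and $A(>\xi)S=0$) nonzero, hence surjective, homomorphism $\Delta_\xi\to S$; its kernel is proper, so lies in $\rad_\xi$, and equals $\rad_\xi$ since $S$ is simple, giving $S\cong L(\xi)$ up to grading shift.

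\emph{Finally}, irredundancy: I must recover $\xi$ from $L(\xi)$. We already have $A(>\xi)L(\xi)=0$ while $A(\geq\xi)L(\xi)=L(\xi)\neq0$ (immediate from $\cellb^\xi_{\sS',\sT}c_\sS=\langle c_\sT,c_\sS\rangle_\xi c_{\sS'}$ with $\xi\in\cP^+$), so it suffices to see that $\xi$ is the \emph{unique} maximal element with $A(\geq\xi)L(\xi)\neq0$. For this I would analyze the $(A,A)$-bimodule $A(\geq\xi)/A(>\xi)$, which decomposes into copies of $\Delta_\xi$ indexed by $M(\xi)$; this lets one prove that any composition factor $L(\zeta)$ of $\Delta_\xi$ has $\zeta\leq\xi$ and that $[\Delta_\xi:L(\xi)]=1$, i.e.\ that the decomposition matrix is lower-unitriangular, which separates the $L(\xi)$. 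I expect this triangularity/bimodule step to be the main obstacle; everything else is formal.

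For part~(2), the graded contragredient $L^\circledast$ is $\Hom_\K(L,\K)$ with $A$ acting through $*$ and $(L^\circledast)_d=(L_{-d})^*$, so a graded isomorphism $L\cong L^\circledast$ is the same datum as a non-degenerate homogeneous degree-zero $*$-associative bilinear form on $L$. Since $\langle-,-\rangle_\xi$ on $\Delta_\xi$ is $*$-associative, pairs degree $d$ with degree $-d$, and has radical exactly $\rad_\xi$, it descends to precisely such a form on $L(\xi)$, yielding $L(\xi)\cong L(\xi)^\circledast$ with the induced grading.
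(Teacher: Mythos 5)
Your proof is correct and is essentially the Graham--Lehrer/Hu--Mathas argument that the paper simply cites without reproducing (the statement sits inside an excised block and the paper gives no proof of its own), so you have just supplied the standard details of the cited result. The one line worth tightening is in the completeness step: ``its kernel \dots equals $\rad_\xi$ since $S$ is simple'' presupposes $\rad_\xi\neq\Delta_\xi$, i.e.\ $\xi\in\cP^+$; this is where one inserts the observation that $\langle-,-\rangle_\xi=0$ would force $\bigl(A(\geq\xi)\bigr)^2\subseteq A(>\xi)$ and hence $S=\bigl(A(\geq\xi)\bigr)^2S=0$, a contradiction.
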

  


Now assume that $\auto$ is a order $p$ automorphism of $A$ which is compatible with the cellular structure in the sense that $\cP$ and $\dot\bigsqcup_{\xi \in \cP} M(\xi)$ carry actions of $\auto$ such that $\auto(\cellb^\xi_{\sS,\sT})=\cellb^{\auto \xi}_{\auto\sS,\auto\sT}$.  

There is a unique coarsest partial order such that the basis remains cellular, and we note that $\xi$ and $\auto^k\xi$ are never comparable in this order unless $\xi=\auto^k\xi$.  

\begin{lemma}
The map $q_{\xi}\colon \Delta_{\xi}^\auto\to \Delta_{\auto\xi}$ sending $c_{\sS}\mapsto c_{\auto\sS}$ is an isomorphism of $A$-modules which satisfies  $\langle c_{\sT},c_{\sS}\rangle=\langle c_{\auto\sT},c_{\auto\sS}\rangle$.  This induces an isomorphism between the quotient by the radical of this form,  giving a scaffolding for the set of simple $A$-modules.
\end{lemma}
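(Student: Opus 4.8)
The plan is to deduce everything by transporting the two defining relations of the cellular structure through $\auto$, so almost all of the work is the same short computation done twice. As a preliminary I would record that $\auto$ preserves the cell filtration, i.e.\ $\auto(A(>\xi))=A(>\auto\xi)$: the hypothesis $\auto(\cellb^\xi_{\sS,\sT})=\cellb^{\auto\xi}_{\auto\sS,\auto\sT}$ makes $\auto$ an automorphism compatible with the cell datum, hence it preserves the canonical coarsest cell order (so $\eta>\xi\iff\auto\eta>\auto\xi$), and therefore carries the spanning set of $A(>\xi)$ (cellular basis vectors indexed above $\xi$) to that of $A(>\auto\xi)$. This is the only structural input beyond formal manipulation.

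For the $A$-linearity of $q_\xi$, I would apply $\auto$ to the cellular multiplication rule $x\cellb^\xi_{\sS,\sT}\equiv\sum_{\sS'}r_x(\sS',\sS)\,\cellb^\xi_{\sS',\sT}\pmod{A(>\xi)}$. Since $\auto$ is an algebra homomorphism fixing $\K$, this gives $\auto(x)\,\cellb^{\auto\xi}_{\auto\sS,\auto\sT}\equiv\sum_{\sS'}r_x(\sS',\sS)\,\cellb^{\auto\xi}_{\auto\sS',\auto\sT}\pmod{A(>\auto\xi)}$, and comparing with the same rule for the cell $\auto\xi$ (applied to $\auto(x)$ and the pair $(\auto\sS,\auto\sT)$, using the linear independence of the $\cellb^{\auto\xi}_{\bullet,\auto\sT}$ modulo $A(>\auto\xi)$) forces the equivariance $r_{\auto(x)}(\auto\sS',\auto\sS)=r_x(\sS',\sS)$. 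Substituting this into the action rule $x\cdot c_\sT=\sum_\sS r_x(\sS,\sT)c_\sS$ on $\Delta_\xi$ and $\Delta_{\auto\xi}$ shows exactly that $c_\sS\mapsto c_{\auto\sS}$ intertwines the $A$-action on $\Delta_{\auto\xi}$ with the $\auto$-twisted action on $\Delta_\xi$; since $\sS\mapsto\auto\sS$ is a bijection $M(\xi)\to M(\auto\xi)$, $q_\xi$ is then an isomorphism $\Delta_\xi^\auto\to\Delta_{\auto\xi}$ of $A$-modules. The assertion about the forms is the identical argument applied to $\cellb^\xi_{\sS',\sT}\cellb^\xi_{\sS,\sT'}\equiv\langle c_\sT,c_\sS\rangle_\xi\,\cellb^\xi_{\sS',\sT'}\pmod{A(>\xi)}$: here $\langle c_\sT,c_\sS\rangle_\xi\in\K$ is $\auto$-fixed and $\cellb^{\auto\xi}_{\auto\sS',\auto\sT'}$ is nonzero modulo $A(>\auto\xi)$, so comparison with the corresponding relation for $\auto\xi$ yields $\langle c_\sT,c_\sS\rangle_\xi=\langle c_{\auto\sT},c_{\auto\sS}\rangle_{\auto\xi}$.

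With these two facts in hand, $q_\xi$ carries $\rad\langle-,-\rangle_\xi$ isomorphically onto $\rad\langle-,-\rangle_{\auto\xi}$, so $\auto$ preserves $\cP^+$ and $q_\xi$ descends to an isomorphism $L(\xi)^\auto\to L(\auto\xi)$; since the $L(\xi)$ with $\xi\in\cP^+$ are a complete irredundant list of the simple $A$-modules (up to grading shift) and $\auto$ permutes them according to its action on $\cP^+$, these descended maps form a system of the required type. It then remains to check the cocycle identity $q_{\auto^{p-1}(\xi)}\cdots q_{\auto(\xi)}q_\xi=1$, and this is immediate: on underlying vector spaces each $q$ is the relabelling $c_\sS\mapsto c_{\auto\sS}$, so the composite around an $\auto$-orbit is the relabelling by $\auto^p=\id$ — which is the identity of $\bigsqcup_\xi M(\xi)$, because an order-$p$ automorphism of $A$ acts trivially on the cellular basis, hence on the index sets, after $p$ steps. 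Thus the identity holds on the nose, already at the level of the $\Delta$'s, and a fortiori for the induced maps on the $L(\xi)$'s; this is exactly a scaffolding.

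I expect the only real care needed is bookkeeping with the $\auto$-twist: checking that the relabelling by $\auto$ genuinely produces a map $\Delta_\xi^\auto\to\Delta_{\auto\xi}$ with the twist on the correct side (as opposed to its inverse), and composing the twisted morphisms $q_{\auto(\xi)}\circ q_\xi$ coherently around an orbit. There is no substantive difficulty beyond this; every assertion in the lemma is obtained by pushing a defining cellular relation through $\auto$ and using that $\auto$ respects the cell datum.
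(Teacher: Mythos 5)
Your overall strategy --- push the two defining cellular relations through $\auto$ and read off everything --- is the right one, and the form-invariance, the descent to $L(\xi)$, and the cocycle identity are all handled correctly. But the point you defer to the end as ``bookkeeping with the $\auto$-twist'' is exactly where the argument as written goes wrong, and it is not mere bookkeeping. You correctly derive $r_{\auto(x)}(\auto\sS',\auto\sS)=r_x(\sS',\sS)$. Substituting this into the action rule gives, for $\phi(c_\sS)=c_{\auto\sS}$, the identity $\phi(x\,c_\sT)=\auto(x)\,\phi(c_\sT)$, i.e.\ $x\cdot\phi(m)=\phi(\auto^{-1}(x)\,m)$. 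Under the convention of this paper that $M^\auto$ carries the action $a\cdot m=a^{\auto}m=\auto(a)m$, this makes $\phi$ an $A$-module map $\Delta_\xi^{\auto^{-1}}\to\Delta_{\auto\xi}$ (equivalently $\Delta_\xi^{\auto}\to\Delta_{\auto^{-1}\xi}$), \emph{not} $\Delta_\xi^{\auto}\to\Delta_{\auto\xi}$. For $p=2$ the two coincide, which is why the discrepancy is easy to miss; for $p\geq 3$ they do not. Concretely, take $A=\K^3$ with orthogonal idempotents $e_1,e_2,e_3$ as cellular basis and $\auto(e_i)=e_{i+1}$: then $\auto(1)=2$ on $\cP$, but $\Delta_1^{\auto}$ is the module on which $e_3$ acts by $1$, so $\Delta_1^{\auto}\cong\Delta_3=\Delta_{\auto^{-1}(1)}$, and $c_1\mapsto c_2$ is not $A$-linear from $\Delta_1^{\auto}$ to $\Delta_2$ (test $x=e_2$: the twisted action gives $e_2\cdot c_1=e_3c_1=0$, while $e_2\cdot c_2=c_2$).

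So the correct assertion, with the stated conventions, is that $c_\sS\mapsto c_{\auto^{-1}\sS}$ gives an isomorphism $\Delta_\xi^{\auto}\to\Delta_{\auto^{-1}\xi}$ --- equivalently, the permutation of $\cP$ (and hence of the simples) induced by twisting is the \emph{inverse} of the permutation coming from the action on the cellular basis. Once that replacement is made, everything else in your write-up goes through verbatim: the form identity $\langle c_\sT,c_\sS\rangle_\xi=\langle c_{\auto\sT},c_{\auto\sS}\rangle_{\auto\xi}$ is insensitive to the direction issue, the radical is carried to the radical, and the composite around a $C_p$-orbit is relabelling by $\auto^{\mp p}=\id$. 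The mathematics is therefore salvageable with a one-line correction, but as it stands the sentence ``shows exactly that $c_\sS\mapsto c_{\auto\sS}$ intertwines the $A$-action on $\Delta_{\auto\xi}$ with the $\auto$-twisted action on $\Delta_\xi$'' asserts the opposite of what your computation establishes, and the check you yourself identify as the one real danger point is precisely the one that fails.
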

As usual, we let $p_{\xi}$ be the smallest positive integer such that $\auto^{p_{\xi}}\xi=\xi$.  
\begin{lemma} The simple $A^{\auto}$-modules are given by the eigenspaces on $L(\xi)$ for $\xi\in \cP^\circ$ of the automorphism induced by
\[Q_\xi c_{\sT}=c_{\auto^{p_{\xi}}\sT}.\] In particular, the action of $\auto$ is Morita if and only if the action of $\auto$ on $\cup_{k=1}^{p} M(\auto^k\xi)$ is faithful for all $\xi$.
\end{lemma}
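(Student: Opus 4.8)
The plan is to read the first assertion off Theorem~\ref{thm:clifford} via the scaffolding supplied by the preceding lemma, and then to obtain the Morita criterion by testing the condition of Lemma~\ref{lem:Morita} separately on each $\auto$-orbit of cells.

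For the first sentence: the preceding lemma endows the simple modules $L(\xi)$, $\xi\in\cP^+$, with a scaffolding $q_\xi$ whose associated automorphism $Q_\xi=q_{\auto^{p_{\xi}-1}\xi}\cdots q_{\auto\xi}q_\xi$ is exactly the operator $c_{\sT}\mapsto c_{\auto^{p_{\xi}}\sT}$ on $L(\xi)$; this is well defined because $\auto^{p_{\xi}}$ preserves $M(\xi)$, it intertwines the $A$-action with its $\auto^{p_{\xi}}$-twist, and $Q_\xi^{p/p_{\xi}}=1$. Since $\mu_p(\K)$ is cyclic of order $p$, the subgroup $\mu_{p/p_{\xi}}(\K)$ has full order and $x^{p/p_{\xi}}-1$ is separable over $\K$, so $Q_\xi$ is diagonalizable and its eigenspaces are precisely the isotypic components $V_{x,\nu}$ of Theorem~\ref{thm:clifford}, indexed by the characters $\nu$ of the cyclic stabilizer $C_\xi$. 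Feeding the scaffolding into Theorem~\ref{thm:clifford} then gives that these eigenspaces form a complete irredundant list of the simple $A^C$-modules (equivalently of the simple $\tilde A$-modules) as $\xi$ runs over a set $\cP^\circ$ of $\auto$-orbit representatives in $\cP^+$.

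For the ``in particular'' clause I would invoke Lemma~\ref{lem:Morita}: the action is Morita exactly when $A_\rho A_{\rho^{-1}}=A^C$ for every $\rho\in\mu_p(\K)$ (the case $\rho=1$ being automatic). Filter $A$ by its $\auto$-stable cellular order ideals; since the members of an $\auto$-orbit in $\cP$ are pairwise incomparable, the successive subquotients of $\operatorname{gr}A$ are indexed by $\auto$-orbits $\mathcal O$, and the block $B_{\mathcal O}$ attached to the orbit of $\xi$ is $\bigoplus_{k=0}^{p_{\xi}-1}J_k$, where $J_k$ has basis $\{\cellb^{\auto^k\xi}_{\sS,\sT}\}$ with multiplication controlled by the cell form $\langle-,-\rangle_{\auto^k\xi}$, and $\auto$ acts by cyclically shifting $J_k\to J_{k+1}$ together with the permutation of $M(\auto^k\xi)$ it induces. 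Because $\auto$ preserves this filtration, $A_\rho$ and $A^C$ pass to the corresponding eigen/invariant subspaces of $\operatorname{gr}A$; the inclusion $A_\rho A_{\rho^{-1}}\subseteq A^C$ is immediate, and a Nakayama-type induction over the finite filtration reduces the reverse inclusion in $A$ to the statement that, in each block, $(B_{\mathcal O})_\rho(B_{\mathcal O})_{\rho^{-1}}=(B_{\mathcal O})^{C}$ for all $\rho$. When the cell form at $\xi$ is nondegenerate, $J_0\cong\End(\K^{M(\xi)})$ (via $\cellb^{\xi}_{\sS,\sT}\mapsto c_{\sS}\langle c_{\sT},-\rangle$), $\auto^{p_{\xi}}$ acts by conjugation through the permutation of $M(\xi)$, and $(B_{\mathcal O})_\rho$ is identified with the $\rho^{p_{\xi}}$-eigenspace of this action; decomposing $\K^{M(\xi)}=\bigoplus_\chi V_\chi$ into $C_\xi$-isotypic pieces, the $\zeta$-eigenspace is $\bigoplus_\chi\Hom(V_\chi,V_{\chi\cdot\psi_\zeta})$, and a one-line check shows the product equality holds for all $\rho$ iff the set of characters occurring in $\K^{M(\xi)}$ is stable under $\widehat{C_\xi}$, i.e.\ iff every character of $C_\xi$ occurs, i.e.\ iff $C_\xi$ has a free orbit on $M(\xi)$. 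Since $\auto^j$ acts trivially on $\bigsqcup_{k=1}^{p}M(\auto^k\xi)$ precisely when $p_{\xi}\mid j$ and $\auto^j$ fixes $M(\xi)$ pointwise, this condition, imposed on all orbits, is the asserted faithfulness of $\auto$ on the sets $\bigsqcup_{k=1}^{p}M(\auto^k\xi)$ — at least once one knows (as holds in the intended applications, where $M(\xi)$ is a single $C_\xi$-orbit) that ``free orbit exists'' is the same as ``$C_\xi$ acts faithfully''.

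The step I expect to be the main obstacle is the bookkeeping hidden at the end of that reduction: the behaviour of the blocks $B_{\mathcal O}$ at cells $\xi\notin\cP^+$, where $\langle-,-\rangle_\xi=0$ makes $B_{\mathcal O}$ square-zero, so $A_\rho A_{\rho^{-1}}$ cannot reach the nonzero $\auto$-invariants there for $\rho\neq1$ and Morita-ness fails irrespective of the combinatorics — so one genuinely needs $\cP^+=\cP$ in force — together with nailing down the precise hypothesis on the $C_\xi$-set structure of $M(\xi)$ under which the computed condition (``every irreducible character of $C_\xi$ occurs in $\K^{M(\xi)}$'') collapses to the faithfulness statement as stated. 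With $\cP^+=\cP$ and $M(\xi)$ a transitive $C_\xi$-set, every block is an honest matrix algebra and the argument above goes through verbatim.
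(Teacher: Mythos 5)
This lemma sits inside an \verb|\excise{...}| block and is stated in the source without any proof, so there is nothing of the authors' to compare against; I am therefore assessing your argument on its own terms. For the first assertion your route is essentially forced, but note one circularity: Theorem \ref{thm:clifford} \emph{assumes} the action is Morita, while the second half of the present lemma is supposed to characterize when that holds. You should instead quote the underlying Clifford theory (\cite[Th.\ A.6 \& 13]{RamRam}) directly, which gives ``each $Q_\xi$-eigenspace of $L(\xi)$ is simple or zero, and every simple $A^C$-module arises this way'' with no Morita hypothesis.

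For the ``in particular'' clause there are two genuine problems, one of which you created and one of which you correctly diagnosed. First, your reduction to $\operatorname{gr}A$ is valid in only one direction: graded surjectivity of $(\operatorname{gr}A)_\rho(\operatorname{gr}A)_{\rho^{-1}}\to(\operatorname{gr}A)^C$ implies $A_\rho A_{\rho^{-1}}=A^C$ by your induction, but the converse fails because products of lifts from a higher block have components in lower filtration steps. Consequently your claim that a cell with vanishing form kills the Morita property ``irrespective of the combinatorics'' is false: $A=M_2(\K[\epsilon]/\epsilon^2)$ with $\auto$ conjugation by the transposition matrix is cellular with a square-zero top cell, yet $A_{-1}A_{-1}\ni K^2=1,\ KL=J,\ K\epsilon K=\epsilon,\ K\epsilon L=\epsilon J$ (for $K=E_{11}-E_{22}$, $L=E_{12}-E_{21}$) spans $A^C$. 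So it is your method, not the lemma, that needs $\cP^+=\cP$. Second, the faithful-versus-free issue you flag at the end is decisive, and it shows the lemma is false as literally stated. The correct criterion --- obtained by noting that $\pi_1$ generates $\tilde A$ as an ideal iff $\pi_1S\neq0$ for every simple $\tilde A$-module $S$, and that $\pi_1\cdot\Ind(L(\xi)^{(\nu)})$ is the $\nu$-isotypic part of $Q_\xi$ on $L(\xi)$ --- is that \emph{every} character of $C_\xi=\langle\auto^{p_\xi}\rangle$ occurs in $L(\xi)$, for every $\xi\in\cP^+$. Faithfulness of $C_\xi$ on $M(\xi)$ does not imply this: take $A=M_5(\K)$, $p=6$, $\auto$ conjugation by the permutation matrix of $(12)(345)$. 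This is one nondegenerate cell with a faithful action on $M(\xi)=\{1,\dots,5\}$, but the primitive sixth roots of unity are not eigenvalues of $Q_\xi$ on $L(\xi)=\K^5$, and indeed $A_\rho A_{\rho^{-1}}\subsetneq A^C$ for $\rho$ primitive. So the hypothesis must be strengthened from ``faithful'' to the existence of a free $C_\xi$-orbit (as in the transitive situations arising in the paper, and as in the free-orbit argument actually used in Lemma \ref{lem:KLR-Morita}), the quantifier should run over $\cP^+$ rather than $\cP$, and one still needs an argument that the relevant eigenspaces survive the passage from $\Delta_\xi$ to its quotient $L(\xi)$.
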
}

\subsection{Koszul algebras with a cyclic group action}
Now, specialize to the case where $A$ is a positively graded algebra with $A_0$ semi-simple; we call such an algebra {\bf mixed}.  As usual, we call a complex of graded projectives $\cdots \to P_k\to P_{k-1}\to \cdots \to P_0$ over $A$ {\bf linear} if $P_k$ is generated in degree $k$.  Note that linear projective complexes form an graded abelian category (with grading shift given by simultaneous grading and homological shift to maintain linearity).
\begin{definition}
We call $A$ Koszul if $A_0$ possesses a linear projective resolution, and the Koszul dual $A^!$ of $A$ is the endomorphism algebra of this resolution, that is $\Ext_A(A_0,A_0)$.
\end{definition}
Assume that we have a graded automorphism $\alpha$ of $A$, and consider the graded algebras $A^C$ and $\tilde{A}$ (where we have $C$ in degree 0).  Note that $\tilde{A}_0$ is semi-simple by Lemma \ref{lem:semi-simple}, so  $\tilde{A}$ satisfies the same conditions.  
\begin{theorem}\label{thm:smash-Koszul}
If $A$ is Koszul, then $\tilde{A}$ is Koszul as well, with $\tilde{A}^!\cong A^!\# C$.
\end{theorem}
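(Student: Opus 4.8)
The plan is to obtain a linear projective resolution of $\tilde A_0$ by inducing one up from $A$, and then to read off the Koszul dual $\tilde A^{!}=\Ext_{\tilde A}(\tilde A_0,\tilde A_0)$ from that resolution using the induction--restriction adjunction.

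First, since $\tilde A\cong A\otimes_{\K}\K[C]$ is free of rank $p$ as a left and as a right $A$-module, the functor $\Ind_A^{\tilde A}=\tilde A\otimes_A(-)$ is exact and carries a graded projective generated in degree $k$ to a graded projective generated in degree $k$. Because $\alpha$ is a graded automorphism it preserves $A_{>0}$, so $\tilde A\cdot A_{>0}=\tilde A_{>0}$ and therefore $\tilde A\otimes_A A_0=\tilde A/\tilde A_{>0}=:\tilde A_0$. Hence, starting from a linear projective resolution $P_\bullet\to A_0$ over $A$ (which exists as $A$ is Koszul), the complex $\tilde A\otimes_A P_\bullet\to\tilde A_0$ is a linear projective resolution over $\tilde A$; since $\tilde A_0$ is semisimple by Lemma \ref{lem:semi-simple}, this already proves that $\tilde A$ is Koszul, and exhibits $\tilde A^{!}=\Ext_{\tilde A}(\tilde A_0,\tilde A_0)$ via this resolution.

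For the dual, write $\tilde A_0=\Ind_A^{\tilde A}A_0$ and combine the adjunction $\Hom_{\tilde A}(\Ind_A^{\tilde A}P_\bullet,-)\cong\Hom_A(P_\bullet,\Res_A^{\tilde A}(-))$ with the standard decomposition $\Res_A^{\tilde A}\Ind_A^{\tilde A}M\cong\bigoplus_{g\in C}{}^{g}M$ and the identification ${}^{g}A_0\cong A_0$ induced by the automorphism of $A_0=A/A_{>0}$ coming from $\alpha$. This produces a graded vector space isomorphism $\tilde A^{!}\cong\bigoplus_{g\in C}\Ext_A(A_0,A_0)=\bigoplus_{g\in C}A^{!}$, so in particular $\dim\tilde A^{!}=\dim(A^{!}\#C)$ in each degree. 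To promote this to an isomorphism of algebras I would exhibit the obvious structure maps: functoriality of $\Ind_A^{\tilde A}$ gives an injective algebra homomorphism $A^{!}=\Ext_A(A_0,A_0)\hookrightarrow\Ext_{\tilde A}(\tilde A_0,\tilde A_0)=\tilde A^{!}$ with image the $g=1$ summand above, and the elements of $C\subseteq\tilde A_0$ acting on $\tilde A_0$ by right multiplication give $\tilde A$-module endomorphisms of $\tilde A_0$, i.e.\ a copy of $C$ sitting in degree $0$ of $\tilde A^{!}$. One then checks that these two maps satisfy the defining relations of $A^{!}\#C$ and that the induced map $A^{!}\#C\to\tilde A^{!}$ is surjective, hence (by the dimension count) an isomorphism.

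The main obstacle is this last step: verifying that the \emph{ring} structure, and not merely the graded vector space structure, is that of $A^{!}\#C$. Concretely one must show that conjugation by $g\in C$ acts on the subalgebra $A^{!}\subseteq\tilde A^{!}$ by precisely the automorphism of $\Ext_A(A_0,A_0)$ induced functorially by $\alpha$, which forces one to unwind the twists hidden in $\Res_A^{\tilde A}\Ind_A^{\tilde A}$ (equivalently, to track the non-canonical homotopy equivalences ${}^{g}P_\bullet\simeq P_\bullet$ and see that the ambiguity is absorbed by the $C$-action). A cleaner route, sidestepping homotopies, is to argue through quadratic duals: Koszulity of $A$ gives that $A$ is quadratic with $A^{!}$ its quadratic dual $T_{A_0}(A_1^{*})/(R^{\perp})$, and a direct base-change computation shows $(A\#C)^{(!)}\cong A^{!}\#C$ --- replacing $A_0$ by $A_0\#C$ leaves the generating bimodule $A_1$ and the relation space $R$ intact up to the evident base change, and forming orthogonal complements commutes with it. Since $\tilde A=A\#C$ is itself Koszul by the first part of the argument, its Koszul dual coincides with its quadratic dual, yielding $\tilde A^{!}=A^{!}\#C$. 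In either approach the essential bookkeeping is that $\alpha$ descends compatibly to $A_0$, to $A_1$, and to the quadratic relations, and hence acts on $A^{!}$.
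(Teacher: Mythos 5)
Your proposal is correct and follows essentially the same route as the paper: induce the linear projective resolution of $A_0$ up to $\tilde A$, then use the adjunction together with $\Res^{\tilde A}_A\Ind_A^{\tilde A}\cong\bigoplus_{g\in C}{}^{g}(-)$ to identify $\Ext_{\tilde A}(\tilde A_0,\tilde A_0)$ with $A^!\#C$. The only difference is that you make explicit (and, via the quadratic-dual detour, resolve) the verification that the multiplicative structure is really that of the smash product, a point the paper's proof passes over rather quickly.
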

\begin{proof}
The functor $\Ind_{A}^{\tilde{A}}$ is exact and sends projectives to projectives.  Thus, it induces a functor on linear complexes of projectives that sends a resolution of  $A_0$ to one of $\tilde{A}_0$.  This shows that $\tilde{A}_0$ has a linear resolution, as desired.

Furthermore, $C$ acts on $A_0$ as a left module by right multiplication.  This gives an action of $A^!\# C$ on the linear resolution.  Furthermore, the fact that \[\Hom_{\tilde{A}}(\Ind_{A}^{\tilde{A}}(P^\bullet),\Ind_{A}^{\tilde{A}}(P^\bullet))\cong \Hom_{A}(P^\bullet,\Res^{\tilde{A}}_{A}\Ind_{A}^{\tilde{A}}(P^\bullet))\cong \Hom_{A}(P^\bullet,(P^\bullet)^{\oplus p})\] shows that this is precisely the endomorphism ring.
\end{proof}

We wish to strengthen this result a bit to the case of a quasi-hereditary algebra.  Assume that $A$ is quasi-hereditary with a partial order $\leq$ on $\simples$ which is invariant under the action of $\auto$.  
Recall that $A$ is called {\bf standard Koszul} if every standard module $\Delta_x$ has a linear projective resolution.
\begin{theorem}\label{thm:standard-Koszul}
The algebras $A^C$ and $\tilde{A}$ are quasi-hereditary with the induced partial order on $\tilde{\simples}$.  If $A$ is standard Koszul, so are $A^C$ and $\tilde{A}$.
\end{theorem}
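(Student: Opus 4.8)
The plan is to establish both assertions for $\tilde{A}$ and then transport them to $A^C$ through the (grading-preserving) Morita equivalence of Lemma~\ref{lem:Morita}, so throughout we keep the assumption of Theorem~\ref{thm:clifford} that the action of $C$ is Morita. The first step is to upgrade the scaffolding from simple to standard modules. Since $\auto$ is a graded automorphism and $\leq$ is $\auto$-invariant, twisting by $\auto$ permutes the standard modules, $\Delta_x^{\auto}\cong\Delta_{\auto x}$ as graded modules; as both sides have one-dimensional degree-zero component $L_{\auto x}$, this isomorphism is unique up to a scalar, and I normalize $q^{\Delta}_x\colon\Delta_x^{\auto}\to\Delta_{\auto x}$ so that it descends to the scaffolding map $q_x$ on $L_x$. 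The cocycle relation $q^{\Delta}_{\auto^{p-1}x}\cdots q^{\Delta}_x=1$ is then automatic: the composite is a degree-zero scalar operator that restricts to the identity on the nonzero quotient $L_x$. Exactly as for simple and projective modules, this makes $\Ind_A^{\tilde{A}}\Delta_x$ a module with a degree-zero action of the stabilizer $C_x$, and I write $\tilde{\Delta}_{[x],\nu}$ for its $\nu$-isotypic summand, so that $\Ind_A^{\tilde{A}}\Delta_x\cong\bigoplus_\nu\tilde{\Delta}_{[x],\nu}$ with no grading shift.

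To see that $\tilde{A}$ is quasi-hereditary for the induced order, I would check the standard criterion directly. Each projective cover $\tilde{P}_{[x],\nu}$ is a summand of $\Ind_A^{\tilde{A}}P_x$, and since $\Ind_A^{\tilde{A}}$ is exact, preserves projectivity (as in the proof of Theorem~\ref{thm:smash-Koszul}), and carries the $\Delta$-filtration of $P_x$ to a filtration by the modules $\Ind_A^{\tilde{A}}\Delta_y\cong\bigoplus_\mu\tilde{\Delta}_{[y],\mu}$, it follows that $\tilde{P}_{[x],\nu}$ is $\tilde{\Delta}$-filtered. One then computes $(\Ind_A^{\tilde{A}}P_x:\tilde{\Delta}_{[z],\sigma})=\sum_{y\in[z]}(P_x:\Delta_y)$; using that $(P_x:\Delta_y)\neq0$ forces $y\geq x$ and that distinct members of an $\auto$-orbit are incomparable (so the induced relation on orbits really is a partial order), this sum vanishes unless $[z]\geq[x]$ and equals $1$ when $[z]=[x]$. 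Combined with the surjection $\tilde{P}_{[x],\nu}\twoheadrightarrow\tilde{\Delta}_{[x],\nu}$ and the decomposition $\Ind_A^{\tilde{A}}P_x\cong\bigoplus_\nu\tilde{P}_{[x],\nu}$, this pins down the $\tilde{\Delta}$-filtration of $\tilde{P}_{[x],\nu}$: $\tilde{\Delta}_{[x],\nu}$ on top with multiplicity one, remaining subquotients $\tilde{\Delta}_{[z],\sigma}$ with $[z]>[x]$. Finally $\End_{\tilde{A}}(\Ind_A^{\tilde{A}}\Delta_x)\cong\Hom_A(\Delta_x,\Res_A^{\tilde{A}}\Ind_A^{\tilde{A}}\Delta_x)\cong\K[C_x]$, whose $\nu$-block $\End_{\tilde{A}}(\tilde{\Delta}_{[x],\nu})$ is $\K$, so the $\tilde{\Delta}_{[x],\nu}$ are pairwise non-isomorphic and Schurian. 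Hence $\tilde{A}$ is quasi-hereditary with standard modules the $\tilde{\Delta}_{[x],\nu}$ and the partial order induced from $\leq$ on $\auto$-orbits. (Equivalently one may build an $\auto$-stable heredity chain of $A$, taking at each stage the $\auto$-stable ideal attached to the currently maximal simples, and induce it up, using $\rad(A\# C)=\rad(A)\# C$, which holds by Lemma~\ref{lem:semi-simple}, to verify that $J\# C$ is a heredity ideal of $A\# C$ whenever $J$ is an $\auto$-stable heredity ideal of $A$.) Transporting through Lemma~\ref{lem:Morita} yields that $A^C$ is quasi-hereditary with the induced order on $\tilde{\simples}$.

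Now assume $A$ is standard Koszul, fix $x$, and let $P^\bullet\to\Delta_x$ be a linear projective resolution over $A$. The functor $\Ind_A^{\tilde{A}}=\tilde{A}\otimes_A(-)$ is exact, sends graded projectives to graded projectives, preserves the grading, and sends a module generated in degree $k$ to one generated in degree $k$ (as $\tilde{A}$ is free over $A$ concentrated in degree $0$); hence $\Ind_A^{\tilde{A}}P^\bullet\to\Ind_A^{\tilde{A}}\Delta_x$ is a linear projective resolution over $\tilde{A}$. Linear projective resolutions are minimal, and the minimal resolution of a direct sum is the direct sum of the minimal resolutions, so the complex $\Ind_A^{\tilde{A}}P^\bullet$ splits as a direct sum of linear projective resolutions of the summands $\tilde{\Delta}_{[x],\nu}$ of $\Ind_A^{\tilde{A}}\Delta_x$. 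Since $x$ and $\nu$ are arbitrary, $\tilde{A}$ is standard Koszul, and applying the grading-preserving Morita equivalence of Lemma~\ref{lem:Morita}, which matches projectives with projectives and standard modules with standard modules, $A^C$ is standard Koszul as well.

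I expect the main obstacle to be the middle paragraph: correctly identifying the standard modules of $\tilde{A}$ and the induced partial order, i.e.\ the Clifford-theoretic bookkeeping showing that $\Ind_A^{\tilde{A}}\Delta_x$ decomposes into Schurian summands $\tilde{\Delta}_{[x],\nu}$ and that the projectives carry the expected $\tilde{\Delta}$-filtrations. Once this is in place, the standard Koszul statement is a formal consequence of the exactness and grading-preservation of $\Ind_A^{\tilde{A}}$ together with the minimality of linear resolutions.
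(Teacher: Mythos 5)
Your argument is correct, and for the standard Koszul statement it takes a genuinely different route from the paper. For quasi-heredity, the paper works on the $A^C$ side: it chooses an $\auto$-equivariant heredity filtration of $A$ itself (possible because distinct elements of an orbit are incomparable), writes each layer as $M_x\otimes\Delta_x$ with $M_x$ a $C_x$-representation, and intersects with $A^C$ to exhibit a filtration of $A^C$ by the isotypic components $\Delta_{x,\nu}$; you instead verify the standard criterion directly on $\tilde{A}$ by inducing projectives and counting $\tilde{\Delta}$-multiplicities. These are essentially equivalent pieces of Clifford-theoretic bookkeeping, and your version is the more detailed of the two (the multiplicity computation and the Schurian endomorphism rings are exactly what the paper leaves implicit). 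For standard Koszulity the divergence is real: the paper invokes the \'Agoston--Dlab--Luk\'acs characterization twice, combining Theorem \ref{thm:smash-Koszul} (which identifies $\tilde{A}^!\cong A^!\# C$) with the quasi-heredity of $\tilde{A}$ and $\tilde{A}^!$ to conclude; you induce a linear resolution of $\Delta_x$ up to $\tilde{A}$ and split it using minimality of linear resolutions and Krull--Schmidt for graded projectives over a mixed algebra. Your route is more elementary and self-contained --- it produces explicit linear resolutions of the $\tilde{\Delta}_{[x],\nu}$ and avoids the external theorem --- while the paper's route is shorter on the page and reuses the Koszul duality already established. Both transports to $A^C$ via the graded Morita equivalence are unproblematic. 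The only points worth making fully explicit in your write-up are the two facts you lean on in the splitting step: that a linear complex of projectives over a mixed algebra is automatically minimal (the differential lands in $A_{\geq 1}P_{k-1}\subseteq\rad P_{k-1}$), and that a direct summand of a graded projective generated in degree $k$ is again generated in degree $k$.
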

\begin{proof}
Given any standard module, we clearly have $\Delta_x^\auto\cong \Delta_{\auto\cdot x}$; since $\Delta_x$ has only scalar endomorphisms, there is a unique system of these isomorphisms compatible with the scaffolding on simples.  We will prove that the isotypic components $\Delta_{x,\nu}$ for the action of $C_x$ on $\Delta_x$ are the standard modules for the algebra $A^C$ (and thus their images under Morita equivalence for $\tilde{A}$).  To show this, we need only confirm that the algebra $A^C$ is filtered by these standard modules.  By the quasi-hereditary assumption, the algebra $A$ is filtered with subquotients being sums of standard modules; since no two elements of a single order are comparable, we can choose this filtration to be $\auto$-equivariant.   As a $C$-module, each subquotient is a sum of terms given by a tensor product of  $M_x\otimes \Delta_x$ where a $M_x$ is a  $C_x$-representation. The intersection of this filtration with $A^C$ gives the invariants $(M_x\otimes \Delta_x)^{C_x}$, which is given by the isotypic components of $M_x$ tensored with the dual isotypic component of $\Delta_X$.  Thus, $A^C$ is quasi-hereditary.

Now, assume that $A$ is standard Koszul.  By \cite[Thm. 3]{ADL}, $A^!$ is also standard Koszul.  By the first part of this result, $\tilde{A}$ and $\tilde{A}^!$ are both quasi-hereditary and by Theorem \ref{thm:smash-Koszul}, they are Koszul dual to each other.  Applying \cite[Thm. 3]{ADL} again, the fact that $\tilde{A}^!$ is quasi-hereditary with the reverse order to $\tilde{A}$ implies that $\tilde{A}$ is standard Koszul.
\end{proof}
\excise{
Now, let us consider the effect of adding this cyclic group action to Ext algebras.
 Let $S'_1,S'_2$ be two simple $\tilde{A}$-modules.
Let $S_1$ and $S_2$ be simple $A$-modules such that $S_i'=\Ind^{\tilde{A}}_{A\# C_{i}}S_i$, with scaffolding that matches the action of $C_i$ on $S_i$.   That is, $(S'_i)^C$ is the invariants of $Q_i$ acting on $S_i$.

Let $p_1=p_{S_1},p_2=p_{S_2}$ and let $p'=\operatorname{lcm}(p_1,p_2)$ and $C_{p'}=\langle \auto^{p'}\rangle$. Note that this is the number of distinct pairs of modules of the form $(S_1^{\auto^k},S_2^{\auto^k})$ up to isomorphism.  Then we have order $p/p'$ morphisms of $S_1^{\auto^{p'}}\to S_1$ and $S_2^{\auto^{p'}}\to S_2$ respectively given by $Q_{S_1}^{p'/p_1}$ and $Q_{S_2}^{p'/p_2}$.  These induces an action of $C_{p'}$ on $\Ext^n(S_1,S_2)$ via the identification with $\Ext^n(S_1^{\auto^{p'}},S_2^{\auto^{p'}})$ and then $f\mapsto Q_{S_2}^{p'/p_2}fQ_{S_1}^{-p'/p_1}$.  

\begin{theorem}
\[\Ext^n_{\tilde{A}}(S'_1,S'_2)\cong \bigoplus_{k=0}^{\operatorname{gcm}(p_1,p_2)-1}\Ext^n_{A}(S_1,S_2^{\auto^k})^{C_{p'}}\]
\end{theorem}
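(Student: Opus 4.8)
The plan is to rewrite $\Ext_{\tilde{A}}$ in terms of $\Ext_A$ equipped with a $C$-action and then to evaluate the $C$-invariants as a sum over orbits of a permutation module. The first step is the natural isomorphism
\[\Ext^n_{\tilde{A}}(M,N)\;\cong\;\Ext^n_A(\Res^{\tilde{A}}_A M,\Res^{\tilde{A}}_A N)^C\]
for all $\tilde{A}$-modules $M,N$, where $C=\langle\auto\rangle$ acts on the right-hand side through the twist autoequivalence $N'\mapsto (N')^{\auto}$ of $A\mmod$ together with the $A$-module isomorphisms $M\cong M^{\auto}$, $N\cong N^{\auto}$ afforded by $\auto$ (explicitly $m\mapsto\auto^{-1}m$). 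This holds because $\Res^{\tilde{A}}_A$ is exact and right adjoint to the exact functor $\Ind^{\tilde{A}}_A=\tilde{A}\otimes_A-$ (exact since $\tilde{A}$ is free over $A$), so it carries injective $\tilde{A}$-modules to injective $A$-modules; computing $\Ext^n_A(\Res M,\Res N)$ from an injective resolution $N\hookrightarrow I^\bullet$ over $\tilde{A}$, the complex $\Hom_A(\Res M,\Res I^\bullet)$ carries the stated $C$-action, its $C$-invariant subcomplex is $\Hom_{\tilde{A}}(M,I^\bullet)$, and since $p=|C|$ is invertible in $\K$ the functor $(-)^C$ is exact and commutes with passing to cohomology.

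Next I would feed in the description of $S'_1,S'_2$ recalled before the statement. Since $S'_i=\Ind^{\tilde{A}}_{A\#C_i}S_i$ and $\tilde{A}$ is free of rank $p_i$ as a right $A\#C_i$-module on the cosets $\auto^0,\dots,\auto^{p_i-1}$, a direct computation gives $\Res^{\tilde{A}}_A S'_i\cong\bigoplus_{j=0}^{p_i-1}S_i^{\auto^j}$, the sum of the $A$-modules in the $\auto$-orbit of $S_i$, with the generator of $C$ cyclically permuting the $p_i$ summands and acting on the wraparound through $Q_{S_i}$. Hence
\[\Ext^n_A(\Res S'_1,\Res S'_2)\;\cong\;\bigoplus_{j=0}^{p_1-1}\bigoplus_{l=0}^{p_2-1}\Ext^n_A\!\bigl(S_1^{\auto^j},S_2^{\auto^l}\bigr),\]
and, unwinding the $C$-action from the first step, $\auto$ carries the $(j,l)$-summand isomorphically onto the $(j+1,l+1)$-summand by the twist isomorphism, post- and pre-composed with the appropriate power of $Q_{S_2}$, resp.\ $Q_{S_1}$, whenever an index wraps.

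It remains to take $C$-invariants of this permutation module. Realizing $C\cong\Z/p$ as acting on the index set $\Z/p_1\times\Z/p_2$ by $+(1,1)$: every orbit has size $\operatorname{lcm}(p_1,p_2)=p'$, so there are $p_1p_2/p'=\gcd(p_1,p_2)$ orbits, represented by $(0,0),(0,1),\dots,(0,\gcd(p_1,p_2)-1)$, and the stabilizer of any index is $\{m:p_1\mid m\text{ and }p_2\mid m\}=\langle\auto^{p'}\rangle=C_{p'}$. Thus, as a $C$-module, $\bigoplus_{j,l}\Ext^n_A(S_1^{\auto^j},S_2^{\auto^l})\cong\bigoplus_{k=0}^{\gcd(p_1,p_2)-1}\Ind^{C}_{C_{p'}}\Ext^n_A(S_1,S_2^{\auto^k})$, whose $C$-invariants are $\bigoplus_{k=0}^{\gcd(p_1,p_2)-1}\Ext^n_A(S_1,S_2^{\auto^k})^{C_{p'}}$; here $C_{p'}$ acts on the $k$-th summand as $p'/p_1$ wraparounds on the source composed with $p'/p_2$ on the target, i.e.\ by $f\mapsto Q_{S_2}^{p'/p_2}fQ_{S_1}^{-p'/p_1}$, exactly the action fixed before the statement. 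Combining with the first step gives the claimed isomorphism.

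The expected main obstacle is bookkeeping rather than anything conceptual: one must pin down the $C$-equivariant structure on $\Res^{\tilde{A}}_A S'_i$, in particular the precise wraparound map (and, if a character twist $\nu_i$ is allowed in the definition of $S'_i$, the extra scalar $\nu_i(\auto^{p'})^{\pm 1}$ it contributes, which simply twists the $C_{p'}$-action appearing on the right), and then check that the stabilizer action produced by the orbit decomposition matches $f\mapsto Q_{S_2}^{p'/p_2}fQ_{S_1}^{-p'/p_1}$ on the nose. Verifying that the isomorphism of the first paragraph is natural enough to carry the permutation action through the decomposition of $\Res S'_i$ into its simple $A$-summands is the only point that really requires care.
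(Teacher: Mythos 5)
Your argument is correct, and it is organized differently from the paper's (which is in any case only a sketch, ending with ``the claimed answer embeds in the obvious way''). The paper starts from Shapiro's lemma applied to the \emph{full} inductions: $\Ext^n_{\tilde{A}}(\Ind^{\tilde{A}}_{A}S_1,\Ind^{\tilde{A}}_{A}S_2)\cong \Ext^n_{A}(S_1,L_2)$ with $L_2=\oplus_{k=0}^{p-1}S_2^{\auto^k}$ (so $p$ summands, each $A$-isomorphism class appearing $p/p_2$ times), and then must cut this down to the single pair $(S'_1,S'_2)$ by taking invariants for the product action of the two stabilizers $C_{1}\times C_{2}$, since $\Ind^{\tilde{A}}_{A}S_i$ splits as the sum of all character twists $\Ind^{\tilde A}_{A\# C_i}S_i^{(\nu)}$. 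You instead apply the adjunction symmetrically --- $\Ext^n_{\tilde{A}}(M,N)\cong\Ext^n_{A}(\Res M,\Res N)^{C}$, valid because $p$ is invertible and $\Res$ preserves injectives --- to $S'_1$ and $S'_2$ themselves, so that each restriction contributes only the $p_i$ modules in the $\auto$-orbit, and a single diagonal $C$-action remains; the orbit count on $\Z/p_1\times\Z/p_2$ then produces the $\gcd(p_1,p_2)$ summands and the stabilizer $C_{p'}$ directly. Your route buys a complete and self-contained computation in which the $C_{p'}$-action $f\mapsto Q_{S_2}^{p'/p_2}fQ_{S_1}^{-p'/p_1}$ falls out of the orbit-stabilizer bookkeeping rather than having to be matched against a separately defined embedding; the paper's route is shorter to state but leaves precisely that identification (and the removal of the unwanted character-twist summands) unverified. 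The one caveat you already flag is the right one: if $S'_i$ is induced from a nontrivially twisted module $S_i^{(\nu_i)}$, the wraparound map on $\Res S'_i$ acquires the scalar $\nu_i(\auto^{p_i})^{\pm1}$, and the $C_{p'}$-action on the right-hand side must be twisted accordingly for the statement to hold as written.
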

\begin{proof}
First, note that Shapiro's Lemma shows that \[\Ext^n_{\tilde{A}}(\Ind^{\tilde{A}}_{A}S_1,\Ind^{\tilde{A}}_{A}S_2)\cong  \Ext^n_{A}(S_1,L_2)\]
where $L_2=\oplus_{k=0}^{p-1}S_2^{\auto^k}$.  

The left-hand side has an action of $C_1=C_{S_1}$ and $C_2=S_{S_2}$ induced by their action on the inductions.  If we let $\sigma \colon L_2^{\alpha}\to L_2$ be the shift map $(s_1,\dots, s_p) \mapsto (s_2,\dots, s_p,s_1)$, then the  action of $(\auto^{kp_1},\auto^{mp_2})$ is given by 
\[(\auto^{kp_1},\auto^{mp_2})\cdot \eta=Q_{2}^{-m}\sigma^{mp_2-kp_1} \eta Q_{1}^{k} \]
We're interested in the invariants of this action.  The claimed answer embeds in the obvious way.
\end{proof}
}

\section{Cherednik algebras}\label{sec:cherednik}
\subsection{The standard presentation}\label{ss:standard}
There is an infinite family of complex reflection groups $G(\ell,p,n)$ labeled by positive integers $\ell,p$, and $n$ with the requirement that $p$ divides $\ell$. The group $G(\ell,1,n)$ consists of $n \times n$ matrices with $\ell$th roots of unity as the nonzero entries where there is exactly one nonzero entry in each row and column. 
\begin{definition}
    The group $G(\ell,p,n)$ is the subgroup of $G(\ell,1,n)$ containing matrices where the product of the nonzero entries is an $\ell/p$th root of unity. 
\end{definition}

Let $\K$ be a field of characteristic coprime to $\ell$ and $n$ and let $\zetal$ be $\ell$th root of unity.  The most common choice of interest to us is $\K=\C$ and $\zetal = e^{2\pi i/ \ell}$.  Consider the diagonal matrix $t_i = \diag(1,\dots,\zetal,\dots,1)$ with $\zetal$ as the $i$th entry along the diagonal. Let $(i,j)$ be the permutation matrix interchanging $i$ and $j$. The reflections in $G(\ell, p, n)$ are
\begin{align*}
r_{ij}^{(k)}&=t_i^k t_j^{-k} (i,j)    &&\text{for $1 \leq i \leq j \leq n$ and $0 \leq k \leq \ell-1$}\\
t_i^{kp}& &&\text{for $1 \leq i \leq n$ and $0 \leq k \leq \ell/p - 1$.}
\end{align*}
In the case $p=1$, the group $G(\ell, 1, n)$ is generated by $(i,i+1)$ and $t_i$. For general $p$, the group $G(\ell,p,n)$ is generated by the elements $t_i^p, t_i t_{i+1}^{-1}$, and $(i,i+1)$.

To define the Cherednik algebra, we first give a standard definition, following \cite{Gri}. Let $\mathfrak{h}=\K^n$ with the usual action of the matrix group $G(\ell,p,n)$ and $\mathfrak{h}^*$ the dual space, which by the usual inner product, we can think of as $\K^n$ with the transpose inverse action of this matrix group. 
We let $V = \mathfrak{h}^* \oplus \mathfrak{h}$. This is a symplectic representation of $G(\ell,p,n)$, with each reflection acting by a symplectic reflection, that is, a symplectomorphism $g\colon V \to V$ such that $\ker(g-1)$ is a codimension 2 subspace. 
Let $T$ be the set of reflections in $G(\ell,p,n)$. For $s \in T$, define $\alpha_s \in \mathfrak{h}^*$ and $\alpha_s^\vee \in \mathfrak{h}$ by
\begin{align}
	sy = y - \langle y, \alpha_s^\vee \rangle \alpha_s.
\end{align}
Fixing a basis $x_1,\dots,x_n$ for $\mathfrak{h}$ and the dual basis $y_1,\dots,y_n$ for $\mathfrak{h}^*$, we have
\begin{equation}
    \alpha_s = \zetal^{-\ell-k} x_i \qquad 
	\alpha_s^\vee = (\zetal^{k+1}-\zetal) y_i
\end{equation}
for $s = t_i^k$, and
\begin{equation}
    \alpha_s = y_i - \zetal^k y_j \qquad \alpha_s^\vee = x_i - \zetal^{-k} x_j
\end{equation}
for $s = t_i^k t_j^{-k} (i,j)$. These vectors also satisfy
\begin{align}
	s^{-1} x = x - \langle \alpha_s, x \rangle \alpha_s^\vee.
\end{align}
Let $TV = \bigoplus_{k=0}^\infty V^{\otimes k}$ be the tensor algebra over $V$. Let $W \subset GL(V)$ be a finite subgroup. For $f \in TV$, denote its image under $w \in W$ by $f^w$. Let $\Bbbk W$ be the group algebra of $W$ over a field $\Bbbk$. The semidirect product $TV \rtimes W$ is $TV \otimes \Bbbk W$ with multiplication 
\begin{align}
	(f_1 w_1)(f_2 w_2) = f_1 f_2^{w_1} w_1 w_2
\end{align}
for $f_1, f_2 \in TV$ and $w_1, w_2 \in W$. 
\begin{definition}
    The rational Cherednik algebra corresponding to a complex reflection group $W$ is the quotient of $TV \rtimes W$ by the relations
\begin{equation}
    [x, x'] = 0\qquad 
	 [y, y'] = 0 
\end{equation}	 
\begin{equation}
     [x, y] = \hbar \langle y, x \rangle - \sum_{s \in T} c_s \langle \alpha_s, x \rangle \langle y, \alpha_s^\vee \rangle s
\end{equation}
for $x, x' \in \mathfrak{h}$, $y, y' \in \mathfrak{h}^*$, and fixed parameters $\hbar$ and $c_s$ satisfying $c_{wsw^{-1}} = c_s$ for all $w \in W$.

We let $\mathsf{H}_{p}$ denote the Cherednik algebra for $G(\ell,p,n)$, with $\ell,n$ left implicit.
\end{definition}
 The restriction on $c_s$ associates a variable to each conjugacy class of $W$. When $n\geq 3$, the reflections of the form $s=r_{ij}^{(k)}$ are a single conjugacy class, so we give all of them the parameter $c_s = k$. The powers $t_i^{kp}$ for $k=1,\dots,\ell/p-1$ are representatives of all other conjugacy classes, and let $c_k$ be the parameter for $t_i^k$. Note that for $G(\ell,p,n)$, we have $c_k = 0$ unless $p$ divides $k$. The parameters $c_k$ will ultimately correspond to the masses of the hypermultiplet scalars.

In the basis we chose for $\mathfrak{h}$ and $\mathfrak{h}^*$, a reflection $t_i$ acts by
\begin{align}
	t_i x_j &= \zetal^{\delta_{i,j}} x_j t_i \label{eqn:refln1}\\
	t_i y_j &= \zetal^{-\delta_{i,j}} y_j t_i. \label{eqn:refln2}
\end{align}
Expanding the commutation relations in this basis gives
\begin{align}
	&[x_i, y_i] = \hbar + k \sum_{i \neq j} \sum_{k=0}^{\ell-1}  t_i^k t_j^{-k} s_{ij} + \sum_{k=1}^{\ell-1} c_k (1-\zetal^{-k}) t_i^k \\
	&[x_i, y_j] = -k \sum_{k=0}^{\ell-1}  t_i^k t_j^{-k} s_{ij} \qquad \mathrm{for} \ i \neq j. \label{eqn:com2}
\end{align}
Following \cite{Webalt}, define polynomials
\begin{equation}
	p(u) = \sum_{k=1}^{\ell-1} c_k u^k = \sum_{k=1}^{\ell-1} \sum_{r=0}^{\ell-1} \zetal^{-rk} h_r u^k, 
\end{equation}
for the fixed variables $h_0,\dots,h_{\ell-1}$, which are the Fourier transforms of the variables $c_1,\dots,c_{\ell-1}$.  The condition $c_k=0$ for $k$ not divisible by $p$ implies that $p(u)=q(u^p)$ for $q(v)=\sum_{m=1}^{\ell/p-1}c_{mp}v^m$.   Explicitly, we have
\begin{equation}
	h_r = p(\zetal^r)=\sum_{s=1}^{\ell-1} c_s \zetal^{rs}.
\end{equation}
Since $c_k = 0$ if $p$ does not divide $k$, the exchange of translation and multiplication under Fourier tranform implies that 
$h_r = h_{r+\ell/p}$.
Using these polynomials, we can simplify the commutation relation
\begin{align} \label{eqn:com1}
	[x_i, y_i] = \hbar + k \sum_{i \neq j} \sum_{k=0}^{\ell-1}  t_i^k t_j^{-k} s_{ij} + p(t_i) - p(\zetal^{-1} t_i).
\end{align}
We can then regard the Cherednik algebra as generated by the variables $t_i$,$x_i$, and $y_i$ and obeying the relations (\ref{eqn:refln1}), (\ref{eqn:refln2}), (\ref{eqn:com2}), and (\ref{eqn:com1}). 

 We also define the variables $s_m = h_m + m\hbar$, which are only independent for $0 \leq m \leq \ell/p - 1$. They obey $s_m + \ell \hbar/p = s_{m + \ell/p}$.  
\begin{definition}\label{def:p-cyclic}
    If the parameters of our Cherednik algebra satisfy one of the equivalent conditions:
    \begin{enumerate}
        \item $c_k=0$ if $p\nmid k$,
        \item $h_r = h_{r+\ell/p}$ for all $r$,
        \item $s_m + \ell \hbar/p = s_{m + \ell/p}$ for all $m$,
    \end{enumerate}
    then we say our parameters are {\bf $p$-cyclic}.
\end{definition}
As noted in \cite[(4.14)]{Gri}, if parameters of the Cherednik algebra $\mathsf{H}_1$ are $p$-cyclic, then we can realize $\mathsf{H}_{p}$ as a subalgebra in $\mathsf{H}_{1}$, and in fact as the fixed points of an automorphism $\auto$ of order $p$  defined by 
\begin{equation}
    \auto(x_i)=x_i\qquad \auto(y_i)=y_i \qquad \auto(r_{ij}^{k})=r_{ij}^{k}\qquad \auto(t_i)=\zetap t_i
\end{equation}
where $\zetap=\zetal^{\ell/p}.$  Let $C_p$ be the cyclic group of automorphisms generated by $\auto$.
\begin{lemma}
The action of $\auto$ on $\mathsf{H}_{1}$ is Morita. Thus, the algebras $\mathsf{H}_{p}$ and $\tilde{\mathsf{H}}_{1}=\mathsf{H}_{1}\# C_p$ are Morita equivalent.
\end{lemma}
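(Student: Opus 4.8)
The plan is to verify the defining condition of a Morita action from Definition 2.2: for every $\rho\in\mu_p(\K)$ we need $(\mathsf{H}_1)_\rho(\mathsf{H}_1)_{\rho^{-1}}=\mathsf{H}_1^{C_p}=\mathsf{H}_p$, where $(\mathsf{H}_1)_\rho$ is the $\rho$-eigenspace of $\auto$. Once this is established, the second sentence is immediate from Lemma 2.3. The key observation is that $\auto$ acts diagonally on the PBW-type basis of $\mathsf{H}_1$ built from monomials in the $x_i$, the $y_i$, and group elements $w\in G(\ell,1,n)$: since $\auto$ fixes each $x_i$, each $y_i$, and each $r_{ij}^{(k)}$, and scales $t_i$ by $\zetap=\zetal^{\ell/p}$, a monomial $g\in G(\ell,1,n)$ written in the generators is an eigenvector, with eigenvalue $\zetap^{\,a(g)}$ where $a(g)$ records the total power of the $t_i$'s appearing in $g$ (equivalently, $g$ lies in the coset of $G(\ell,p,n)$ determined by $a(g)\bmod p$). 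Hence $\mathsf{H}_1$ is graded by $\Z/p$ via this coset grading, with $(\mathsf{H}_1)_\rho$ the piece of degree $a$ when $\rho=\zetap^{-a}$, and $\mathsf{H}_p=(\mathsf{H}_1)_1$ is the degree-$0$ part.

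First I would make this grading precise: the group $G(\ell,1,n)$ surjects onto $\Z/p$ by sending a monomial matrix to the product of its nonzero entries, read in $\mu_\ell(\K)/\mu_{\ell/p}(\K)\cong\mu_p(\K)$; the kernel is exactly $G(\ell,p,n)$. Extending by putting $x_i,y_i$ in degree $0$ gives a $\Z/p$-grading on $TV\rtimes G(\ell,1,n)$, and one checks the Cherednik relations are homogeneous — the only nontrivial point is the commutator $[x_i,y_i]$, where $\sum_k t_i^kt_j^{-k}s_{ij}$ is a sum of degree-$0$ elements (each factor $t_i^kt_j^{-k}s_{ij}=r_{ij}^{(k)}$ has trivial determinant-type invariant), and $p(t_i)-p(\zetal^{-1}t_i)=\sum_k c_k(1-\zetal^{-k})t_i^k$ involves only terms $t_i^k$ with $p\mid k$ by the $p$-cyclic hypothesis, which are again degree $0$. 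So the grading descends to $\mathsf{H}_1$ and agrees with the $\auto$-isotypic decomposition.

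Next I would produce, for each $a\in\Z/p$, an explicit invertible element of degree $a$. The element $z:=t_1t_2\cdots t_n$ (or even just $t_1$) has degree $1$ and is a unit in $\mathsf{H}_1$ with $z^{-1}$ of degree $-1$ — indeed $t_1^{-1}$ is already in the algebra since $t_1$ has finite order. Then for any $\rho=\zetap^{-a}$ we have $z^{a}\in(\mathsf{H}_1)_\rho$ and $z^{-a}\in(\mathsf{H}_1)_{\rho^{-1}}$, and $z^a\cdot z^{-a}=1$. More generally, given any homogeneous element $u$ of degree $-a$ we can write $u=(uz^{a})z^{-a}$ with $uz^a$ of degree $0$, i.e. in $\mathsf{H}_p$; thus $(\mathsf{H}_1)_{\rho^{-1}}=\mathsf{H}_p\cdot z^{-a}$ and likewise $(\mathsf{H}_1)_\rho=z^a\cdot\mathsf{H}_p$, whence
\[
(\mathsf{H}_1)_\rho(\mathsf{H}_1)_{\rho^{-1}}=z^a\,\mathsf{H}_p\,\mathsf{H}_p\,z^{-a}=z^a\,\mathsf{H}_p\,z^{-a}.
\]
Finally I would observe that conjugation by $z$ preserves degree (it's a degree-$0$ operation on a $\Z/p$-graded algebra, since $z$ is homogeneous), hence preserves $\mathsf{H}_p$, so $z^a\mathsf{H}_p z^{-a}=\mathsf{H}_p=\mathsf{H}_1^{C_p}$, giving the Morita condition. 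The Morita equivalence of $\mathsf{H}_p$ and $\tilde{\mathsf{H}}_1$ then follows directly from Lemma 2.3.

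The main obstacle is the homogeneity check for the relations — specifically making sure the correction term $p(t_i)-p(\zetal^{-1}t_i)$ in \eqref{eqn:com1} lies in degree $0$, which is exactly where the $p$-cyclic hypothesis $c_k=0$ for $p\nmid k$ is used; without it the grading would not descend to $\mathsf{H}_1$ and $\auto$ would not even be an algebra automorphism. Everything else is bookkeeping with the $\Z/p$-grading and the fact that the torus elements $t_i$ are units.
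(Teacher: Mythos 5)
Your proof is correct and uses essentially the same argument as the paper: the paper's proof simply observes that $1,t_1,\dots,t_1^{p-1}$ are units representing the $p$ isotypic components of $C_p$, which is exactly your key step with $z=t_1$. The extra work you do verifying that the $\Z/p$-grading descends through the relations (using the $p$-cyclic hypothesis) is the well-definedness of $\auto$, which the paper takes as given by citing Griffeth.
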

\begin{proof}
    The elements $1,t_1,t_1^2,\dots, t_1^{p-1}$ are all units and represent the $p$ different isotypic components of $C_p$.  This shows that the action is Morita and by Lemma \ref{lem:Morita}, that we have the desired Morita equivalence.
\end{proof}


\subsection{An alternate presentation}\label{ss:alternate}
An alternate presentation of the rational Cherednik algebra is given in \cite{Webalt} for $G(\ell,1,n)$, and a very closely related presentation is given by Griffeth in \cite[\S 3]{GrifU}. This presentation matches better with the physical description of the Coulomb branch. We extend this presentation to $G(\ell,p,n)$.

Recall that $G(\ell,p,n)$ is generated by $t_i^p$, $t_i t_{i+1}^{-1}$, and $(i,i+1) \in S_n$. Let $\tilde{H}$ be the free $\CC[\hbar]$ algebra generated by $\CC[\hbar] G(\ell,p,n)$ and the symbols $\sigma$, $\tau$, and $u_i$ for $i=1,\dots,n$. Define $u_i$ and $t_i$ for all $i \in \ZZ$ by the rules $u_i = u_{i-n} + \hbar$ and $t_i = \zetal^{-1} t_{i-n}$. Let $H$ be the quotient of $\tilde{H}$ by the relations
\begin{align*}
	u_i(j,j+1)&=(j,j+1)u_{(j,j+1)\cdot
		i}+k\ell (\delta_{i,j} - \delta_{i,j+1}) \pi_{j,j+1} &j&=1,\dots, n-1 \\
	u_it_j&=t_ju_i\\
	\sigma (j,j-1)&=(j+1,j)\sigma& j&=2,\dots, n-1 \\
	\tau (j,j+1)&=(j-1,j)\tau& j&=2,\dots, n-1 \\
	      \sigma^2 r_{n}&=r_1\sigma^2\subeqn\\
    \tau^2 r_1&=r_{n}\tau^2\subeqn\\
	\sigma\tau&=u_1-p(\zetal ^{-1} t_1)+\hbar\\
	\tau\sigma&=u_n-p(t_n) \\
	u_iu_j&=u_ju_i & i,j&\in \ZZ \\
	u_i\sigma&=\sigma u_{i-1}& i&\in \ZZ \\
	u_i\tau&=\tau u_{i+1}& i&\in \ZZ \\
	t_i\sigma&=\sigma t_{i-1}& i&\in \ZZ \\
	t_i\tau&=\tau t_{i+1}& i&\in \ZZ \\
	\tau (1,2) \sigma &= \sigma (n-1,n) \tau +k \left(\sum_{p=0}^{\ell-1}\zetal^p t_n^pt_{1}^{-p}\right)
\end{align*}
where $\pi_{j,j+1} := \frac{1}{\ell} \sum_{p=0}^{\ell-1} t_j^pt_{j+1}^{-p}$.
\begin{theorem}
	The algebra $H$ is isomorphic to the rational Cherednik algebra for $G(\ell, p, n)$ with the maps
	\begin{align*}
	x_i \mapsto (i,\dots,1) \sigma (n,\dots,i) \qquad
	y_i \mapsto (i,\dots,n) \tau (1,\dots,i).
	\end{align*}
\end{theorem}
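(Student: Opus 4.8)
The plan is to exhibit mutually inverse algebra homomorphisms between $H$ and the Cherednik algebra $\mathsf{H}_p$ in its standard presentation, using the analogous result for $G(\ell,1,n)$ from \cite{Webalt} as the backbone. First I would recall that the standard presentation of $\mathsf{H}_1$ has an alternate presentation $H_1$ (with generators $\sigma,\tau,u_i,t_i$ and $S_n$) proved isomorphic in \cite{Webalt}, via the same formulas $x_i\mapsto (i,\dots,1)\sigma(n,\dots,i)$ and $y_i\mapsto(i,\dots,n)\tau(1,\dots,i)$. The key observation is that when the parameters of $\mathsf{H}_1$ are $p$-cyclic, the order-$p$ automorphism $\auto$ of $\mathsf{H}_1$ given by $\auto(t_i)=\zetap t_i$ and fixing $x_i,y_i,r_{ij}^{(k)}$ transports, under this isomorphism, to the automorphism of $H_1$ fixing $\sigma,\tau,u_i$ and $S_n$ and scaling $t_i\mapsto\zetap t_i$: indeed $x_i,y_i$ are built from $\sigma,\tau$ and permutations alone, and the relations defining $H_1$ are $\auto$-equivariant provided $p(u)=q(u^p)$, which is exactly the $p$-cyclic condition (it guarantees $p(\zetap^{-1}t)=p(t)$-type compatibility in the relations $\sigma\tau=u_1-p(\zetal^{-1}t_1)+\hbar$ and $\tau\sigma=u_n-p(t_n)$, and the remaining relations involve only $t_i$-monomials invariant under the relevant substitutions). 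Hence the fixed-point subalgebra $H_1^{C_p}$ is identified with $\mathsf{H}_1^{C_p}=\mathsf{H}_p$, via the same formulas.

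Next I would check that $H$, as presented in the statement, is precisely this fixed-point subalgebra $H_1^{C_p}$. The $\auto$-fixed subalgebra of $H_1$ is generated by the $\auto$-invariant generators $\sigma,\tau,u_i$, $S_n$, together with the $\auto$-invariant monomials in the $t_i$, the smallest of which generate the subgroup $\mu_{\ell/p}\subset\mu_\ell$ acting on each coordinate; concretely these recover $t_i^p$ and $t_it_{i+1}^{-1}$, matching the generating set $\CC[\hbar]G(\ell,p,n)$ together with $\sigma,\tau,u_i$ declared in the definition of $\tilde H$. One then verifies that each defining relation of $H$ holds inside $H_1$ after this substitution — the relations $u_i(j,j+1)=\cdots$, $\sigma(j,j-1)=(j+1,j)\sigma$, $u_i\sigma=\sigma u_{i-1}$, $\sigma\tau=u_1-p(\zetal^{-1}t_1)+\hbar$, $\tau\sigma=u_n-p(t_n)$, and the braid-like relation $\tau(1,2)\sigma=\sigma(n-1,n)\tau+k\sum\zetal^pt_n^pt_1^{-p}$ are all directly inherited from the $G(\ell,1,n)$ relations of \cite{Webalt}, while $\sigma^2r_n=r_1\sigma^2$ and $\tau^2r_1=r_n\tau^2$ (with $r_i$ the appropriate reflections of $G(\ell,p,n)$) come from the $p$-th powers of the analogous $G(\ell,1,n)$ relations, since $t_i^p$ is what survives in the fixed subalgebra. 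This gives a surjection $H\twoheadrightarrow H_1^{C_p}\cong\mathsf{H}_p$.

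To get injectivity I would produce the inverse map directly and argue via a PBW/size count. Define $\mathsf{H}_p\to H$ by sending $t_i^p,\,t_it_{i+1}^{-1},\,(i,i+1)$ to the evident generators and $x_i,y_i$ to $(i,\dots,1)\sigma(n,\dots,i)$, $(i,\dots,n)\tau(1,\dots,i)$; one checks the standard Cherednik relations using the $H$-relations exactly as in \cite{Webalt} (the commutator $[x_i,y_j]$ computation is the content of the relations $\sigma\tau=u_1-p(\zetal^{-1}t_1)+\hbar$, $\tau\sigma=u_n-p(t_n)$, and $\tau(1,2)\sigma=\cdots$, together with the $u_i$-shift relations). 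Composing the two maps one way is manifestly the identity on generators; for the other direction one uses that $H$ is spanned by monomials of PBW type — ordered monomials in the $\sigma,\tau$ (equivalently the $x_i,y_i$) with coefficients in $\CC[\hbar]G(\ell,p,n)$ — which follows from the shift relations letting one move all $u_i,t_i$ past $\sigma,\tau$, so that $H$ is at most as large as $\mathsf{H}_p$; combined with surjectivity this forces an isomorphism.

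The main obstacle I anticipate is the bookkeeping in the relation $\sigma\tau=u_1-p(\zetal^{-1}t_1)+\hbar$ versus $\tau\sigma=u_n-p(t_n)$ and the long braid relation involving $k\sum_{p=0}^{\ell-1}\zetal^pt_n^pt_1^{-p}$: one must be careful that after restricting to the $p$-cyclic locus and passing to $\auto$-invariants, the polynomial $p$ still appears with the correct arguments and that the index conventions $u_i=u_{i-n}+\hbar$, $t_i=\zetal^{-1}t_{i-n}$ are compatible with the $G(\ell,p,n)$ (rather than $G(\ell,1,n)$) group relations $\sigma^2r_n=r_1\sigma^2$, $\tau^2r_1=r_n\tau^2$. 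Verifying that $p$-cyclicity is exactly what makes $\auto$ well-defined and the relations equivariant — rather than some stronger or weaker hypothesis — is the conceptual crux; everything else is the same computation as in the $G(\ell,1,n)$ case, carried along the Morita/fixed-point dictionary established in Section 2.
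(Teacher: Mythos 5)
Your proposal is correct, but it is organized quite differently from the paper's argument, which is a one--line deferral: the paper simply observes that the proof of the $G(\ell,1,n)$ case in \cite{Webalt} goes through verbatim for the subgroup $G(\ell,p,n)\subset G(\ell,1,n)$, i.e.\ one re-runs the direct verification of the relations and the PBW-type spanning argument with the smaller group algebra in place of $\CC[\hbar]G(\ell,1,n)$. You instead route everything through the Clifford-theoretic dictionary of Section 2: you transport the order-$p$ automorphism $\auto$ to the alternate presentation $H_1$, identify both $\mathsf{H}_p$ and the target of the map as $C_p$-fixed subalgebras, and then deduce the isomorphism from the known $p=1$ case. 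This buys you the fact that all relations are \emph{inherited} rather than re-verified, and your observation that $p$-cyclicity is precisely the condition for $\auto$ to preserve the relations $\sigma\tau=u_1-p(\zetal^{-1}t_1)+\hbar$ and $\tau\sigma=u_n-p(t_n)$ is exactly right (and matches the paper's Definition of $p$-cyclic parameters and the Griffeth observation quoted just before this theorem). The price is the extra step you correctly identify: a presented algebra need not coincide with the fixed subalgebra just because its generators and relations are the invariant ones, so you genuinely need both the surjection $H\twoheadrightarrow H_1^{C_p}$ and the PBW spanning bound to close the argument; your sketch of that step is the standard one and is sound. The only imprecision worth noting is your gloss that the relations $\sigma^2 r_n=r_1\sigma^2$ and $\tau^2 r_1=r_n\tau^2$ arise as ``$p$-th powers'' of the $G(\ell,1,n)$ relations --- these are really the wrap-around cases of the conjugation relations $\sigma(j,j-1)=(j+1,j)\sigma$ and are inherited unchanged, not as powers --- but this does not affect the validity of the overall argument.
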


\begin{proof}
	For the case $G(\ell,1,n)$, this theorem is proved in \cite{Webalt}. For the general case $G(\ell,p,n)$, the same proof applies. In particular, $G(\ell, p, n)$ is a subalgebra of $G(\ell,1,n)$.
\end{proof}

As in \cite{Webalt}, we can represent the algebra $H$ graphically using string diagrams with $n$ strings on a cylinder. Many of our examples of string diagrams are taken from \cite{Webalt}. We draw the diagrams with the cylinder cut along a dashed line. Crossing the dashed line passes around the cylinder between the $n$th string position and the first string position. The generators of $H$ are the following:
\begin{equation*}
\tikz{
	\node[label=below:{$t_i^p$}] at (-4.5,0){ 
		\tikz[very thick,xscale=1.2]{
			\draw[dashed,thick] (-.7,-.5)-- (-.7,.5);
			\draw[dashed,thick] (1.7,-.5)-- (1.7,.5);
			\draw (-.5,-.5)-- (-.5,.5);
			\draw (.5,-.5)-- (.5,.5) node [midway,tstar]{};
			\node[scale=.7] at (.7,.2) {$p$};
			\draw (1.5,-.5)-- (1.5,.5);
			\node at (1,0){$\cdots$};
			\node at (0,0){$\cdots$};
		}
	};
	\node[label=below:{$t_i t_{i+1}^{-1}$}] at (0,0){ 
		\tikz[very thick,xscale=1.2]{
			\draw[dashed,thick] (-.7,-.5)-- (-.7,.5);
			\draw[dashed,thick] (1.7,-.5)-- (1.7,.5);
			\draw (-.5,-.5)-- (-.5,.5);
			\draw (.3,-.5)-- (.3,.5) node [midway,tstar]{};
			\draw(.7,-.5) --(.7,.5) node [midway,tstar]{};	
			\node[scale=.7] at (.9,.3) {$-1$};		
			\draw (1.5,-.5)-- (1.5,.5);
			\node at (1.2,0){$\cdots$};
			\node at (-.1,0){$\cdots$};
		}
	};
	\node[label=below:{$u_i$}] at (4.5,0){ 
		\tikz[very thick,xscale=1.2]{
			\draw[dashed,thick] (-.7,-.5)-- (-.7,.5);
			\draw[dashed,thick] (1.7,-.5)-- (1.7,.5);
			\draw (-.5,-.5)-- (-.5,.5);
			\draw (.5,-.5)-- (.5,.5) node [midway,fill=black,circle,inner
			sep=2pt]{};
			\draw (1.5,-.5)-- (1.5,.5);
			\node at (1,0){$\cdots$};
			\node at (0,0){$\cdots$};
		}
	};
}
\end{equation*}

\begin{equation*}
\tikz{
	\node[label=below:{$(i,i+1)$}] at (4.5,0){ 
		\tikz[very thick,xscale=1.2]{
			\draw[dashed,thick] (-.7,-.5)-- (-.7,.5);
			\draw[dashed,thick] (1.7,-.5)-- (1.7,.5);
			\draw (-.5,-.5)-- (-.5,.5);
			\draw (.1,-.5)-- (.9,.5);
			\draw (.9,-.5)-- (.1,.5);
			\draw (1.5,-.5)-- (1.5,.5);
			\node at (1,0){$\cdots$};
			\node at (0,0){$\cdots$};
		}
	};
	\node[label=below:{$\sigma$}] at (-4.5,0){ 
		\tikz[very thick,xscale=1.2]{
			\draw[dashed,thick] (-.7,-.5)-- (-.7,.5);
			\draw[dashed,thick] (1.7,-.5)-- (1.7,.5);
			\draw (-.5,-.5)-- (-.1,.5);
			\draw (.3,-.5)-- (.7,.5);
			\draw (1.1 ,-.5)-- (1.5,.5);
			\node at (.9,0){$\cdots$};
			\draw (-.7,0)-- (-.5,.5);
			\draw (1.5 ,-.5)-- (1.7,0);
			\node at (.1,0){$\cdots$};
		}
	};
	\node[label=below:{$\tau$}] at (0,0){ 
		\tikz[very thick,xscale=1.2]{
			\draw[dashed,thick] (-.7,-.5)-- (-.7,.5);
			\draw[dashed,thick] (1.7,-.5)-- (1.7,.5);
			\draw (-.5,.5)-- (-.1,-.5);
			\draw (.3,.5)-- (.7,-.5);
			\draw (1.1 ,.5)-- (1.5,-.5);
			\node at (.9,0){$\cdots$};
			\draw (-.7,0)-- (-.5,-.5);
			\draw (1.5 ,.5)-- (1.7,0);
			\node at (.1,0){$\cdots$};
		}
	};
}
\end{equation*}
\excise{
In the isomorphism with the chiral ring, the $u_i$, represented by dots, correspond to the complex scalar fields of the vectormultiplet, and any diagram where the strings wrap around the cylinder corresponds to a monopole operator.
}
The string diagrams are multiplied by stacking one on top of the other so that the ends of the strings match, and the resulting diagram is read top to bottom. For example, the relations $t_1 \sigma = \zetal \sigma t_1$ and $u_1 \sigma = \sigma (u_n -\hbar)$ are captured by the diagrams:

\begin{equation*}
\tikz[very thick,scale=1.5,baseline]{  
	\draw[dashed,thick] (0,-.5)-- (0,.5);
	\draw (-.4,-.5) --  node [pos=.75,tstar]{} (.4,.5);
}= \zetal\tikz[very thick,scale=1.5,baseline]{  
	\draw[dashed,thick] (0,-.5)-- (0,.5);
	\draw (-.4,-.5) --  node [pos=.25,tstar]{} (.4,.5);
} \hspace{1.5cm}
\tikz[very thick,scale=1.5,baseline]{  
	\draw[dashed,thick] (0,-.5)-- (0,.5);
	\draw (-.4,-.5) --  node [pos=.75,ucircle]{} (.4,.5);
}= \tikz[very thick,scale=1.5,baseline]{  
	\draw[dashed,thick] (0,-.5)-- (0,.5);
	\draw (-.4,-.5) --  node [pos=.25,ucircle]{} (.4,.5);
}-\hbar \tikz[very thick,scale=1.5,baseline]{  
	\draw[dashed,thick] (0,-.5)-- (0,.5);
	\draw (-.4,-.5) --  (.4,.5);
}
\end{equation*}

Using the isomorphism to the standard presentation of the Cherednik algebra, we can express $x_i$ and $y_i$ as string diagrams:
\begin{equation*}
\tikz{
	\node[label=below:{$ x_i $}] at (-4.5,0){ 
		\tikz[very thick,xscale=1.2]{
			\draw[dashed,thick] (-.7,-.5)-- (-.7,.5);
			\draw[dashed,thick] (1.7,-.5)-- (1.7,.5);
			\draw (-.5,-.5)-- (-.5,.5);
			\draw (.3,-.5)-- (.3,.5);
			\draw (.7,-.5)-- (.7,.5);
			\draw (1.5 ,-.5)-- (1.5,.5);
			\node at (1.1,.3){$\cdots$};
			\draw (-.7,0) to[out=0,in=-90] (.5,.5);
			\draw (.5 ,-.5)to[out=90,in=180] (1.7,0);
			\node at (-.1,-.3){$\cdots$};
		}
	};
	\node[label=below:{$ y_i $}] at (0,0){ 
		\tikz[very thick,xscale=1.2]{
			\draw[dashed,thick] (-.7,-.5)-- (-.7,.5);
			\draw[dashed,thick] (1.7,-.5)-- (1.7,.5);
			\draw (-.5,.5)-- (-.5,-.5);
			\draw (.7,.5)-- (.7,-.5);
			\draw (.3,.5)-- (.3,-.5);
			\draw (1.5 ,.5)-- (1.5,-.5);
			\node at (1.1,-.3){$\cdots$};
			\draw (-.7,0)to[out=0,in=90] (.5,-.5);
			\draw (.5 ,.5) to[out=-90,in=180] (1.7,0);
			\node at (-.1,.3){$\cdots$};
		}
	};
}
\end{equation*}

In \cite{Webalt}, a polynomial representation of the Cherednik algebra for $G(\ell,1,n)$ is defined over the ring $\sP = \CC[U_1,\dots,U_n; T_1,\dots,T_n]/\langle T_i^\ell -1 \rangle$ where $U_i$ and $T_i$ are defined for all $i \in \ZZ$ by $U_i = U_{i-n}+\hbar$ and $T_i = \zetal^{-1} T_{i-n}$. We want to define an analogous representation here. However, since $G(\ell,p,n)$ is a subgroup of $G(\ell,1,n)$, it will split $\sP$ into $p$ isomorphic submodules, one for each coset of $G(\ell,p,n)$ in $G(\ell,1,n)$. To only consider one such submodule, we define the polynomial ring as $\sP = \CC[U_1,\dots,U_n; T_1^p,\dots,T_n^p, T_1 T_2^{-1},\dots, T_{n-1} T_n^{-1}]/\langle T_i^\ell - 1\rangle$ for $p \neq 1$.

We have exactly the same action of $H$ on $\sP$ as \cite{Webalt}, but we reproduce it here for convenience. We have
\begin{align}
	&u_i \cdot f(\mathbf{U};\mathbf{T}) =U_if(\mathbf{U};\mathbf{T}) \\
	&t_i\cdot f(\mathbf{U};\mathbf{T}) = T_i f(\mathbf{U};\mathbf{T}) \\
	&(i,i+1)\cdot f(\mathbf{U};\mathbf{T}) = f^{(i,i+1)}+k\ell \frac{
		f^{(i,i+1)}-f}{U_{i+1}-U_i}\pi_{i,i+1}\\
	&\sigma \cdot f(\mathbf{U};\mathbf{T}) = ( U_1-p(\zetal^{-1}T_1)+\hbar ) f(U_2,\dots,U_{n+1};T_2,\dots,T_{n+1})\\
	&\tau \cdot f(\mathbf{U};\mathbf{T}) = f(U_0,\dots,U_{n-1};T_0,\dots,T_{n-1})
\end{align}
where $\pi_{i,i+1} = \frac{1}{\ell}\sum_{k=0}^{\ell-1} T_i^k T_{i+1}^{-k}$ and $f^{(i,i+1)}$ denotes $f(\U;\T)$ with $U_i$ and $U_{i+1}$ exchanged and $T_i$ and $T_{i+1}$ exchanged. More generally, we will use $f^a$ to denote the image of $f(\U;\T)$ under the linear action of $a \in G(\ell,p,n)$, which consists of permutation and shifts of the variables.

\subsection{Spherical and partially spherical subalgebras}\label{ss:subalgebras}
Let $A$ be the subgroup of $G(\ell, p, n)$ generated by elements of the form $t_i^p$ and $t_i t_{i+1}^{-1}$. 
\begin{definition}
	Let $W=G(\ell,p,n)$ and 
	\begin{equation*}
		e = \frac{1}{|W|} \sum_{g \in W} g \qquad
		e' = \frac{1}{|A|} \sum_{g \in A} g
	\end{equation*}
	The {\bf spherical subalgebra} is defined to be $\mathsf{H}_p^{\operatorname{sph}}=e\mathsf{H}_pe$ and the {\bf partially spherical subalgebra} is defined as $\mathsf{H}_p^{\operatorname{psph}}=e'\mathsf{H}_pe'$.
\end{definition}

The idempotents $e$ and $e'$ project to elements of $H$ that are invariant under $W$ and $A$, respectively. Thus, the spherical subalgebra consists of elements invariant under $W$, and the partially spherical subalgebra consists of elements invariant under $A$.

In order to clarify the connection between $\mathsf{H}_p$ and $\mathsf{H}_1$, it is useful to consider the Morita equivalent smash product $\tilde{\mathsf{H}}_1$.
\begin{lemma}
Let $\tilde{W}=G(\ell,1,n)$, and let $\tilde{A}$ be its subgroup generated by $t_i$'s.  Consider the idempotents
	\begin{equation*}
		\tilde{e} = \frac{1}{|\tilde{W}|} \sum_{g \in \tilde{W}} g \qquad\qquad 
		\tilde{e}' = \frac{1}{|\tilde{A}|} \sum_{g \in \tilde{A}} g
	\end{equation*}
	The inclusion $\mathsf{H}_p\hookrightarrow \tilde{\mathsf{H}}_1$ induces isomorphisms:
	\[e\mathsf{H}_pe\cong \tilde{e}\tilde{\mathsf{H}}_1\tilde{e}\qquad\qquad  e'\mathsf{H}_pe'\cong\tilde{e}'\tilde{\mathsf{H}}_1\tilde{e}'.\]
\end{lemma}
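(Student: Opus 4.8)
The plan is to reduce the claimed isomorphisms $e\mathsf{H}_pe\cong \tilde e\tilde{\mathsf{H}}_1\tilde e$ and $e'\mathsf{H}_pe'\cong\tilde e'\tilde{\mathsf{H}}_1\tilde e'$ to the Morita picture established earlier in the paper, where $A=\mathsf{H}_1$ carries the order-$p$ automorphism $\auto$ with $\auto(t_i)=\zetap t_i$, $\mathsf{H}_p=\mathsf{H}_1^{C_p}$, and $\tilde{\mathsf{H}}_1=\mathsf{H}_1\# C_p$. The key observation is that $C_p=\langle\auto\rangle$ can be realized \emph{internally} inside $G(\ell,1,n)\subset\mathsf{H}_1$: since $\zetap=\zetal^{\ell/p}$, conjugation by the diagonal element $z:=t_1^{\ell/p}t_2^{\ell/p}\cdots t_n^{\ell/p}$ (or more efficiently just by $t_1^{\ell/p}$, since only the ratios $t_it_{i+1}^{-1}$ and the $t_i^p$ must be scaled consistently — one should check which of these actually implements $\auto$) scales each $t_i$ by $\zetap$ and fixes the $x_i,y_i$ and the $r_{ij}^{(k)}$, hence equals the automorphism $\auto$. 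More precisely, $\auto$ is an inner automorphism of $\tilde{\mathsf{H}}_1$: in $\tilde{\mathsf{H}}_1=\mathsf{H}_1\#C_p$ the generator $\auto$ of $C_p$ satisfies $\auto\,a\,\auto^{-1}=a^{\auto}$, and the group element $g\in G(\ell,1,n)\subset\mathsf{H}_1$ with $gag^{-1}=a^{\auto}$ gives $g^{-1}\auto$ central in the relevant sense; one shows $\tilde{\mathsf{H}}_1\cong \mathsf{H}_1\otimes_{\mathbbm k}\mathbbm k[\mathbb Z/p]$ via $a\mapsto a$, $\auto\mapsto g^{-1}\cdot(\text{grouplike generator})$, but this is not quite what we need.

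The cleaner route is the following. First I would show $\tilde e'\tilde{\mathsf{H}}_1\tilde e'\cong e'\mathsf{H}_pe'$. Unpack $\tilde e'$: since $\tilde A=\langle t_1,\dots,t_n\rangle\subset G(\ell,1,n)$ and $A=\langle t_i^p,\ t_it_{i+1}^{-1}\rangle\subset G(\ell,p,n)$, the subgroup $C_p$ is realized inside $\tilde A$ as (generated by) the image of $z$ above, and $\tilde A\cong A\rtimes C_p$ as a semidirect product realized inside $G(\ell,1,n)$ — concretely $\tilde A = A\cdot\langle t_1^{\ell/p}\rangle$ with $A\cap\langle t_1^{\ell/p}\rangle$ to be computed (one must verify the index is exactly $p$, which follows from $|\tilde A|/|A|=p$ by comparing orders: $|\tilde A|=\ell^n$, $|A|=\ell^{n}/p$). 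Hence $\tilde e'=e'\cdot\frac1p\sum_{j=0}^{p-1}z^{j}$ where $\frac1p\sum z^j$ is exactly the idempotent $\pi_1$ projecting onto $\mathsf{H}_1^{C_p}$. So $\tilde e'\tilde{\mathsf{H}}_1\tilde e'=e'\pi_1(\mathsf{H}_1\#C_p)\pi_1 e'$, and $\pi_1(\mathsf{H}_1\#C_p)\pi_1\cong \mathsf{H}_1^{C_p}=\mathsf{H}_p$ canonically (the corner of the smash product at the trivial isotypic idempotent is the invariant subalgebra — this is the $A^C\cong\pi_1\tilde A\pi_1$ half of the Clifford-theory package), giving $\tilde e'\tilde{\mathsf{H}}_1\tilde e'\cong e'\mathsf{H}_pe'$ with the isomorphism induced exactly by the inclusion $\mathsf{H}_p\hookrightarrow\tilde{\mathsf{H}}_1$ composed with multiplication by $e'$. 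The argument for $e$ versus $\tilde e$ is identical after checking that $\tilde W=G(\ell,1,n)$ decomposes as $W\rtimes C_p$ with $C_p$ the same cyclic group (again an index-$p$ computation, $|\tilde W|/|W|=p$), so that $\tilde e = e\,\pi_1$.

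The step I expect to be the main obstacle is the bookkeeping identifying $C_p$ (the group of \emph{automorphisms}) with an honest \emph{subgroup} of $\tilde A$ (resp.\ $\tilde W$) sitting complementarily to $A$ (resp.\ $W$), i.e.\ verifying that $\tilde A = A\rtimes C_p$ and $\tilde W = W\rtimes C_p$ \emph{with the same $C_p$}, and that under this identification $\tilde e' = e'\pi_1$ and $\tilde e=e\pi_1$ as elements of $\tilde{\mathsf H}_1 = \mathsf H_1\#C_p$. This requires being careful that the automorphism $\auto$, which a priori lives outside $\mathsf H_1$, coincides with conjugation by a specific grouplike element; the $p$-cyclic hypothesis on the parameters is exactly what makes $\auto$ an automorphism of $\mathsf H_1$ in the first place (so that it can even be realized inside $\tilde{\mathsf H}_1$), and I would flag that this is the place that hypothesis is used. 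Once the idempotent identities $\tilde e = e\pi_1$, $\tilde e' = e'\pi_1$ are in hand, everything reduces to the elementary fact that for any algebra $B$ with order-$p$ automorphism and any $B$-invariant idempotent $f$, one has $f\pi_1(B\#C_p)\pi_1 f = f B^{C_p} f$ via the canonical map — a one-line consequence of $\pi_1(B\#C_p)\pi_1\cong B^{C_p}$, which is part of Lemma \ref{lem:Morita} and its proof.
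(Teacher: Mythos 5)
You have correctly identified where the difficulty lies, but the step you propose to resolve it fails. The automorphism $\auto$ is not inner on $\mathsf{H}_1$, and in particular is not conjugation by $z=t_1^{\ell/p}\cdots t_n^{\ell/p}$ (nor by $t_1^{\ell/p}$): by the relations $t_ix_j=\zetal^{\delta_{ij}}x_jt_i$ and $t_iy_j=\zetal^{-\delta_{ij}}y_jt_i$, conjugation by a diagonal group element \emph{fixes} every $t_j$ (the $t$'s commute with one another) and \emph{rescales} the $x_j$ and $y_j$ --- exactly the opposite of $\auto$, which fixes the $x_j,y_j$ and rescales the $t_j$. So the copy of $\Z/p$ sitting inside $\tilde A$ as a complement to $A$ and the group $C_p=\langle\auto\rangle$ of automorphisms are two genuinely different cyclic groups, and the identity on which your reduction rests, $\tilde e'=e'\pi_1$, is false: $\frac1p\sum_jz^j$ is an idempotent of $\K[\tilde A]\subset\mathsf{H}_1$, while $\pi_1=\frac1p\sum_j\auto^j$ has nonzero components in $\mathsf{H}_1\auto^m$ for every $m$, so $e'\pi_1$ cannot equal any element of $\mathsf{H}_1$. (A secondary slip: $z$ maps to $n\ell/p$ in $\tilde A/A\cong\Z/p$ and need not generate the quotient; the coset representatives used elsewhere in the paper are $1,t_1,\dots,t_1^{p-1}$.)

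The statement that is actually available, and that completes your final paragraph, is that $\tilde e'$ and $e'\pi_1$ are \emph{equivalent} (not equal) idempotents of $\tilde{\mathsf{H}}_1$. Since $(\tilde e')^{\auto^k}$ is the projector in $\K[\tilde A]$ onto the character $\prod t_i^{a_i}\mapsto\zetap^{-k\sum a_i}$, it is orthogonal to $\tilde e'$ for $0<k<p$; using this one computes that $u=p\,\pi_1\tilde e'$ and $v=\tilde e'\pi_1$ satisfy $uv=e'\pi_1$ and $vu=\tilde e'$, so $x\mapsto uxv$ is an isomorphism $\tilde e'\tilde{\mathsf{H}}_1\tilde e'\cong e'\pi_1\tilde{\mathsf{H}}_1\pi_1e'$, which together with the standard identification $\pi_1\tilde{\mathsf{H}}_1\pi_1\cong\mathsf{H}_1^{C_p}=\mathsf{H}_p$ (your ``elementary fact'') gives $\tilde e'\tilde{\mathsf{H}}_1\tilde e'\cong e'\mathsf{H}_pe'$; one must then still verify that the composite agrees with the map induced by the inclusion $\mathsf{H}_p\hookrightarrow\tilde{\mathsf{H}}_1$. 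The spherical case is identical with $\tilde W,W$ in place of $\tilde A,A$, using the character of $G(\ell,1,n)$ given by the product of the nonzero matrix entries. So the overall architecture of your last paragraph (reduce to the corner $\pi_1\tilde{\mathsf{H}}_1\pi_1$) is sound, but the idempotent comparison must be an equivalence established by this orthogonality computation, and the attempted realization of $C_p$ as a subgroup of $G(\ell,1,n)$ has to be discarded.
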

Note that $\tilde{\mathsf{H}}_1$ is $\Z/p\Z$ graded by the power of $\auto$ which appears, and this induces a $\Z/p\Z$ grading on $e\mathsf{H}_pe$.  This agrees with the grading induced by the action of $\tilde{A}/A$ on $e\mathsf{H}_pe$ by conjugation.  



\begin{proposition} \label{prop:psc_gen1}
	The partially spherical subalgebra $\mathsf{H}_p^{\operatorname{sph}}$ is generated by the following elements:
	\newseq
	\begin{equation*}\subeqn
		 u_i\qquad  (i,i+1)\qquad x_i^\ell\qquad y_i^\ell\label{eq:ps-gens1}
	\end{equation*}
	\begin{equation*}\subeqn
	    (x_1 \dots x_n)^{\ell/p} \qquad (y_1 \dots y_n)^{\ell/p} \qquad (x_{i_1} \dots x_{i_j} )^{k\ell/p}(y_{i_{j+1}} \dots y_{i_n})^{\ell-k\ell/p}\label{eq:ps-gens2}
	\end{equation*}
	where $i_1,\dots,i_n$ is any permutation of $1,\dots,n$ and $k=1,\dots,p-1$. The multiplication by $e'$ is implicit in these generators.  
\end{proposition}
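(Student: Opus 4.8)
The plan is to pass to associated graded algebras with respect to the order (PBW) filtration on $\mathsf{H}_p$, in which $x_i,y_i$ have degree $1$ and the group algebra $\CC W$ (for $W:=G(\ell,p,n)$) sits in degree $0$, so that $\operatorname{gr}\mathsf{H}_p$ is the skew group algebra $R:=\CC[V]\rtimes W$ with $V=\mathfrak{h}^*\oplus\mathfrak{h}$ and $\CC[V]=\CC[x_1,\dots,x_n,y_1,\dots,y_n]$. Since $e'$ lies in filtration degree $0$, $\operatorname{gr}(e'\mathsf{H}_pe')=e'Re'$, so it suffices to show that the symbols of the listed elements generate $e'Re'$: the usual comparison then forces the subalgebra $B\subseteq e'\mathsf{H}_pe'$ that they generate to exhaust $e'\mathsf{H}_pe'$. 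The symbols are read off at once. From the presentation of Section~\ref{ss:alternate}, $x_1y_1=\sigma\tau=u_1-p(\zetal^{-1}t_1)+\hbar$, and conjugating by $S_n$ gives $x_iy_i=u_i-p(\zetal^{-1}t_i)+\hbar$; hence $u_ie'$ has symbol $x_iy_ie'$, while $(i,i+1)e'$, $x_i^\ell e'$, $y_i^\ell e'$ and the $x$--$y$ monomials of \eqref{eq:ps-gens2} times $e'$ are homogeneous and are their own symbols.

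The next step is to describe $e'Re'$. The group $A$ is the full group $T_p$ of diagonal matrices in $W$ (a sum-zero-mod-$p$ argument on exponent vectors), so it is normal with $W/A\cong S_n$, and the characters of the diagonal torus $(\Z/\ell)^n$ trivial on $A$ form the cyclic group $\langle(\ell/p)\mathbf{1}\rangle$ of order $p$, which is $S_n$-invariant. As $A$ scales the monomial $x^{\mathbf{b}}y^{\mathbf{d}}$ by $\zetal^{\mathbf{e}\cdot(\mathbf{b}-\mathbf{d})}$, a short computation with $e'=\tfrac1{|A|}\sum_{a\in A}a$ shows $e'\,x^{\mathbf{b}}y^{\mathbf{d}}w\,e'=0$ unless $b_i-d_i$ is independent of $i$ modulo $\ell$ (i.e. unless $x^{\mathbf{b}}y^{\mathbf{d}}\in\CC[V]^A$), and that otherwise $e'\,x^{\mathbf{b}}y^{\mathbf{d}}w\,e'=(e'x^{\mathbf{b}}y^{\mathbf{d}}e')(e'we')$. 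So $e'Re'$ is generated by its subalgebras $e'\CC[V]^Ae'\cong\CC[V]^A$ and $e'\CC W e'$, the latter a quotient of $\CC S_n$ generated by the $e'(i,i+1)e'$ (because $t^{\mathbf{c}}e'=e'$ for $t^{\mathbf{c}}\in A$, so $e'we'$ depends only on the image of $w$ in $S_n$). The proposition thus reduces to the purely commutative statement that $\CC[V]^A$ is generated by the $x_iy_i$, the $x_i^\ell$, the $y_i^\ell$, and the monomials $(x_{i_1}\cdots x_{i_j})^{k\ell/p}(y_{i_{j+1}}\cdots y_{i_n})^{\ell-k\ell/p}$.

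For that, an $A$-invariant monomial $x^{\mathbf{b}}y^{\mathbf{d}}$ satisfies $b_i-d_i\equiv c\pmod{\ell}$ for a single $c\in\{0,\ell/p,\dots,(p-1)\ell/p\}$. Dividing out $\prod_i(x_iy_i)^{\min(b_i,d_i)}$ reduces to the case where for each $i$ one of $b_i,d_i$ vanishes. If $c=0$ the remaining exponents are all multiples of $\ell$, so what is left is a product of $x_i^\ell$'s and $y_j^\ell$'s. If $c=k\ell/p$ with $1\le k\le p-1$ then for each $i$ exactly one of $b_i,d_i$ is nonzero; with $I=\{i:b_i>0\}$, every $b_i$ $(i\in I)$ is $\ge k\ell/p$ and $\equiv k\ell/p$, every $d_i$ $(i\notin I)$ is $\ge\ell-k\ell/p$ and $\equiv\ell-k\ell/p$ mod $\ell$, so the remaining monomial is $\prod_{i\in I}x_i^{k\ell/p}\prod_{i\notin I}y_i^{\ell-k\ell/p}$ times a product of $x_i^\ell$'s and $y_j^\ell$'s. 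This proves the commutative claim --- with $(x_1\cdots x_n)^{\ell/p}$ and $(y_1\cdots y_n)^{\ell/p}$ the cases $I=\{1,\dots,n\}$ and $I=\varnothing$ --- and hence the proposition; for $p=1$ the mixed monomials are vacuous and this recovers the generating set of \cite{Webalt} for $\mathsf{H}_1^{\operatorname{psph}}$.

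I expect the main obstacle to be the middle step: establishing the vanishing $e'x^{\mathbf{b}}y^{\mathbf{d}}we'=0$ for non-$A$-invariant monomials and the factorization $e'x^{\mathbf{b}}y^{\mathbf{d}}we'=(e'x^{\mathbf{b}}y^{\mathbf{d}}e')(e'we')$, and keeping the residues mod $\ell$ straight in the monomial peeling. Everything downstream --- the identification of symbols and the $\operatorname{gr}$-argument --- is routine. An alternative is to go through the isomorphism $e'\mathsf{H}_pe'\cong\tilde e'\tilde{\mathsf{H}}_1\tilde e'$ and Clifford theory, bootstrapping from $p=1$, but the filtration argument handles all $p$ at once.
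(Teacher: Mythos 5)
Your proof is correct, but it takes a genuinely different route from the paper's. The paper argues directly inside $\mathsf{H}_p$ with the cylindrical string-diagram calculus: it notes that $e'he'=0$ whenever conjugation by $t_i^p$ or $t_it_{i+1}^{-1}$ multiplies $h$ by a nontrivial root of unity, classifies the surviving diagrams by their winding numbers around the cylinder (one strand winding $\ell$ times; all strands winding $\ell/p$ times in the same direction; or a split of $k\ell/p$ versus $\ell-k\ell/p$), and exhibits a listed generator realizing each winding type. You instead pass to the associated graded for the PBW filtration, identify $\operatorname{gr}(e'\mathsf{H}_pe')$ with $e'(\CC[V]\rtimes W)e'$, and reduce everything to the commutative invariant theory of the diagonal subgroup $A$ plus the factorization $e'x^{\mathbf{b}}y^{\mathbf{d}}we'=(e'x^{\mathbf{b}}y^{\mathbf{d}}e')(e'we')$. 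The combinatorial core is the same in both proofs --- the characters of the diagonal torus trivial on $A$ are exactly the powers of $(\ell/p)\mathbf{1}$, which is precisely what the winding-number conditions encode --- but your version buys an explicit exhaustion step (the listed elements generate, not merely survive the projection) via the standard filtration argument, whereas the paper leaves that spanning claim to the diagrammatic heuristic; the paper's version, in exchange, stays inside $\mathsf{H}_p$ and sets up the diagrammatic normal forms reused in Proposition \ref{prop:psc_gen2}. Two small imprecisions in your write-up, neither fatal: conjugating $u_1$ by $(i,\dots,1)$ produces degree-zero correction terms proportional to the $\pi_{j,j+1}$, so $x_iy_i$ equals $u_i$ only modulo filtration degree $0$ (which is all your symbol computation actually needs); and the vanishing criterion is that $b_i-d_i$ is constant in $i$ \emph{and} that constant is divisible by $\ell/p$ modulo $\ell$ --- your parenthetical ``$\in\CC[V]^A$'' and your subsequent case analysis use the correct condition, but the phrase ``independent of $i$ modulo $\ell$'' alone understates it.
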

The elements of (\ref{eq:ps-gens1}) all have degree 0 in the $\Z/p\Z$ grading, whereas the elements of (\ref{eq:ps-gens2}) have degrees 
	\begin{equation*}
	    \deg\big((x_1 \dots x_n)^{\ell/p}\big)=1 \qquad \deg\big((y_1 \dots y_n)^{\ell/p}\big)=-1 
	\end{equation*}  
	\begin{equation*}
	    \deg\big((x_{i_1} \dots x_{i_j} )^{k\ell/p}(y_{i_{j+1}} \dots y_{i_n})^{\ell-k\ell/p}\big)=k
	\end{equation*}
\begin{proof}
	Clearly, $e' u_i$ and $e' (i,i+1)$ are invariant under $A$. To understand the rest of the generators, we consider all possible string diagrams such that moving $t_i^p$ or $t_i t_{i+1}^{-1}$ past the diagram does not pick up extra factors of $\zetal$.
	
	To see why this condition is necessary, consider an element $h \in H$ and the associated string diagram. We just need to establish that $e'he'$ is nonzero. Suppose $h t_i^p = \zetal^k t_i^p h$ for some $k$ such that $\zetal^k \neq 1$. Then, we have
	\begin{align*}
		e' h e' = e' h t_i^p e' = e' \zetal^k t_i^p h e' = \zetal^k e' h e',
	\end{align*}
	implying $e' h e' = 0$. The same thing would happen if moving $t_i t_{i+1}^{-1}$ past $h$ picked up a power of $\zetal$.
	
	By definition, moving $t_i^p$ or $t_i t_{i+1}^{-1}$ past the seam of the cylinder will pick up powers of $\zetal$, but if this power is a multiple of $\ell$, the extra factor will cancel. We find that the allowed diagrams are combinations of diagrams where
	\begin{itemize}
		\item one string wraps around $\ell$ times,
		\item every string wraps around $\ell/p$ times in the same direction, or
		\item some strings wrap around $k\ell/p$ times in one direction and all the other strings wrap around $\ell - k\ell/p$ times in the other direction for some $k = 1,\dots,p-1$.
	\end{itemize}
	For the first case, suppose the $i$th string wraps around $\ell$ times. Moving $t_i$ past the diagram will take it around the cylinder $\ell$ times, so it will pick up a factor of $\zetal^{\pm \ell} = 1$ depending on what direction it goes. For $j\neq i$, moving $t_j$ past the diagram does not take it around the cylinder at all. Therefore, moving $t_j^p$ or $t_j t_{j+1}^{-1}$ (for any $j$) past this diagram will not pick up an overall power of $\zetal$, so this diagram corresponds to a nonzero element of $\mathsf{H}_p^{\operatorname{psph}}$. The corresponding generators that take the $i$th string around $\ell$ times are $x_i^\ell$ and $y_i^\ell$.
	
	For the second case, moving $t_i$ past the diagram picks up a factor of $\zetal^{\pm \ell/p}$ depending on the direction. Therefore, $t_i^p$ picks up a factor of $\zetal^{\pm \ell} = 1$ and $t_i t_{i+1}^{-1}$ picks up a factor of $\zetal^{\pm \ell/p} \zetal^{\mp \ell/p} = 1$. The corresponding generators are $(x_1 \dots x_n)^{\ell/p}$ and $(y_1 \dots y_n)^{\ell/p}$.
	
	For the last case, some of the $t_i$ will pick up a factor of $\zetal^{\pm k\ell/p}$ and the others will pick up a factor of $\zetal^{\mp (\ell - k\ell/p)} = \zetal^{\pm k\ell/p}$. Just as in the previous case, these diagrams correspond to nonzero generators. These generators are $(x_{i_1} \dots x_{i_j} )^{k\ell/p}(y_{i_{j+1}} \dots y_{i_n})^{\ell-k\ell/p}$.
\end{proof}

This set of generators turns out to not be the best choice for our purposes. We instead consider a slightly different, but nonetheless equivalent, set of generators. Because we have the $S_n$ generators, we only need $x_1^\ell$ and $y_n^\ell$ in place of $x_i^\ell$ and $y_i^\ell$. Similarly, we only need $(x_i x_{i-1} \dots x_1 )^{k\ell/p} (y_{i+1} y_{i+2} \dots y_n)^{\ell-k\ell/p}$ for $1<i<n$ in place of the last generator.

In order to avoid extra terms coming the $S_n$ action in what follows, we remove extra permutations from our generators. Considered as string diagrams, this is the statement that we want to remove extra permutations from the top and bottom of the diagrams so that our generators have the minimum numbers of string crossings while still passing around the cylinder the appropriate number of times. This requirement gives us the modified set of generators. The following diagrams show some comparisons of the two sets of generators:

\begin{equation*}
\tikz{
	\node[label=below:{$ x_1^{\ell} $}] at (-5,0){ 
		\tikz[very thick,xscale=1.5,yscale=2]{
			\draw[dashed,thick] (-.7,-.8)-- (-.7,.5);
			\draw[dashed,thick] (1.7,-.8)-- (1.7,.5);
			\draw (-.3,-.8)-- (-.3,.5);
			\draw (-.1,-.8)-- (-.1,.5);
			\draw (1.3,-.8)-- (1.3,.5);
			\draw (1.5 ,-.8)-- (1.5,.5);
			\node at (.6,.3){$\cdots$};
			\node at (.6,0){$\vdots$};
			\draw (-.7,.1)-- (1.7,.1);
			\draw (-.7,-.3)-- (1.7,-.3);
			\draw (-.7,-.2)-- (1.7,-.2);
			\draw (-.7,.2) to[out=0,in=-90] (-.5,.5);
			\draw (-.5,-.8)to[out=90,in=180] (0,-.4);
			\draw (0,-.4)-- (1.7,-.4);
		}
	};
	\node[label=below:{$ x_1^{\ell-1} \sigma $}] at (0,0){ 
		\tikz[very thick,xscale=1.5,yscale=2]{
			\draw[dashed,thick] (-.7,-.8)-- (-.7,.5);
			\draw[dashed,thick] (1.7,-.8)-- (1.7,.5);
			\draw (-.5,-.8)-- (-.3,.5);
			\draw (-.3,-.8)-- (-.1,.5);
			\draw (1.1,-.8)-- (1.3,.5);
			\draw (1.3 ,-.8)-- (1.5,.5);
			\node at (.6,.3){$\cdots$};
			\node at (.6,0){$\vdots$};
			\draw (-.7,.1)-- (1.7,.1);
			\draw (-.7,-.3)-- (1.7,-.3);
			\draw (-.7,-.2)-- (1.7,-.2);
			\draw (-.7,.2) to[out=0,in=-90] (-.5,.5);
			\draw (1.5,-.8)to[out=90,in=180] (1.7,-.4);
		}
	};
}
\end{equation*}

\begin{equation*}
\tikz{
	\node[label=below:{$ (x_1 \dots x_n)^{\ell/p} $}] at (-5,0){ 
		\tikz[very thick,xscale=1.5,yscale=2]{
			\draw[dashed,thick] (-.7,-.8)-- (-.7,.5);
			\draw[dashed,thick] (1.7,-.8)-- (1.7,.5);
			\node at (.6,.4){$\cdots$};
			\node at (.6,-.1){$\vdots$};
			\draw (-.7,-.6) to[out=0,in=-90] (-.5,-.4);
			\draw (-.5,-.4)-- (-.5,.5);
			\draw (-.5,-.8)to[out=90,in=180] (0,-.6);
			\draw (0,-.6)-- (1.7,-.6);
			\draw (-.7,-.5) to[out=0,in=-90] (-.3,-.3);
			\draw (-.3,-.3)-- (-.3,.5);
			\draw (-.3,-.8)to[out=90,in=180] (.2,-.5);
			\draw (.2,-.5)-- (1.7,-.5);
			\draw (1.1,.2) to[out=0,in=-90] (1.3,.5);
			\draw (-.7,.2)-- (1.1,.2);
			\draw (1.3,0)to[out=90,in=180] (1.7,.2);
			\draw (1.3,-.8)-- (1.3,0);
			\draw (1.3,.3) to[out=0,in=-90] (1.5,.5);
			\draw (-.7,.3)-- (1.3,.3);
			\draw (1.5,.1)to[out=90,in=180] (1.7,.3);
			\draw (1.5,-.8)-- (1.5,.1);
		}
	};
	\node[label=below:{$ \sigma^{n\ell/p} $}] at (0,0){ 
		\tikz[very thick,xscale=1.5,yscale=2]{
			\draw[dashed,thick] (-.7,-.8)-- (-.7,.5);
			\draw[dashed,thick] (1.7,-.8)-- (1.7,.5);
			\draw (-.7,.1)-- (-.3,.5);
			\draw (-.7,-.1)-- (-.1,.5);
			\draw (1.1,-.8)-- (1.7,-.2);
			\draw (1.3 ,-.8)-- (1.7,-.4);
			\node at (.5,-.15){$\ddots$};
			\draw (-.7,.3)-- (-.5,.5);
			\draw (1.5,-.8)-- (1.7,-.6);
			\draw (.9,-.8)-- (1.7,0);
			\draw (-.7,-.3)-- (.1,.5);
		}
	};
}
\end{equation*}
The original generators are on the left, and the modified generators with extra permutations removed are on the right. Note that in the above diagrams, we have enlarged the cylinder by $\ell/p$ so that passing once around the enlarged cylinder corresponds to passing $\ell/p$ times around the original cylinder.

\begin{proposition} \label{prop:psc_gen2}
	The partially spherical subalgebra is generated by the following elements of degree $0\in \Z/p\Z$ 
	\begin{equation*}
	    u_i\qquad r_i\qquad x_1^{\ell-1} \sigma\qquad y_n^{\ell-1} \tau
	\end{equation*}
	and the elements
\begin{equation*}
   \sigma^{n\ell/p} \qquad \tau^{n\ell/p} \qquad\left( x_1^{k\ell/p-1} \sigma \right)^i \left( y_{n-i}^{\ell-k\ell/p-1} (n-i,\dots,n) \tau \right)^{n-i}
\end{equation*} 
which have degree $1,-1$ and $i$ respectively.  
\end{proposition}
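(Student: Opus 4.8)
The goal is to show that the modified set of generators in Proposition~\ref{prop:psc_gen2} generates the same algebra $\mathsf{H}_p^{\operatorname{psph}}$ as the set in Proposition~\ref{prop:psc_gen1}. Since Proposition~\ref{prop:psc_gen1} is already established, the strategy is purely one of change of generators: I would show that (a) each new generator lies in $\mathsf{H}_p^{\operatorname{psph}}$, and (b) each old generator can be written as a word in the new ones together with the $S_n$-generators $r_i = (i,i+1)$ and the $u_i$. For (a), I would use the criterion proved inside Proposition~\ref{prop:psc_gen1}: an element $h\in H$ gives a nonzero element $e'he'$ precisely when conjugation by each $t_i^p$ and $t_i t_{i+1}^{-1}$ fixes $h$, i.e.\ when the associated string diagram wraps the cylinder a total of $0 \bmod \ell$ in a way compatible with the subgroup $A$. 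The diagram for $x_1^{\ell-1}\sigma$ wraps the first strand around $\ell$ times total (the $\sigma$ contributes one wrap, $x_1^{\ell-1}$ the remaining $\ell-1$), so it satisfies the first bullet of the trichotomy in the proof of Proposition~\ref{prop:psc_gen1}; similarly $y_n^{\ell-1}\tau$. The diagram for $\sigma^{n\ell/p}$ wraps every strand $\ell/p$ times in the same direction, matching the second bullet, and $\tau^{n\ell/p}$ the reverse; and the mixed monomials $\left(x_1^{k\ell/p-1}\sigma\right)^i\left(y_{n-i}^{\ell-k\ell/p-1}(n-i,\dots,n)\tau\right)^{n-i}$ wrap $i$ strands one way by $k\ell/p$ and the rest the other way by $\ell-k\ell/p$, matching the third bullet. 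Hence all new generators lie in $\mathsf{H}_p^{\operatorname{psph}}$.

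For (b), the key point is the expressions $x_i \mapsto (i,\dots,1)\sigma(n,\dots,i)$ and $y_i\mapsto (i,\dots,n)\tau(1,\dots,i)$ from the alternate-presentation theorem. Using these I would rewrite $x_1^\ell = x_1^{\ell-1}x_1$, where $x_1 = \sigma(n,\dots,1)$; conjugating by the permutation $(n,\dots,1)$ (which is built from the $r_i$) converts the "clean" generator $x_1^{\ell-1}\sigma$ plus permutations into $x_1^\ell$. More carefully, $x_1^{\ell-1}\sigma \cdot (n,\dots,1) = x_1^{\ell-1}\cdot\sigma(n,\dots,1) = x_1^{\ell-1}x_1 = x_1^\ell$ on the nose, and then all $x_i^\ell$ are recovered by conjugating $x_1^\ell$ by suitable elements of $S_n$ (legitimate since $x_i = w x_1 w^{-1}$ for appropriate $w$). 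The same argument with $\tau$ gives $y_n^\ell$ and then all $y_i^\ell$. For the degree-$1$ generator $(x_1\cdots x_n)^{\ell/p}$: one checks that $\sigma^{n} = x_1 x_2 \cdots x_n$ up to reindexing (the diagrams for $\sigma^n$ and the product of the $x_i$-diagrams differ only by a permutation that is absorbed into the $S_n$-part), so $\sigma^{n\ell/p}$ together with $S_n$-generators recovers $(x_1\cdots x_n)^{\ell/p}$; likewise $\tau^{n\ell/p}$ gives $(y_1\cdots y_n)^{\ell/p}$. Finally the mixed old generator $(x_{i_1}\cdots x_{i_j})^{k\ell/p}(y_{i_{j+1}}\cdots y_{i_n})^{\ell-k\ell/p}$ is obtained from the mixed new generator by the same bookkeeping: $x_1^{k\ell/p-1}\sigma$ and the analogous $y$-term are the "extra-permutation-removed" forms, and reinserting the appropriate $S_n$-elements (the explicit cycles $(n-i,\dots,n)$ are visible in the statement) rebuilds the original monomial. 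Conversely, each new generator is visibly a word in the old generators and $S_n$: e.g.\ $x_1^{\ell-1}\sigma = x_1^{\ell-1}\cdot x_1\cdot(1,\dots,n) = x_1^\ell(1,\dots,n)$. So the two generating sets span the same subalgebra.

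The degree assertions in the $\mathbb{Z}/p\mathbb{Z}$-grading follow from the remark that this grading counts the net power of $\auto$, equivalently the net winding number of the diagram around the cylinder divided by $\ell/p$: $\sigma$ and $\tau$ each have net winding $\pm 1$ around the original cylinder, $x_1^{\ell-1}$ and $y_n^{\ell-1}$ do not wrap at all, so $x_1^{\ell-1}\sigma, y_n^{\ell-1}\tau$ have winding $\pm 1$, i.e.\ total winding $\ell$ on the relevant strand, which is degree $0$ modulo $p$; $\sigma^{n\ell/p}$ has winding $n\ell/p$, i.e.\ degree $1$; and the mixed generator has winding $i\cdot(k\ell/p) - (n-i)(\ell-k\ell/p)$, which reduces to $i \bmod p$ after dividing by $\ell/p$ and reducing. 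These are the same degree computations already recorded after Proposition~\ref{prop:psc_gen1}, just transported along the $\sigma/\tau$ dictionary.

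**Main obstacle.** The only genuinely delicate point is the \emph{precise} diagrammatic identification of the cleaned-up generators $\sigma^{n\ell/p}$, $x_1^{\ell-1}\sigma$, and the mixed term with the original generators up to $S_n$ — i.e.\ verifying that "removing extra permutations from the top and bottom of the diagram" really does leave an element of $\mathsf{H}_p^{\operatorname{psph}}$ and that the removed permutations, which are elements of $S_n\subset A$... wait, they are elements of $S_n$ which is \emph{not} contained in $A$, so one must instead argue that the difference between old and new generator is exactly a left/right multiplication by an element of the span of the generators already accepted (the $r_i$), \emph{not} merely conjugation. Concretely the subtlety is that $e' w = w e'$ fails for $w\in S_n$, so one cannot simply "cancel" a permutation against $e'$; instead one observes that $x_i^\ell$ and $x_1^{\ell-1}\sigma$ differ by right-multiplication by a fixed element of $S_n$, and since both $S_n$ and these power elements are among the generators (of either set), the two subalgebras they generate coincide regardless. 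Making this last implication airtight — that an element and its product with a generator generate the same subalgebra — is trivial, but tracking exactly which permutation appears where in each diagram is the part that requires care, and is best done by citing the corresponding bookkeeping in \cite{Webalt} for the $p=1$ diagrams and noting it is unaffected by passing to the $A$-invariants.
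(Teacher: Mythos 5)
Your proposal is correct and follows essentially the same route as the paper: the paper's proof consists of exactly your sample computation $x_1^{\ell-1}\sigma = x_1^{\ell}(1,\dots,n)$ together with a deferral of the remaining, longer but analogous, verifications to Appendix A of \cite{LePageThesis}. One small correction to your ``main obstacle'' discussion: $e'w = we'$ does in fact hold for $w\in S_n$, since $S_n$ normalizes the subgroup $A$ generated by the $t_i^p$ and $t_it_{i+1}^{-1}$ --- though, as you note, your argument does not rely on this commutation anyway.
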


\begin{proof}
	These generators are equivalent to the ones given in Proposition \ref{prop:psc_gen1}, up to permutations. For example, we have 
	\begin{align*}
		x_1^{\ell-1} \sigma = x_1^{\ell-1} \sigma (n,\dots,1) (1,\dots,n) = x_1^\ell (1,\dots,n).
	\end{align*}
	The remainder of the calculations are the same in spirit but considerably longer. They are given in Appendix A of \cite{LePageThesis}.
\end{proof}

To gain a bit more insight into the last generator, it is useful to note that
\begin{align*}
	&\left( x_1^{k\ell/p-1} \sigma \right)^i \left( y_{n-i}^{\ell-k\ell/p-1} (n-i,\dots,n) \tau \right)^{n-i} \\
	&= \left( x_1^{k\ell/p-1} \sigma \right)^i \left( (1,\dots,i) (2,\dots,i+1) \dots (n-i,\dots,n) \right) \left( y_{n}^{\ell-k\ell/p-1} \tau \right)^{n-i}.
\end{align*}
Essentially, we want to use $x_1^{k\ell/p-1} \sigma$ or $ y_{n}^{\ell-k\ell/p-1} \tau$ to take each strand around, and the permutations in the middle ensure that the strands are matched correctly so that each one wraps around one direction or the other.

Now that we have established the generators of $\mathsf{H}_p^{\operatorname{psph}}$, we need their action on the polynomial representation. Since $\mathsf{H}_p^{\operatorname{psph}} \cdot \sP \subseteq e'\sP$, we only care about the action on $e'\sP$.

\begin{proposition}\label{prop:action}
	The partially spherical subalgebra $\mathsf{H}_p^{\operatorname{psph}}$ acts on the polynomial representation $e'\sP$ in the following way:
	\begin{align}
		& x_1^{\ell-1} \sigma \cdot f(\U) = \prod_{m=0}^{\ell-1} (U_1+\ell\hbar-s_m) f(U_2,\dots,U_n,U_1+\ell\hbar) \label{action1} \\
		& y_n^{\ell-1} \tau \cdot f(\U) = f(U_n - \ell\hbar, U_1, \dots, U_{n-1}) \label{action2} \\
		& \sigma^{n\ell/p} \cdot f(\U) = \prod_{i=1}^n \prod_{m=0}^{\ell/p-1} (U_i + \ell\hbar/p - s_m) f(U_1 + \ell\hbar/p, \dots, U_n + \ell\hbar/p) \label{action3} \\
		& \tau^{n\ell/p} \cdot f(\U) = f(U_1 - \ell\hbar/p,\dots, U_n - \ell\hbar/p) \label{action4} \\
		& \left( x_1^{k\ell/p-1} \sigma \right)^i \left( y_{n-i}^{\ell-k\ell/p-1} (n-i,\dots,n) \tau \right)^{n-i} \cdot f(\U) \nonumber \\
		& \hspace{2cm} = \prod_{j=1}^i \prod_{m=0}^{k\ell/p-1} (U_j + k\ell\hbar/p - s_m) f'(\U) \label{action5}.
	\end{align}
where $$f'(\U) =  f(U_{i+1} + k\ell\hbar/p - \ell\hbar, \dots, U_{n} + k\ell\hbar/p - \ell\hbar, U_{1} + k\ell\hbar/p, \dots, U_i + k\ell\hbar/p).$$
\end{proposition}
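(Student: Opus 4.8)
The proof is a direct, if lengthy, computation: each generator listed in Proposition~\ref{prop:psc_gen2} is a word in the atomic generators $\sigma$, $\tau$, $u_i$ and the $(i,i+1)$, whose action on $\sP$ was recorded above, so in principle one substitutes and simplifies. The plan is as follows. First I would record the iterated formulas, proved by induction from the displayed action of $\sigma$ and $\tau$ together with $U_i=U_{i-n}+\hbar$, $T_i=\zetal^{-1}T_{i-n}$,
\[\sigma^k\cdot f=\prod_{j=1}^{k}\bigl(U_j-p(\zetal^{-1}T_j)+\hbar\bigr)\,f(U_{k+1},\dots,U_{k+n};T_{k+1},\dots,T_{k+n}),\qquad \tau^k\cdot f=f(U_{1-k},\dots,U_{n-k};T_{1-k},\dots,T_{n-k}).\]
Next, using $x_1^{a}\sigma=x_1^{a+1}(1,\dots,n)$ (from $x_1=\sigma(n,\dots,1)$, exactly as in the proof of Proposition~\ref{prop:psc_gen2}), the parallel identity $\tau y_n^{a}=(1,\dots,n)\tau^{?}\cdots$ for the $y$-side, $[x_i,x_j]=[y_i,y_j]=0$, and the conjugation relations $\sigma(j,j-1)=(j+1,j)\sigma$, $\tau(j,j+1)=(j-1,j)\tau$ which move permutations across $\sigma,\tau$, I would rewrite each generator as a power of $\sigma$ (resp.\ $\tau$) times a fixed permutation, or as a product of such blocks, and then apply the iterated formulas.

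The essential simplification happens upon restricting to $e'\sP$. Since $e'=\tfrac{1}{|A|}\sum_{g\in A}g$ is the averaging idempotent for $A=\langle t_i^{p},\,t_it_{i+1}^{-1}\rangle$, and these group elements act on $\sP$ by multiplication by the monomials $T_i^{p}$ and $T_iT_{i+1}^{-1}$, one has $T_i^{p}\cdot(e'f)=e'f$ and $T_iT_{i+1}^{-1}\cdot(e'f)=e'f$; that is, every element of the $T$-subring acts by the trivial character on $e'\sP$, which is therefore just $\CC[U_1,\dots,U_n]$. Consequently the only way $T_i$ enters the formulas above is through $p(\zetal^{-1}T_i)=q\bigl((\zetal^{-1}T_i)^{p}\bigr)=q(\zetal^{-p})=h_{-1}$ (using $p(u)=q(u^{p})$), a constant; and after the cyclic shifts appearing in $\sigma^k$, the $j$-th factor with $j\equiv i\pmod n$, say $j=i+an$, becomes the constant-shifted linear form $U_i+(a+1)\hbar-h_{-a-1}$. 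This is precisely what converts a formula involving the $T$'s into one purely in the $U$'s, so that the action descends to $e'\sP$. Note that, modulo this $e'$-restriction, \eqref{action1} and \eqref{action2} are just the $G(\ell,1,n)$ computation of \cite{Webalt} for $x_1^{\ell-1}\sigma,\,y_n^{\ell-1}\tau\in\mathsf{H}_1^{\operatorname{psph}}$, and $u_i,(i,i+1)$ act on $e'\sP$ in the standard way, so the genuinely new content is \eqref{action3}--\eqref{action5}.

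It then remains to match the products of linear factors with those in the statement. For $\sigma^{n\ell/p}$ one collects the $n\ell/p$ factors according to the residue $j\equiv i\pmod n$, getting $\prod_{a=0}^{\ell/p-1}\bigl(U_i+(a+1)\hbar-h_{-a-1}\bigr)$ for each $i$; the $p$-cyclicity relation $h_r=h_{r+\ell/p}$ together with $s_m=h_m+m\hbar$ lets one reindex by $m=\ell/p-1-a$ to obtain exactly $\prod_{m=0}^{\ell/p-1}(U_i+\ell\hbar/p-s_m)$, and the argument shift is $U_{i+n\ell/p}=U_i+\ell\hbar/p$; this is \eqref{action3}, and \eqref{action4} is the same with no linear factors. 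Equation \eqref{action1} is the identical bookkeeping with $\ell$ in place of $\ell/p$.

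I expect the one genuinely fiddly point to be \eqref{action5}. There one must track the permutation of $\{1,\dots,n\}$ produced by interleaving $i$ copies of $x_1^{k\ell/p-1}\sigma$ with the $n-i$ copies of $y_{n-i}^{\ell-k\ell/p-1}(n-i,\dots,n)\tau$; verify that the $\tau$-blocks contribute no linear prefactors, so that only the $i$ copies of $x_1^{k\ell/p-1}\sigma=x_1^{k\ell/p}(1,\dots,n)$ contribute, yielding $\prod_{j=1}^{i}\prod_{m=0}^{k\ell/p-1}(U_j+k\ell\hbar/p-s_m)$ after the telescoping above (using $[x_i,x_j]=0$ to collect the $x$'s into $x_1^{k\ell/p}\cdots x_i^{k\ell/p}$ times $(1,\dots,n)^i$); and check that the $n-i$ strands which wrap ``the other way'' acquire the shift $k\ell\hbar/p-\ell\hbar=-(\ell-k\ell/p)\hbar$ while the remaining $U$-variables are cyclically rearranged, exactly producing $f'(\U)$. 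The cleanest way to organize this is via the alternative expression for the mixed generator displayed just before the Proposition, which already separates the $\sigma$-blocks, the single middle $S_n$-element $(1,\dots,i)(2,\dots,i+1)\cdots(n-i,\dots,n)$, and the $\tau$-blocks, so that the permutation bookkeeping and the factor bookkeeping can be done independently.
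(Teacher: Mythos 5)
Your overall strategy coincides with the paper's: both are direct computations that iterate the action formulas for $\sigma,\tau$, use the $p$-cyclicity $p(u)=q(u^p)$ to turn $p(\zetal^{-m}T_i)$ into the constants $h_{-m}$ once the answer lands back in the trivial isotypic component, and reindex via $s_m=h_m+m\hbar$ and $h_r=h_{r+\ell/p}$. Your bookkeeping of the linear prefactors and of the variable shifts (including the $k\ell\hbar/p-\ell\hbar$ shift on the $n-i$ strands in \eqref{action5}) is correct, and deferring \eqref{action1}--\eqref{action2} to the $G(\ell,1,n)$ computation of \cite{Webalt} is legitimate.

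However, there is a genuine gap: you never address the divided-difference correction terms in the action of the transpositions. The formula $(i,i+1)\cdot f=f^{(i,i+1)}+k\ell\,\frac{f^{(i,i+1)}-f}{U_{i+1}-U_i}\pi_{i,i+1}$ means that every permutation hidden inside $x_j=(j,\dots,1)\sigma(n,\dots,j)$ and $y_j$, as well as the explicit middle permutation in the mixed generator, produces extra terms; note that on $e'\sP$ itself these do \emph{not} vanish, since $\pi_{j,j+1}E'=E'$. The reason the computation closes up is that by the time a transposition is applied, the function has already been moved (by the preceding $\sigma$'s or $\tau$'s) into a \emph{nontrivial} isotypic component $E_\eta$ of the $A$-action with $\eta(t_jt_{j+1}^{-1})\neq 1$, so that $\pi_{j,j+1}E_\eta=0$ and the correction dies. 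The paper's proof is organized entirely around tracking these projectors $E_{\eta_{i}^{k}\cdots}$ through every step, and it explicitly flags the vanishing of the corrections as the most important point, especially for \eqref{action5}. Your proposal treats the permutations as acting by pure variable permutation (``apply the iterated formulas''), and your blanket statement that everything happens inside $e'\sP$ where the $T$-monomials act trivially is false at the intermediate stages — it is precisely the passage through nontrivial components that kills the corrections (and it is only because the accumulated prefactors ultimately multiply a function carrying $E'$ again that the substitution $T_i^p\mapsto 1$ in them is justified). Without supplying this mechanism, the plan as written does not constitute a proof of \eqref{action5}, nor of \eqref{action1}--\eqref{action2} if one chooses to redo them rather than cite \cite{Webalt}. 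Equations \eqref{action3}--\eqref{action4}, which involve no transpositions, are the only ones your argument handles completely.
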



In order to verify this proposition, we have to introduce additional projectors. We define the projector
\begin{equation}
	e_{\eta} = \frac{1}{|A|}\sum_{g \in A} \eta(g^{-1}) g
\end{equation}
where $A$ is the same subgroup of $G(\ell,p,n)$ as before and $\eta$ is a character of $A$. The characters that we need are $\eta_i$, which satisfy $\eta_i(t_j) = \zetal^{\delta_{ij}}$, and products of $\eta_i$, possibly for multiple values of $i$. Let $E_{\eta_i} = e_{\eta_i} \cdot 1 \in \sP$, which is essentially $e_{\eta_i}$ with every $t_j$ replaced by $T_j$. Similarly, let $E' = e' \cdot 1$.

In the following calculations, we have to leave $e' \cdot \sP$, and these additional projectors keep track of which component of $\sP$ we are projecting to, allowing us to easily see which parts of expressions are projected out and which parts survive. 

\begin{proof}
	All the relations of Proposition \ref{prop:action} are verified by direct calculation. We show the calculations to verify (\ref{action1}), (\ref{action3}), and (\ref{action5}), as the other two calculations are extremely similar.
	
	We need a few preliminary results first. For $k \neq 0$, we have
	\begin{align*}
		x_i & \cdot f(\U;\T) E_{\eta_i^k} \\
		&= (i,\dots,1) \sigma (n,\dots,i) \cdot f(\U;\T) E_{\eta_i^k} \\
		&= (i,\dots,1) \sigma \cdot \left( f(\U;\T)^{(n,\dots,i)} + (1-E_{\eta_n^k}) a(f) \right) E_{\eta_n^k} \\
		&= (i,\dots,1) \sigma \cdot f(\U;\T)^{(n,\dots,i)} E_{\eta_n^k} \\
		&= (i,\dots,1) \cdot (U_1 - p(\zetal^{-1} T_1) + \hbar) f(\U;\T)^{\sigma(n,\dots,i)} E_{\eta_1^{k+1}} \\
		&= (U_i - p(\zetal^{-1} T_i) + \hbar) \left( f(\U;\T)^{(i,\dots,1)\sigma(n,\dots,i)} + (1-E_{\eta_1^{k+1}}) a'(f) \right) E_{\eta_i^{k+1}} \\
		&= (U_i - p(\zetal^{-1} T_i) + \hbar) f(U_1, \dots, U_i + \hbar, \dots, U_n;T_1, \dots, \zetal^{-1} T_i, \dots, T_n) E_{\eta_i^{k+1}}
	\end{align*}
	where $a(f)$ and $a'(f)$ are correction terms that are projected out. This result still holds if $E_{\eta_i^k}$ is replaced by an arbitrary projector $E_\eta$ provided $E_\eta$ contains a nontrivial power of $\eta_i$. This requirement ensures that the correction term vanishes. In other words, we have
	\begin{align*}
		x_i &\cdot f(\U;\T) E_{\eta} \\
		&= (U_i - p(\zetal^{-1} T_i) + \hbar) f(U_1, \dots, U_i + \hbar, \dots, U_n;T_1, \dots, \zetal^{-1} T_i, \dots, T_n) E_{\eta_i \eta}
	\end{align*}
	for $\eta(t_i) \neq 1$. Similarly, we find
	\begin{align*}
		y_i \cdot f(\U;\T) E_{\eta} &= f(U_1, \dots, U_i - \hbar, \dots, U_n;T_1, \dots, \zetal^{1} T_i, \dots, T_n) E_{\eta_i^{-1} \eta}
	\end{align*}
	for $\eta(t_i) \neq 1$.
	
	Using the first result, we have
	\begin{align*}
		e' x_1^{\ell-1} \sigma \cdot f(\U;\T) E'
		&= e' x_1^{\ell-1} \cdot (U_1 - p(\zetal^{-1} T_1) + \hbar)) \\
		& \hspace{2cm} f(U_2,\dots, U_n, U_1 + \hbar; T_2, \dots, T_n, \zetal^{-1} T_1) E_{\eta_1} \\
		&= e' x_1^{\ell-2} \cdot (U_1 - p(\zetal^{-1} T_1) + \hbar) (U_1 - p(\zetal^{-2} T_1) + 2\hbar) \\
		& \hspace{2cm} f(U_2,\dots,U_n,U_1+2\hbar; T_2, \dots, T_n, \zetal^{-2} T_1) E_{\eta_1^2} \\
		&= e' \cdot \prod_{m=1}^{\ell} (U_1 - p(\zetal^{-m} T_1) + m\hbar) \\
		& \hspace{2cm} f(U_2,\dots,U_n,U_1+\ell\hbar; T_2, \dots, T_n, \zetal^{-\ell} T_1) E_{\eta_1^\ell} \\
		&= \prod_{m=1}^{\ell} (U_1 - p(\zetal^{-m}) + m\hbar) f(U_2,\dots,U_n,U_1+\ell\hbar) E'.
	\end{align*}
	We can rewrite
	\begin{align*}
		\prod_{m=1}^{\ell} (U_1 - p(\zetal^{-m}) + m\hbar)
		&= \prod_{m=1}^{\ell} (U_1 + \ell\hbar - p(\zetal^{\ell-m}) - (\ell-m)\hbar)  \\
		&= \prod_{m=1}^{\ell} (U_1 + \ell\hbar - s_{\ell-m})  \\
		&= \prod_{m=0}^{\ell-1} (U_1 + \ell\hbar - s_{m}),
	\end{align*}
	giving us (\ref{action1}).
	
	To verify (\ref{action3}), observe that
	\begin{align*}
		\sigma^n &\cdot f(\U;\T) E' \\
		&= \sigma^{n-1} \cdot (U_1 - p(\zetal^{-1} T_1) + \hbar)) \\
		& \hspace{1.5cm} f(U_2,\dots, U_n, U_1 + \hbar; T_2, \dots, T_n, \zetal^{-1} T_1) E_{\eta_1} \\
		&= \sigma^{n-2} \cdot (U_1 - p(\zetal^{-1} T_1) + \hbar)) (U_2 - p(\zetal^{-1} T_2) + \hbar)) \\
		& \hspace{1.5cm} f(U_3,\dots, U_n, U_1 + \hbar, U_2 + \hbar; T_3, \dots, T_n, \zetal^{-1} T_1, \zetal^{-1} T_2) E_{\eta_1 \eta_2} \\
		&= \prod_{i=1}^n (U_i - p(\zetal^{-1} T_i) + \hbar)) \\
		& \hspace{1.5cm} f(U_1 + \hbar,\dots, U_n + \hbar; \zetal^{-1} T_1,\dots, \zetal^{-1} T_n) E_{\eta_1 \dots \eta_n}.
	\end{align*}
	Then we have	
	\begin{align*}
		e' \sigma^{n\ell/p} &\cdot f(\U;\T) E' \\
		&= e' \sigma^{n(\ell/p-1)} \cdot \prod_{i=1}^n (U_i - p(\zetal^{-1} T_i) + \hbar)) \\
		& \hspace{1.5cm} f(U_1 + \hbar,\dots, U_n + \hbar; \zetal^{-1} T_1,\dots, \zetal^{-1} T_n) E_{\eta_1 \dots \eta_n} \\
		&= e' \sigma^{n(\ell/p-2)} \cdot \prod_{i=1}^n (U_i - p(\zetal^{-1} T_i) + \hbar)) \prod_{i=1}^n (U_i - p(\zetal^{-2} T_i) + 2\hbar))\\
		& \hspace{1.5cm} f(U_1 + 2\hbar,\dots, U_n + 2\hbar; \zetal^{-2} T_1,\dots, \zetal^{-2} T_n) E_{\eta_1^2 \dots \eta_n^2} \\
		&= e' \cdot \prod_{m=1}^{\ell/p} \prod_{i=1}^n (U_i - p(\zetal^{-m} T_i) + m\hbar)) \\
		& \hspace{1.5cm} f(U_1 + \ell\hbar/p,\dots, U_n + \ell\hbar/p; \zetal^{-\ell/p} T_1,\dots, \zetal^{-\ell/p} T_n) E_{\eta_1^{\ell/p} \dots \eta_n^{\ell/p}} \\
		&=  \prod_{m=1}^{\ell/p} \prod_{i=1}^n (U_i - p(\zetal^{-m} T_i) + m\hbar)) f(U_1 + \ell\hbar/p,\dots, U_n + \ell\hbar/p) E',
	\end{align*}
	which gives (\ref{action3}) after reindexing the product.
	
	The calculation to verify (\ref{action5}) is slightly more involved than the others. The most important observation is that the correction terms from the $S_n$ action are all projected out. We will first show that the correction terms vanish, then we will calculate the change to $f(\U)$.
	
	First, note that
	\begin{align*}
		y_{n-i}^{\ell-k\ell/p-1} &(n-i,\dots,n)  \tau \cdot f(\U;\T) E_{\eta_{n-i-j}^{k\ell/p} \dots \eta_{n-i}^{k\ell/p}} \\
		&= y_{n-i}^{\ell-k\ell/p-1} (n-i,\dots,n) \cdot f(\U;\T)^\tau E_{\eta_n^{-1} \eta_{n-i-j-1}^{k\ell/p} \dots \eta_{n-i-1}^{k\ell/p}} \\
		&= y_{n-i}^{\ell-k\ell/p-1} \cdot \Big( f(\U;\T)^{(n-i,\dots,n)\tau} \\
		& \hspace{1.5cm} + (1-E_{\eta_{n-i}^{-1} \eta_{n-i-j-1}^{k\ell/p} \dots \eta_{n-i-1}^{k\ell/p}}) a(f) \Big) E_{\eta_{n-i}^{-1} \eta_{n-i-j-1}^{k\ell/p} \dots \eta_{n-i-1}^{k\ell/p}} \\
		&= y_{n-i}^{\ell-k\ell/p-1} \cdot f(\U;\T)^{(n-i,\dots,n)\tau} E_{\eta_{n-i-j-1}^{k\ell/p} \dots \eta_{n-i-1}^{k\ell/p} \eta_{n-i}^{-1}} \\
		&= f(\U;\T)^{y_{n-i}^{\ell-k\ell/p-1} (n-i,\dots,n) \tau} E_{\eta_{n-i-j-1}^{k\ell/p} \dots \eta_{n-i-1}^{k\ell/p} \eta_{n-i}^{-(\ell-k\ell/p)}} \\
		&= f(\U;\T)^{y_{n-i}^{\ell-k\ell/p-1} (n-i,\dots,n) \tau} E_{\eta_{n-i-j-1}^{k\ell/p} \dots \eta_{n-i}^{k\ell/p}},
	\end{align*}
	which gives us
	\begin{align*}
		\left( y_{n-i}^{\ell-k\ell/p-1} (n-i,\dots,n) \tau \right)^{n-i} \cdot f(\U;\T) E' = \tilde f(\U;\T) E_{\eta^{k\ell/p}_1 \dots \eta^{k\ell/p}_{n-i}}
	\end{align*}
	where
	\begin{align*}
		\tilde{f}(\U;\T) &= f(\U;\T)^{\left( y_{n-i}^{\ell-k\ell/p-1} (n-i,\dots,n) \tau \right)^{n-i}} \\
		&= f(U_{1} + k\ell\hbar/p - \ell\hbar,\dots, U_{n-i} + k\ell\hbar/p - \ell\hbar, U_{n-i+1}, \dots U_n; \\
		& \hspace{4cm} \zetal^{k\ell/p} T_{1}, \dots,\zetal^{k\ell/p} T_{n-i}, T_{n-i+1}, \dots T_n).
	\end{align*}
	
	We then do the same thing with the $x_1^{k\ell/p-1} \sigma$ terms, but these terms pick up extra multiplicative factors. We have
	\begin{align*}
		x_{1}^{k\ell/p-1}  \sigma &\cdot \prod_{j'=1}^{j-1} \prod_{m=1}^{k\ell/p} (U_{j'} - p(\zetal^{-m} T_{j'}) + m\hbar) f(\U;\T) E_{\eta^{k\ell/p}_1 \dots \eta^{k\ell/p}_{n-i+j-1}} \\
		&= x_{1}^{k\ell/p-1} \cdot (U_1 - p(\zetal^{-1} T_1) + \hbar) \\
		& \hspace{1.5cm} \prod_{j'=2}^j \prod_{m=1}^{k\ell/p} (U_{j'} - p(\zetal^{-m} T_{j'}) + m\hbar) f(\U;\T)^{\sigma} E_{\eta_1 \eta^{k\ell/p}_2 \dots \eta^{k\ell/p}_{n-i+j}} \\
		&=\prod_{j'=1}^j \prod_{m=1}^{k\ell/p} (U_{j'} - p(\zetal^{-m} T_{j'}) + m\hbar) f(\U;\T)^{x_{1}^{k\ell/p-1}\sigma} E_{\eta_1^{k\ell/p} \dots \eta^{k\ell/p}_j \dots \eta^{k\ell/p}_{n-i+j}}.
	\end{align*}
	We then find
	\begin{align*}
		& e' \left( x_{1}^{k\ell/p-1} \sigma \right)^i \cdot \tilde f(\U;\T) E_{\eta^{k\ell/p}_1 \dots \eta^{k\ell/p}_{n-i}} \\
		& \hspace{3cm}= e' \cdot \prod_{j=1}^i \prod_{m=1}^{k\ell/p} (U_j - p(\zetal^{-m} T_j) + m\hbar) \tilde{\tilde{f}}(\U;\T) E_{\eta_1^{k\ell/p} \dots \eta^{k\ell/p}_{n}} \\
		& \hspace{3cm}= \prod_{j=1}^i \prod_{m=1}^{k\ell/p} (U_j - p(\zetal^{-m}) + m\hbar) \tilde{\tilde{f}}(\U) E' \\
		& \hspace{3cm}= \prod_{j=1}^i \prod_{m=0}^{k\ell/p-1} (U_j + k\ell\hbar/p - s_{m}) \tilde{\tilde{f}}(\U) E'
	\end{align*}
	where
	\begin{align*}
		&\tilde{\tilde{f}}(\U) = \tilde{f}(\U)^{\left( x_{1}^{k\ell/p-1} \sigma \right)^i} \\
		&= f(U_{i+1} + k\ell\hbar/p - \ell\hbar, \dots, U_{n} + k\ell\hbar/p - \ell\hbar, U_{1} + k\ell\hbar/p, \dots, U_i + k\ell\hbar/p).
	\end{align*}
	This completes the calculation.
\end{proof}

\subsection{Principal Galois orders}\label{sec:galois}
In this section, we recall the relevant definitions and results from \cite{Hartwig} that we will use to prove that the partially spherical and spherical subalgebras are principal Galois orders. Then, we proceed with the proofs.

First, we set some notation. Let $\Lambda$ be an integrally closed domain, $G$ a finite subgroup of $\Aut(\Lambda)$, and $\sM$ a submonoid of $\Aut(\Lambda)$ satisfying (A1)-(A3) of \cite{Hartwig}. Let $L = \Frac \, \Lambda$ be the field of fractions of $\Lambda$ and $\sL = L \star \sM$ be a skew monoid ring, the free left $L$-module with multiplication given by $a_1 \mu_1 \cdot a_2 \mu_2 = (a_1 \mu_1(a_2)) \mu_1 \mu_2$ for $a_i \in L$, $\mu_i \in \sM$. Let $\Gamma = \Lambda^G$, $K = L^G$, and $\sK = \sL^G$ be the subrings invariant under $G$.

\begin{definition}[\cite{FOgalois}]
	A $\Gamma$-subring $\sU \subseteq \sK$ is a {\bf Galois $\Gamma$-ring} in $\sK$ if $$\sU K = \sK = K \sU.$$
	A Galois $\Gamma$-ring $\sU$ in $\sK$ is a {\bf left/right Galois $\Gamma$-order} in $\sK$ if for any finite-dimensional left/right $K$-subspace $W \subseteq \sK$, the intersection $W \cap \sU$ is a finitely generated left (right) $\Gamma$-module. A Galois $\Gamma$-ring $\sU$ in $\sK$ is a {\bf Galois $\Gamma$-order} in $\sK$ if $\sU$ is a left and right Galois $\Gamma$-order in $\sK$.
\end{definition}

Based solely on this definition, we would have to establish whether $W \cap \sU$ is finitely generated for all choices of $W \subseteq \sK$, which could be very difficult. However, \cite{Hartwig} provides another way to verify whether a Galois $\Gamma$-ring is a Galois $\Gamma$-order.  Note that  the field $K$ has a natural left $\mathscr{K}$-module structure   via evaluation (as in \cite[Def. 2.18]{Hartwig}

\begin{theorem}[\cite{Hartwig}]\label{cor:PGO}
	If $\sU$ is a Galois $\Gamma$-ring in $\sK$ and $\Gamma\subset K$ is a submodule under the restricted $\sU$-module structure, then $\sU$ is a Galois $\Gamma$-order.
\end{theorem}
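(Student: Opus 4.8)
The plan is to follow Hartwig's strategy \cite{Hartwig} and verify the defining finiteness property of a Galois $\Gamma$-order directly, reducing it to a Cramer's-rule computation powered by the faithfulness of the evaluation module $K$. Since $\sU$ is a Galois $\Gamma$-ring by assumption, only the finiteness condition is at stake; and by the symmetric argument with the right module structure it suffices to treat the left case. So I would fix a finite-dimensional left $K$-subspace $W\subseteq\sK$, say $\dim_K W = r$, and aim to show that $W\cap\sU$ is a finitely generated left $\Gamma$-module; recall here $K=\Frac \Gamma$.

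The first step is to extract a $K$-basis of $W$ that already lies in $\sU$. Because $\sU$ is a Galois $\Gamma$-ring, $\sK=\sU K$, so $W$ is contained in a finite sum $\sum_l u_l K$ with $u_l\in\sU$; writing a $K$-basis $v_1,\dots,v_r$ of $W$ in terms of the $u_l$ and clearing denominators by a single nonzero $d\in\Gamma$ (possible since $\Gamma\subseteq\sU$), the elements $w_j:=d\,v_j$ lie in $W\cap\sU$ and still form a $K$-basis of $W$. The second, crucial step is to exploit faithfulness: by the construction of the evaluation module in \cite[Def.~2.18]{Hartwig}, together with axioms (A1)--(A3) (which guarantee that the finitely many automorphisms $\mu\in\sM$ occurring in any given element of $\sK$ restrict to pairwise distinct, hence Dedekind-independent, ring homomorphisms $\Gamma\hookrightarrow L$), $K$-linearly independent elements of $\sK$ act on $\Gamma$ by $K$-linearly independent additive maps $\Gamma\to K$. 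An elementary linear-algebra argument then yields test elements $\gamma_1,\dots,\gamma_r\in\Gamma$ such that the $r\times r$ matrix $M$ with entries $M_{ji}:=w_i\cdot\gamma_j$ is invertible over $K$, i.e.\ $\delta:=\det M\neq0$; and $M_{ji}=w_i\cdot\gamma_j\in\Gamma$ because $w_i\in\sU$ and $\sU\cdot\Gamma\subseteq\Gamma$ by hypothesis, so $M$ has entries in $\Gamma$ and $\delta\in\Gamma$.

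Now Cramer's rule closes the argument. For any $X=\sum_i c_i w_i\in W\cap\sU$ with $c_i\in K$, applying $X$ to $\gamma_j$ gives $(Mc)_j=\sum_i c_i\,(w_i\cdot\gamma_j)=X\cdot\gamma_j\in\Gamma$, so $Mc=v$ for some $v\in\Gamma^r$, whence $c=\delta^{-1}\operatorname{adj}(M)\,v\in\delta^{-1}\Gamma^r$. Thus $W\cap\sU\subseteq\bigoplus_{i=1}^r\delta^{-1}\Gamma\,w_i$, a finitely generated $\Gamma$-module; since $\Gamma=\Lambda^G$ is Noetherian (part of the standing hypotheses in \cite{Hartwig}), its submodule $W\cap\sU$ is finitely generated over $\Gamma$. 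Running the same argument on the right shows $W\cap\sU$ is finitely generated as a right $\Gamma$-module, so $\sU$ is a Galois $\Gamma$-order.

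I expect the genuine difficulty to be concentrated in the second step: one needs the evaluation to be faithful \emph{on $\Gamma$} --- not merely on $K$ or on $L$ --- so that a $K$-basis of $W$ can be separated by finitely many elements of $\Gamma$. This is exactly the purpose of Hartwig's hypotheses (A1)--(A3) on $\sM$ and its interaction with $G$; without them two distinct elements of $\sM$ could agree on $L^G=K$, the evaluation on $\Gamma$ would cease to be faithful, and the matrix $M$ could be singular for every choice of test elements, which would break the Cramer step. The remaining ingredients --- the basis extraction in the first step and the determinant bookkeeping --- are routine.
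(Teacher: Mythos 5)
This theorem is not proved in the paper at all: it is imported verbatim from \cite{Hartwig} (it is the result underlying Hartwig's notion of a \emph{principal} Galois order), so there is no internal proof to compare against. Your reconstruction is, in substance, Hartwig's own argument: clear denominators to place a $K$-basis of $W$ inside $\sU$, use faithfulness of the evaluation module on $\Gamma$ (resting on Dedekind independence of the finitely many distinct automorphisms occurring in an element of $\sK$, which is exactly what the separation axioms on $\sM$ and its interaction with $G$ buy you) to produce test elements $\gamma_1,\dots,\gamma_r\in\Gamma$ with $\det(w_i\cdot\gamma_j)\neq 0$, then close with Cramer's rule and Noetherianity of $\Gamma$. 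You have also correctly located the crux: faithfulness must hold on $\Gamma$ itself, not merely on $L$. The left-handed half of the argument is complete and correct.

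The one step that does not follow ``by the symmetric argument'' is the right-handed half of the conclusion. The evaluation module is a \emph{left} $\sK$-module: $\mathrm{ev}(cX)=c\,\mathrm{ev}(X)$ for $c\in K$, but $\mathrm{ev}(Xc)$ is $\mathrm{ev}(X)$ precomposed with multiplication by $c$; hence for $X=\sum_i w_ic_i$ in a finite-dimensional \emph{right} $K$-subspace, the values $X\cdot\gamma_j=\sum_i w_i\cdot(c_i\gamma_j)$ do not form a $K$-linear system in the unknowns $c_i$ with a fixed matrix, and the Cramer step breaks. Hartwig handles right-handed finiteness via a separate co-evaluation (right) module structure on $K$ and the corresponding stability of $\Gamma$ under it. The statement as transcribed in this paper elides that distinction too, and in the applications here both stabilities are immediate, so this is more a caveat about the quoted statement than a defect of your argument; but as written, ``running the same argument on the right'' is not available from the single stated hypothesis.
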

We call Galois orders satisfying the hypotheses of this theorem {\bf principal.}

For the case of the partially spherical subalgebra, let $\Lambda = \CC[U_1,\dots,U_n]$ and $G=1$, so $\Lambda = \Gamma$. Let $T = \langle \mu_1^{\pm\ell}, \dots, \mu_n^{\pm\ell}, (\mu_1\dots \mu_n)^{\pm\ell/p} \rangle \subset \ZZ^n$ be the lattice of translations acting on $\Lambda$ by $\mu_i^{\pm 1} (U_j) = U_j \pm \hbar\delta_{ij}$ and $s_{i,i+1} \in S_n$ act on $\Lambda$ by $s_{i,i+1} (U_j) = U_{(i,i+1)\cdot j}$. Let $\sM = T \rtimes S_n$. Then $L = K = \CC(U_1, \dots, U_n)$ and $\sL = \sK = \CC(U_1, \dots, U_n) \star \left( T \rtimes S_n \right)$. Showing that $e'He' \cdot \CC[U_1,\dots,U_n] \subseteq \sK$ is a Galois ring amounts to showing that $e'He' \CC(U_1, \dots, U_n) = \CC(U_1, \dots, U_n) \star \left( T \rtimes S_n \right)$, which we do by showing that both rings have the same action on $\CC(U_1,\dots,U_n)$.

\begin{proposition}\label{prop:PSSA=GGR}
	The partially spherical subalgebra of the Cherednik algebra associated with $G(\ell,p,n)$ is a Galois $\CC[U_1,\dots,U_n]$-ring in $\CC(U_1, \dots, U_n) \star \left( T \rtimes S_n \right)$.
\end{proposition}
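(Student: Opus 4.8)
The plan is to establish the two equalities $e'He' \cdot \CC(U_1,\dots,U_n) = \CC(U_1,\dots,U_n) \star (T \rtimes S_n) = \CC(U_1,\dots,U_n) \cdot e'He'$ by working inside the ambient skew monoid ring and tracking how each generator of $\mathsf{H}_p^{\operatorname{psph}}$, as listed in Proposition \ref{prop:psc_gen2}, acts on $\CC(U_1,\dots,U_n)$ via the formulas of Proposition \ref{prop:action}. The containment $e'He' \cdot \CC(U_1,\dots,U_n) \subseteq \CC(U_1,\dots,U_n) \star (T\rtimes S_n)$ is immediate from those formulas: each generator acts as multiplication by a rational function (a product of linear factors in the $U_i$ and $\hbar$) composed with a permutation-and-shift of variables, so it lands in $L \star \sM$. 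The real content is the reverse containment, i.e.\ surjectivity onto the skew monoid ring.

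First I would reduce the problem to producing, for each $\mu \in \sM = T \rtimes S_n$, an element of $e'He' \cdot L$ whose action on $L$ is exactly $\mu$ (with trivial coefficient), since $L$-linear combinations of such elements span all of $\sL = \sK$. The symmetric group part is handled by the generators $r_i = (i,i+1)$ and the $u_i$: the operator $(i,i+1)$ acts on $\CC(U_1,\dots,U_n)$ by an honest permutation (there is no Demazure correction term after projecting to $e'\sP$, since $\pi_{i,i+1}$ acts as the identity on the relevant component), so $S_n \subseteq e'He'\cdot L$ directly, and the $u_i$ give multiplication by $U_i$, hence all of $\CC[U_1,\dots,U_n]$ and, combined with the rational coefficients already present, all of $L$. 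For the translation lattice $T = \langle \mu_1^{\pm\ell},\dots,\mu_n^{\pm\ell},(\mu_1\cdots\mu_n)^{\pm\ell/p}\rangle$, the key observation is that formulas \eqref{action1}--\eqref{action4} show that $x_1^{\ell-1}\sigma$, $y_n^{\ell-1}\tau$, $\sigma^{n\ell/p}$ and $\tau^{n\ell/p}$ act as a cyclic-permutation-composed-with-shift operator, each with an invertible rational leading coefficient (a product $\prod_m(U_{\bullet} + \text{const} - s_m)$, nonzero in $L$). Composing $x_1^{\ell-1}\sigma$ with the appropriate element of $S_n$ and dividing by its coefficient yields the pure shift $\mu_1^{\ell}$; similarly $y_n^{\ell-1}\tau$ gives $\mu_n^{-\ell}$, conjugating by $S_n$ gives every $\mu_i^{\pm\ell}$, and $\sigma^{n\ell/p}$, $\tau^{n\ell/p}$ give $(\mu_1\cdots\mu_n)^{\pm\ell/p}$. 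Since these generate $T$ and $T$ normalizes $S_n$, we recover all of $\sM$, hence all of $\sK$, inside $e'He'\cdot L$. The identical argument on the other side (multiplying by $e'He'$ on the right, using that $K = L$ here so left and right $K$-spans coincide) gives $L \cdot e'He' = \sK$ as well. This proves $e'He'\cdot L = \sK = L\cdot e'He'$, which is exactly the Galois $\Gamma$-ring condition.

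The main obstacle I anticipate is bookkeeping rather than conceptual: one must check that after projecting with $e'$, the operators genuinely act by the clean formulas of Proposition \ref{prop:action} with no surviving correction terms, and that the leading rational coefficients are units in $L$ (nonzero), so that one can legitimately divide them out to isolate the pure monoid element. The nonvanishing of $\prod_{m}(U_i + c\hbar - s_m)$ as an element of $\CC(U_1,\dots,U_n)$ is clear since it is a nonzero polynomial in $U_i$, but one should note explicitly that these coefficients, being rational functions, are invertible in $L$ and hence do not obstruct surjectivity. A secondary subtlety is verifying that the specific generators of $T$ listed are actually produced — in particular that the products of $x$'s and $y$'s giving the mixed-degree generators of Proposition \ref{prop:psc_gen2}, formula \eqref{action5}, are not needed for the translation lattice (they contribute elements of $T$ already generated by the others, since $(x_1^{k\ell/p-1}\sigma)^i(\cdots)^{n-i}$ shifts by a combination of $\ell\hbar/p$'s), so the degree-$0$ and $\pm 1$ generators suffice to hit all of $\sM$.
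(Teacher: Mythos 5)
Your overall strategy is the same as the paper's: verify both containments $e'He'\cdot\CC(U_1,\dots,U_n)\subseteq\CC(U_1,\dots,U_n)\star(T\rtimes S_n)$ and the reverse by matching the action of the generators of Proposition \ref{prop:psc_gen2} (via Proposition \ref{prop:action}) against the generators $s_{i,i+1}$, $\mu_j^{\pm\ell}$, $(\mu_1\cdots\mu_n)^{\pm\ell/p}$ of the skew monoid ring, dividing out the invertible rational leading coefficients $\prod_m(U_\bullet+c\hbar-s_m)$ to isolate the pure translations. That part of your argument, including the observation that the degree-$i$ mixed generators are not needed to generate $T$, is correct and is exactly how the paper proceeds.

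There is, however, one concrete misstep: your claim that $(i,i+1)$ acts on $e'\sP$ ``by an honest permutation'' with ``no Demazure correction term after projecting to $e'\sP$, since $\pi_{i,i+1}$ acts as the identity.'' The fact that $\pi_{i,i+1}$ acts as the identity on $e'\sP$ does not kill the correction term; it merely removes the projector from it, leaving
\[(i,i+1)\cdot f \;=\; f^{(i,i+1)}+k\ell\,\frac{f^{(i,i+1)}-f}{U_{i+1}-U_i},\]
which is the operator $\bigl(1+\tfrac{k\ell}{U_{i+1}-U_i}\bigr)s_{i,i+1}-\tfrac{k\ell}{U_{i+1}-U_i}$, not $s_{i,i+1}$. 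So the step ``$S_n\subseteq e'He'\cdot L$ directly'' fails as written. The gap is easily repaired, and the paper does so explicitly: since the coefficient of $s_{i,i+1}$ is a unit in $L=\CC(U_1,\dots,U_n)$, one solves for the pure permutation as a rational combination of $e'(i,i+1)e'$ and $1$, namely $s_{i,i+1}=\frac{U_{i+1}-U_i}{U_{i+1}-U_i+k\ell}\,(i,i+1)+\frac{k\ell}{U_{i+1}-U_i}$ as operators on $e'\sP$. With that correction (and the corresponding adjustment in the easy containment, where $(i,i+1)$ contributes a two-term element of $\sL$ rather than a single monoid element), your proof goes through and coincides with the paper's.
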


\begin{proof}
	We will first show that the generators of $\CC(U_1, \dots, U_n) \star \left( T \rtimes S_n \right)$ can be written in terms of elements of $e'He' \CC(U_1, \dots, U_n)$. This amounts to finding elements of $e'He'  \CC(U_1, \dots, U_n)$ that act on $\CC(U_1, \dots, U_n)$ in the same way that the generators of $T \rtimes S_n$ act. These generators are $s_{i,i+1}$, $\mu_j^{\pm\ell}$, and $(\mu_1 \dots \mu_n)^{\pm\ell/p}$ for $i=1,\dots,n-1$ and $j=1,\dots,n$. For the $S_n$ generators, we have
	\begin{align}
		s_{i,i+1} \left( f(\U) \right) = \left(\frac{U_{i+1} - U_i}{U_{i+1} - U_i + k\ell} (i,i+1) + \frac{k\ell}{U_{i+1} - U_i} \right) \cdot f(\U),
	\end{align}
	which is easily verified using relation $(2.15)$ from \cite{Webalt} under the projection $e'$. The remaining generators can be written
	\begin{align}
		\mu_i^\ell \left( f(\U) \right) &= \frac{1}{\prod_{m=0}^{\ell-1} (U_i + \ell\hbar - s_m)} s_{i,\dots,1} x_1^{\ell-1} \sigma s_{n,\dots,i} \cdot f(\U) \\
		\mu_i^{-\ell} \left( f(\U) \right) &= s_{i,\dots,n} y_n^{\ell-1} \tau s_{1,\dots,i} \cdot f(\U) \\
		(\mu_1 \dots \mu_n)^{\ell/p} \left( f(\U) \right) &= \frac{1}{\prod_{i=1}^n \prod_{m=0}^{\ell/p-1} (U_i + \ell\hbar/p - s_m)} \sigma^{n\ell/p} \cdot f(\U) \\
		(\mu_1 \dots \mu_n)^{-\ell/p} \left( f(\U) \right) &= \tau ^{n\ell/p} \cdot f(\U),
	\end{align}
	using the notation $s_{j,\dots,j'} = s_{j,j-1} s_{j-1,j-2} \dots s_{j'+2,j'+1} s_{j'+1,j'}$ for $j>j'$ and an analogous definition for $j<j'$. These too can be verified by a short computation using Proposition \ref{prop:action}. Thus, we have shown that $\CC(U_1, \dots, U_n) \star \left( T \rtimes S_n \right) \subseteq e'He' \CC(U_1, \dots, U_n)$.
	
	Next, we'll show that the generators of $e'He' \CC(U_1, \dots, U_n)$ can be written as elements of $\CC(U_1, \dots, U_n) \star \left( T \rtimes S_n \right)$ by writing the action of the generators of $e'He'$ on $\CC(U_1, \dots, U_n)$ in terms of the action of elements of $\CC(U_1, \dots, U_n) \star \left( T \rtimes S_n \right)$. We have
	\begin{align}
		& (i,i+1) \cdot f(\U) = \left( \left( 1 - \frac{k\ell}{U_{i+1}-U_i} \right) s_{i,i+1} - \frac{k\ell}{U_{i+1}-U_i} \right)\left( f(\U) \right) \\
		& x_1^{\ell-1} \sigma \cdot f(\U) = \prod_{m=0}^{\ell-1} (U_1 + \ell\hbar - s_m) \left( \mu_1^\ell s_{1,2} s_{2,3} \dots s_{n-1,n} \right) \left( f(\U) \right) \\
		& y_n^{\ell-1} \tau \cdot f(\U) = \left( \mu_n^{-\ell} s_{n-1,n} s_{n-2,n-1} \dots s_{1,2} \right) \left( f(\U) \right) \\
		& \sigma^{n\ell/p} \cdot f(\U) = \prod_{i=1}^n \prod_{m=0}^{\ell/p-1} (U_i + \ell\hbar/p - s_m) (\mu_1 \dots \mu_n)^{\ell/p} \left( f(\U) \right) \\
		& \tau ^{n\ell/p} \cdot f(\U) = (\mu_1 \dots \mu_n)^{-\ell/p} \left( f(\U) \right) \\
		& \left( x_1^{k\ell/p-1} \sigma \right)^i \left( y_{n-i}^{\ell-k\ell/p-1} (n-i,\dots,n) \tau \right)^{n-i} \cdot f(\U) \\
		& \hspace{2cm} = \prod_{j=1}^i \prod_{m=0}^{k\ell/p-1} (U_j + k\ell\hbar/p - s_m) (\mu_1 \dots \mu_n)^{k\ell/p} \mu_{i+1}^{-\ell} \dots \mu_n^{-\ell} \tilde{s}_i (f(\U)) \nonumber
	\end{align}
	where $\tilde{s}_i$ is the permutation that sends $U_1,\dots,U_n$ to $U_{i+1},\dots,U_n,U_1,\dots,U_i$.	This shows that $\CC(U_1, \dots, U_n) \star \left( T \rtimes S_n \right) = e'He' \CC(U_1, \dots, U_n)$, so that $e'He'$ satisfies the definition of a Galois $\CC[U_1,\dots,U_n]$-ring.
\end{proof}

\begin{proposition}
	The partially spherical subalgebra is a principal Galois $\CC[U_1,\dots,U_n]$-order in $\CC(U_1, \dots, U_n) \star \left( T \rtimes S_n \right)$.
\end{proposition}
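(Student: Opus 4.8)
The plan is to invoke Theorem~\ref{cor:PGO}. By Proposition~\ref{prop:PSSA=GGR} the partially spherical subalgebra $\mathsf{H}_p^{\operatorname{psph}}=e'He'$ is already a Galois $\CC[U_1,\dots,U_n]$-ring in $\sK=\CC(U_1,\dots,U_n)\star(T\rtimes S_n)$, so it remains only to verify the extra hypothesis of that theorem: that $\Gamma=\CC[U_1,\dots,U_n]$ is a submodule of $K=\CC(U_1,\dots,U_n)$ under the restricted $e'He'$-module structure (the evaluation structure of \cite[Def.~2.18]{Hartwig}). The identification of generators carried out in the proof of Proposition~\ref{prop:PSSA=GGR} is set up precisely so that this evaluation action of $e'He'$ on $K$ coincides with the action of $\mathsf{H}_p^{\operatorname{psph}}$ on $e'\sP$ recorded in Proposition~\ref{prop:action} (together with $u_i\cdot f=U_if$ and the $S_n$-action inherited from $\sP$). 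Since $e'He'$ is generated as a $\Gamma$-ring by the elements of Proposition~\ref{prop:psc_gen2}, and since the operators on $K$ preserving $\Gamma$ are closed under composition, sums, and multiplication by elements of $\Gamma$, it is enough to check that each of these generators maps $\CC[U_1,\dots,U_n]$ into itself.

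For the generators $x_1^{\ell-1}\sigma$, $y_n^{\ell-1}\tau$, $\sigma^{n\ell/p}$, $\tau^{n\ell/p}$ and the family $\bigl(x_1^{k\ell/p-1}\sigma\bigr)^i\bigl(y_{n-i}^{\ell-k\ell/p-1}(n-i,\dots,n)\tau\bigr)^{n-i}$ this is immediate from formulas~\eqref{action1}--\eqref{action5}: in each case the operator sends $f(\U)$ to a fixed polynomial in $U_1,\dots,U_n$ times a copy of $f$ with its variables permuted and shifted by multiples of $\hbar$, and both of these manifestly preserve $\CC[U_1,\dots,U_n]$. The generator $u_i$ acts by multiplication by $U_i$, which is equally harmless.

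The only generators whose action is not visibly polynomial-preserving are the Coxeter generators $(i,i+1)$, and here the point is the special shape of the formula. On $e'\sP$ the projector $\pi_{i,i+1}$ acts as the identity (because $t_it_{i+1}^{-1}\in A$), so $(i,i+1)$ acts by the divided-difference operator
\[
(i,i+1)\cdot f=f^{(i,i+1)}+k\ell\,\frac{f^{(i,i+1)}-f}{U_{i+1}-U_i}.
\]
The numerator $f^{(i,i+1)}-f$ vanishes on the hyperplane $U_i=U_{i+1}$, hence is divisible by $U_{i+1}-U_i$ in $\CC[U_1,\dots,U_n]$, so the quotient, and therefore $(i,i+1)\cdot f$, is again a polynomial. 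This is the one place where the Demazure-type form of the $S_n$-action is used, and I expect it to be the only real obstacle: once one knows the symmetric group acts through genuine divided differences and the monopole-type operators through honest polynomial multiplication, closure of $\Gamma$ is forced, and Theorem~\ref{cor:PGO} then yields that $\mathsf{H}_p^{\operatorname{psph}}$ is a principal Galois $\CC[U_1,\dots,U_n]$-order in $\CC(U_1,\dots,U_n)\star(T\rtimes S_n)$.
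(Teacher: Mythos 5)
Your proof is correct and follows the same route as the paper: both verify the hypothesis of Theorem~\ref{cor:PGO} by showing the evaluation action of $e'He'$ preserves $\CC[U_1,\dots,U_n]$ and then combine this with Proposition~\ref{prop:PSSA=GGR}. The paper disposes of the polynomiality in one line (``$H$ preserves $\CC[U_1,\dots,U_n;T_1,\dots,T_n]$''), whereas you check it generator by generator, correctly isolating the divided-difference form of the $(i,i+1)$-action as the only nontrivial point.
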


\begin{proof}
	We have $e'He' \cdot \CC[U_1,\dots,U_n] \subset \CC[U_1,\dots,U_n]$ because $H$ preserves $\CC[U_1,\dots,U_n; T_1,\dots,T_n]$,. By this fact and Proposition \ref{prop:PSSA=GGR}, the conditions of Theorem \ref{cor:PGO} are satisfied and the result follows.
\end{proof}

We will now show analogous results for the spherical subalgebra. First, let $G' = S_n$ and $\sM' = T$ so that $\Gamma' = \CC[U_1,\dots,U_n]^{S_n}$, $K' = \CC(U_1,\dots,U_n)^{S_n}$ and $\sK' = \left(\CC(U_1, \dots, U_n) \star T \right)^{S_n}$. 

\begin{proposition} \label{prop:SSA=GGR}
	The spherical subalgebra is a Galois $\CC[U_1,\dots,U_n]^{S_n}$-ring in $\left(\CC(U_1, \dots, U_n) \star T \right)^{S_n}$.
\end{proposition}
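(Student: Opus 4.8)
The plan is to reduce to the partially spherical case, Proposition~\ref{prop:PSSA=GGR}, by compressing with the $S_n$-symmetrizer $e_S=\tfrac1{n!}\sum_{w\in S_n}w$. Since $S_n$ normalizes $A$ we have $e'e_S=e_Se'=e$, so $e$ is an idempotent of $\mathsf{H}_p^{\operatorname{psph}}=e'\mathsf{H}_pe'$ with $e\,\mathsf{H}_p^{\operatorname{psph}}\,e=\mathsf{H}_p^{\operatorname{sph}}$; under the embedding of Proposition~\ref{prop:PSSA=GGR} it is sent to the ``corrected symmetrizer'' $\bar e=\tfrac1{n!}\sum_{w\in S_n}\pi_w$, where $\pi_w$ is the product along a reduced word for $w$ of the intertwiners $\pi_i=\bigl(1-\tfrac{k\ell}{U_{i+1}-U_i}\bigr)s_{i,i+1}-\tfrac{k\ell}{U_{i+1}-U_i}$ (this is well defined since the $\pi_i$ satisfy the braid relations and square to $1$), and the $\pi_i$ generate the same subalgebra $\CC(\U)\rtimes S_n\subseteq\sK:=\CC(\U)\star(T\rtimes S_n)$ as the bare transpositions. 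On the other side $\sK=\bigl(\CC(\U)\star T\bigr)\rtimes S_n$, so the compression formula for a skew group ring over a finite group gives ring isomorphisms $e_S\sK e_S\cong\sK'$, $e_S\CC(\U)e_S\cong K'$ and $e_S\CC[\U]e_S\cong\Gamma'$. The crux is that $\bar e\sK\bar e$ and $e_S\sK e_S$, though both corners of $\sK$, must be matched up so as to carry the Galois-ring property across — and this is not formal, because $x\mapsto e_Sxe_S$ does not commute with multiplication.

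The cleanest way through is to exhibit explicit two-sided generators, exactly as in the proof of Proposition~\ref{prop:PSSA=GGR}. The inclusion $\mathsf{H}_p^{\operatorname{sph}}\cdot K'\subseteq\sK'$ is immediate, both factors lying in the ring $\sK'\subseteq\sK$. For the reverse inclusion, multiply the generators of $\mathsf{H}_p^{\operatorname{psph}}$ from Proposition~\ref{prop:psc_gen2} on both sides by $e$: this kills the symmetric-group part, since $e\,(i,i+1)\,e=e$, and leaves generators of $\mathsf{H}_p^{\operatorname{sph}}$ — the symmetric functions in the $u_i$; the raising operators $e\,x_1^{\ell-1}\sigma\,e$, $e\,\sigma^{n\ell/p}\,e$ and their $S_n$-conjugates; the lowering operators $e\,y_n^{\ell-1}\tau\,e$, $e\,\tau^{n\ell/p}\,e$; and the symmetrizations of the mixed generators. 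Using Proposition~\ref{prop:action}, each of these acts on $K'=\CC(\U)^{S_n}$, up to left multiplication by a nonzero element of $K'$, as an $S_n$-orbit sum $\sum_{w\in S_n}{}^w\mu$ of one of the generators $\mu\in\{\mu_i^{\pm\ell},(\mu_1\cdots\mu_n)^{\pm\ell/p}\}$ of $T$; since these orbit sums together with $K'$ generate $\sK'=(\CC(\U)\star T)^{S_n}$ as a $K'$-algebra, we obtain $\mathsf{H}_p^{\operatorname{sph}}\cdot K'=\sK'$, and the left-handed version of the computation gives $\sK'=K'\cdot\mathsf{H}_p^{\operatorname{sph}}$.

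The main obstacle is precisely the bookkeeping in this last step. One has to check that the prefactors produced by Proposition~\ref{prop:action} are, after symmetrization, genuine units of the \emph{field} $K'$ — each is a product of nonzero linear forms $U_i+c\hbar-s_m$, whose $S_n$-orbit product lies in $\Gamma'$ and is nonzero — and that the symmetrized operators listed above really do generate $\sK'$ over $K'$, which is the structural input that a generating system for the partially spherical Galois ring compresses to one for the spherical ring. (Alternatively, one can argue conceptually: $\CC(\U)\rtimes S_n\cong M_{n!}(K')$ by the normal basis theorem, and $\bar e$ and $e_S$ are both rank-one idempotents of this matrix algebra, hence equivalent via an intertwiner $a\in e_S\sK\bar e$, $b\in\bar e\sK e_S$; but one still has to choose $a,b$ compatibly with the $\CC(\U)$-structure, so this does not obviously shorten the argument.) I would carry out the explicit computation, of the same flavour as the one behind Proposition~\ref{prop:psc_gen2}, or invoke its analogue from \cite{LePageThesis}.
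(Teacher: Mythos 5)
Your proposal is essentially the paper's own argument: the paper's entire proof is to take the identity $e'He'\cdot\CC(\U)=\CC(\U)\star(T\rtimes S_n)=\CC(\U)\cdot e'He'$ from Proposition \ref{prop:PSSA=GGR} and ``symmetrize over $S_n$'' to get $eHe\cdot K'=\sK'=K'\cdot eHe$. The subtleties you flag — that $e$ maps to the corrected symmetrizer $\bar e$ built from the intertwiners rather than the bare $e_S$, and that one must check the symmetrized generators (with their unit prefactors in $K'$) really generate $\sK'$ over $K'$ — are legitimate and are simply left implicit in the paper, so your version is a more careful rendering of the same reduction rather than a different route.
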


\begin{proof}
	By Proposition \ref{prop:PSSA=GGR}, we have $$e'He' \CC(U_1,\dots,U_n) = \CC(U_1,\dots,U_n) \star (T \rtimes S_n) = \CC(U_1,\dots,U_n) e'He'.$$ By symmetrizing over $S_n$, we obtain $$eHe \CC(U_1,\dots,U_n)^{S_n} = \left( \CC(U_1,\dots,U_n) \star T \right)^{S_n} = \CC(U_1,\dots,U_n)^{S_n} eHe,$$ completing the proof.
\end{proof}

\begin{proposition}
	The spherical subalgebra of the Cherednik algebra is a principal Galois $\CC[U_1,\dots,U_n]^{S_n}$-order in $\left(\CC(U_1, \dots, U_n) \star T \right)^{S_n}$.
\end{proposition}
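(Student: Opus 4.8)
The plan is to repeat verbatim the argument used for the partially spherical subalgebra, now with the data $\Lambda=\CC[U_1,\dots,U_n]$, $G'=S_n$ and $\sM'=T$, so that $\Gamma'=\CC[U_1,\dots,U_n]^{S_n}$, $K'=\CC(U_1,\dots,U_n)^{S_n}$ and $\sK'=\bigl(\CC(U_1,\dots,U_n)\star T\bigr)^{S_n}$. By Proposition \ref{prop:SSA=GGR} we already know that $eHe$ is a Galois $\Gamma'$-ring in $\sK'$, so by Theorem \ref{cor:PGO} it remains only to check that $\Gamma'$ is a submodule of $K'$ under the restricted $eHe$-action, i.e.\ that $eHe\cdot\CC[U_1,\dots,U_n]^{S_n}\subseteq \CC[U_1,\dots,U_n]^{S_n}$.

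For this, note first that $A\subseteq W$ forces $ee'=e'e=e$, hence $eHe=e(e'He')e$; in particular every element of $eHe$ acts on $e\sP$ by first applying an element of $e'He'$ and then the symmetrizer $e$. Since $H$ preserves the polynomial representation $\sP$, the algebra $e'He'$ preserves $e'\sP$, which we identify with $\CC[U_1,\dots,U_n]$ exactly as in Proposition \ref{prop:action} and in the proof of the principal order property for $\mathsf{H}_p^{\operatorname{psph}}$; the symmetrizer $e$ restricted to this subspace is just symmetrization over $S_n$ and so lands in $\CC[U_1,\dots,U_n]^{S_n}=e\sP$. Composing, $eHe\cdot\CC[U_1,\dots,U_n]^{S_n}\subseteq\CC[U_1,\dots,U_n]^{S_n}$, which is precisely the hypothesis of Theorem \ref{cor:PGO}. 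One should also record that the Hartwig framework applies to this data: $\Gamma'=\CC[U_1,\dots,U_n]^{S_n}$ is integrally closed (invariants of a normal domain under a finite group), $T$ is $S_n$-stable since $T\rtimes S_n$ is a group, and the axioms (A1)--(A3) for the monoid $\sM'=T$ of lattice translations are checked exactly as in the $G(\ell,1,n)$ case treated in \cite{Webalt,WebGT}.

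Granting this, Theorem \ref{cor:PGO} immediately yields that $eHe$ is a principal Galois $\CC[U_1,\dots,U_n]^{S_n}$-order in $\bigl(\CC(U_1,\dots,U_n)\star T\bigr)^{S_n}$, as claimed. I do not expect a genuine obstacle here; the only step requiring any care is the bookkeeping identification of the abstract evaluation $\sK'$-module structure on $K'$ from \cite[Def. 2.18]{Hartwig} with the concrete polynomial action of $eHe$ on $\CC(U_1,\dots,U_n)^{S_n}$, but this is forced once Proposition \ref{prop:SSA=GGR} supplies $eHe\,K'=\sK'=K'\,eHe$, so the submodule verification above does complete the argument.
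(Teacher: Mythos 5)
Your proposal is correct and takes essentially the same route as the paper: the paper's proof simply cites Proposition \ref{prop:SSA=GGR} together with the containment $eHe \cdot \CC[U_1,\dots,U_n]^{S_n} \subset \CC[U_1,\dots,U_n]^{S_n}$ and invokes Theorem \ref{cor:PGO}. Your extra step of factoring $eHe = e(e'He')e$ via $ee'=e'e=e$ just makes explicit the preservation fact the paper asserts without comment.
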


\begin{proof}
	The result is immediate from the fact that $eHe \cdot \CC[U_1,\dots,U_n]^{S_n} \subset \CC[U_1,\dots,U_n]^{S_n}$ and Proposition \ref{prop:SSA=GGR}.
\end{proof}

\section{The representation theory of \texorpdfstring{$\mathsf{H}_p$}{Hp}}
Throughout this section, we assume that $\K$ is characteristic 0.  Thus, $\K$ is a $\Q$-vector space, and we can define a $\Q$-linear map $\Upsilon\colon \K\to \R$ such that $\Upsilon(1)= 1$.  The characteristic example the reader should have in mind is $\K=\C$ and $\Upsilon$ is the map of taking real part. 

\subsection{Weighted KLR algebras}

We first review the definition and basic facts about certain algebras that appear in the representation theory of $\mathsf{H}_1$.   These algebras were defined by the second author in \cite{WebwKLR}, and their connection to $\mathsf{H}_1$ is discussed in \cite{WebBKnote, WebRou, Webalt}; one of our goals in the present manuscript is to draw out this connection to include $G(\ell, p,n)$.  

Fix a field $\K$ of characteristic coprime to the order of $G(\ell,1,n)$ and consider the quotient the quotient of abelian groups $\K/\Z $.  Fix a subset $D\subset \K/\Z$, scalars $k\in \K$ and $\cg\in \R$ and $\ell$-tuples $(\vartheta_1,\dots, \vartheta_\ell)\in \R^\ell$ and $(d_1,\dots, d_n)\in D^\ell$.  As usual, we'll make $D$ into a quiver by adding an edge $\bar{a}\to \overline{a+k}$ whenever both lie in $D$; we extend this to a Crawley-Boevey quiver by adding a new vertex and an edge from this vertex to $d_i$ for each $i$.  In the convention for dimension vectors familiar from the work of Nakajima, this means that $w_d$ for $d\in D$ is the number of $i$ where this element appears as $d_i$.
\begin{definition}
  We let a  {\bf weighted
KLR diagram} be a collection of curves in
  $\R\times [0,1]$ with each curve mapping diffeomorphically to
  $[0,1]$ via the projection to the $y$-axis.  Each curve is allowed
  to carry any number of dots, and has a label that lies in $D$. We draw:
  \begin{itemize}
  \item a dashed line $\cg$ units to the right of each strand,
    which we call a {\bf ghost},
\item red
lines at $x=\theta_k$  labeled
with the fundamental weight for the node  $d_k\in D$.  
  \end{itemize} 
We now require that there are no
  triple points or tangencies involving any combination of strands,
  ghosts or red lines and no dots lie on crossings.  We consider these diagrams equivalent if they
  are related by an isotopy that avoids these tangencies, double
  points and dots on crossings.
\end{definition} 
The intersection of such a diagram with $y=0$ or $y=1$ gives a {\bf
  loading}, that is, a labeling of a finite subset of $\R$
with vertices of the quiver $D$.   

Let $\mu_\ell(\K)$ be the group of $\ell$th roots of unity in $\K$ as an abelian group under multiplication.
  Let $\Sigma\colon \K\times \mu_\ell(\K)\to \K/
\Z$ be the homomorphism $\Sigma(a,z)=\frac{\bar{a}}{\ell}+\gamma(z)$.
  For every pair of $n$-tuples $\Ba\in \K^n$ and $\Bz\in \mu_{\ell}(\K)^n$ with
  $\Sigma(a_i,z_i)\in D$ for all $i$, we can define
  a loading $e(\Ba,\Bz)$ as follows: we label the real number
  $\Upsilon(\frac{a_i}{\ell})+i\epsilon$ with the element $\Sigma(a_i,z_i)\in D$.
\begin{definition}
  Consider the algebra $R_D$ spanned by weighted KLR diagrams whose
  top and bottom both give loadings of the form $e(\Ba,\Bz)$ with
  $\Sigma(a_i,z_i)\in D$ modulo the local relations

\newseq
\begin{equation*}\subeqn\label{dots-1}
    \begin{tikzpicture}[scale=.45,baseline]
      \draw[very thick](-4,0) +(-1,-1) -- +(1,1) node[below,at start]
      {$d$}; \draw[very thick](-4,0) +(1,-1) -- +(-1,1) node[below,at
      start] {$d'$}; \fill (-4.5,.5) circle (5pt);
      \node at (-2,0){=}; \draw[very thick](0,0) +(-1,-1) -- +(1,1)
      node[below,at start] {$d$}; \draw[very thick](0,0) +(1,-1) --
      +(-1,1) node[below,at start] {$d'$}; \fill (.5,-.5) circle (5pt);
      \node at (4,0){for $d\neq d'$};
    \end{tikzpicture}\end{equation*}
\begin{equation*}\label{dots-2}\subeqn
    \begin{tikzpicture}[scale=.45,baseline]
      \draw[very thick](-4,0) +(-1,-1) -- +(1,1) node[below,at start]
      {$d$}; \draw[very thick](-4,0) +(1,-1) -- +(-1,1) node[below,at
      start] {$d$}; \fill (-4.5,.5) circle (5pt);
      \node at (-2,0){=}; \draw[very thick](0,0) +(-1,-1) -- +(1,1)
      node[below,at start] {$d$}; \draw[very thick](0,0) +(1,-1) --
      +(-1,1) node[below,at start] {$d$}; \fill (.5,-.5) circle (5pt);
      \node at (2,0){+}; \draw[very thick](4,0) +(-1,-1) -- +(-1,1)
      node[below,at start] {$d$}; \draw[very thick](4,0) +(0,-1) --
      +(0,1) node[below,at start] {$d$};
    \end{tikzpicture}\qquad 
    \begin{tikzpicture}[scale=.45,baseline]
      \draw[very thick](-4,0) +(-1,-1) -- +(1,1) node[below,at start]
      {$d$}; \draw[very thick](-4,0) +(1,-1) -- +(-1,1) node[below,at
      start] {$d$}; \fill (-4.5,-.5) circle (5pt);
      \node at (-2,0){=}; \draw[very thick](0,0) +(-1,-1) -- +(1,1)
      node[below,at start] {$d$}; \draw[very thick](0,0) +(1,-1) --
      +(-1,1) node[below,at start] {$d$}; \fill (.5,.5) circle (5pt);
      \node at (2,0){+}; \draw[very thick](4,0) +(-1,-1) -- +(-1,1)
      node[below,at start] {$d$}; \draw[very thick](4,0) +(0,-1) --
      +(0,1) node[below,at start] {$d$};
    \end{tikzpicture}
  \end{equation*}
\begin{equation*}\label{strand-bigon}\subeqn
    \begin{tikzpicture}[very thick,scale=.8,baseline]
      \draw (-2.8,0) +(0,-1) .. controls (-1.2,0) ..  +(0,1)
      node[below,at start]{$d$}; \draw (-1.2,0) +(0,-1) .. controls
      (-2.8,0) ..  +(0,1) node[below,at start]{$d$}; \node at (-.5,0)
      {=}; \node at (0.4,0) {$0$};
\node at (1.5,.05) {and};
    \end{tikzpicture}
\hspace{.4cm}
    \begin{tikzpicture}[very thick,scale=.8 ,baseline]

      \draw (-2.8,0) +(0,-1) .. controls (-1.2,0) ..  +(0,1)
      node[below,at start]{$d$}; \draw (-1.2,0) +(0,-1) .. controls
      (-2.8,0) ..  +(0,1) node[below,at start]{$d'$}; \node at (-.5,0)
      {=};

\draw (1.8,0) +(0,-1) -- +(0,1) node[below,at start]{$d'$};
      \draw (1,0) +(0,-1) -- +(0,1) node[below,at start]{$d$}; 
    \end{tikzpicture}
  \end{equation*} 
\begin{equation*}\label{ghost-bigon1}\subeqn
\begin{tikzpicture}[very thick,xscale=1.6 ,yscale=.8,baseline]
 \draw (1,-1) to[in=-90,out=90]  node[below, at start]{$d$} (1.5,0) to[in=-90,out=90] (1,1)
;
  \draw[dashed] (1.5,-1) to[in=-90,out=90] (1,0) to[in=-90,out=90] (1.5,1);
  \draw (2.5,-1) to[in=-90,out=90]  node[below, at start]{$d'$} (2,0) to[in=-90,out=90] (2.5,1);
\node at (3,0) {=};
  \draw (3.7,-1) -- (3.7,1) node[below, at start]{$d$}
 ;
  \draw[dashed] (4.2,-1) to (4.2,1);
  \draw (5.2,-1) -- (5.2,1) node[below, at start]{$d'$};  \node at (6.2,0){for $d+k\neq d'$};
\end{tikzpicture}
\end{equation*} 
\begin{equation*}\label{ghost-bigon1a}\subeqn
\begin{tikzpicture}[very thick,xscale=1.6 ,yscale=.8,baseline]
 \draw (1.5,-1) to[in=-90,out=90]  node[below, at start]{$d$} (1,0) to[in=-90,out=90] (1.5,1)
;
  \draw[dashed] (1,-1) to[in=-90,out=90] (1.5,0) to[in=-90,out=90] (1,1);
  \draw (2,-1) to[in=-90,out=90]  node[below, at start]{$d'$} (2.5,0) to[in=-90,out=90] (2,1);
\node at (3,0) {=};
  \draw (4.2,-1) -- (4.2,1) node[below, at start]{$d$}
 ;
  \draw[dashed] (3.7,-1) to (3.7,1);
  \draw (4.7,-1) -- (4.7,1) node[below, at start]{$d'$};  \node at (6.2,0){for $d+k\neq d'$};
\end{tikzpicture}
\end{equation*} \begin{equation*}\label{ghost-bigon2}\subeqn
\begin{tikzpicture}[very thick,xscale=1.4,baseline=25pt]
 \draw (1,0) to[in=-90,out=90]  node[below, at start]{$d$} (1.5,1) to[in=-90,out=90] (1,2)
;
  \draw[dashed] (1.5,0) to[in=-90,out=90] (1,1) to[in=-90,out=90] (1.5,2);
  \draw (2.5,0) to[in=-90,out=90]  node[below, at start]{$d+k$} (2,1) to[in=-90,out=90] (2.5,2);
\node at (3,1) {=};
  \draw (3.7,0) -- (3.7,2) node[below, at start]{$d$}
 ;
  \draw[dashed] (4.2,0) to (4.2,2);
  \draw (5.2,0) -- (5.2,2) node[below, at start]{$d+k$} node[midway,fill,inner sep=2.5pt,circle]{};
\node at (5.75,1) {$-$};

  \draw (6.2,0) -- (6.2,2) node[below, at start]{$d$} node[midway,fill,inner sep=2.5pt,circle]{};
  \draw[dashed] (6.7,0)-- (6.7,2);
  \draw (7.7,0) -- (7.7,2) node[below, at start]{$d+k$};
\end{tikzpicture}
\end{equation*}
 \begin{equation*}\label{ghost-bigon2a}\subeqn
\begin{tikzpicture}[very thick,xscale=1.4,baseline=25pt]
 \draw (1.5,0) to[in=-90,out=90]  node[below, at start]{$d$} (1,1) to[in=-90,out=90] (1.5,2)
;
  \draw[dashed] (1,0) to[in=-90,out=90] (1.5,1) to[in=-90,out=90] (1,2);
  \draw (2,0) to[in=-90,out=90]  node[below, at start]{$d+k$} (2.5,1) to[in=-90,out=90] (2,2);
\node at (3,1) {=};
  \draw (4.2,0) -- (4.2,2) node[below, at start]{$d$}
 ;
  \draw[dashed] (3.7,0) to (3.7,2);
  \draw (4.7,0) -- (4.7,2) node[below, at start]{$d+k$} node[midway,fill,inner sep=2.5pt,circle]{};
\node at (5.25,1) {$-$};

  \draw (6.2,0) -- (6.2,2) node[below, at start]{$d$} node[midway,fill,inner sep=2.5pt,circle]{};
  \draw[dashed] (5.7,0)-- (5.7,2);
  \draw (6.7,0) -- (6.7,2) node[below, at start]{$d+k$};
\end{tikzpicture}
\end{equation*}
 \begin{equation*}\subeqn\label{triple-boring}
    \begin{tikzpicture}[very thick,scale=1 ,baseline]
      \draw (-3,0) +(1,-1) -- +(-1,1) node[below,at start]{$m$}; \draw
      (-3,0) +(-1,-1) -- +(1,1) node[below,at start]{$d$}; \draw
      (-3,0) +(0,-1) .. controls (-4,0) ..  +(0,1) node[below,at
      start]{$d'$}; \node at (-1,0) {=}; \draw (1,0) +(1,-1) -- +(-1,1)
      node[below,at start]{$m$}; \draw (1,0) +(-1,-1) -- +(1,1)
      node[below,at start]{$d$}; \draw (1,0) +(0,-1) .. controls
      (2,0) ..  +(0,1) node[below,at start]{$d'$};
    \end{tikzpicture}
  \end{equation*}
\begin{equation*}\subeqn \label{eq:triple-point1}
    \begin{tikzpicture}[very thick,xscale=1.6,yscale=.8,baseline]
      \draw[dashed] (-3,0) +(.4,-1) -- +(-.4,1);
 \draw[dashed]      (-3,0) +(-.4,-1) -- +(.4,1); 
    \draw (-1.5,0) +(.4,-1) -- +(-.4,1) node[below,at start]{$d+k$}; \draw
      (-1.5,0) +(-.4,-1) -- +(.4,1) node[below,at start]{$d+k$}; 
 \draw (-3,0) +(0,-1) .. controls (-3.5,0) ..  +(0,1) node[below,at
      start]{$d$};\node at (-.75,0) {=};  \draw[dashed] (0,0) +(.4,-1) -- +(-.4,1);
 \draw[dashed]      (0,0) +(-.4,-1) -- +(.4,1); 
    \draw (1.5,0) +(.4,-1) -- +(-.4,1) node[below,at start]{$d+k$}; \draw
      (1.5,0) +(-.4,-1) -- +(.4,1) node[below,at start]{$d+k$}; 
 \draw (0,0) +(0,-1) .. controls (.5,0) ..  +(0,1) node[below,at
      start]{$d$};
\node at (2.25,0)
      {$-$};   
     \draw (4.5,0)
      +(.4,-1) -- +(.4,1) node[below,at start]{$d+k$}; \draw (4.5,0)
      +(-.4,-1) -- +(-.4,1) node[below,at start]{$d+k$}; 
 \draw[dashed] (3,0)
      +(.4,-1) -- +(.4,1); \draw[dashed] (3,0)
      +(-.4,-1) -- +(-.4,1); 
\draw (3,0)
      +(0,-1) -- +(0,1) node[below,at start]{$d$};
    \end{tikzpicture}
  \end{equation*}
\begin{equation*}\subeqn\label{eq:triple-point2}
    \begin{tikzpicture}[very thick,xscale=1.6,yscale=.8,baseline]
\draw[dashed] (-3,0) +(0,-1) .. controls (-3.5,0) ..  +(0,1) ;  
  \draw (-3,0) +(.4,-1) -- +(-.4,1) node[below,at start]{$d$}; \draw
      (-3,0) +(-.4,-1) -- +(.4,1) node[below,at start]{$d$}; 
 \draw (-1.5,0) +(0,-1) .. controls (-2,0) ..  +(0,1) node[below,at
      start]{$d+k$};\node at (-.75,0) {=};  
    \draw (0,0) +(.4,-1) -- +(-.4,1) node[below,at start]{$d$}; \draw
      (0,0) +(-.4,-1) -- +(.4,1) node[below,at start]{$d$}; 
 \draw[dashed] (0,0) +(0,-1) .. controls (.5,0) ..  +(0,1);
 \draw (1.5,0) +(0,-1) .. controls (2,0) ..  +(0,1) node[below,at
      start]{$d+k$};
\node at (2.25,0)
      {$+$};   
     \draw (3,0)
      +(.4,-1) -- +(.4,1) node[below,at start]{$d$}; \draw (3,0)
      +(-.4,-1) -- +(-.4,1) node[below,at start]{$d$}; 
\draw[dashed] (3,0)
      +(0,-1) -- +(0,1);\draw (4.5,0)
      +(0,-1) -- +(0,1) node[below,at start]{$i+k$};
    \end{tikzpicture}.
  \end{equation*}
  \begin{equation*}\subeqn\label{cost}
  \begin{tikzpicture}[very thick,baseline,scale=.8,xscale=.7]
    \draw (-2.8,0)  +(0,-1) .. controls (0,0) ..  +(0,1) node[below,at start]{$d$};
       \draw[wei] (-1.4,0)  +(0,-1) -- +(0,1) node[below,at start]{$d$};
           \node at (0,0) {=};
    \draw[wei] (2.8,0)  +(0,-1) -- +(0,1) node[below,at start]{$d$};
       \draw (1.2,0)  +(0,-1) -- +(0,1) node[below,at start]{$d$} node[circle,fill=black, midway, inner sep=2pt]{};
  \end{tikzpicture}\qquad\qquad
  \begin{tikzpicture}[very thick,baseline,scale=.8,xscale=.7]
          \draw[wei] (6.8,0)  +(0,-1) -- +(0,1) node[below,at start]{$d'$};
  \draw (5.2,0)  +(0,-1) .. controls (8,0) ..  +(0,1) node[below,at start]{$d$};
           \node at (8,0) {=};
    \draw (9.2,0)  +(0,-1) -- +(0,1) node[below,at start]{$d$};
       \draw[wei] (10.8,0)  +(0,-1) -- +(0,1) node[below,at start]{$d'$};
  \end{tikzpicture}
\end{equation*} 
  \begin{equation*}\subeqn
    \begin{tikzpicture}[very thick,scale=.8,baseline]
      \draw (-3,0)  +(1,-1) -- +(-1,1) node[at start,below]{$d$};
      \draw (-3,0) +(-1,-1) -- +(1,1)node [at start,below]{$d'$};
      \draw[wei] (-3,0)  +(-.3,-1) -- node[at start,below]{$m$}+(-.3,1);
      \node at (-1,0) {=};
      \draw (1,0)  +(1,-1) -- +(-1,1) node[at start,below]{$d$};
      \draw (1,0) +(-1,-1) -- +(1,1) node [at start,below]{$d'$};
      \draw[wei] (1,0) +(.3,-1)-- node[at start,below]{$m$} +(.3,1);   
\node at (2.8,0) {$+ $};
      \draw (6.5,0)  +(1,-1) -- +(1,1)  node[at start,below]{$d$};
      \draw (6.5,0) +(-1,-1) -- +(-1,1) node [at start,below]{$d'$};
      \draw[wei] (6.5,0) +(0,-1) -- node[at start,below]{$m$} +(0,1);
\node at (3.8,-.2){$\delta_{d,d',m} $}  ;
 \end{tikzpicture}
  \end{equation*}
\begin{equation*}\subeqn\label{dumb}
    \begin{tikzpicture}[very thick,scale=.8,baseline]
      \draw[wei] (-3,0)  +(0,-1) -- +(0,1);
      \draw (-3,0)  +(-.5,-1) .. controls (-4,-.3) ..  +(.5,1);
      \draw (-3,0) +(-1,-1) -- +(1,1);
      \node at (-1,0) {=};
      \draw[wei] (1,0)  +(0,-1) -- +(0,1);
  \draw (1,0)  +(-.5,-1) .. controls (2,.3) ..  +(.5,1);
      \draw (1,0) +(-1,-1) -- +(1,1);    \end{tikzpicture}\qquad \qquad
    \begin{tikzpicture}[very thick,scale=.8,baseline]
  \draw(-3,0) +(-1,-1) -- +(1,1);
  \draw[wei](-3,0) +(0,-1) -- +(0,1);
\fill (-3.5,-.5) circle (3pt);
\node at (-1,0) {=};
 \draw(1,0) +(-1,-1) -- +(1,1);
  \draw[wei](1,0) +(0,-1) -- +(0,1);
\fill (1.5,.5) circle (3pt);
    \end{tikzpicture}
  \end{equation*}
For the relations (\ref{dumb}), we also include their mirror images.

This algebra is graded with \[
\deg\tikz[baseline,very thick,scale=1.5]{\draw (.2,.3) --
  (-.2,-.1) node[at end,below, scale=.8]{$d$}; \draw
  (.2,-.1) -- (-.2,.3) node[at start,below,scale=.8]{$d'$};} =-2\delta_{d,d'} \qquad  \deg\tikz[baseline,very thick,scale=1.5]{\draw[densely dashed] 
  (-.2,-.1)-- (.2,.3) node[at start,below, scale=.8]{$d$}; \draw
  (.2,-.1) -- (-.2,.3) node[at start,below,scale=.8]{$d'$};} =\delta_{d',d-k} \qquad \deg\tikz[baseline,very thick,scale=1.5]{\draw (.2,.3) --
  (-.2,-.1) node[at end,below, scale=.8]{$d$}; \draw [densely dashed]
  (.2,-.1) -- (-.2,.3) node[at start,below,scale=.8]{$d'$};}
=\delta_{d',d+k}\]
\[
\deg\tikz[baseline,very thick,scale=1.5]{\draw
  (0,.3) -- (0,-.1) node[at end,below,scale=.8]{$d$}
  node[midway,circle,fill=black,inner
  sep=2pt]{};}=2 \qquad \deg\tikz[baseline,very thick,scale=1.5]{\draw[wei] 
  (0,-.1)-- (0,.3) node[at start,below, scale=.8]{$d$}; \draw
  (.2,-.1) -- (-.2,.3) node[at start,below,scale=.8]{$d'$};} =\delta_{d',d} \qquad \deg\tikz[baseline,very thick,scale=1.5]{\draw (.2,.3) --
  (-.2,-.1) node[at end,below, scale=.8]{$d$}; \draw [wei]
  (0,-.1) -- (0,.3) node[at start,below,scale=.8]{$d'$};}
=\delta_{d',d}\] and we'll also consider the completion $\widehat{R}_D$ of this
  algebra with respect to its grading.  
\end{definition}


  It will often be technically more convenient for us to think of
  $R_D$ or $\widehat{R}_D$ as a category whose objects are loadings
  and whose morphisms are elements of $R_D$ matching the source
  loading at the bottom and target loading at the top; this is the
  standard trick for considering a ring with set of idempotents
  summing to the identity as a category, discussed in \cite[\S 3.1]{Lauintro}.

  \begin{remark}\label{rem:equiv}
    As we've defined it, the algebra $R_D$ is infinite rank as a
    module over $\K[y_1,\dots, y_n]$, since we consider the
    $x$-values of the strands at the top and bottom of the diagram as
    fixed.  However, if two loadings are related by an isotopy
    (i.e. the straight line diagram relating them has no crossings),
    they are equivalent objects in the category $R_D$.  This is {\bf
      equivalence} of loadings, as discussed in
    \cite[Def. 2.9]{WebwKLR}.  As in \cite[Def. 2.13]{WebwKLR}, we
    usually take ``weighted KLR algebra'' to mean the algebra Morita
    equivalent to $R_D$ where we keep only one loading from each
    equivalence class.  
    
    Note that being finite-dimensional isn't invariant under this equivalence, so it is better to consider {\bf locally finite-dimensional} modules, those with $e(\Ba,\Bz)M$ is finite dimensional over $\K$ for all $(\Ba,\Bz)$.
  \end{remark}

\begin{definition}\label{def:wgmod}
Let $R_D\wgmod$ be the category of finitely generated $R_D$-modules $M$ which are weakly gradable and locally finite-dimensional, that is, $M$ has a finite filtration with gradable subquotients and  $e(\Ba,\Bz)M$ is finite dimensional over $\K$ for all $(\Ba,\Bz)$.  

These are precisely the modules that extend to finitely generated modules over the completion $\widehat{R}_D$ with the discrete topology.  
\end{definition}

  A structure on the category of $R_D$-modules that will be important for us is duality of modules.  There is an anti-involution $\psi$ of $R_D$ defined by reflecting diagrams through the line $y=\frac{1}{2}$ and multiplying by $-1$ raised to the number of crossings of strands with the same label (note the similarity to the anti-involution $\psi$ defined for the original KLR algebra in \cite[\S 2.1]{KLI}).  We can use $\psi$ to switch between left and right modules.
  \begin{definition}
  The {\bf contragredient} $M^\circledast$ of a left $R_D$-module $M$ is the elements of the $\K$-vector space dual which are zero on the image of almost all idempotents $e(\Ba,\Bz)$, with the  left module structure induced by $\psi$.  
  \end{definition}
  Of course, the contragredient of a graded module has a natural dual grading.  
  Let us collect a couple of useful observations about $R_D$ which are implicit in earlier literature, but which are useful to have here:
  \begin{lemma}
  If $\K$ is characteristic 0, every simple module over $\widehat{R}_D$ has a unique grading for which is self-dual as a graded module.   
  \end{lemma}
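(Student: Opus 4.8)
The plan is to separate the assertion into existence and uniqueness, isolating the purely formal part --- which rests only on local finite-dimensionality --- from the one structural input that must be imported from \cite{WebwKLR}. The formal fact driving both halves is that a nonzero locally finite-dimensional graded $\widehat{R}_D$-module $M$ is never isomorphic to a nontrivial shift $M\langle c\rangle$: picking a loading with $e(\Ba,\Bz)M\neq 0$, such an isomorphism would restrict to a graded isomorphism of the nonzero \emph{finite}-dimensional graded vector space $e(\Ba,\Bz)M$ with $e(\Ba,\Bz)M\langle c\rangle$, forcing the nonempty finite set of degrees in which $e(\Ba,\Bz)M$ lives to be invariant under translation by $c$, so $c=0$. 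From this it follows that any two gradings on a simple module $L$ differ by a single overall shift: $L$ is cyclic (a nonzero vector generates it) hence finitely generated, so the identity of the underlying ungraded module decomposes as $\mathrm{id}=\sum_m\phi_m$ with each $\phi_m$ a homogeneous map between the two graded modules; each nonzero $\phi_m$ is an isomorphism by simplicity of $L$, and two distinct nonzero components would exhibit $L$ as isomorphic to a nontrivial shift of itself.

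\emph{Uniqueness} is then immediate: if a grading on $L$ is self-dual and $L\langle m\rangle$ is too, then $(L\langle m\rangle)^{\circledast}=L^{\circledast}\langle -m\rangle\cong L\langle -m\rangle$ must also be $\cong L\langle m\rangle$, whence $L\cong L\langle 2m\rangle$ and $m=0$. For \emph{existence} I would invoke the combinatorial description of simple $R_D$-modules from \cite{WebwKLR}. First, every simple $L$ is gradable (a simple, weakly gradable module equals one of the gradable subquotients in its own filtration). Second, each simple carries an extremal loading $(\Ba,\Bz)$ with $e(\Ba,\Bz)L$ one-dimensional, a datum which determines $L$ up to isomorphism. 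Since $\psi$ fixes each idempotent $e(\Ba,\Bz)$ --- reflection through $y=\tfrac12$ leaves the bottom and top loadings of $e(\Ba,\Bz)$ unchanged --- the contragredient $L^{\circledast}$ is again simple with $e(\Ba,\Bz)L^{\circledast}$ the graded dual of $e(\Ba,\Bz)L$, one-dimensional at the same extremal loading; hence $L^{\circledast}\cong L$ as ungraded modules, and by the shift observation above $L^{\circledast}\cong L\langle m_0\rangle$ for a unique $m_0\in\Z$. Normalizing the grading of $L$ so that its extremal weight space sits in degree $0$ puts $e(\Ba,\Bz)L^{\circledast}$ in degree $0$ as well, which pins $m_0=0$; this grading is the desired self-dual one.

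I expect the genuine obstacle to be the existence step, and more precisely the point that the shift relating $L$ to $L^{\circledast}$ is \emph{even} --- equivalently, that some weight space of $L$ is concentrated in a single degree. For a general graded algebra with a duality this can fail (a simple supported symmetrically about a half-integer admits no self-dual grading), so the content here is entirely in importing the relevant structure of $R_D$ --- the one-dimensional extremal weight spaces, or equivalently the degree-$0$ homogeneity of the contravariant form on standard modules --- from \cite{WebwKLR}; the standing hypothesis $\operatorname{char}\K=0$ is used only insofar as that structure theory requires it.
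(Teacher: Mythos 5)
Your proof is correct and takes essentially the same route as the paper: establish ungraded self-duality of each simple from the fact that its classifying data are read off from its support, locate a loading $(\Ba,\Bz)$ with $e(\Ba,\Bz)S$ one-dimensional, and normalize the grading so that this weight space sits in degree $0$. The only differences are cosmetic --- the paper extracts the one-dimensional weight space from the Dunkl--Opdam classification in \cite{Webalt} (multiplicity-free spectrum on $V_\xi$ together with the Young-subgroup invariants of $L(\bQ)$) rather than citing the weighted KLR combinatorics directly, and your explicit no-nontrivial-self-shift and uniqueness arguments are left implicit there.
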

  \begin{proof}
  This is implicit in \cite[Thm. 3.22]{Webalt}; since this theorem is stated for Dunkl-Opdam modules, we must translate a bit.  Any simple over $\widehat{R}_D$ has a corresponding simple Dunkl-Opdam module.  
  By the classification in \cite[Thm. 3.22]{Webalt}, this arises from a multi-segment $\bQ$ and $\ell$-multipartition $\xi$, and these can be read off from the collection of pairs $(\Ba,\Bz)$ with $\Wei_{(\Ba,\Bz)}(S)$ which are non-zero. Since this is preserved by duality, every simple is self-dual.  Furthermore, one can construct $(\Ba,\Bz)$ where this weight space is one dimensional: the Dunkl-Opdam operators act on the representation $V_\xi$ with multiplicity free spectrum, and while this is not always true for $L(\bQ)$, the invariants under the Young subgroup of $S_{|\bQ|}$ corresponding to the decomposition into segments has a 1 dimensional space of invariants uniquely characterized by its weight under the Dunkl-Opdam elements $u_i$ (which correspond to the polynomial generators in the degenerate affine Hecke algebra). 
  
  Thus, if we grade $S$ so that $e(\Ba,\Bz)S$ for this choice of $(\Ba,\Bz)$  is in degree 0, then $S$ will be graded self-dual.
  \end{proof}
  \begin{remark}
  While the proof of this result used the characteristic 0 hypothesis (needed for \cite[Thm. 3.22]{Webalt}), in fact if one translates the proof of \cite[Thm. 3.22]{Webalt} into purely diagrammatic terms, it works in characteristic $p$ as well.
  \end{remark}

Assume that we have an order $p$ permutation $\sigma\in S_{\ell}$ such that $\theta_{\sigma(k)}=\theta_{k}$ and $d_{\sigma(k)}=d_{k}+\frac{1}{p}$ for all $k$.  In this case, we have an order $p$ permutation $\mathsf{a}$ of the set of weighted KLR diagrams induced by shifting all labels by $d\mapsto d+\frac{1}{p}$.  Our conditions on the $\sigma$ above assure that the set of red lines is unchanged, so all the relations (\ref{dots-1}--\ref{dumb}) and the grading are preserved.  
\begin{lemma}\label{lem:KLR-Morita}
The induced map $\mathsf{a}\colon R_D\to R_D$ generates a Morita cyclic group action of $R_D$.
\end{lemma}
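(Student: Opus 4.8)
The plan is to verify directly the defining condition of a Morita action, namely that $(R_D)_\rho\,(R_D)_{\rho^{-1}}=R_D^C$ for every $\rho\in\mu_p(\K)$, where $C=\langle\mathsf a\rangle$; the asserted Morita equivalence between $R_D^C$ and $\widetilde{R_D}=R_D\# C$ via the bimodule $R_D$ then follows at once from Lemma~\ref{lem:Morita}. We may assume $p\ge2$ (the case $p=1$ being vacuous), and, passing to the model of Remark~\ref{rem:equiv} in which one loading is kept from each equivalence class, we may assume the $e(\Ba,\Bz)$ form a genuine complete set of orthogonal idempotents of $R_D$. First I would record two elementary facts. The automorphism $\mathsf a$ has order exactly $p$: applying it $p$ times shifts every strand and red-line label by $p\cdot\tfrac1p=1\equiv0$ in $\K/\Z$ and applies $\sigma^p=\id$ to the red lines, so $\mathsf a^p=\id$, while for $0<j<p$ the map $\mathsf a^j$ shifts strand labels by $\tfrac jp\notin\Z$ and so is nontrivial. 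Moreover $\mathsf a$ permutes the idempotents $e(\Ba,\Bz)$ \emph{freely}, in orbits of size $p$: the shift $d\mapsto d+\tfrac1p$ acts freely on $\K/\Z$ since $p\ge2$, and isotopy-equivalent loadings carry the same label on each strand, so $\mathsf a^j(e(\Ba,\Bz))$ — all of whose strand labels are shifted by $\tfrac jp\ne0$ — is never equivalent to $e(\Ba,\Bz)$ for $0<j<p$.

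The heart of the proof is the locally unital analogue of the observation that $1,t_1,\dots,t_1^{p-1}$ are units representing the isotypic components in the Cherednik case. Fix $\rho\in\mu_p(\K)$ and an $\mathsf a$-orbit $O=\{e_0,\,e_1=\mathsf a(e_0),\,\dots,\,e_{p-1}=\mathsf a^{p-1}(e_0)\}$ of idempotents, and put $\bar e_O=\sum_{i=0}^{p-1}e_i$, which lies in $R_D^C$. Set
\[
x_{O,\rho}=\sum_{i=0}^{p-1}\rho^{-i}e_i,\qquad y_{O,\rho}=\sum_{i=0}^{p-1}\rho^{\,i}e_i .
\]
Since $\mathsf a(e_i)=e_{i+1}$ with indices mod $p$, a one-line reindexing gives $\mathsf a(x_{O,\rho})=\rho\,x_{O,\rho}$ and $\mathsf a(y_{O,\rho})=\rho^{-1}y_{O,\rho}$, so $x_{O,\rho}\in(R_D)_\rho$ and $y_{O,\rho}\in(R_D)_{\rho^{-1}}$, whereas orthogonality of the $e_i$ yields $x_{O,\rho}\,y_{O,\rho}=\sum_i e_i=\bar e_O$. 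The $\bar e_O$ form a complete set of orthogonal idempotents of $R_D^C$, so any $a\in R_D^C$ is a finite sum $a=\sum_{O,O'}\bar e_O\,a\,\bar e_{O'}$; and since the eigenspaces of $\mathsf a$ multiply, that is $(R_D)_\mu(R_D)_\nu\subseteq(R_D)_{\mu\nu}$, we have
\[
\bar e_O\,a\,\bar e_{O'}=x_{O,\rho}\bigl(y_{O,\rho}\,a\,\bar e_{O'}\bigr)\quad\text{with}\quad y_{O,\rho}\,a\,\bar e_{O'}\in(R_D)_{\rho^{-1}}(R_D)_1(R_D)_1\subseteq(R_D)_{\rho^{-1}} .
\]
Hence $R_D^C\subseteq(R_D)_\rho(R_D)_{\rho^{-1}}$; the reverse inclusion is clear since $(R_D)_\rho(R_D)_{\rho^{-1}}\subseteq(R_D)_1=R_D^C$. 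As this holds for every $\rho$, the $C$-action is Morita, which is the assertion.

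I do not anticipate a real obstacle: once the combinatorics of $\mathsf a$ is pinned down the argument is purely formal. The only delicate points are bookkeeping ones — that $R_D$ is locally unital rather than unital, so that the notion of a Morita action and Lemma~\ref{lem:Morita} are to be read idempotent-wise (in the generality intended), and that one should work in the ``one loading per equivalence class'' model of $R_D$ so that the $e(\Ba,\Bz)$ genuinely are orthogonal idempotents on which $\mathsf a$ acts freely.
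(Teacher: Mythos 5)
Your proposal is correct and follows essentially the same route as the paper: both arguments hinge on the observation that $C_p$ acts freely on the straight-line idempotents (because $d\mapsto d+\tfrac1p$ acts freely on $\K/\Z$ and isotopy preserves labels), so that each orbit produces eigenvectors of $\mathsf a$ supported on orthogonal idempotents whose products recover the invariant idempotents. Your write-up simply verifies the defining identity $(R_D)_\rho(R_D)_{\rho^{-1}}=R_D^C$ more explicitly than the paper, which phrases the same construction in terms of the two-sided ideal generated by an isotypic component, and you are right that the locally unital bookkeeping you flag is the only point requiring care.
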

\begin{proof}
The compatibility discussed above shows that it is a ring automorphism, and the fact that $C_p$ acts freely on $\K/\Z$ by $d\mapsto d+\frac{1}{p}$ shows that this action is faithful.

In fact, the straight-line idempotents in $R_D$ carry a free action by $C_p$.  Thus, every isotypic component for $C_p$ contains a sum of these idempotents where $e(\Ba,\Bz)$ appears with non-zero coefficient.  Thus, $e(\Ba,\Bz)$ lies in the two-sided ideal generated by this isotypic component for all $(\Ba,\Bz)$, showing this ideal is whole algebra.  This shows that the action is Morita. 
\end{proof}

\subsection{Review of the representation theory of \texorpdfstring{$\mathsf{H}_1$}{H1}}

Here, we quickly remind the reader of the results on the representation theory of $\mathsf{H}_1$ discussed in \cite{Webalt,WebRou}.   It will probably be helpful to the reader to We analyze the representations of this algebra by means of the functors \[\Wei_{\Ba,\Bz}(M)=\{m\in M\mid
(u_i-a_i)^Nm=(t_i-z_i)^Nm=0 \text{ for } N\gg 0\}.\] 
\begin{definition}
We call a module {\bf Dunkl-Opdam} if $M\cong \bigoplus_{(\Ba,\Bz)}\Wei_{\Ba,\Bz}(M)$, that is, if the subalgebra $\DO$ acts locally finitely.
\end{definition}  


Let $\mathcal{C}$ be the category of Dunkl-Opdam modules over $\mathsf{H}_1$.  This category naturally decomposes into blocks according to the orbit of $(\Ba,\Bz)$ under the action of the extended affine Weyl group $S_n\ltimes \Z^n$.  This group acts by
permutations and translations sending \[(\Ba,\Bz)\mapsto
((a_1+m_1,\dots, a_n+m_n),(\zetal^{m_1}z_1,\dots, \zetal^{m_n}z_n)).\] Two pairs lie in the same orbit if and only if their images under $\Sigma$
agree up to permutation of the entries.  As in the previous section, we fix $D\subset \K/\Z$ and let $\tilde{D}=\Sigma^{-1}(D)\subset \K \times \mu_{\ell}(\K)$.   
Let $R_D$ be the weighted KLR algebra attached to the quiver $D$ 
and the parameters attached to the set $D$ with the parameters $\cg=\Upsilon(k)$, $\theta_i=\Upsilon (h_i/\ell)$, and $d_i=\overline{s_i/\ell}$. 

This matches the conventions of 
\cite[Def. 3.8]{Webalt}, except that we don't add small real numbers to avoid placing red lines on top of each other: in \cite{Webalt}, these positions depend on a small parameter $\epsilon$, but if we send $\epsilon$ to 0 and let red strands with $h_m=h_{m'}$ sit at the same position, the resulting algebra is isomorphic since the loadings of the form $e(\Ba,\Bz)$ have no black strands between red strands with $h_m=h_{m'}$.  This is the algebra we will call $R_D$ in this paper.

Since we always have $h_{m}=h_{m+\ell/p}$, our red strands now come in {\bf packets} corresponding to the labels $\{m, m+\frac{\ell}{p},m+2\frac{\ell}{p}, \cdots\}$ for $1\leq m\leq \frac{\ell}{p}$, which all have the same $x$-value, and whose labels in $\K/\Z$ have the form $\{\mathsf{s}_m,\mathsf{s}_m+\frac{1}{p},\mathsf{s}_m+\frac{2}{p},\cdots \}$.
\begin{theorem}\label{thm:H1-case}
There is an equivalence $\mathsf{W}\colon \mathsf{H}\mmod_D\to \widehat{R}_D\wgmod$ sending $M\mapsto
\oplus_{\Ba,\Bz} W_{\Ba,\Bz}(M)$, induced by an isomorphism $\widehat{R}_D\to \End(\bigoplus_{(\Ba,\Bz)}W_{\Ba,\Bz})$.  
\end{theorem}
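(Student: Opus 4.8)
The plan is to construct the isomorphism $\widehat R_D\to\End(\bigoplus_{\Ba,\Bz}\Wei_{\Ba,\Bz})$ promised in the statement directly on generators, and then deduce that $\mathsf W$ is an equivalence by a Morita-type reconstruction argument. The first observation is that the Dunkl--Opdam subalgebra $\DO$ generated by the $u_i$ and $t_i$ is commutative, so on every object of $\mathsf H\mmod_D$ the decomposition $M=\bigoplus_{\Ba,\Bz}\Wei_{\Ba,\Bz}(M)$ into generalized $\DO$-eigenspaces is a functorial direct-sum decomposition. Hence each $\Wei_{\Ba,\Bz}$ is exact, and $\mathsf W=\bigoplus_{\Ba,\Bz}\Wei_{\Ba,\Bz}$ is exact and faithful. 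Moreover $\Wei_{\Ba,\Bz}$ is pro-representable: if $\mathfrak m_{\Ba,\Bz}\subset\DO$ is the maximal ideal cutting out the weight $(\Ba,\Bz)$, then on $\mathsf H\mmod_D$ one has $\Wei_{\Ba,\Bz}\cong\varinjlim_N\Hom_{\mathsf H_1}\!\big(\mathsf H_1/\mathsf H_1\mathfrak m_{\Ba,\Bz}^N,-\big)$, and by the PBW theorem for $\mathsf H_1$ the resulting pro-object $\widehat P_{\Ba,\Bz}$, cut down to the block with weights $\Sigma$-supported in $D$, is projective in the pro-completion of $\mathsf H\mmod_D$.

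The heart of the matter is to identify $\widehat R_D$ with the endomorphism algebra of $\widehat P:=\bigoplus_{\Ba,\Bz}\widehat P_{\Ba,\Bz}$. The idempotents $e(\Ba,\Bz)$ correspond to the projections onto the summands. A dot on the $i$-th strand of a loading $e(\Ba,\Bz)$ is sent to the locally nilpotent operator $u_i-a_i$; a crossing of two black strands, of a black strand with a ghost, or of a black strand with a red line is sent to the corresponding intertwining map between weight spaces, built from the elements $x_i,y_i$ and the degenerate-affine-Hecke-type intertwiners of $\mathsf H_1$, post-composed with the projection onto the target generalized eigenspace --- the fact that $x_i,y_i$ shift $u$-weights by $\pm\hbar$ is exactly what makes these land in the loading obtained by applying the relevant element of $S_n\ltimes\Z^n$. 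One then verifies the defining relations (\ref{dots-1}--\ref{dumb}) for these operators, and that the grading of $R_D$ matches the filtration on $\End(\widehat P)$ by order of vanishing in the $u_i-a_i$; this produces a graded algebra map $\Phi\colon\widehat R_D\to\End(\widehat P)$. Surjectivity holds because every $\Hom(\widehat P_{\Ba,\Bz},\widehat P_{\Ba',\Bz'})$ is generated over $\DO$ by such intertwiners, and injectivity is checked against the explicit diagrammatic basis of $R_D$ established in \cite{WebwKLR} --- equivalently, by testing $\Phi$ on the images under $\mathsf W$ of the polynomial representation $\sP$ and its twists, on which the generators manifestly satisfy no relations beyond (\ref{dots-1}--\ref{dumb}). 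For parameters of the form coming from $\mathsf H_1$ this is precisely the computation of \cite{Webalt}; the only new ingredient is the remark made above that sending the perturbation parameter $\epsilon\to0$, so that the red lines in a common packet $\{\mathsf s_m,\mathsf s_m+\tfrac1p,\dots\}$ share an $x$-coordinate, does not change the algebra up to isomorphism, since no loading $e(\Ba,\Bz)$ has a black strand between two red lines with $h_m=h_{m'}$, so the relative order of those red lines never enters a relation.

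Finally, $\widehat P$ is a projective pro-generator: every $M\in\mathsf H\mmod_D$ is a quotient of a direct sum of copies of the $\widehat P_{\Ba,\Bz}$, because $\Hom(\widehat P_{\Ba,\Bz},M)=\Wei_{\Ba,\Bz}(M)$ is finite-dimensional and the weight spaces generate $M$. Hence $\mathsf W=\Hom(\widehat P,-)$ is an equivalence of $\mathsf H\mmod_D$ onto the category of finitely generated $\widehat R_D$-modules cut out by the finiteness hypotheses, and one checks that this target is exactly $\widehat R_D\wgmod$, the weak-gradability and local finite-dimensionality conditions translating into the Dunkl--Opdam and finiteness conditions on the $\mathsf H_1$ side. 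I expect the main obstacle to be the injectivity of $\Phi$ in the middle paragraph --- that is, the basis and faithfulness input --- which is exactly what \cite{WebwKLR,Webalt} supply; everything around it is formal, the one subtlety being to match the two categories (rather than a larger or smaller one) on the nose.
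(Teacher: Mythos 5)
Your proposal is correct and takes essentially the same approach as the paper: the paper offers no proof of this theorem beyond citing \cite{Webalt} (the statement sits in a section explicitly labelled as a review of that work), together with the observation, made in the paragraph immediately preceding the theorem, that letting the red strands in a packet share an $x$-coordinate does not change the algebra because no loading $e(\Ba,\Bz)$ places a black strand between red lines with $h_m=h_{m'}$. You correctly reconstruct the substance of the cited argument (generators to intertwiners, faithfulness via the polynomial representation, the pro-generator formalism) and isolate that single new ingredient.
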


In \cite{Webalt}, we give an indexing set for the Dunkl-Opdam modules of $\mathsf{H}_1$.  
\begin{definition}
A {\bf charged segment} is a $g$-tuple (for some $g\leq n$) of elements
$\mathbf{q}=(q_1,\dots, q_g)$ of
$\K/\Z$, which satisfy $q_{i+1}-q_i=k$.  We'll use {\bf lifted
  segment} to mean a similar $g$-tuple $\mathbf{a}$ in $\K$ satisfying $a_{i+1}-a_i=k\ell$, together with a scalar $z\in \mu_\ell(\K)$.
  
  As usual, a {\bf multisegment} is a multi-set of charged segments, and the size of a multisegment is the sum of the sizes of the constituent segments.
\end{definition} 
Our indexing set will be the set $\pairs_n$  of pairs $\chi=(\bQ,\xi)$ with $\mathbf{Q}$ a choice of charged multisegment  and $\xi$ an $\ell$-multipartition  whose sizes add to $n$.

Fix a negative integer  $N\ll 0$ and let $\Lambda(q_1,\dots, q_g)$ for a charged segment be the unique
lifted segment $(a_1,\dots, a_g), z$ of elements of $\K$ such that
$\Sigma(a_i,z)=q_i$,  $a_{i+1}-a_i=k\ell$, and
$\Upsilon(a_1)$ is minimized subject to  $P\leq \Upsilon(a_i)$; this
means that $P\leq \Upsilon(a_1)< P+1$ if $\Upsilon(k)\geq 0$ and
$P\leq \Upsilon(a_g)< P+1$  if  $\Upsilon(k)\leq 0$.

Let $\mathcal{D\!O}_n$ be the subalgebra of $\mathsf{H}_1$ generated by $G(\ell,1,n)$ and $\DO$; this is isomorphic to the degenerate affine Hecke algebra of $G(\ell,1,n)$.   Given $\chi=(\bQ,\xi)\in \pairs_n$, we can define an irreducible representation $K_{\chi}$ of $\mathcal{D\!O}_n$. The multisegment given by lifting each segment of $\mathbf{Q}$ induces a representation $L_{\bQ}$ of $\mathcal{D\!O}_{|\mathbf{Q}|}$ by the usual formalism of Zelevinsky, and the irrep $V_{\xi}$ of $G(\ell,1,|\xi|)$  can  be inflated to a $\mathcal{D\!O}_{|\xi|}$-module as described in \cite[Lem. 3.7]{Webalt}.  The representation $K_{\chi}$ is the (irreducible) convolution of these two modules.  In the notation of \cite{Webalt}, this is $K_{\chi}=\mathcal{D\!O}_n\otimes_{\mathcal{D\!O}_{|\mathbf{Q}|}\otimes \mathcal{D\!O}_{|\xi|}}L(\mathbf{Q})\otimes V_\xi$.

We let $\mathcal{M}(\chi)=\mathsf{H}_1\otimes_{\mathcal{D\!O}_n} K_{\chi}$, and $\Delta(\chi)$ be the quotient of $\mathcal{M}(\chi)$ by the submodule generated by the images of $\mathcal{M}(\mathbf{Q}', \xi')$ for any multi-segment $\mathcal{Q'}$ of larger size than $\mathcal{Q}$, or equal size with $c_{\xi'}<c_{\xi}$, with $c_{\xi}$ the usual $c$-function.
\begin{theorem}
For each simple Dunkl-Opdam module $S$ in $\mathsf{H}_1$, there is a unique pair $\chi\in \pairs_n$ such that $S=S(\chi)$ is the unique simple quotient of $\Delta(\chi)$.  
\end{theorem}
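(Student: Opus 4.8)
The plan is to recognize this as essentially \cite[Thm.~3.22]{Webalt} and to reassemble its proof from the tools recalled above. First I would reduce to a single block. Since a Dunkl--Opdam module is by definition the sum of its $\Wei_{\Ba,\Bz}$-spaces, the category $\mathsf{H}_1\mmod_D$ of those supported on pairs with $\Sigma(a_i,z_i)\in D$ is a direct summand of the whole category, and by Theorem~\ref{thm:H1-case} the functor $\mathsf{W}$ identifies it with $\widehat{R}_D\wgmod$; objects of this category have finite length by the general structure theory, so every object has a simple head. The pairs $\chi=(\bQ,\xi)\in\pairs_n$ relevant to this block are exactly those whose charged multisegment has entries in $D$, and the goal is a bijection between these and the simple objects of $\mathsf{H}_1\mmod_D$.

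For the existence of $S(\chi)$: the module $K_\chi$ is an irreducible $\mathcal{D\!O}_n$-module --- the irreducibility of the Zelevinsky-style convolution $\mathcal{D\!O}_n\otimes_{\mathcal{D\!O}_{|\bQ|}\otimes\mathcal{D\!O}_{|\xi|}}L(\bQ)\otimes V_\xi$ is the standard fact about segment representations of the degenerate affine Hecke algebra --- so $\mathcal{M}(\chi)=\mathsf{H}_1\otimes_{\mathcal{D\!O}_n}K_\chi$ is a nonzero, finite-length Dunkl--Opdam module in the block. I would then verify that the family $\{\mathcal{M}(\chi)\}$ ordered by $(|\bQ|,c_\xi)$ --- bigger size, or equal size with smaller $c$-value, counting as ``higher'' --- is standardly stratified; equivalently that $\mathsf{W}(\mathcal{M}(\chi))$ is the standard $R_D$-module attached to $\chi$ and that the $c$-function order refines the dominance order used on the weighted KLR side. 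Granting this, the proper quotient $\Delta(\chi)$, obtained by killing the images of all strictly higher $\mathcal{M}(\chi')$, has a unique simple quotient $S(\chi)$, and $S(\chi)$ is a composition factor of $\mathcal{M}(\chi')$ only when $\chi'$ is not strictly higher than $\chi$.

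For exhaustiveness and uniqueness: let $S$ be any simple object of $\mathsf{H}_1\mmod_D$. The subalgebra $\mathcal{D\!O}_n$ (generated by $G(\ell,1,n)$ and the locally finite $\DO$) acts locally finitely on $S$, so $S$ contains a simple $\mathcal{D\!O}_n$-submodule, which by the classification of simple $\mathcal{D\!O}_n$-modules is some $K_\chi$; Frobenius reciprocity gives a nonzero, hence surjective, map $\mathcal{M}(\chi)\twoheadrightarrow S$. Taking $\chi$ with $|\bQ|$ minimal and then $c_\xi$ maximal among all such $\chi$, this surjection must annihilate the image of every strictly higher $\mathcal{M}(\chi')$ (else $S$ would be a quotient of that $\mathcal{M}(\chi')$, contradicting the choice), so it factors through $\Delta(\chi)$ and $S\cong S(\chi)$. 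Uniqueness follows because $\chi$ can be recovered from $S$: the pairs $(\Ba,\Bz)$ with $\Wei_{\Ba,\Bz}(S)\neq 0$, together with the dimensions of these weight spaces, determine $\bQ$ and $\xi$ --- exactly the input (the explicit Dunkl--Opdam spectrum on $K_\chi$ plus the one-dimensional invariants under the relevant Young subgroup) used in the self-duality lemma above. Hence $\chi\mapsto S(\chi)$ is a bijection onto the simple Dunkl--Opdam modules.

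The main obstacle is the standardly-stratified claim in the second paragraph: identifying $\mathsf{W}(\mathcal{M}(\chi))$ with the correct standard module of $R_D$ and checking that the $c$-function ordering refines the ordering under which the weighted KLR algebra is standardly stratified. This is precisely the structural input imported from \cite[\S 3]{Webalt} (built on \cite{WebwKLR}); once it is in place, the block reduction, the Frobenius-reciprocity argument, and the recovery of $\chi$ from the weight support of $S$ are all formal.
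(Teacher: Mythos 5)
First, a point of comparison: the paper does not actually prove this theorem --- it is stated as a recollection of \cite[Thm.~3.22]{Webalt} (the proof of the self-duality lemma in the same section cites that theorem for precisely this classification). So your reconstruction is measured against the standard argument rather than anything in the text, and in outline you have reproduced that argument: block decomposition via $\Sigma$, irreducibility of $K_\chi$ over $\mathcal{D\!O}_n$, Frobenius reciprocity to produce a surjection $\mathcal{M}(\chi)\twoheadrightarrow S$, and an extremality argument to make it factor through $\Delta(\chi)$. Deferring the standardly-stratified structure (in particular, that the generating copy of $K_\chi$ survives in $\Delta(\chi)$, so that $\Delta(\chi)$ has a simple head) to \cite{Webalt} and \cite{WebwKLR} is consistent with what the paper itself does.

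There is, however, one concrete error in the exhaustiveness step: your extremality condition is reversed. With your own convention that $\chi'$ is ``higher'' than $\chi$ when $|\bQ'|>|\bQ|$, or $|\bQ'|=|\bQ|$ and $c_{\xi'}<c_\xi$ (the order used to define $\Delta(\chi)$), you must choose $\chi$ \emph{maximal} in this order among those with $\mathcal{M}(\chi)\twoheadrightarrow S$ --- that is, $|\bQ|$ maximal and then $c_\xi$ minimal --- not ``$|\bQ|$ minimal and then $c_\xi$ maximal'' as written. As stated, the existence of a strictly higher $\chi'$ with $S$ a quotient of $\mathcal{M}(\chi')$ does not contradict your choice (a lowest candidate can perfectly well have other candidates above it), so the claimed factorization through $\Delta(\chi)$ does not follow; the fix is just to swap the two superlatives. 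I would also flag that the final uniqueness claim --- that the set of pairs $(\Ba,\Bz)$ with $\Wei_{\Ba,\Bz}(S)\neq 0$ and the dimensions of those spaces determine $\chi$ --- is asserted rather than argued. That is genuinely the combinatorial heart of \cite[Thm.~3.22]{Webalt}: one must show distinct $\chi$ yield simples with distinguishable Dunkl--Opdam spectra. If you are not importing that theorem wholesale, this step still owes a proof.
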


\subsection{The category of Dunkl-Opdam modules for \texorpdfstring{$\mathsf{H}_p$}{Hp}}
Now, we consider these results in the context of the $C_p$-action whose fixed points are $\mathsf{H}_p$.

 The associated category $\mathcal{C}^{C_p}$ of $\mathsf{H}_p$-modules which are Dunkl-Opdam after induction is simply the category where $\DO^{C_p}\subset \mathsf{H}_p$ acts locally finitely, and the equivalent category $\tilde{\mathcal{C}}$ over $\tilde{\mathsf{H}}_1=\mathsf{H}_1\# C_p$ is that where $\DO$ acts locally finitely. 

Just as above, we can think of vectors $\Ba\in \K^n$ and $\Bz\in \mu_p(\K)^n$ as giving a maximal ideal in $\DO$, and consider the generalized weight space functor $\tWei_{\Ba,\Bz}\colon \tilde{\mathsf{H}}_1\mmod\to \K\mmod$ defined by \[\tWei_{\Ba,\Bz}(M)=\{m\in M\mid
(u_i-a_i)^Nm=(t_i-z_i)^Nm=0 \text{ for } N\gg 0\}.\]  
Note that while this is defined by the same formula as the functor $\Wei_{\Ba,\Bz}\colon \mathsf{H}_1\mmod\to \K\mmod$, it is a different functor since the source category is different.  

Let $\Bz^\alpha=(\zetap^{-1}z_1,\dots, \zetap^{-1} z_n)$.  
\begin{lemma}\label{lem:tWei-action}
The action of $\auto$ induces an isomorphism of functors $\auto\colon \tWei_{\Ba,\Bz}\to \tWei_{\Ba,\Bz^{\auto}}$, and thus an action of $\auto$ on $\bigoplus_{(\Ba,\Bz)}\tWei_{(\Ba,\Bz)}$ whenever the indexing set is closed under $\auto$.
\end{lemma}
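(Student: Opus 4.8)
The plan is to take as the natural transformation $\auto$ simply left multiplication by the algebra element $\auto\in\tilde{\mathsf{H}}_1$, and then to track what this does to generalized weight spaces. First I would record the conjugation rules inside the smash product $\tilde{\mathsf{H}}_1=\mathsf{H}_1\#C_p$: there $\auto$ is a unit of order $p$ with $\auto a\auto^{-1}=a^{\auto}$ for $a\in\mathsf{H}_1$, so $\auto^{-1}u_i\auto=u_i$ and $\auto^{-1}t_i\auto=\zetap^{-1}t_i$. The second identity is immediate from $\auto(t_i)=\zetap t_i$. The first holds because each $u_i$ lies in the fixed subalgebra $\mathsf{H}_p=\mathsf{H}_1^{\auto}$ (visible from the alternate presentation: $\auto$ fixes $\sigma$, $\tau$ and every element of $S_n$, and $p$-cyclicity makes it fix $p(\zetal^{-1}t_1)$, so the relation $\sigma\tau=u_1-p(\zetal^{-1}t_1)+\hbar$ forces $\auto(u_1)=u_1$, and then $\auto(u_i)=u_i$ for all $i$ using $u_i\sigma=\sigma u_{i-1}$).

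Next I would define, for each $M\in\tilde{\mathsf{H}}_1\mmod$, the $\K$-linear endomorphism $m\mapsto\auto m$ of $M$. This family is automatically natural in $M$, since morphisms of $\tilde{\mathsf{H}}_1$-modules are left $\tilde{\mathsf{H}}_1$-linear and hence commute with left multiplication by $\auto$. It remains only to see where weight spaces go. For $m\in\tWei_{\Ba,\Bz}(M)$ and $N\gg0$, conjugating the defining nilpotency conditions gives $(u_i-a_i)^N\auto m=\auto(u_i-a_i)^Nm=0$ and $(t_i-\zetap^{-1}z_i)^N\auto m=\zetap^{-N}\auto(t_i-z_i)^Nm=0$, so left multiplication by $\auto$ carries $\tWei_{\Ba,\Bz}(M)$ into $\tWei_{\Ba,\Bz^{\auto}}(M)$. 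Since $\auto$ is invertible with $\auto^{-1}=\auto^{p-1}$, the same computation applied to $\auto^{-1}$ yields an inverse natural transformation $\tWei_{\Ba,\Bz^{\auto}}\to\tWei_{\Ba,\Bz}$, so $\auto$ is an isomorphism of functors.

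Finally, whenever the chosen indexing set of pairs $(\Ba,\Bz)$ is stable under $(\Ba,\Bz)\mapsto(\Ba,\Bz^{\auto})$, I would assemble these isomorphisms into a single endomorphism of $\bigoplus_{(\Ba,\Bz)}\tWei_{(\Ba,\Bz)}$; its $p$-th iterate is left multiplication by $\auto^p=1$, so it has order dividing $p$ and thus gives the asserted action of $C_p=\langle\auto\rangle$.

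I do not expect a genuine obstacle here: once the conjugation formulas in $\tilde{\mathsf{H}}_1$ are in hand the argument is purely formal. The one point that needs care is the direction of the twist on the $\mu_p(\K)^n$-coordinate --- the relation $\auto(t_i)=\zetap t_i$ divides the $t_i$-eigenvalue by $\zetap$, which is precisely why the target is $\tWei_{\Ba,\Bz^{\auto}}$ with $\Bz^{\auto}=(\zetap^{-1}z_1,\dots,\zetap^{-1}z_n)$ as defined immediately before the statement --- and this is settled by the computation above.
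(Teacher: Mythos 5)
Your proposal is correct and follows essentially the same route as the paper, whose entire proof is the single computation $t_i\auto v=\zetap^{-1}\auto t_iv=\zetap^{-1}z_i(\auto v)$ together with the remark that $\auto^{-1}$ supplies the inverse. Your version merely fills in details the paper leaves implicit (naturality of left multiplication by $\auto$, the generalized-eigenvector form of the nilpotency conditions, and the check that $\auto$ fixes the $u_i$ so that the $\Ba$-coordinate is unchanged), all of which are handled correctly.
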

\begin{proof}
If $v\in W_{\Ba,\Bz}(M)$, then \[t_i\auto v=\zetap^{-1} \auto t_iv=\zetap^{-1} z_i(\auto v)\]
so $v\in W_{\Ba,\Bz^{\auto}}(M)$, and similarly $\auto^{-1}$ gives the inverse isomorphism.
\end{proof}

Now, let us consider a set $D\subset \K/\Z$, and assume that $D$ is closed under the action by addition of $\frac{1}p\Z/\Z$.  In this case, the set $\tilde{D}$ is closed under $\auto$, which induces a free $C_p$ action on $\tilde{D}$.  Let \[\tWei_D=\bigoplus_{(a_i,z_i)\in \tilde{D}}\tWei_{(\Ba,\Bz)}\qquad \Wei_D=\bigoplus_{(a_i,z_i)\in \tilde{D}}\Wei_{(\Ba,\Bz)}\]
and note that these are related by $\tWei_D=\Wei_D\circ \Res^{\tilde{\mathsf{H}_1}}_{\mathsf{H}_1}.$  This induces an obvious homomorphism $\End(\Wei_D)\to \End(\tWei_D)$.

From Lemma \ref{lem:tWei-action}, we have a homomorphism $C_p\to\End(\tWei_{(\Ba,\Bz)}).  $

\begin{proposition}\label{prop:smash-iso}
These homomorphisms induce an isomorphism $\End(\tWei_D)\cong \End(\Wei_D)\# C_p$. 
\end{proposition}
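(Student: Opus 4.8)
The plan is to use the fact, established in Theorem~\ref{thm:H1-case} together with Lemma~\ref{lem:KLR-Morita}, that the $C_p$-action on $\widehat{R}_D \cong \End(\Wei_D)$ is precisely the restriction of the $C_p$-action on $\mathsf{H}_1$ to the weight-space picture, and that this action is Morita. Concretely, I would first recall that $\tWei_D = \Wei_D \circ \Res^{\tilde{\mathsf{H}}_1}_{\mathsf{H}_1}$ exhibits $\bigoplus_{(\Ba,\Bz)\in\tilde D}\tWei_{(\Ba,\Bz)}$ as the functor $\Wei_D$ composed with restriction. An endomorphism of $\tWei_D$ is therefore a natural transformation of the underlying $\K$-vector-space valued functor on $\tilde{\mathsf{H}}_1\mmod$; since $\tilde{\mathsf{H}}_1 = \mathsf{H}_1 \# C_p$ is free as a right $\mathsf{H}_1$-module with basis $1, \auto, \dots, \auto^{p-1}$, giving a $\tilde{\mathsf{H}}_1$-module $M$ is the same as giving an $\mathsf{H}_1$-module together with a compatible semilinear action of $C_p$. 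The key input is Lemma~\ref{lem:tWei-action}: the operator $\auto$ does not preserve each $\tWei_{(\Ba,\Bz)}$ but permutes them by $\Bz \mapsto \Bz^{\auto}$, so it lands inside $\End(\tWei_D)$ exactly because $\tilde D$ is $\auto$-stable, and it is there that the copy of $C_p$ inside $\End(\tWei_D)$ comes from.

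The main steps, in order, are: (1) Build the homomorphism $\End(\Wei_D)\# C_p \to \End(\tWei_D)$ by combining the natural map $\End(\Wei_D)\to\End(\tWei_D)$ (pre/post-composing with $\Res$) with the $C_p \to \End(\tWei_D)$ of Lemma~\ref{lem:tWei-action}, and check these are compatible — i.e.\ that conjugating an element $\phi\in\End(\Wei_D)$ by $\auto$ inside $\End(\tWei_D)$ reproduces the action of $\auto$ on $\End(\Wei_D)\cong\widehat R_D$ coming from $\mathsf a$ (Lemma~\ref{lem:KLR-Morita}); this is essentially a naturality diagram chase using that $\auto$ as an endomorphism of $\tWei_D$ is induced by the algebra automorphism $\auto$ of $\mathsf{H}_1$. (2) Prove injectivity: an element of $\End(\Wei_D)\#C_p$ is $\sum_{j} \phi_j \auto^j$; if this acts as zero on all of $\tilde{\mathsf{H}}_1\mmod$, then evaluating on induced modules $\Ind^{\tilde{\mathsf{H}}_1}_{\mathsf{H}_1}M$ and using that $\auto^j$ shifts the $\Bz$-grading by $\zetap^{-j}$ lets one separate the homogeneous components, forcing each $\phi_j=0$ in $\End(\Wei_D)$ (here one uses that $\Wei_D$ is already known to be a faithful/generating family on $\mathsf{H}_1\mmod_D$ via Theorem~\ref{thm:H1-case}). (3) Prove surjectivity: given $\Psi\in\End(\tWei_D)$, decompose it as $\sum_j \Psi_j$ according to the $\Z/p$-grading on $\End(\tWei_D)$ induced by the $C_p$-action on the index set $\tilde D$ (the degree-$j$ part shifts $\tWei_{(\Ba,\Bz)}$ to $\tWei_{(\Ba,\Bz^{\auto^{-j}})}$); then $\Psi_j \auto^{-j}$ is degree $0$, hence commutes with the $C_p$-action, hence descends to a natural transformation of $\Wei_D$ on $\mathsf{H}_1\mmod$, i.e.\ an element of $\End(\Wei_D)$, and $\Psi = \sum_j (\Psi_j\auto^{-j})\auto^j$ is in the image.

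I expect the main obstacle to be step~(3), specifically the claim that a degree-zero natural endotransformation of $\tWei_D$ genuinely comes from an element of $\End(\Wei_D)$ — one must check that the data of a natural transformation on the category $\tilde{\mathcal C}$ of $\tilde{\mathsf{H}}_1$-modules, which is more rigid than on $\mathsf{H}_1\mmod$, does not fail to extend; equivalently, that every $\mathsf{H}_1$-module in the relevant block (or a generating subcategory thereof) appears as $\Res$ of a $\tilde{\mathsf{H}}_1$-module up to the maps one needs, which follows from the Morita equivalence $\tilde{\mathsf{H}}_1\mmod \cong \mathsf{H}_p^{\mathrm{sph}}$-related statements and the freeness of induction/restriction. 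A clean way around this, which I would adopt, is to phrase everything through the equivalence $\mathsf W$ of Theorem~\ref{thm:H1-case}: then $\End(\Wei_D)\cong\widehat R_D$, the $C_p$-action matches $\mathsf a$ of Lemma~\ref{lem:KLR-Morita}, restriction along $\mathsf{H}_1\hookrightarrow\tilde{\mathsf{H}}_1$ corresponds to $\Res$ along $R_D\hookrightarrow R_D\#C_p$, and the isomorphism $\End(\tWei_D)\cong R_D\# C_p$ becomes the tautology $\End_{R_D\#C_p}\big(\Ind^{R_D\#C_p}_{R_D}(-)\big)\cong R_D\#C_p$ applied to the regular representation, exactly as in the proof of Theorem~\ref{thm:smash-Koszul}. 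This reduces the proposition to the standard computation $\Hom_{\tilde A}(\Ind^{\tilde A}_A P,\Ind^{\tilde A}_A P)\cong \Hom_A(P,P^{\oplus p})\cong \End_A(P)\#C$ used there, now with $A=R_D$, which is routine.
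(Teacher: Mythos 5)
Your proposal is correct, and its core mechanism --- the decomposition $\tilde{\mathsf{H}}_1\cong\bigoplus_{m=0}^{p-1}\mathsf{H}_1\auto^m$ inducing a $\Z/p\Z$-graded direct sum decomposition of the endomorphism space --- is exactly the one the paper uses. The difference is in execution. The paper's proof is a direct computation: it writes each space of natural transformations concretely as a completed bimodule, $\Hom(\tWei_{(\Ba,\Bz)},\tWei_{(\Ba',\Bz')})\cong \varprojlim\tilde{\mathsf{H}}_1/\bigl(\tilde{\mathsf{H}}_1\mathfrak{m}_{(\Ba,\Bz)}^N+\mathfrak{m}_{(\Ba',\Bz')}^N\tilde{\mathsf{H}}_1\bigr)$, and then the freeness of $\tilde{\mathsf{H}}_1$ over $\mathsf{H}_1$ immediately yields $\bigoplus_m\Hom(\Wei_{\auto^m(\Ba,\Bz)},\Wei_{(\Ba',\Bz')})\auto^m$. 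This makes your steps (2) and (3) --- and in particular your worry about whether a degree-zero transformation of $\tWei_D$ descends to one of $\Wei_D$ --- automatic, since injectivity and surjectivity are read off from an explicit identification of both sides rather than argued separately. Your ``clean way'' via $\Hom_{\tilde A}(\Ind_A^{\tilde A}P,\Ind_A^{\tilde A}P)\cong\End_A(P)\# C$ applied to the pro-representing object of $\Wei_D$ is the same computation in adjunction language and is fine (note $\Res\Ind P\cong\bigoplus_m P^{\auto^m}$, which is $P^{\oplus p}$ only because $\tilde D$ is $\auto$-stable); but you should avoid phrasing it through the statement that restriction along $\mathsf{H}_1\hookrightarrow\tilde{\mathsf{H}}_1$ ``corresponds to'' restriction along $R_D\hookrightarrow R_D\# C_p$, since the compatibility of the $C_p$-action with the KLR presentation is precisely what the paper establishes in the theorem \emph{following} this proposition, using it as input --- leaning on that identification here would be circular unless you verify the compatibility with the formulas of \cite[Lem.~3.12]{Webalt} independently first.
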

\begin{proof}
Letting $\mathfrak{m}_{(\Ba,\Bz)}\subset \DO$ be the maximal ideal corresponding to this point, we have an expression for this natural transformation spaces in question:
\[\Hom(\Wei_{(\Ba,\Bz)},\Wei_{(\Ba',\Bz')})\cong \varprojlim{\mathsf{H}}_1/{\mathsf{H}}_1\mathfrak{m}_{(\Ba,\Bz)}^N+\mathfrak{m}_{(\Ba',\Bz')}^N{\mathsf{H}}_1.\]
\[\Hom(\tWei_{(\Ba,\Bz)},\tWei_{(\Ba',\Bz')})\cong \varprojlim\tilde{\mathsf{H}}_1/\tilde{\mathsf{H}}_1\mathfrak{m}_{(\Ba,\Bz)}^N+\mathfrak{m}_{(\Ba',\Bz')}^N\tilde{\mathsf{H}}_1.\]  Using the fact that $\tilde{\mathsf{H}}_1\cong \bigoplus_{m=0}^{p-1} \mathsf{H}_1\auto^m$, we have that 
\[\Hom(\tWei_{(\Ba,\Bz)},\tWei_{(\Ba',\Bz')})\cong \bigoplus_{m=0}^{p-1} \Hom(\Wei_{\auto^m(\Ba,\Bz)},\Wei_{(\Ba',\Bz')})\auto^m,\] from which the result is clear.
\end{proof}
Now, we wish to study the compatibility of this isomorphism with the KLR type presentation of $\End(\Wei_D)\cong \widehat{R}_D$.  Note that $R_D$ has an automorphism $R_D\to R_D$ induced by labelling all black strands via the automorphism $\bar{a}\mapsto \bar{a}-\frac{1}{p}$ of $D$.  This is well-defined since by assumption, the labels of each packet of red strands form an orbit under this action, and so the overall set of red strands with labels and $x$-values is left unchanged.   
\begin{theorem}
This presentation of Proposition \ref{prop:smash-iso} induces an isomorphism $\End(\tWei_D)\cong \widehat{R}_D\# C_p$ which agrees with the automorphism $\mathsf{a}$ of Lemma \ref{lem:KLR-Morita}.
\end{theorem}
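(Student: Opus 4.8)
The plan is to combine two identifications already at our disposal: the smash-product decomposition $\End(\tWei_D)\cong\End(\Wei_D)\#C_p$ of Proposition \ref{prop:smash-iso}, and the weighted-KLR presentation $\widehat R_D\cong\End(\Wei_D)$ of Theorem \ref{thm:H1-case}. Composing these gives $\End(\tWei_D)\cong\widehat R_D\#C_p$ immediately; the only real content is that the $C_p$-action thereby transported to $\widehat R_D$ — namely conjugation by $\auto$, which preserves the degree-$0$ part $\End(\Wei_D)$ of the $\Z/p$-grading $\End(\tWei_D)=\bigoplus_m\End(\Wei_D)\auto^m$ — coincides with the diagram relabeling automorphism $\mathsf a$ of Lemma \ref{lem:KLR-Morita} (more precisely, conjugation by $\auto$ equals $\mathsf a^{-1}$, so conjugation by $\auto^{-1}$ equals $\mathsf a$; which generator of $C_p$ one fixes is a matter of convention).

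First I would recall the explicit form of the isomorphism of Theorem \ref{thm:H1-case} from \cite{Webalt}: a pair $(\Ba,\Bz)$ with $\Sigma(a_i,z_i)\in D$ determines the loading $e(\Ba,\Bz)$ placing the label $\Sigma(a_i,z_i)$ at $\Upsilon(a_i/\ell)+i\epsilon$, the corresponding idempotent of $\widehat R_D$ maps to the projection onto $\Wei_{\Ba,\Bz}$, and the diagrammatic generators (strand and ghost crossings, dots, crossings with the red lines) map to the natural transformations between weight spaces built from the Dunkl-Opdam intertwiners. By Lemma \ref{lem:tWei-action}, $\auto$ carries $\Wei_{\Ba,\Bz}$ isomorphically onto $\Wei_{\Ba,\Bz^{\auto}}$, and since $\gamma(\zetap^{-1})=-\tfrac1p$ in $\K/\Z$ we get $\Sigma(a_i,\zetap^{-1}z_i)=\Sigma(a_i,z_i)-\tfrac1p$; thus the loading $e(\Ba,\Bz^{\auto})$ is obtained from $e(\Ba,\Bz)$ by the label shift $d\mapsto d-\tfrac1p$ at unchanged positions. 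Hence conjugation by $\auto$ permutes the images of the idempotents $e(\Ba,\Bz)$ exactly as the relabeling automorphism does.

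To promote this from idempotents to all of $\widehat R_D$, I would observe that the remaining data in the definition of $R_D$ — the strand positions $\Upsilon(a_i/\ell)$, the ghost offset $\cg=\Upsilon(k)$, the red-line positions $\theta_m=\Upsilon(h_m/\ell)$, and the Crawley-Boevey quiver on $D$ — are all unchanged by $d\mapsto d-\tfrac1p$; this is exactly the compatibility (from $D$ being $\tfrac1p\Z/\Z$-closed and the red lines occurring in packets) that made $\mathsf a$ a well-defined ring automorphism in Lemma \ref{lem:KLR-Morita}. Conjugation by $\auto$ is then an algebra automorphism of $\widehat R_D$ fixing the images of the idempotents, preserving the grading of Theorem \ref{thm:H1-case}, and sending the dot on a strand labeled $d$ — which corresponds to $u_i-a_i$ and hence is $\auto$-fixed — to the dot on the strand labeled $d-\tfrac1p$. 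Since the remaining diagrammatic generators span one-dimensional graded morphism spaces of prescribed degree, the same rigidity that identified $\mathsf a$ forces conjugation by $\auto$ to agree with $\mathsf a^{\mp1}$ on all of $\widehat R_D$, which together with Proposition \ref{prop:smash-iso} proves the theorem.

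I expect the crux to be this last step: checking that conjugation by $\auto$ sends the crossing generators involving the red lines — whose defining natural transformations encode the action of $\mathsf{H}_1$ on weight spaces via the KZ-type intertwiners of \cite{Webalt} — to their relabeled counterparts with no stray scalar, rather than merely permuting idempotents. I would settle this either by the rigidity argument above (one-dimensionality of the relevant morphism spaces, with the dot relations pinning down scalars) or, more explicitly, by tracing the intertwiner formulas and using that $\auto$ fixes $u_i$, $x_i$, $y_i$ and multiplies $t_i$ by $\zetap$ — precisely the bookkeeping that produces the label shift by $-\tfrac1p$ to begin with.
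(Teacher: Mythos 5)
Your proposal is correct and follows essentially the same route as the paper, whose entire proof is the one-line observation that the explicit formulas of \cite[Lem. 3.12]{Webalt} for the action of the diagrammatic generators on the weight functors are visibly compatible with relabeling; your ``tracing the intertwiner formulas, using that $\auto$ fixes $u_i,x_i,y_i$ and scales $t_i$ by $\zetap$'' is exactly that verification, spelled out. Your remark that conjugation by $\auto$ shifts labels by $-\tfrac1p$ and hence matches $\mathsf a^{-1}$ rather than $\mathsf a$ as literally defined in Lemma \ref{lem:KLR-Morita} is a fair catch of a sign convention the paper glosses over (the text just before the theorem uses $\bar a\mapsto\bar a-\tfrac1p$), and does not affect the substance of the statement.
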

\begin{proof}
It's clear from the formulas of \cite[Lem. 3.12]{Webalt} that the action of $C_p$ on $\Wei_D$ is compatible with the action on $\widehat{R}_D$ by relabeling.
\end{proof}
As in Definition \ref{def:wgmod}, we can consider the category $\tilde{R}_D\wgmod$ of finitely generated, weakly gradable, and locally finite-dimensional modules over $\tilde{R}_D=R_D\#C_p$.  
\begin{corollary}
The category $\mathsf{H}_p\mmod_D$ of Dunkl-Opdam $\mathsf{H}_p$-modules with supports in $D^n$ are equivalent to the category $\tilde{R}_D\wgmod$, via the functor $\Wei\circ \Ind^{\mathsf{H}_1}_{\mathsf{H}_p}$.  
\end{corollary}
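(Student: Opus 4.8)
The plan is to assemble the statement from three pieces already in place: Theorem~\ref{thm:H1-case}, the identification $\End(\tWei_D)\cong\widehat{R}_D\#C_p$ coming from Proposition~\ref{prop:smash-iso} and the theorem following it, and the Morita/Clifford formalism of Section~2. The rough shape is: first promote Theorem~\ref{thm:H1-case} from $\mathsf{H}_1$ to $\tilde{\mathsf{H}}_1=\mathsf{H}_1\#C_p$, obtaining an equivalence $\tWei_D\colon\tilde{\mathsf{H}}_1\mmod_D\xrightarrow{\sim}\tilde{R}_D\wgmod$; then transport this along the Morita equivalence $\mathsf{H}_p\mmod\cong\tilde{\mathsf{H}}_1\mmod$, using that this equivalence carries $\Ind^{\mathsf{H}_1}_{\mathsf{H}_p}$ to $\Res^{\tilde{\mathsf{H}}_1}_{\mathsf{H}_1}$.

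For the first part, recall $\tWei_D=\Wei_D\circ\Res^{\tilde{\mathsf{H}}_1}_{\mathsf{H}_1}$ and $\tilde{\mathsf{H}}_1\cong\bigoplus_{m=0}^{p-1}\mathsf{H}_1\auto^m$ as an $\mathsf{H}_1$-bimodule, so induction from $\mathsf{H}_1$ to $\tilde{\mathsf{H}}_1$ is exact and preserves (pro-)projectives and progenerators. An object of $\tilde{\mathsf{H}}_1\mmod_D$ is exactly a Dunkl--Opdam $\mathsf{H}_1$-module with support in $D$ together with a compatible order-$p$ action of $\auto$; by Theorem~\ref{thm:H1-case} and the theorem after Proposition~\ref{prop:smash-iso}---which says the equivalence $\Wei_D$ intertwines the $\auto$-twist with the relabelling automorphism $\mathsf{a}$---such an object is transported to an object of $\widehat{R}_D\wgmod$ carrying a compatible order-$p$ action of $\mathsf{a}$, i.e. an object of $\tilde{R}_D\wgmod$, and weak gradability and local finite-dimensionality are visibly preserved both ways. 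On morphisms one invokes Proposition~\ref{prop:smash-iso} and the subsequent theorem, which compute $\End(\tWei_D)\cong\widehat{R}_D\#C_p\cong\tilde{R}_D$. So the argument proving Theorem~\ref{thm:H1-case} runs verbatim in this equivariant setting and gives the equivalence $\tWei_D\colon\tilde{\mathsf{H}}_1\mmod_D\xrightarrow{\sim}\tilde{R}_D\wgmod$.

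For the second part, the $C_p$-action on $\mathsf{H}_1$ is Morita (shown in Section~3.1), so Lemma~\ref{lem:Morita} gives a Morita equivalence $G\colon\mathsf{H}_p\mmod\xrightarrow{\sim}\tilde{\mathsf{H}}_1\mmod$, and the commuting triangle of Section~2 identifies $\Res^{\tilde{\mathsf{H}}_1}_{\mathsf{H}_1}\circ G\cong\Ind^{\mathsf{H}_1}_{\mathsf{H}_p}$. Since a module over $\mathsf{H}_p$ or over $\tilde{\mathsf{H}}_1$ is Dunkl--Opdam with support in $D^n$ precisely when its image in $\mathsf{H}_1\mmod$ under induction, resp.\ restriction, is, the equivalence $G$ restricts to $\mathsf{H}_p\mmod_D\xrightarrow{\sim}\tilde{\mathsf{H}}_1\mmod_D$. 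Therefore
\[
\Wei\circ\Ind^{\mathsf{H}_1}_{\mathsf{H}_p}\;\cong\;\tWei_D\circ\Res^{\tilde{\mathsf{H}}_1}_{\mathsf{H}_1}\circ G\;=\;\tWei_D\circ G
\]
is a composite of equivalences, and the $\tilde{R}_D$-module structure it produces on $\Wei_D(\Ind^{\mathsf{H}_1}_{\mathsf{H}_p}M)$ is the one coming from the residual $C_p$-action on $\Ind^{\mathsf{H}_1}_{\mathsf{H}_p}M=\mathsf{H}_1\otimes_{\mathsf{H}_p}M$, which is exactly the $\auto$-action built into $G$. I do not anticipate a substantive obstacle: the real content has all been spent in Theorem~\ref{thm:H1-case}, in the equivariance of $\Wei_D$ (the theorem after Proposition~\ref{prop:smash-iso}, where one checks that no cocycle obstruction arises---legitimate since $C_p$ is cyclic, generated by an element of exact order $p$, so there is no higher coherence to verify), and in the Morita formalism of Section~2. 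What needs care is purely bookkeeping: confirming that the functor named in the statement is literally $\tWei_D\circ G$, and that the support and finiteness conditions cutting out $\mathsf{H}_p\mmod_D$, $\tilde{\mathsf{H}}_1\mmod_D$, and $\tilde{R}_D\wgmod$ all correspond under these functors.
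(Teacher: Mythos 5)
Your proposal is correct and follows exactly the route the paper intends: the corollary is stated without an explicit proof precisely because it is the composite of Theorem \ref{thm:H1-case}, the identification $\End(\tWei_D)\cong \widehat{R}_D\# C_p$ from Proposition \ref{prop:smash-iso} and the theorem following it, and the Morita equivalence $\mathsf{H}_p\mmod\cong\tilde{\mathsf{H}}_1\mmod$ of Lemma \ref{lem:Morita}, with the identification $\Res^{\tilde{\mathsf{H}}_1}_{\mathsf{H}_1}\circ G\cong\Ind^{\mathsf{H}_1}_{\mathsf{H}_p}$ coming from the commutative diagram in Section 2. Your bookkeeping of the support and finiteness conditions is the only content beyond what the paper leaves implicit, and it checks out.
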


\begin{definition}
A {\bf grading} of a Dunkl-Opdam module over $\mathsf{H}_p$ is a grading of the corresponding $\tilde{R}_D$-module.   
\end{definition}
Since the simple modules over $\tilde{R}_D$ are all gradable, the same is true of all simple Dunkl-Opdam modules.

\subsection{Scaffolding and indexing sets}
Let us now turn to establishing a scaffolding for the category of Dunkl-Opdam modules.  For simplicity, we assume that $\Upsilon(k)\geq 0$.  First, note that the inclusion of the subalgebra $\mathcal{D\!O}_n\subset \mathsf{H}_1$ is compatible with the automorphism $\auto$. Thus we have an action of $C_p$ on $\mathcal{D\!O}_n$; this is obviously a Morita action, since the powers of $t_1$ give units in each isotypic component.

Thus, we first describe a scaffolding for the representations of $\mathcal{D\!O}_n$.  As usual, for some $g$, and a lifted segment $(a_1,\dots, a_g)$ and choice of $z\in \mu_{\ell}(\K)$, there is a 1-dimensional representation $L(\Ba,z)$ over $\mathcal{D\!O}_g$ where $S_g$ acts trivially, $t_i$ acts by $z$ for all $i$, and $u_i$ by $a_i$. \begin{lemma}
 There is a bijection between lifted multisegments and simple $\mathcal{D\!O}_n$-modules.   The simple $\mathcal{L}(\bQ)$ is the unique simple quotient of the induction $I(\bQ)$ of $L(\Ba^{(1)},z^{(1)})\boxtimes \cdots \boxtimes L(\Ba^{(m)},z^{(m)})$ from a parabolic subalgebra $\mathcal{D\!O}_{g_1}\otimes \cdots \otimes \mathcal{D\!O}_{g_m}\subset \mathcal{D\!O}_n$ where the lifted segments $\Ba,z$ are ordered so that $\Upsilon(a_1^{(i)})\geq \Upsilon(a_1^{(i+1)})$. 
\end{lemma}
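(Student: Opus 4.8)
The plan is to recognize $\mathcal{D\!O}_n$ as the degenerate affine Hecke algebra of $G(\ell,1,n)$ and to run the standard Zelevinsky classification of its simple modules, restricted to the block(s) whose $\DO$-weights lie in $\tilde D$. First I would reduce to a single block: the idempotents in the completion of $\DO$ split the category of $\mathcal{D\!O}_n$-modules on which $\DO$ acts locally finitely with weights $(\Ba,\Bz)$ satisfying $\Sigma(a_i,z_i)\in D$ into a product over multiplicity functions $w\colon D\to\Znn$ with $\sum_{d}w_d=n$. Once $w$ is fixed, only the structure of $D$ near the vertices with $w_d>0$ matters, and this is a disjoint union of type-$A_\infty$ or cyclic type-$A$ quivers, since the only edges of $D$ are $\bar a\to\overline{a+k}$; equivalently, on each block $\mathcal{D\!O}_n$ is (a completion of) the red-line-free version of the weighted KLR algebra of \cite{WebwKLR}, so one could alternatively argue entirely on the KLR side.

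Next I would construct the standard modules and their heads. Given a lifted multisegment $\bQ=\{(\Ba^{(1)},z^{(1)}),\dots,(\Ba^{(m)},z^{(m)})\}$ with segment sizes $g_1,\dots,g_m$ summing to $n$, order the segments so that $\Upsilon(a^{(i)}_1)\geq\Upsilon(a^{(i+1)}_1)$, form the one-dimensional $\mathcal{D\!O}_{g_i}$-modules $L(\Ba^{(i)},z^{(i)})$ on which $S_{g_i}$ acts trivially, $t_j$ acts by $z^{(i)}$, and $u_j$ by the entries of $\Ba^{(i)}$, and set $I(\bQ)=\mathcal{D\!O}_n\otimes_{\mathcal{D\!O}_{g_1}\otimes\cdots\otimes\mathcal{D\!O}_{g_m}}L(\Ba^{(1)},z^{(1)})\boxtimes\cdots\boxtimes L(\Ba^{(m)},z^{(m)})$. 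A shuffle computation of $\DO$-weight spaces shows that the weight $\mu_{\bQ}$ obtained by concatenating the segment weights in the chosen order occurs in $I(\bQ)$ with multiplicity one and is maximal, in the order dictated by $\Upsilon$, among all weights of $I(\bQ)$; this is precisely where the ordering on the segments is used, exactly as in \cite{Webalt} and the classical case. Hence any proper submodule of $I(\bQ)$ has zero $\mu_{\bQ}$-weight space (otherwise it would contain a generator), the sum of all proper submodules is proper, and $I(\bQ)$ has a unique maximal submodule; its unique simple head is $\mathcal{L}(\bQ)$, and $\mu_{\bQ}$ remains a weight of it.

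It then remains to see that $\bQ\mapsto\mathcal{L}(\bQ)$ is a bijection onto the simple modules with $\DO$-support in $\tilde D$. For injectivity, $\mu_{\bQ}$ is the unique highest $\DO$-weight of $\mathcal{L}(\bQ)$, and the decomposition of a weight into an appropriately ordered tuple of segments is unique, so $\mathcal{L}(\bQ)\cong\mathcal{L}(\bQ')$ forces $\bQ=\bQ'$. For surjectivity, given a simple $S$ with support in $\tilde D$, take its highest weight $\nu$; writing $\nu$ as a concatenation of segments yields a multisegment $\bQ$ admitting a nonzero map $I(\bQ)\to S$, whence $S\cong\mathcal{L}(\bQ)$; and an indexing count, both sides being $\bigsqcup_{w}$ of the multisegments for the component quivers, shows nothing is missed. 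The one genuinely nontrivial ingredient — as opposed to bookkeeping — is the uniqueness of the segment decomposition of a simple module's highest weight, i.e. the Zelevinsky linkage combinatorics; but since each connected component of $D$ is type $A_\infty$ or cyclic type $A$ and the $\mu_\ell$-grading (the $t_i$-eigenvalues) only separates components, the classical argument in the form of \cite{Webalt} (or a direct KLR-theoretic version via \cite{WebwKLR}) applies verbatim, so I would present the construction above and the surjectivity argument in full and cite these references for the linkage step.
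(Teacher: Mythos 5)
The paper offers no proof of this lemma---it is invoked as the standard Zelevinsky--Rogawski classification for the degenerate affine Hecke algebra of $G(\ell,1,n)$---so you are supplying the standard argument, and your overall strategy (parabolic induction of segment characters in decreasing order of $\Upsilon(a_1^{(i)})$, unique simple head detected on the concatenated weight, injectivity and surjectivity via extremal weights) is the right one. However, the key step fails as stated: the claim that $\mu_{\bQ}$ occurs in $I(\bQ)$ with multiplicity one is false whenever the multisegment contains a repeated segment, which is allowed since $\bQ$ is a multiset. By the Mackey/shuffle decomposition, the generalized $\mu_{\bQ}$-weight space has dimension equal to the number of minimal coset representatives of $S_n/(S_{g_1}\times\cdots\times S_{g_m})$ that stabilize $\mu_{\bQ}$, and the block permutation interchanging two identical segments is such a representative. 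Already for two copies of a singleton segment $(a)$ with the same $z$, the module $I(\bQ)$ is two-dimensional, $u_1$ acts on it by a nontrivial Jordan block with eigenvalue $a$, and the \emph{entire} module is the generalized $\mu_{\bQ}$-weight space (it happens to be irreducible, but not for the reason you give). Consequently the deduction ``any proper submodule has zero $\mu_{\bQ}$-weight space, hence the sum of all proper submodules is proper'' does not follow from maximality of $\mu_{\bQ}$ alone; one needs the standard supplementary input that the induction of $L(\Ba,z)^{\boxtimes k}$ from $k$ copies of a single segment is already irreducible (or the self-duality/bilinear-form argument on the KLR side), and then one inducts in stages.

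The injectivity step is also thinner than it looks: ``unique highest weight plus unique segment decomposition'' is not enough, because $\mathcal{L}(\bQ)$ has many weights, several of which may themselves be correctly ordered concatenations $\mu_{\bQ'}$ for other multisegments $\bQ'$. To recover $\bQ$ from the isomorphism class of $\mathcal{L}(\bQ)$ one must know that $\mu_{\bQ}$ is the unique maximal such weight, which in practice is extracted from the unitriangularity $[I(\bQ)]=[\mathcal{L}(\bQ)]+\sum_{\bQ'<\bQ}m_{\bQ'}[\mathcal{L}(\bQ')]$ together with the linear independence of the classes (or formal characters) of the $I(\bQ)$, or from a Jacquet-functor argument. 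You do defer ``the linkage combinatorics'' to \cite{Webalt} and \cite{WebwKLR}, which is reasonable given that the paper itself cites Zelevinsky's formalism without proof; but the deferral has to be understood as covering both of the points above, whereas as written your text asserts the multiplicity-one and highest-weight-determines-$\bQ$ claims as if they were immediate shuffle computations.
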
 
Since this will be important for us later, we note that this order is not unique, since we might have $\Upsilon(a_1^{(i)})= \Upsilon(a_1^{(i+1)})$, but  $\Ba^{(i)}\neq \Ba^{(i+1)}$ or $z^{(i)}\neq z^{(i+1)}$.  For purposes of scaffolding, we wish to have a canonical order, so we choose a total order on lifted segments compatible with the partial ordering by $\Upsilon$ of the first element, and order elements $\zeta'$ of $\mu_\ell(\K)$ by the smallest non-negative integer $k$ such that $\zeta'=\zetal^k$.  This induces a total order on pairs where we use these orders lexicographically, first on $\Ba$ and then on $z$.    
\begin{definition}\label{def:azQ}
Let $\Ba_{\bQ}$ and $\Bz_{\bQ}$ be the concatenation of the liftings of the multisegments $\bQ$ (with $z^{(i)}$ appearing with multiplicity $|\Ba^{(i)}|)$. This is the weight of $u_i$ and $t_i$ acting on the generator of $I(\bQ)$.
\end{definition}

If we twist by the automorphism $\auto$, then $L(\Ba^{(1)},z^{(1)})\boxtimes \cdots \boxtimes L(\Ba^{(m)},z^{(m)})$ is sent to $L(\Ba^{(1)},\zetap^{-1}z^{(1)})\boxtimes \cdots \boxtimes L(\Ba^{(m)},\zetap^{-1}z^{(m)})$.  These lifted segments are no longer ordered correctly for our total order: the segments $\Ba$ have stayed in the correct order, but if we have $\Ba^{(i)}=\Ba^{(i+1)}$ but $z^{(i)}\neq z^{(i+1)}$, these might be potentially out of order.  However, we can apply the unique shortest permutation in $\sigma_{\bQ}\in S_n$ that puts these back into order.  
\begin{lemma}
Right multiplication by $\sigma_{\bQ}$ induces an isomorphism $q_{\bQ}\colon L(\bQ)^\auto\to L(\auto\cdot \bQ)$, which gives a scaffolding.
\end{lemma}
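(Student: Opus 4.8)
The plan is to build $q_{\bQ}$ at the level of the induced modules $I(\bQ)$ and then pass to heads. Recall that $\auto$ fixes every $u_i$ and every element of $S_n$ and only rescales the $t_i$, so it preserves $\mathcal{D\!O}_n$ together with each parabolic subalgebra $\mathcal{D\!O}_{\vec g}=\mathcal{D\!O}_{g_1}\otimes\cdots\otimes\mathcal{D\!O}_{g_m}$, and twisting by $\auto$ commutes with parabolic induction; thus $I(\bQ)^{\auto}$ is again an induction $\mathcal{D\!O}_n\otimes_{\mathcal{D\!O}_{\vec g}}\K_{\chi}$ of the one-dimensional character $\chi$ of $\mathcal{D\!O}_{\vec g}$ obtained from $\chi_{\bQ}$ by the rescaling of the $t_i$-weights recalled just above the statement. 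The combinatorial crux, already isolated in the text, is that $\sigma_{\bQ}$ permutes only segments with identical $\Ba$ — the coarser levels of the total order are insensitive to the twist — and such segments have equal length. Hence the parabolic attached to the sorted form $\auto\cdot\bQ$ is \emph{the same} $\mathcal{D\!O}_{\vec g}$, we may take $\sigma_{\bQ}\in S_n$ to be the block permutation of the corresponding segment-intervals (which normalizes $S_{\vec g}$, hence lies in the normalizer of $\mathcal{D\!O}_{\vec g}$), and conjugation by $\sigma_{\bQ}$ carries $\chi$ to the character $\chi_{\auto\cdot\bQ}$ occurring in $I(\auto\cdot\bQ)$. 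Granting this, $h\otimes v\mapsto h\,\sigma_{\bQ}\otimes v$ is a well-defined homomorphism $I(\bQ)^{\auto}\to I(\auto\cdot\bQ)$ (the compatibility needed to move elements of $\mathcal{D\!O}_{\vec g}$ across the tensor is exactly the conjugation identity), with two-sided inverse right multiplication by $\sigma_{\bQ}^{-1}$; being an isomorphism of modules it carries radicals to radicals, so it descends to an isomorphism of heads $q_{\bQ}\colon L(\bQ)^{\auto}=\operatorname{head}I(\bQ)^{\auto}\to\operatorname{head}I(\auto\cdot\bQ)=L(\auto\cdot\bQ)$, using that $L(\bQ)$ is by definition the head of $I(\bQ)$ and that twisting by $\auto$ is an exact autoequivalence. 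The choice of coset representative for $\sigma_{\bQ}$ modulo $S_{\vec g}$ is immaterial, since $S_{\vec g}\subseteq\ker\chi_{\bQ}$ acts by the identity under right multiplication on the induced module.

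It then remains to check the scaffolding identity $q_{\auto^{p-1}(\bQ)}\cdots q_{\auto(\bQ)}q_{\bQ}=1_{L(\bQ)}$. The left-hand side is induced by right multiplication by the product of block permutations $\sigma_{\auto^{p-1}\bQ}\cdots\sigma_{\auto\bQ}\sigma_{\bQ}$, so it suffices to show this product is the identity of $S_n$ (or even merely lies in $S_{\vec g}$). Write $o_k$ for the order-isomorphism from the set of segments of $\auto^{k}\bQ$ to $\{1,\dots,m\}$ determined by the chosen total order; unwinding the definition of $\sigma_{\auto^{k}\bQ}$ as the permutation sorting ``$\auto$ applied to the sorted form of $\auto^{k}\bQ$'' identifies it with the block permutation of $o_{k+1}\circ\auto\circ o_{k}^{-1}$, where $\auto$ denotes the evident bijection from the segments of $\auto^{k}\bQ$ to those of $\auto^{k+1}\bQ$. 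Since block permutations multiply compatibly with composition of segment-permutations, the product telescopes to the block permutation of $o_{p}\circ\auto^{p}\circ o_{0}^{-1}$, which is the identity because $\auto^{p}$ acts as the identity on segments (as $\zetap^{p}=1$) and $o_{p}=o_{0}$ (as $\auto^{p}\bQ=\bQ$).

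The step I expect to require the most care — though it is bookkeeping rather than anything deep — is the one just outlined: confirming that $\sigma_{\bQ}$ never has to move a segment past one of a different length. That is what makes all the modules in sight inductions from one fixed parabolic, makes each $q_{\bQ}$ a genuine isomorphism rather than merely a nonzero map on heads, and lets the composite around a $C_p$-orbit be computed as a plain product of permutations. Once that is secured, the telescoping identity for the block permutations is essentially forced, and with it the scaffolding relation.
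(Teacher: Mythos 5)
There is a genuine gap at the central step of your first paragraph. You treat $\mathcal{D\!O}_n$ as if it were a plain semidirect product of its polynomial part with $\K[S_n]$, so that the block permutation $\sigma_{\bQ}$, normalizing the Young subgroup, automatically normalizes the parabolic $\mathcal{D\!O}_{g_1}\otimes\cdots\otimes\mathcal{D\!O}_{g_m}$ and conjugates its character in the obvious way. But $\mathcal{D\!O}_n$ is the degenerate affine Hecke algebra of $G(\ell,1,n)$: the relation $u_i(j,j+1)=(j,j+1)u_{(j,j+1)\cdot i}+k\ell(\delta_{i,j}-\delta_{i,j+1})\pi_{j,j+1}$ means that moving $u_i$ past a transposition produces correction terms proportional to $\pi_{j,j+1}$, which do not lie in the parabolic. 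So $\sigma_{\bQ}$ does not normalize $\mathcal{D\!O}_{g_1}\otimes\cdots\otimes\mathcal{D\!O}_{g_m}$, your ``conjugation identity'' is false as stated, and neither the well-definedness nor the invertibility of $h\otimes v\mapsto h\sigma_{\bQ}\otimes v$ follows from block-permutation bookkeeping alone. The missing input --- which is exactly the content of the paper's proof --- is that every simple reflection occurring in a reduced word for $\sigma_{\bQ}$ separates two positions carrying distinct $t$-eigenvalues, so that $\pi_{j,j+1}$ (which acts on a $(\Ba,\Bz)$-weight vector by $\delta_{z_j,z_{j+1}}$, since $z_j/z_{j+1}$ is an $\ell$th root of unity) annihilates the relevant vectors; hence all correction terms vanish and each $(j,j+1)$ acts as an invertible intertwiner $\Wei_{(\Ba,\Bz)}\to\Wei_{(\Ba^{r_j},\Bz^{r_j})}$. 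Your appeal to a two-sided inverse given by $\sigma_{\bQ}^{-1}$ silently uses the same vanishing. Note that the fact you correctly isolate --- that $\sigma_{\bQ}$ only permutes segments with identical $\Ba$, hence of equal length --- matters not primarily because it keeps the parabolic fixed, but because it guarantees that the strands crossed by $\sigma_{\bQ}$ carry different $z$'s.

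Your second paragraph, the telescoping computation showing $\sigma_{\auto^{p-1}\bQ}\cdots\sigma_{\bQ}=1$, is correct and is in fact more detailed than the paper's one-line assertion of the same identity; but it only yields the scaffolding relation once each $q_{\auto^k\bQ}$ is known to be honest right multiplication by the corresponding group element with no corrections, which again rests on the vanishing described above.
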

\begin{proof}
To check that right multiplication by $\sigma_{\bQ}$ induces a map $I(\bQ)^\auto\to I(\auto\cdot \bQ)$, we simply use many times that if $z_i\neq z_{i+1}$, then $r_i=(i,i+1)$ induces a natural transformation $\Wei_{(\Ba,\Bz)}\to \Wei_{(\Ba^{r_i},\Bz^{r_i})}$ which is obviously invertible.  Since $\sigma_{\bQ}$ leaves all blocks with the same value of $z$ in the same order, every simple reflection in a word for $\sigma_{\bQ}$ acts on the generating copy of $L(\Ba^{(1)},z^{(1)})\boxtimes \cdots \boxtimes L(\Ba^{(m)},z^{(m)})\subset I(\bQ)^\auto$ sending it again to a weight vector of the correct weight.  Similarly, the generators of the Young subgroup are conjugated by $\sigma_{\bQ}$ to the generators of the Young subgroup for $\auto\cdot \bQ$.  This shows we have an isomorphism.  To show it is a scaffolding,
we need only check that $q_{\auto^{p-1}\bQ}\cdots q_{\bQ}=1$, and this is clear since the permutation $\sigma_{\bQ}$ satisfies the analogous equation $\sigma_{\auto^{p-1}\bQ}\cdots \sigma_{\bQ}=1$.
\end{proof}

In particular, if there is a permutation $\sigma$ such that $\Ba^{(\sigma(i))}=\Ba^{i}$ and $z^{(\sigma(i))}=\zetap^{-k}z^{(i)}$, then $L(\Ba^{(*)},z^{(*)})^{\alpha^k}\cong L(\Ba^{(*)},\zetap^{-k}z^{(*)})\cong L(\Ba^{(*)},z^{(*)})$, and our scaffolding gives a preferred isomorphism.  Since our action is Morita, the eigenspaces of this isomorphism give the different irreps of $\mathcal{D\!O}_n^{C_p}$.

It's worth thinking about this isomorphism in terms of the algebra $R_D$;   when a permutation $\sigma$ as above exists, the idempotents $e(\Ba_{\bQ},\Bz_{\bQ})$ and  $e(\Ba_{\bQ},\zetap^{-k}\Bz_{\bQ})$ are not the same, but the straight-line diagram $\delta_\sigma$ tracing out the permutation $\sigma$ gives an isomorphism between these idempotents.  Thus, right multiplication by $r\mapsto \auto^k(r\delta_{\sigma})$ gives the desired $R_D\# C_\chi$-module structure on  $R_De(\Ba_{\bQ},\Bz_{\bQ})$.  

It's easy to upgrade this scaffolding to one for Dunkl-Opdam modules over $\mathsf{H}_p$.  The simple module $K_{\chi}$ has an attached lifted multisegment $\bQ'$ obtained by adding all the rows of $\xi$ (correctly charged) to the lifts of the multisegment $\bQ$.   Applying the scaffolding to $K_{\chi}=L(\bQ')$, we obtain an isomorphism $q_{\bQ,\xi}\colon K_{\chi}^\auto\to K_{\auto\cdot \chi}$ where the action of $\auto$ on multisegments and multipartitions is by sending 
\begin{align}
    \label{eq:segment-action}\auto\cdot (q_1,\dots, q_g)&=\big(q_1-\frac{1}{p},\dots, q_g-\frac{1}{p}\big)\\ \auto\cdot (\xi^{(1)},\dots, ,\xi^{(\ell)})&=(\xi^{(\lp+1)},\dots, ,\xi^{(\ell)},\xi^{(1)},\dots, \xi^{(\lp)})\label{eq:YD-action}
\end{align}
It may look a little strange to have such different formulas for the action on the multisegment and the multipartition, but this is because of the how the eigenvalues of the $t_i$'s are organized in the corresponding $\mathcal{D\!O}_n$ representation.  For a multisegment, the choice of this eigenvalue $z_i$ is left implicit, and in fact depends on the choice of $N$ used to construct the corresponding lifted multisegment.  Since $\Sigma(a, \zetap^{-1}z)=\Sigma(a, z)-\frac{1}{p}$, subtracting $\frac{1}{p}$ exactly has the effect of changing the eigenvalue as desired.  On the other hand, the $\ell$ components of the multipartition $\xi$ correspond to the different roots of unity that $t_i$ can act by, and thus the rotation above is the correct operator to change this eigenvalue by $\zetap^{-1}$.

This scaffolding extends to the modules $M(\chi)$  and $\Delta(\chi)$ and thus a scaffolding morphism $S(\chi)^{\auto}\to S(\auto\cdot \chi)$ for simple Dunkl-Opdam modules over $\mathsf{H}_p$.  This is induced by considering the corresponding $R_D$-modules as a quotients of $R_De(\Ba_{\chi},\Bz_{\chi})$ where $\Ba_{\chi}=\Ba_{\bQ'}$, and $\Bz_{\chi}=\Bz_{\bQ'}$.  All of these quotient maps are compatible with the $R_D\# C_{\chi}$-module structure already described.  

Thus, applying Theorem \ref{thm:clifford}, we can classify all Dunkl-Opdam modules over $\mathsf{H}_p$, which strengthens a result of Griffeth on category $\mathcal{O}$ \cite[Th. 9.1]{GrifJack}:
\begin{theorem}\label{thm:DO-classification}
For each orbit $[\chi]$ of the action of $C_p$ on  $\pairs_n$, the $p/p_\chi$ different eigenspaces of $Q_\chi$ acting on acting on $S(\chi)$ are all (non-zero) simple $\mathsf{H}_p$-modules, and every simple Dunkl-Opdam $\mathsf{H}_p$-module is of this form for a unique $\chi$ up to the action of $C_p$ and unique $p/p_\chi$th root of unity.  
\end{theorem}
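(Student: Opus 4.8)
The plan is to deduce the classification as an essentially immediate consequence of the Clifford-theoretic machinery of Theorem~\ref{thm:clifford}, applied with $A=\mathsf{H}_1$, with $C=C_p$ acting through the order $p$ automorphism $\auto$ of Definition~\ref{def:p-cyclic}, and with $\mathcal{C}$ the category of Dunkl--Opdam modules over $\mathsf{H}_1$. The Morita hypothesis on the action is already in hand (the powers of $t_1$ are units representing all isotypic components of $C_p$), so the only things that remain before invoking Theorem~\ref{thm:clifford} are to check that $\mathcal{C}$ is a Serre subcategory stable under twisting by $\auto$, and that the scaffolding on simples constructed in the previous subsection genuinely satisfies the scaffolding cocycle condition and descends correctly to simple subquotients.

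For the first point I would fix a subset $D\subset\K/\Z$ closed under translation by $\tfrac1p\Z/\Z$, work inside the block category $\mathsf{H}_1\mmod_D$ of Dunkl--Opdam modules supported in $D^n$ (any simple Dunkl--Opdam $\mathsf{H}_p$-module restricts to $\mathsf{H}_1$ with support in some such $D$, so no generality is lost), and note that this category is Serre because the generalized weight functors $\Wei_{\Ba,\Bz}$ are exact and supports only shrink under passing to sub- and quotient modules. Stability under $\auto$ then follows from Lemma~\ref{lem:tWei-action} together with the identity $\Sigma(a,\zetap^{-1}z)=\Sigma(a,z)-\tfrac1p$, which shows that $\auto$ permutes the weight spaces inside $D^n$ precisely because $D$ is $\tfrac1p\Z/\Z$-stable. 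This last observation is the only place the hypothesis on $D$ enters, and getting this support/twisting bookkeeping right is the one genuinely delicate point of the argument.

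For the scaffolding I would simply invoke the system $q_\chi\colon S(\chi)^{\auto}\to S(\auto\cdot\chi)$ already constructed: it descends from the scaffolding $q_{\bQ,\xi}$ on the $\mathcal{D\!O}_n$-modules $K_\chi$ (right multiplication by the shortest reordering permutation $\sigma_{\bQ'}$) through the surjections $\mathcal{M}(\chi)\twoheadrightarrow\Delta(\chi)\twoheadrightarrow S(\chi)$, and the cocycle identity $q_{\auto^{p-1}\chi}\cdots q_\chi=\mathrm{id}$ is inherited from the corresponding identity $\sigma_{\auto^{p-1}\bQ'}\cdots\sigma_{\bQ'}=1$ for permutations; the action of $\auto$ on $\pairs_n$ here is the one given by \eqref{eq:segment-action} and \eqref{eq:YD-action}, and $p_\chi$ is the smallest positive integer with $\auto^{p_\chi}\cdot\chi=\chi$. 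Since $\Delta(\chi)$ has only scalar endomorphisms one knows this descent is forced and hence well defined, so nothing new needs to be proved.

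With those checks in place Theorem~\ref{thm:clifford} applies verbatim: the operator $Q_\chi=q_{\auto^{p_\chi-1}\chi}\cdots q_\chi\colon S(\chi)^{\auto^{p_\chi}}\to S(\chi)$ is an automorphism of the underlying $\mathsf{H}_1$-module with $Q_\chi^{p/p_\chi}=1$, its eigenspaces (one for each $p/p_\chi$th root of unity) are exactly the isotypic components for the stabilizer $C_\chi\cong\mu_{p/p_\chi}(\K)$, and Theorem~\ref{thm:clifford} asserts that each of these is a nonzero simple $\mathsf{H}_p=\mathsf{H}_1^{C_p}$-module and that every simple object of $\mathcal{C}^{C_p}$ arises this way for a unique $C_p$-orbit $[\chi]\subset\pairs_n$ and a unique character $\nu$, i.e.\ a unique $p/p_\chi$th root of unity. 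Letting $D$ range over all $\tfrac1p\Z/\Z$-stable subsets of $\K/\Z$ then yields the statement for all simple Dunkl--Opdam $\mathsf{H}_p$-modules. I do not expect a serious obstacle beyond the bookkeeping flagged above, since all the real content sits in the preceding construction of the scaffolding and of the equivalence with $\widehat{R}_D\wgmod$.
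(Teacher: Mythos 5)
Your proposal is correct and follows essentially the same route as the paper: the paper likewise constructs the scaffolding on the $\mathcal{D\!O}_n$-simples via the reordering permutations $\sigma_{\bQ}$, transports it through $K_\chi = L(\bQ')$, $\mathcal{M}(\chi)$, $\Delta(\chi)$ down to $S(\chi)$, fixes a $\tfrac1p\Z/\Z$-stable support set $D$ to get a $C_p$-stable Serre subcategory, and then invokes Theorem~\ref{thm:clifford} with the already-established Morita property of the action. The only cosmetic difference is that the paper justifies the descent of the scaffolding to $S(\chi)$ by compatibility of the quotient maps from $R_De(\Ba_\chi,\Bz_\chi)$ with the $R_D\#C_\chi$-structure, whereas you appeal to $\Delta(\chi)$ having scalar endomorphisms; both are fine.
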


``Typically'' the action of $C_p$ will be free (for example, whenever $p$ is coprime to $n$), in which case we just have that every simple Dunkl-Opdam $\mathsf{H}_p$-module is the restriction of a simple $\mathsf{H}_1$-module, with restriction being a $p$ to $1$ map.

On the other hand, this is certainly not always the case.  Perhaps the case of greatest external interest would be that of $G(2,2,n)$, the Weyl group of type $D_n$.  In this case, we have a multisegment $\bQ$ in $\K/\Z$ and a bipartition $\xi=(\xi^{(1)},\xi^{(2)})$.  
\begin{theorem}
A simple Dunkl-Opdam module over the Cherednik algebra of $G(2,1,n)$ remains simple after restriction to that for $G(2,2,n)$ unless $\xi_1=\xi_2$ and $\bQ=\alpha\cdot \bQ=\bQ-\nicefrac{1}{2}$ (by the formula \eqref{eq:segment-action}), in which case it is the sum of two distinct simple modules (which appear in the restriction of no other simple).
\end{theorem}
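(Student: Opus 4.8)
The plan is to obtain the statement as the specialization of Theorem~\ref{thm:DO-classification} to $\ell=p=2$, once the $C_2$-action on the index set $\pairs_n$ is written out explicitly. Throughout I take the parameters of $\mathsf{H}_1$ (the Cherednik algebra of $G(2,1,n)$) to be $2$-cyclic --- here this amounts to $c_1=0$, and it is exactly the hypothesis making $\mathsf{H}_2=\mathsf{H}_1^{\auto}$ the fixed-point subalgebra of an order-$2$ automorphism $\auto$ whose action is Morita; then the restriction functor $\Res^{\mathsf{H}_1}_{\mathsf{H}_2}$ is the honest restriction functor whose behaviour on a simple $S(\chi)$ is governed by the Clifford-theoretic statements of Theorems~\ref{thm:clifford} and~\ref{thm:DO-classification}.

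The one case-specific computation is the $C_2$-action on $\chi=(\bQ,\xi)$ with $\xi=(\xi^{(1)},\xi^{(2)})$. Formula~\eqref{eq:segment-action} with $p=2$ reads $\auto\cdot\bQ=\bQ-\tfrac12$ (subtract $\tfrac12$ from every charged segment of $\bQ$ in $\K/\Z$), while formula~\eqref{eq:YD-action} with $\ell=2$, so that $\lp=1$, reads $\auto\cdot\xi=(\xi^{(2)},\xi^{(1)})$. Hence $\auto\cdot\chi=\chi$ --- equivalently $p_\chi=1$ and $C_\chi=C_2$ --- precisely when $\xi^{(1)}=\xi^{(2)}$ and $\bQ=\bQ-\tfrac12$, and in all other cases $p_\chi=2$ with $C_\chi$ trivial. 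Since this is immediate from the displayed formulas, I do not expect a genuine obstacle: the only thing needing care is the asymmetry between~\eqref{eq:segment-action} and~\eqref{eq:YD-action}, which is already explained in the text, and the rest is a bookkeeping specialization of the general classification.

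It then remains to feed the two cases into Theorem~\ref{thm:DO-classification} with $p=2$. If $p_\chi=2$, then $p/p_\chi=1$ and the scaffolding relation $Q_\chi^{p/p_\chi}=1_{S(\chi)}$ forces $Q_\chi=\id_{S(\chi)}$, so the single $Q_\chi$-eigenspace of $S(\chi)$ is $S(\chi)$ itself; Theorem~\ref{thm:DO-classification} asserts this is a simple $\mathsf{H}_2$-module, i.e.\ $S(\chi)$ stays simple on restriction --- and since $S(\auto\cdot\chi)\cong S(\chi)^{\auto}$ restricts to the same $\mathsf{H}_2$-module, restriction is two-to-one on such $\chi$. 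If $p_\chi=1$, then $p/p_\chi=2$ and $Q_\chi$ is an $\mathsf{H}_2$-linear automorphism of $S(\chi)$ with $Q_\chi^2=1$, so $S(\chi)$ restricted to $\mathsf{H}_2$ is the direct sum $V_{\chi,+}\oplus V_{\chi,-}$ of its $(\pm1)$-eigenspaces; by Theorem~\ref{thm:DO-classification} both summands are nonzero and simple, the nonvanishing coming (as in the proof of Theorem~\ref{thm:clifford}) from the Morita equivalence $(-)^{C_2}\colon\tilde{\mathsf{H}}_1\mmod\to\mathsf{H}_2\mmod$. Finally, the uniqueness clause of Theorem~\ref{thm:DO-classification} --- that a simple Dunkl--Opdam $\mathsf{H}_2$-module determines its $C_2$-orbit together with a character of the stabilizer --- gives at once that $V_{\chi,+}\not\cong V_{\chi,-}$ (distinct characters) and that neither occurs as a constituent of $S(\chi')$ restricted to $\mathsf{H}_2$ for any $\chi'$ outside the $C_2$-orbit of $\chi$, which is the parenthetical assertion.
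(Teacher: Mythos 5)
Your proposal is correct and follows exactly the route the paper intends: the theorem is stated as an immediate specialization of Theorem~\ref{thm:DO-classification} to $\ell=p=2$, with the only content being the explicit computation of the $C_2$-action via \eqref{eq:segment-action} and \eqref{eq:YD-action}, which you carry out correctly (the fixed-point condition $\xi^{(1)}=\xi^{(2)}$, $\bQ=\bQ-\tfrac12$, and the two-to-one versus split dichotomy according to $p_\chi$). The paper gives no separate argument beyond this, so your write-up matches its (implicit) proof.
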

So, for example, for $G(2,2,2)$, the Dunkl-Opdam modules come in the following families:
\begin{itemize}
    \item There are 2 simples in category $\cO$ which are the restriction of the simples in category $\cO$ for $G(2,1,2)$ corresponding to $\xi=((2),\emptyset)$ and $\xi=((1,1),\emptyset)$.
    \item There are 2 simples in category $\cO$ which are the distinct summands of the restriction of the simple module for $\xi=((1),(1))$ in category $\cO$ for $G(2,1,2)$.
    \item For each $a\in \K/\Z$, there is a simple corresponding to the multisegment $(a)$ and multipartition $((1),\emptyset)$; this is the restriction of a simple Dunkl-Opdam module for $G(2,1,2)$.
    \item There is a simple for each multisegment of the form $(a,a+k)$ with $a\in \K/\Z$; this is always the restriction of a simple for $G(2,1,2)$ and the only other simple isomorphic after restriction is that for the segment $(a+\nicefrac{1}{2},a+\nicefrac{1}{2}+k)$.
    \item There is a simple for each multisegment given by the segments $(a)$ and $(b)$ if $a\neq b+\nicefrac{1}{2}$;  the multisegment consisting of $(a+\nicefrac{1}{2})$ and $(b+\nicefrac{1}{2})$ gives the same simple.
    \item There are 2 simples which are the distinct summands of the restriction of the simple module for $(a)$ and $(a+\nicefrac{1}{2})$ for each $a\in \K/\Z$. 
\end{itemize}

One of the interesting consequences of this description is that it is compatible with grading.  The algebras $\tilde{R}_D$ and $R_D^{C_p}$ are naturally graded, and the graded modules over these algebras give a graded lift of the category of Dunkl-Opdam modules.  Thus, one can ask questions about graded dimensions of weight spaces and various multiplicities.  

This grading satisfies a very non-trivial property: it is Morita equivalent to a graded algebra with non-negative grading and semi-simple degree 0 part. 
\begin{proposition}
The algebra $R_D$ is $\auto$-equivariantly graded Morita equivalent to a mixed algebra $R_D'$.
\end{proposition}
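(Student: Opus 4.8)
The plan is to separate the $\auto$-equivariance from the underlying (non-equivariant) assertion, and to build $R_D'$ as an idempotent truncation of $R_D$. Since $\auto$ acts on diagrams, hence on loadings, by the relabeling $d\mapsto d+\tfrac1p$ (Lemma~\ref{lem:KLR-Morita}), it acts freely on the set of loadings and on their equivalence classes (Remark~\ref{rem:equiv}), and --- this is the key point for equivariance --- any rule for selecting a truncating idempotent that depends only on the \emph{positions} of strands and red lines, not on their labels, is automatically $\auto$-invariant. So it suffices to produce such an idempotent $\mathbf{e}\in\widehat{R}_D$, put $R_D'=\mathbf{e}R_D\mathbf{e}$, and verify: (i) $\mathbf{e}$ is full, so $R_D$ and $R_D'$ are graded Morita equivalent via $R_D\mathbf{e}$; (ii) $R_D'$ is non-negatively graded; (iii) $(R_D')_0$ is semisimple. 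Given (i)--(iii), $\auto(\mathbf{e})=\mathbf{e}$ makes $R_D\mathbf{e}$ an $\auto$-equivariant Morita bimodule, and the induced automorphism of $R_D'$ is the restriction of $\mathsf{a}$.

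To build $\mathbf{e}$: within each equivalence class of loadings pick the representative in which consecutive strands, and strands and red lines, are separated by more than $\cg$ (so the straight-line diagram from it to any equivalent loading is crossingless); this choice uses only positions, hence is $\auto$-equivariant. For such a loading $L$ (with idempotent $e(L)$) and each label $d\in D$ occurring in $L$, the weighted KLR diagrams which permute the $d$-labeled strands of $L$ among themselves, carry dots only on those strands, and leave all other strands, ghosts and red lines straight, span a nilHecke-type subalgebra $N_d(L)\subseteq e(L)R_De(L)$; let $e_{d,L}\in N_d(L)$ be the standard ``Demazure'' idempotent (the one with $e_{d,L}N_d(L)e_{d,L}$ equal to the symmetric-polynomial subalgebra), put $e_L=\prod_d e_{d,L}$, and set $\mathbf{e}=\sum_L e_L$ over the chosen representatives. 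Fullness of $\mathbf{e}$ is the usual fact that a Demazure idempotent generates the nilHecke algebra as a two-sided ideal, combined with the fact that our representatives meet every equivalence class; hence $R_D\sim R_D'=\mathbf{e}R_D\mathbf{e}$ as graded algebras.

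The substance is (ii) and (iii). Starting from the diagrammatic monomial basis of $R_D$ (a minimal-crossing diagram for each ``double coset'' of loadings, decorated by monomials in the dots) one gets a basis of $\mathbf{e}R_D\mathbf{e}$ in which the dot monomials on each block of like-labeled strands are replaced by symmetric polynomials. For non-negativity, observe that among the generating crossings only a crossing of two strands with the \emph{same} label in $D$ has negative degree (namely $-2$), while dots ($+2$), solid--ghost crossings ($0$ or $+1$) and solid--red crossings ($0$ or $+1$) all have degree $\ge 0$; and these same-label crossings are exactly the ones absorbed into the idempotents $e_{d,L}$, for which $e_{d,L}N_d(L)e_{d,L}$ is a polynomial ring, non-negatively graded with degree-$0$ part $\K$. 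Granting this, $(R_D')_0$ is spanned by the dot-free degree-$0$ diagrams between spread-out loadings; these are ``permutation'' diagrams of total crossing-degree $0$ relating loadings of a common combinatorial type, they compose to identities, and so $(R_D')_0$ is a finite groupoid algebra over the characteristic-$0$ field $\K$, hence semisimple. Thus $R_D'$ is mixed, $\auto$ acts on it compatibly, and (via Theorem~\ref{thm:smash-Koszul}) this is what lets the smash-product machinery be applied to $\tilde{R}_D=R_D\#C_p$.

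I expect the main obstacle to be the non-negativity statement in the last paragraph: one must pin down precisely which diagrams survive the $\mathbf{e}$-truncation and control the interaction of the nilHecke (Demazure) reduction with the red lines and ghosts that may lie between strands of a common block --- i.e.\ check that every relation used to rewrite an $\mathbf{e}$-truncated diagram into normal form respects $\deg\ge 0$. For $G(\ell,1,n)$ the corresponding mixedness is essentially contained in \cite{Webalt,WebwKLR}, so an acceptable shortcut is to quote it there and verify only that the truncating idempotent can be chosen $\auto$-equivariantly, which is immediate from the fact that $\auto$ acts by relabeling and the choice above depends only on positions.
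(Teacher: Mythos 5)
Your construction has a genuine gap at exactly the point you flag, and the flagged shortcut does not close it. The nilHecke/Demazure truncation $\mathbf{e}R_D\mathbf{e}$ is not the basic algebra of $R_D$: loadings carrying the same multiset of labels in different interleaved orders (say $(d,e,d)$ versus $(d,d,e)$ with $e=d+k$) are inequivalent, so each contributes its own truncated projective, and these share indecomposable summands up to grading shift (this is the categorified Serre relation, e.g.\ $P_{(d,e,d)}\cong P_{(d^{(2)},e)}\oplus P_{(e,d^{(2)})}$, and more drastically $P_{(d^{(2)},d)}\cong P_{(d^{(3)})}\langle 2\rangle\oplus P_{(d^{(3)})}\oplus P_{(d^{(3)})}\langle -2\rangle$ when like-labeled strands sit in separate blocks). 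Consequently $\mathbf{e}R_D\mathbf{e}$ generically has Hom spaces in negative degree, and its degree-zero part contains the non-invertible idempotents realizing these decompositions, so it is not a groupoid algebra; your non-negativity argument by inspecting degrees of generating crossings does not survive the rewriting needed to put a truncated diagram in normal form. More fundamentally, mixedness of the correct (basic) algebra is not a crossing-count: it is the statement that $\Hom(P(\chi),P(\chi'))$ is strictly positively graded for $\chi\neq\chi'$ and one-dimensional in degree $0$ for $\chi=\chi'$, which the paper obtains from the identification of $e_{\chi}R_De_{\chi'}$ with Ext-spaces of simple perverse sheaves (\cite[Cor.~4.7]{WebwKLR}) and purity. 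Citing that result while keeping your position-only idempotent does not work, because the cited mixedness is for one primitive idempotent per simple with a specific grading normalization, not for the nilHecke truncation.

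The equivariance, which you treat as the easy part, is where the paper actually has to do something: one must choose, $\auto$-compatibly, a homogeneous primitive idempotent $e_\chi$ for each $\chi\in\pairs_n$ cutting out $P(S(\chi))$ with the normalization making its head self-dual. The paper does this using the canonically one-dimensional weight spaces $\Wei_{(\Ba_\chi,\Bz_\chi)}(S(\chi))$, which are permuted by $\auto$ via $\Wei_{(\Ba_\chi,\Bz_\chi^{\auto})}\cong\Wei_{(\Ba_{\auto\chi},\Bz_{\auto\chi})}$; the projective generator $P=\bigoplus_\chi R_De_\chi$ then carries a compatible $\auto$-action and $R_D'=\End(P)$ is mixed by the geometric input. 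If you want to salvage your write-up, replace the Demazure idempotent by this choice of primitive idempotents and route the positivity through the perverse-sheaf identification rather than through diagram degrees.
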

\begin{proof}
This is true for $R_D$ by \cite[Cor. 4.7]{WebwKLR}.  To see the compatibility with $C_p$, let us recall the construction of this equivalence in more detail.  For each  $\chi\in \pairs_n$, we have the weight $(\Ba_{\chi},\Bz_{\chi})$ defined above.     By construction, $\Wei_{(\Ba_{\chi},\Bz_{\chi})}(S(\chi))\cong \C$; thus $R_D$ contains a homogeneous primitive idempotent $e_{\chi}$ which acts non-trivially on $\Wei_{(\Ba_{\chi},\Bz_{\chi})}(S(\chi))$.  
The module $R_De_{\chi}$ is thus an indecomposable projective $R_D$ module with head given by $S(\chi)$. Summing over $\pairs_n$, we obtain a projective submodule  $P\subset R_D$; this has a compatible action of $\auto$ arising from the isomorphism $\Wei_{(\Ba_{\chi},\Bz_{\chi}^\circ)}\cong \Wei_{(\Ba_{\auto\chi},\Bz_{\auto\chi})}$.

Every homogenous simple appears as a quotient of $P$, so $P$ is a projective generator and induces a graded Morita equivalence between $R_D$ and $R_D'=\End(P,P)$ compatible with the action of $\auto$ on both rings. 

By \cite[Cor. 4.7]{WebwKLR}, we can identify $e_{\chi}R_De_{\chi'}$ with the Ext-space between two simple perverse sheaves which are only isomorphic if $\chi=\chi'$.  Thus the grading on $e_{\chi}R_De_{\chi'}$ is strictly positive unless $\chi=\chi'$, in which case it is 1-dimensional in degree 0 and non-negative otherwise  (see \cite[Lem. 1.18]{WebCB} and \cite[Thm. 5.25]{WebRou} for more discussion of this point).  
\end{proof}
\begin{corollary}\label{cor:mixed}
The algebra $\tilde{R}_D$ is Morita equivalent to the mixed algebra $\tilde{R}_D'$.  
\end{corollary}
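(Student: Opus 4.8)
The plan is to define $\tilde{R}_D' := R_D'\# C_p$ and to deduce both halves of the statement — that this algebra is mixed, and that it is Morita equivalent to $\tilde{R}_D = R_D\# C_p$ — from the preceding Proposition together with the formalism of Section~2.

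For the Morita equivalence, I would start from the $\auto$-equivariant projective generator $P\subset R_D$ produced in the proof of the preceding Proposition, so that $R_D' = \End_{R_D}(P)$ as a $C_p$-equivariantly graded algebra. The functor $\Ind^{\tilde{R}_D}_{R_D}$ is exact and preserves projectives, so $Q := \Ind^{\tilde{R}_D}_{R_D}P$ is a projective $\tilde{R}_D$-module; it is moreover a generator, since for any nonzero $\tilde{R}_D$-module $M$ the restriction $\Res^{\tilde{R}_D}_{R_D}M$ is nonzero, hence $\Hom_{R_D}(P,\Res^{\tilde{R}_D}_{R_D}M)\neq 0$ (as $P$ is a generator over $R_D$), and so $\Hom_{\tilde{R}_D}(Q,M)\neq 0$ by adjunction. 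Thus $Q$ is a projective generator. Running the adjunction computation from the proof of Theorem~\ref{thm:smash-Koszul},
\[\End_{\tilde{R}_D}\big(\Ind^{\tilde{R}_D}_{R_D}P\big)\cong \Hom_{R_D}\big(P,\Res^{\tilde{R}_D}_{R_D}\Ind^{\tilde{R}_D}_{R_D}P\big)\cong \Hom_{R_D}\big(P,P^{\oplus p}\big),\]
and tracking the $C_p$-action coming from the $\auto$-equivariant structure on $P$, this endomorphism algebra is identified with $\End_{R_D}(P)\# C_p = R_D'\# C_p = \tilde{R}_D'$ as graded algebras, with $C_p$ in degree $0$. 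Hence $\tilde{R}_D$ and $\tilde{R}_D'$ are graded Morita equivalent.

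It then remains to see that $\tilde{R}_D'$ is mixed. It is non-negatively graded because $R_D'$ is and $C_p$ lies in degree $0$; consequently $(\tilde{R}_D')_0 = (R_D')_0\# C_p$. Since $R_D'$ is mixed, $(R_D')_0$ is semisimple — indeed finite-dimensional, being $\bigoplus_{\chi\in\pairs_n}\C\, e_\chi$ by the computation of degree-$0$ parts in the proof of the preceding Proposition — and since $p$ is invertible in $\K$, Lemma~\ref{lem:semi-simple} gives that $(R_D')_0\# C_p$ is semisimple. Thus $\tilde{R}_D'$ is non-negatively graded with semisimple degree-$0$ part, i.e.\ mixed, and the corollary follows.

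The one place I expect any genuine care to be needed — though it is still essentially bookkeeping — is checking that the $\auto$-equivariant structure on $P$ supplied by the preceding Proposition is compatible with both the grading and the ring structure on $R_D' = \End_{R_D}(P)$, so that the identification $\End_{\tilde{R}_D}(\Ind^{\tilde{R}_D}_{R_D}P)\cong R_D'\# C_p$ is an isomorphism of \emph{graded} algebras and not merely of ungraded ones. Once this compatibility is recorded, the rest is a direct application of Lemma~\ref{lem:semi-simple} and the argument already used to prove Theorem~\ref{thm:smash-Koszul}.
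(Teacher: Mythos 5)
Your proposal is correct and matches the argument the paper intends: the corollary is stated without proof precisely because the preceding Proposition was formulated \emph{$\auto$-equivariantly}, so that the equivariant projective generator $P$ induces, exactly as you compute via $\End_{\tilde{R}_D}(\Ind P)\cong\End_{R_D}(P)\#C_p$, a graded Morita equivalence of smash products, with mixedness of $\tilde{R}_D'$ following from Lemma~\ref{lem:semi-simple} applied to the degree-zero part. Your write-up is a faithful (and more detailed) version of this, so nothing further is needed.
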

\begin{remark}\label{rem:characters}
We should note that this mixed property is key for the calculation of the classes in the Grothendieck group of  simple modules following the approach of \cite{WebCB}, ultimately tracing back to the algorithm described by Leclerc in \cite[\S 5.5]{Lecshuf}.  

Let us first describe in the case of $p=1$.  The dimensions of the spaces $e(\Ba,\Bz)S$ play the role of a shuffle expansion of a dual canonical basis vector, and we can compute them using the fact that the graded dimensions are bar-invariant (i.e. they are palindromic as Laurent polynomials in $q$), analogous to the relationship between $q$-characters and shuffles from 
\cite[Thm. 4.4]{KlRa}.

By the mixed property, when the standard modules $\Delta(\chi)$ are given the grading where the simple quotient is self-dual, all other composition factors are positive shifts of simples where  $|\bQ'|\leq |\bQ|$, and if $|\bQ'|= |\bQ|$ then $c_{\xi'}<c_{\xi}$.  Assuming that we can compute $e(\Ba,\Bz)\Delta(\chi)$, we can start with a minimal case (where $\bQ=\emptyset$ and $\xi$ is minimal in the $c$-function order; this corresponds to a simple Verma in category $\cO$), and then inductively compute the unique bar-invariant shuffle expansion which can be obtained from $e(\Ba,\Bz)\Delta(\chi)$ by subtracting positive shifts of the expansions we already know.  Note that we only need to compute these for the idempotents $e(\Ba_{\chi},\Bz_{\chi})$ for $\chi\in \pairs_n$.    

The same approach proceeds for general $p$, except that we need to use some additional structure: $\Delta(\chi)$ is a module over $R_D\smash C_{\chi}$, and the inductions  $\Delta(\chi,\la)$ to $\tilde{\mathsf{H}}_1$  of the different twists of the $C_{\chi}$ action are irreducible.  These have not just a $\Z[q,q^{-1}]$-valued shuffle expansion, but one where $e(\Ba_{\chi'},\Bz_{\chi'})\Delta(\chi,\la)$ carries an action of $C_{\chi'}$ by $m\mapsto \auto^k(\delta m)$ where as before, $\delta$ is an invertible straight-line diagram whose bottom matches $e(\Ba_{\chi'},\Bz_{\chi'})$ and whose top matches $e(\Ba_{\auto^{-k}\chi'},\Bz_{\auto^{-k}\chi'})$, for $\auto^k\in C_{\chi'}$.  Thus, we have apply our algorithm for finding bar-invariant characters to shuffle expansions of standards valued in the characters of the groups $C_{\chi'}$.

Finding the characters of the standards is a combinatorial exercise based on the basis of \cite[Thm. 2.8]{WebwKLR} and the cellular basis of standard modules over the steadied quotient from \cite[Thm. 4.11]{WebRou}.  This assumes that we have already calculated the $q$-characters of simple modules for the degenerate affine Hecke algebra as in \cite[\S 6]{Lecshuf}, but we can instead replace $\Delta(\chi)$ with modules where we use standard modules for the degenerate affine Hecke algebra instead of simples, which it is simpler to calculate the character of.  We leave the details of this calculation to the reader (or perhaps the authors' future selves).
\end{remark}

\subsection{Category \texorpdfstring{$\cO$}{O}}

In \cite[Th. 9.1]{GrifJack}, Griffeth classifies the simple objects of category $\cO$.  In our schema above, these are the simple Dunkl-Opdam modules with $\bQ=\emptyset$, that is, corresponding to a orbit of the action of $C_p$ on the space of $\ell$-multipartitions, and a character of the stabilizer (that is, a $p/p_{\xi}$th root of unity).

However, our understanding of category $\cO$ for $\mathsf{H}_1$, which we denote $\cO_1$, allows us to understand the category $\cO$ for $\mathsf{H}_p$, which we denote $\mathcal{O}_p$.  In particular, recall that by \cite[Thm. 3.15]{Webalt}, the functor $\Wei_D$ defines an equivalence between $\cO_1$, and the category of modules over a steadied quotient $R_D(+)$ of the ring $R_D$. 
\begin{proposition}
The action of $\auto$ descends to $R_D(+)$, and there is an induced equivalence of categories $\Wei\circ \Ind^{\mathsf{H}_1}_{\mathsf{H}_p}\colon \mathcal{O}_p\to \widetilde{R_D(+)}$.  
\end{proposition}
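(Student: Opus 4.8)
The plan is to deduce this statement from the already-established equivalence $\Wei\circ\Ind^{\mathsf{H}_1}_{\mathsf{H}_p}\colon \mathsf{H}_p\mmod_D\xrightarrow{\ \sim\ }\tilde R_D\wgmod$ (the corollary just above) by restricting it to the appropriate subcategories on each side. Concretely there are two things to check: first, that the relabeling automorphism $\mathsf{a}$ of $R_D$ (equivalently the action of $\auto$) descends to the steadied quotient $R_D(+)$, so that $\widetilde{R_D(+)}:=(R_D(+)\#C_p)\wgmod$ makes sense and is a Serre quotient of $\tilde R_D\wgmod$; and second, that under $\Wei\circ\Ind^{\mathsf{H}_1}_{\mathsf{H}_p}$ the subcategory $\cO_p\subset\mathsf{H}_p\mmod_D$ is carried exactly onto $\widetilde{R_D(+)}$.

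For the first point I would go back to the definition of the steadied quotient from \cite{Webalt}: $R_D(+)$ is the quotient of $R_D$ by the ideal spanned by diagrams factoring through an ``unsteady'' loading, and steadiness of a loading is a condition on the multiset of its labels together with their positions relative to the ghosts and the red lines. The automorphism $\mathsf{a}$ acts on loadings by $\bar a\mapsto\bar a-\frac1p$ while leaving every $x$-coordinate, every ghost and the whole configuration of red lines unchanged — here one uses, exactly as in Lemma~\ref{lem:KLR-Morita}, the hypothesis that $D$ is closed under addition by $\frac1p\Z/\Z$ and that the red lines occur in packets of equal $x$-value. Hence $\mathsf{a}$ permutes loadings while preserving steadiness, so it preserves the steadying ideal and descends to an order-$p$ automorphism of $R_D(+)$; the same straight-line-idempotent argument as in Lemma~\ref{lem:KLR-Morita} shows this induced action is again Morita, so $\widetilde{R_D(+)}$ is a well-defined category and is the full subcategory of $\tilde R_D\wgmod$ of modules killed by the steadying ideal.

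For the second point, recall that by \cite[Thm.~3.15]{Webalt} the functor $\Wei_D$ restricts to an equivalence $\cO_1\xrightarrow{\ \sim\ }R_D(+)\wgmod$; in particular $N\in\mathsf{H}_1\mmod_D$ lies in $\cO_1$ iff $\Wei_D(N)$ is annihilated by the steadying ideal. Since $\cO_1$ is a Serre subcategory of $\mathsf{H}_1\mmod$ closed under twisting by $\auto$ (the automorphism only rescales the $t_i$), the Clifford-theoretic setup of Section~2 applies with $\mathcal C=\cO_1$, and $\mathcal C^{C_p}$ is precisely the category $\cO_p$ of $\mathsf{H}_p$-modules that lie in $\cO_1$ after $\Ind^{\mathsf{H}_1}_{\mathsf{H}_p}$ — the Euler element of $\mathsf{H}_p$ being the restriction of that of $\mathsf{H}_1$, membership in category $\cO$ is detected after induction. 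Now, using the $C_p$-equivariance of $\Wei_D$ established in the identification $\End(\tWei_D)\cong\widehat R_D\#C_p$ above, the restriction of $(\Wei\circ\Ind^{\mathsf{H}_1}_{\mathsf{H}_p})(M)$ to $R_D$ is $\Wei_D(\Ind^{\mathsf{H}_1}_{\mathsf{H}_p}M)$, and a $\tilde R_D$-module lies over $R_D(+)\#C_p$ iff its $R_D$-restriction lies over $R_D(+)$ (the steadying ideal being the same on both sides by the first point). Chaining these equivalences, $M\in\cO_p\iff\Ind^{\mathsf{H}_1}_{\mathsf{H}_p}M\in\cO_1\iff(\Wei\circ\Ind^{\mathsf{H}_1}_{\mathsf{H}_p})(M)\in\widetilde{R_D(+)}$, so the corollary's equivalence restricts to the desired one. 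I expect the main obstacle to be the first point — unwinding the definition of the steadied quotient in \cite{Webalt} far enough to see that the ``steadiness'' combinatorics depends only on data fixed by $\mathsf{a}$; the identification $\mathcal C^{C_p}=\cO_p$ is essentially bookkeeping with the Euler grading but should be recorded carefully.
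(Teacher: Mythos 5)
Your proposal is correct and follows essentially the same route as the paper: detect membership in $\cO_p$ by inducing to $\mathsf{H}_1$, apply \cite[Thm.~3.15]{Webalt} to translate this into the $R_D$-action factoring through $R_D(+)$, and observe this is equivalent to the $R_D\#C_p$-action factoring through $\widetilde{R_D(+)}$. The paper's proof is terser (it leaves the descent of $\auto$ to the steadied quotient implicit), but the logical structure is the same.
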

\begin{proof}
First, note that a $\mathsf{H}_p$-module $M$ lies in $\mathcal{O}_p$ if and only if its induction $\Ind_{\mathsf{H}_p}^{\mathsf{H}_1}M$ lies in $\mathcal{O}_p$.  Thus, by \cite[Thm. 3.15]{Webalt}, $M$ lies in $\mathcal{O}_p$ if and only if the action of $R_D$ on $\Wei\circ \Ind^{\mathsf{H}_1}_{\mathsf{H}_p}(M)$ factors through $R_D(+)$.  This is equivalent to the action of $R_D\# C_p$ on $\Wei\circ \Ind^{\mathsf{H}_1}_{\mathsf{H}_p}(M)$ factoring through $\widetilde{R_D(+)}$.
\end{proof}\begin{corollary}\label{cor:smash-Koszul}
The algebra $\widetilde{R_D(+)}$, and thus a graded lift of category $\cO$ for $\mathsf{H}_p$, is mixed (up to Morita equivalence) and standard Koszul.   
\end{corollary}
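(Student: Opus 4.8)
The plan is to deduce this from the general machinery of Section~2, namely Theorems~\ref{thm:smash-Koszul} and~\ref{thm:standard-Koszul}, applied to the steadied quotient $R_D(+)$ together with the induced $\auto$-action, after replacing $R_D(+)$ by an $\auto$-equivariantly graded Morita equivalent mixed algebra. So the first thing I would do is prove the steadied-quotient analogue of the Proposition that $R_D$ is $\auto$-equivariantly graded Morita equivalent to a mixed algebra: for each $\chi\in\pairs_n$ with $\bQ=\emptyset$ one still has $\Wei_{(\Ba_\chi,\Bz_\chi)}(S(\chi))\cong\CC$, hence a homogeneous primitive idempotent $e_\chi\in R_D(+)$, and the projective generator $P=\bigoplus_\chi R_D(+)e_\chi$ carries a compatible $\auto$-action via the scaffolding isomorphisms $\Wei_{(\Ba_\chi,\Bz_\chi)}\cong\Wei_{(\Ba_{\auto\chi},\Bz_{\auto\chi})}$; mixedness of $R_D(+)'=\End(P)$ then follows from the Ext-positivity of the steadied quotient recorded in \cite[Thm.~5.25]{WebRou} (cf.\ \cite[Lem.~1.18]{WebCB}), exactly as in the proof preceding Corollary~\ref{cor:mixed}. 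Denote the resulting graded automorphism of $A:=R_D(+)'$ again by $\auto$.

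Next I would check that $A$ with this action meets the hypotheses of Theorems~\ref{thm:smash-Koszul} and~\ref{thm:standard-Koszul}. The $C_p$-action is Morita by the argument of Lemma~\ref{lem:KLR-Morita}: the straight-line idempotents $e(\Ba,\Bz)$ descend to $R_D(+)$, carry a free $C_p$-action, so each $C_p$-isotypic component generates the unit ideal, and this property is preserved by Morita equivalence. The algebra $R_D(+)$ is quasi-hereditary by \cite[Thm.~3.15]{Webalt}, with highest-weight poset the $c$-function order on $\ell$-multipartitions, and $\auto$ acts on these by the rotation \eqref{eq:YD-action}; since the parameters are $p$-cyclic (so $s_m+\ell\hbar/p=s_{m+\ell/p}$), this rotation shifts $c_\xi$ by a global constant, hence preserves comparability, so the order is $\auto$-invariant. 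Standard Koszulity of $R_D(+)$ is part of \cite{WebRou}, and the scaffolding required by Theorem~\ref{thm:standard-Koszul} is obtained by restricting the scaffolding of the previous subsection to the subset $\bQ=\emptyset$. Granting these, Theorem~\ref{thm:standard-Koszul} gives that $\tilde A=A\#C_p$ is quasi-hereditary and standard Koszul, and it is mixed because $\tilde A_0=A_0\#C_p$ is semisimple (applying Lemma~\ref{lem:semi-simple} blockwise, using that $p$ is invertible in $\K$).

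Finally I would transport this back: the Morita equivalence $R_D(+)\sim A$ of the first step is $\auto$-equivariant, so it induces a Morita equivalence $\widetilde{R_D(+)}=R_D(+)\#C_p\sim A\#C_p=\tilde A$, whence $\widetilde{R_D(+)}$ is mixed up to Morita equivalence and standard Koszul; composing with the equivalence $\mathcal{O}_p\simeq\widetilde{R_D(+)}\wgmod$ of the preceding Proposition yields the assertion about the graded lift of category $\cO$ for $\mathsf{H}_p$. I expect the main obstacle to be the first step: one must verify that the construction of the mixed Morita model and the Ext-positivity input of \cite{WebRou,WebCB} are genuinely compatible with $\auto$ — in particular that the idempotents $e_\chi$ can be taken $\auto$-equivariantly, so that the whole equivalence is $\auto$-equivariant — together with the more routine but not entirely automatic verification that the highest-weight order on $\ell$-multipartitions is $\auto$-invariant.
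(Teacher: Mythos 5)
Your proposal is correct and follows essentially the same route as the paper, which simply cites \cite[Th.~C]{WebRou} for standard Koszulity and mixedness of $R_D(+)$ and then invokes Theorem~\ref{thm:standard-Koszul}. You have merely spelled out the verifications the paper leaves implicit (the Morita property of the $C_p$-action on the steadied quotient, the $\auto$-invariance of the highest-weight order, and the $\auto$-equivariance of the mixed Morita model), all of which are sound.
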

This result was proven for dihedral groups in \cite{RolloJenkins}.
\begin{proof}
By \cite[Th. C]{WebRou}, the algebra $R_D(+)$ is standard Koszul, and after a Morita equivalence is mixed.  Thus by Theorem \ref{thm:standard-Koszul}, $\widetilde{R_D(+)}$ is standard Koszul as well.  
\end{proof}

Note that the Koszul dual of $R_D(+)$ is again (up to Morita equivalence) an algebra of the same type, giving a block of category $\cO$ for the Cherednik algebra of $G(\ell',1,n')$ for quite different data related by rank-level duality (see \cite{CM, WebRou} for more discussion of this combinatorics). In particular, $\ell'$ is the denominator of $k$ in least terms (and $n'$ is quite complicated), while this dual block has $k'=1/\ell$ (or some other fraction with $\ell$ as denominator; there are many equivalent blocks which all give the same category).
At the moment, the natural of this dual action is unclear to us, though it obviously must be quite different from the one that gives $G(\ell,p,n)$.  In particular, $p$ will typically not divide $\ell'$, and the $p$-cyclic condition corresponds to a restriction on the block considered (but not the possible parameters).  

\begin{remark}
As in Remark \ref{rem:characters}, we can use the mixed property to compute decomposition numbers.  We have not been able to find a slick packaging of this conjecture in terms of Fock spaces, as Rouquier did in the $G(\ell,1,n)$ case \cite[\S 6.5]{RouqSchur}.  The mixed property gives the suitable raw material for such a proof as in \cite[Cor. 5.27]{WebRou}, but we can have not found a convenient description of the Grothendieck group.  
\end{remark}

\subsection{The Knizhnik-Zamolodchikov functor}
Just as in \cite[Thm. 3.18]{Webalt}, we can describe the KZ functor on category $\cO$ in terms of this equivalence.  Let $\mathcal{H}_p$ be the Hecke algebra of $G(\ell,p,n)$ with appropriate parameters;  note that $\mathcal{H}_p=\mathcal{H}_1^C$ for a Morita action.  In \cite{LiuClifford}, it's shown that this functor commutes with restriction; this is equivalent to saying that the projective in $\mathcal{O}_p$ representing the functor $\KZ$ satisfies $\Ind_{\mathsf{H}_p}^{\mathsf{H}_1}P_p=P_1$.  This can be rephrased as saying that $P_1$ carries an isomorphism $P_1\cong P_1^\auto$ which makes it into a $\tilde{\mathsf{H}}_1$-module, and $P_p$ is the invariants of this action.  

We will recall the description of $\KZ\colon \cO_1\to \mathcal{H}_1\mmod$ and make a small modification in order to make it compatible with the $C_p$ action.   Let $D^\circ$ be the orbit space for  $\frac{1}{\ell}\Z/\Z$ on $D$.  For each $[d]\in D^\circ$, we choose an element $\varphi([d])\in \K$ such that the orbit is given by the set 
$\Sigma(\varphi(d),\zetal^k)$ for $k\in [1,\ell]$.  Note that we have a small difference with the construction in \cite{Webalt}, where we chose a splitting of $\Sigma$ at always had 1 in the second coordinate. This will not effect the construction at all; in fact, we could choose {\it any} fixed splitting of $\Sigma$, so it is more convenient to choose one which is $C_p$-equivariant.

Choose an integer \[N\gg \max_{\substack{i\in
  [1,\ell]\\ [d]\in D^\circ}}(|\Upsilon(p(\zeta^i)|,|\Upsilon(k)|,|\Upsilon(\varphi([d]))|).\]  For each $n$-tuple
$\mathbf{d}=([d_1],\dots, [d_n])\in (D^\circ)^n$, let
\[\mathbf{a}_{\mathbf{d}}^\pm=(\varphi([d_1])\mp N,\varphi([d_2])\mp2N,\dots,
\varphi([d_n])\mp nN)\qquad \mathbf{1}=(1,\dots, 1).\] 
Then we have an isomorphism of functors \[\KZ\cong \bigoplus_{\substack{\mathbf{d}\in (D^\circ)^n\\ k=1,\dots, \ell}}
  \Wei_{\mathbf{a}_{\mathbf{d}}^-,\zetal^k\mathbf{1}}.\] The action of $R_D$ gives an action of the usual KLR algebra $\mathsf{R}_n$ on the RHS, and this intertwines with the $\mathcal{H}$-action on $\KZ$ under the Brundan-Kleshchev isomorphism \cite{BKKL}.

Note that while the individual terms of the right hand side are not well-defined on $\mathsf{H}_p$-modules, the terms $W_{\mathbf{a}_{\mathbf{d}}^-}=\bigoplus_{k=1}^\ell
  \Wei_{\mathbf{a}_{\mathbf{d}}^-,\zetal^k\mathbf{1}}$ for a fixed $\mathbf{d}$ only depend on the action of elements of $\mathsf{H}_p$, since this is a simultaneous generalized eigenspace for $u_i $ and $T_iT_j^{-1}$ for all pairs $i,j$. Thus, these define exact functors $W_{\mathbf{a}_{\mathbf{d}}^-}\colon \mathcal{O}_p\to \Vect.$
  \begin{theorem}
  The functor $\bigoplus_{\mathbf{d}\in (D^\circ)^n}
  W_{\mathbf{a}_{\mathbf{d}}^-}$ is isomorphic to $\KZ_p\colon  \mathcal{O}_p\to \Vect.$ This functor has an induced action of  $\mathsf{R}_n^C$, which matches the Hecke algebra of $G(\ell,p,n)$ under the isomorphism of Rostam \cite{Rostam}. 
  \end{theorem}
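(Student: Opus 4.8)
The plan is to bootstrap from the corresponding statement for $\mathsf{H}_1$, using that $\KZ$ is compatible with induction. Recall $\KZ_p=\Hom_{\mathsf{H}_p}(P_p,-)$ for a projective $P_p\in\mathcal{O}_p$, and that by \cite{LiuClifford} we have $\Ind_{\mathsf{H}_p}^{\mathsf{H}_1}P_p\cong P_1$; equivalently, $P_1$ carries a compatible $\auto$-action making it a $\tilde{\mathsf{H}}_1$-module $\tilde P_1$ whose image under the Morita equivalence $\mathsf{H}_p\mmod\cong\tilde{\mathsf{H}}_1\mmod$ is $P_p$. Combining the $(\Ind,\Res)$-adjunction for $\mathsf{H}_p\subseteq\mathsf{H}_1$ with this equivalence, I would first deduce, for each $N\in\mathcal{O}_p$, a natural isomorphism
\[
\KZ_p(N)\;\cong\;\KZ_1\!\bigl(\Ind_{\mathsf{H}_p}^{\mathsf{H}_1}N\bigr)^{C_p},
\]
where $C_p$ acts on $\Ind_{\mathsf{H}_p}^{\mathsf{H}_1}N$ through the canonical $\tilde{\mathsf{H}}_1$-structure it inherits (namely it is $\Res^{\tilde{\mathsf{H}}_1}_{\mathsf{H}_1}\tilde N$, with $\tilde N$ the $\tilde{\mathsf{H}}_1$-module corresponding to $N$).

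Next I would substitute the $p=1$ description $\KZ_1\cong\bigoplus_{\mathbf{d}\in(D^\circ)^n,\;k\in[1,\ell]}\Wei_{\mathbf{a}_{\mathbf{d}}^-,\zetal^k\mathbf{1}}$. By Lemma \ref{lem:tWei-action}, $\auto$ carries $\Wei_{\mathbf{a}_{\mathbf{d}}^-,\zetal^k\mathbf{1}}$ to $\Wei_{\mathbf{a}_{\mathbf{d}}^-,\zetal^{k-\ell/p}\mathbf{1}}$ --- only the second coordinate changes --- so $\auto$ cyclically permutes the $p$ weight spaces attached to each $\auto$-orbit $\{k,k-\ell/p,\dots\}$ with $\mathbf{d}$ fixed. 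Computing the $C_p$-invariants --- a short calculation using $\Ind_{\mathsf{H}_p}^{\mathsf{H}_1}N\cong\bigoplus_{m=0}^{p-1}t_1^m\otimes N$, on which $\auto$ scales the $m$th summand by $\zetap^m$, together with the diagonalization of $t_1$ on this induced module --- then shows that each orbit contributes a single copy of $W_{\mathbf{a}_{\mathbf{d}}^-}(N):=\bigoplus_{k=1}^{\ell}\Wei_{\mathbf{a}_{\mathbf{d}}^-,\zetal^k\mathbf{1}}(N)$; the latter is well-defined on $\mathcal{O}_p$ since it is a simultaneous generalized eigenspace for $u_i$ and for the elements $t_it_j^{-1}$, all of which lie in $\mathsf{H}_p$. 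Summing over $\mathbf{d}$ and over orbits yields the natural isomorphism $\KZ_p\cong\bigoplus_{\mathbf{d}}W_{\mathbf{a}_{\mathbf{d}}^-}$. On the weighted-KLR side the same argument reads: $\KZ_1$ is the functor $M\mapsto e^-M$ for the $\auto$-invariant idempotent $e^-:=\sum_{\mathbf{d},k}e(\mathbf{a}_{\mathbf{d}}^-,\zetal^k\mathbf{1})$, and since $\auto(e^-)=e^-$ we get $e^-\widetilde{R_D(+)}e^-\cong\bigl(e^-R_D(+)e^-\bigr)\#C_p$, with $\KZ_p$ recovered by passing through the Morita equivalence of Lemma \ref{lem:Morita} for this cyclic action.

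For the assertion about the Hecke algebra, the action of $\mathsf{R}_n$ on $\bigoplus_{\mathbf{d},k}\Wei_{\mathbf{a}_{\mathbf{d}}^-,\zetal^k\mathbf{1}}$ intertwines the $\mathcal{H}_1$-action on $\KZ_1$ under the Brundan--Kleshchev isomorphism \cite{BKKL}, and the relabeling automorphism $\mathsf{a}$ of Lemma \ref{lem:KLR-Morita} induces an order-$p$ automorphism of $\mathsf{R}_n=e^-R_D(+)e^-$ which, under Brundan--Kleshchev, corresponds to the automorphism of $\mathcal{H}_1$ rotating the eigenvalues of the $t_i$ (equivalently, induced by $t_i\mapsto\zetap t_i$). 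Since $\mathsf{R}_n^{C_p}$ commutes with the cyclic action used to form $\KZ_p=(-)^{C_p}$, it acts on $\KZ_p$, and under Brundan--Kleshchev it is identified with $\mathcal{H}_1^{C_p}=\mathcal{H}_p$, the Hecke algebra of $G(\ell,p,n)$. It then remains to check that this identification agrees with the one of Rostam \cite{Rostam}: Rostam realizes the Hecke algebra of $G(\ell,p,n)$ as the fixed-point subalgebra of the cyclotomic quiver Hecke algebra under the automorphism rotating the quiver vertices (equivalently, cyclically permuting the cyclotomic parameters), and one verifies that this rotation is exactly $\mathsf{a}$, since $\mathsf{a}$ acts on $R_D$ by the vertex rotation $d\mapsto d-\frac1p$ while fixing all dots and crossings. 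Granting this, $\mathsf{R}_n^{C_p}$ and Rostam's subalgebra coincide inside $\mathsf{R}_n$, and the intertwining with $\KZ_p$ descends from the $p=1$ case.

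I expect the main obstacle to be this last comparison of automorphisms. Both $\mathsf{a}$ and Rostam's automorphism are order-$p$ rotations of the ``charge/parameter'' data, but one is defined diagrammatically and the other through Rostam's Hecke-theoretic presentation of $G(\ell,p,n)$; matching them on the nose --- rather than merely up to an inner automorphism, which would yield the statement only up to a harmless twist --- is precisely why the construction was set up with a $C_p$-equivariant splitting of $\Sigma$ (hence of $\varphi$). Tracking this choice through the Brundan--Kleshchev isomorphism should pin down the induced automorphism of $\mathsf{R}_n$ unambiguously and let the two fixed-point algebras be compared term by term.
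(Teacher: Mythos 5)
Your argument is correct in outline but follows a genuinely different route from the paper. The paper compares the two exact functors $\KZ_p$ and $\KZ_p':=\bigoplus_{\mathbf{d}}W_{\mathbf{a}_{\mathbf{d}}^-}$ only through their values on simple modules: using the tautological identity $\KZ_p'\circ\Res^{\mathsf{H}_1}_{\mathsf{H}_p}\cong\KZ$, Clifford theory for $\mathcal{H}_p=\mathcal{H}_1^C$ (so $\KZ(S')$ is zero or splits into $p/p_S$ equidimensional pieces), and the compatibility of $\KZ$ with restriction from \cite{LiuClifford}, it matches the multiplicities of indecomposable summands in the two representing projectives, which suffices since both functors are exact. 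You instead compute $\KZ_p(N)\cong\KZ_1(\Ind_{\mathsf{H}_p}^{\mathsf{H}_1}N)^{C_p}$ directly and identify the invariants weight space by weight space via Lemma \ref{lem:tWei-action}. Your route is longer but produces an explicit natural isomorphism rather than an abstract identification of representing objects, and it makes the $\mathsf{R}_n^{C_p}$-action visible at the level of idempotents; the paper's route is shorter but leans on the simple-module classification. Both hinge on the same external input from \cite{LiuClifford}, and for the Rostam comparison the paper simply asserts what you spell out, namely that Rostam's isomorphism is the restriction of Brundan--Kleshchev's to the $C$-fixed points (the paper cites \cite[Thm 5.2]{LiuClifford} for this), so your worry about matching the two order-$p$ automorphisms on the nose is legitimate but is exactly the content of that citation.

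One bookkeeping slip: for fixed $\mathbf{d}$, it is not true that each $\auto$-orbit $\{k,k-\ell/p,\dots\}$ contributes a full copy of $W_{\mathbf{a}_{\mathbf{d}}^-}(N)$ --- that would overcount by a factor of $\ell/p$. The orbit indexed by the common value $\mu=\zetal^{kp}$ contributes only the generalized $\mu$-eigenspace of $t_1^p$ acting on $W_{\mathbf{a}_{\mathbf{d}}^-}(N)$ (the spaces $\Wei_{\mathbf{a}_{\mathbf{d}}^-,\zetal^k\mathbf{1}}(\Ind N)$ vanish unless $\zetal^{kp}$ is a generalized eigenvalue of $t_1^p$ on the relevant block of $N$), and it is the sum over all orbits that recovers $W_{\mathbf{a}_{\mathbf{d}}^-}(N)$. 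This is easily repaired and does not affect the conclusion.
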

  \begin{proof}
  Let $\KZ_p'=\bigoplus_{\mathbf{d}\in (D^\circ)^n}
  W_{\mathbf{a}_{\mathbf{d}}^-}$. Since this is an exact functor, we only need to establish that it has the correct behavior on each simple module. Let $S$ be a simple in $\mathcal{O}_p$ and  $S'$ a simple in $\mathcal{O}_1$ with $S$ a summand of $\Res^{\mathsf{H}_1}_{\mathsf{H}_p}(S')$.  Note that since $\mathcal{H}_p=\mathcal{H}_1^C$ for a Morita action, $\KZ(S')$ is either 0, or a sum of $p/p_S$ summands of equal dimension.  
  
  Thus, if $\KZ_p(S)=0$, then all other summands of $\Res^{\mathsf{H}_1}_{\mathsf{H}_p}(S')$ are also killed by $\KZ_p$, so $\KZ(S)=0$.  Since  $\KZ_p'\circ \Res^{\mathsf{H}_1}_{\mathsf{H}_p}\cong \KZ$, the functor  $\KZ_p'$ kills $\Res^{\mathsf{H}_1}_{\mathsf{H}_p}(S')$, and thus $S'$ by exactness.
  
  On the other hand, if $\KZ_p(S)\neq 0$, then it is one of the  $p/p_S$ summands of $\KZ(S')$ equal dimension.  Since the $\auto$ action on $\KZ_p(S')$ permutes different weight spaces, it is free, and so it divides into $p/p_S$ summands of equal size under $C_S$, which the images of the different summands of $\Res^{\mathsf{H}_1}_{\mathsf{H}_p}(S')$.  This establishes that the number of copies of the projective cover of $S'$ in the representing projective for $\KZ_p$ and $\KZ_p'$ are the same.  The compatibility with Rostam's isomorphism follows because it is the restriction of Brundan-Kleshchev's to the $C$-fixed points, and the compatibility with restriction from \cite[Thm 5.2]{LiuClifford}.
  \end{proof}
  As with the construction in \cite{Webalt}, this construction has the distinct advantage of being non-transcendental, and thus defined over an arbitrary field of characteristic $0$.

\section{Relationship to quantum field theory}
In recent years, ideas from quantum field theory have made a key contribution to noncommutative algebra and symplectic algebraic geometry.  Before getting into details, let us discuss a bit the theories concerned.

\subsection{3d \texorpdfstring{$\cN=4$}{N=4} Gauge Theories}
A 3d $\cN=4$ gauge theory is defined by a compact Lie group $G_{\mathbb{R}}$, a quaternionic representation $\mathcal{R}$ of $G$, and a set of mass and FI parameters. We note some differences of notation between the mathematical and physical work on this subject.  In the mathematical literature, we typically show a preference for the complex structure given by the obvious embedding $\C\subset \mathbb{H}$, making $\mathcal{R}$ into a complex vector space, with a module structure over the complexification $G$ of $G_{\mathbb{R}}$.  Of course, we can choose a $G$-invariant inner product $\langle -, -\rangle$ on $\mathcal{R}$ which is quaternionic Hermitian  and $\Omega(r_1,r_2)=\langle r_1,Jr_2\rangle - i \langle r_1,Kr_2\rangle$ is a $\C$-linear symplectic structure on $\mathcal{R}$.  Thus, it is essentially equivalent to study a complex representation $\mathcal{R}$ of a complex reductive group $G$ with an invariant symplectic form.  We will also typically want to assume that $\mathcal{R}=N\oplus N^*$ for a $G$ representation $N$.  

This gives us a 3d theory whose field content consists of a vectormultiplet transforming in the adjoint representation of $G$ and hypermultiplets transforming in the representation $\mathcal{R}$. The vectormultiplet contains three real scalar fields, and the hypermultiplets each contain four real scalar fields. The real scalar fields $\phi_i$ (for $i=1,2,3$) in the vectormultiplet are each valued in the Lie algebra $\mathfrak{g}$ of $G$. The real scalars in the hypermultiplets parameterize a copy of $\mathcal{R}$.

Attached to this theory, we have a {\bf moduli space of vacua.}
Since Lorentz invariance prevents all fields except for massless scalar fields from acquiring vacuum expectation values, the moduli space of vacua is parameterized by the massless scalar fields of the theory.

The moduli space of vacua is union of components, or in the parlance of quantum field theory, ``branches.''  The two most important of these are (1) the Higgs branch $\cM_H$ is parameterized by nonzero vacuum expectation values of the hypermultiplet scalars, and (2) the Coulomb branch $\cM_C$ is parameterized by nonzero vacuum expectation values of the vectormultiplet scalars and the dual photon.

The Higgs branches of these theories can be understood as hyperk\"ahler quotients. In particular, the connection to Cherednik algebras was established much earlier in \cite{Gordonquiver}. Until recently, it was very challenging to work explicitly with Coulomb branches of these theories, since no mathematically precise description of them existed.  The naive expectation for the Coulomb branch is
\begin{equation}
	\cM_C \approx T^* \mathbb{T}^\vee / W
\end{equation}
where $\mathbb{T}^\vee$ is the abstract dual Cartan and $W$ the Weyl group of $G$. Quotienting by the Weyl group accounts for the symmetries of the maximal toral subalgebra. The above equation turns out not to be entirely correct (indicated by the $\approx$ sign) and is referred to as the classical Coulomb branch because the geometry of Coulomb branch receives quantum corrections.  

This issue was corrected in recent work of Braverman-Finkelberg-Nakajima \cite{BFN}, using the geometry of the affine Grassmannian.  As discussed in \cite[\S 1.4]{DGGH}, this can be interpreted physically as computing local operators in the A-twist of this theory at a junction of two trivial line defects.  By incorporating an $\Omega$-background, BFN also construct a non-commutative algebra quantizing the holomorphic functions (i.e. the chiral ring) on the Coulomb branch; we will call this the {\bf quantum Coulomb branch}.

This work has proven highly influential in giving new presentations of already understood algebras as quantum Coulomb branches such as $U(\mathfrak{gl}_n)$ and several of its variants such at finite W-algebras.  The most important example for us is the spherical Cherednik algebra of $G(\ell,1, n)$, as shown by Kodera-Nakajima \cite{KoNa} and expanded on by Braverman-Etingof-Finkelberg \cite{BEF} and the second author \cite{Webalt}.  




\begin{theorem}
The spherical Cherednik algebra $e_1\mathsf{H}_1 e_1$ is isomorphic to the BFN Coulomb branch of the 3d $\cN=4$ gauge theory with matter $N=\mathfrak{gl}_n\oplus \C^{n}\otimes \C^{\ell}$ (with trivial action on $\C^\ell$) and (complexified) gauge group $G=GL_n$.
\end{theorem}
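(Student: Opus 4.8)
The plan is to prove the isomorphism by realizing both algebras as one and the same explicit subring of the skew monoid ring $\sK' = (\CC(U_1,\dots,U_n)\star T)^{S_n}$ studied in Section~\ref{sec:galois} (specialized to $p=1$, so that $T = \ell\ZZ^n$ is the lattice of $\ell\hbar$-shifts of the $U_i$ and $S_n$ is the Weyl group of $GL_n$). Concretely, both sides act faithfully by difference operators on $\CC[U_1,\dots,U_n]^{S_n}$, and the claim is that their images in $\operatorname{End}(\CC[U_1,\dots,U_n]^{S_n})$, equivalently their images in $\sK'$, coincide. On the Cherednik side this is already in hand: restricting the polynomial representation of Section~\ref{ss:alternate} through the symmetrizer $e_1$ (so that $e_1\sP = \CC[U_1,\dots,U_n]^{S_n}$) and invoking Propositions~\ref{prop:action} and~\ref{prop:SSA=GGR}, we get that $e_1\mathsf{H}_1e_1$ acts faithfully on $\CC[U_1,\dots,U_n]^{S_n}$ with image exactly the principal Galois $\CC[U_1,\dots,U_n]^{S_n}$-order of Proposition~\ref{prop:SSA=GGR}, and that (by the $p=1$ case of Proposition~\ref{prop:psc_gen2}) this order is generated over its polynomial part $\CC[U_1,\dots,U_n]^{S_n}$ by the symmetrizations of $x_1^{\ell-1}\sigma$ and $y_n^{\ell-1}\tau$.

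Next I would compute the BFN quantum Coulomb branch of \cite{BFN} for $G=GL_n$ and $N=\mathfrak{gl}_n\oplus\CC^n\otimes\CC^\ell$ in the same language. It carries its distinguished commutative polynomial subalgebra $\CC[U_1,\dots,U_n]^{S_n}$ (symmetric functions in the equivariant parameters of the Cartan of $GL_n$), and by the abelianization/GKLO-type description of the Coulomb branch — see \cite{BFN}, and for precisely this gauge theory \cite{KoNa} and \cite{BEF} — inverting the root factors $U_i-U_j$ produces an injection into $\sK'$ under which the dressed monopole operator attached to a coweight $\lambda$ becomes $q_\lambda(U)\,\mu_\lambda$, with $\mu_\lambda$ the lattice shift and $q_\lambda(U)$ an explicit product of linear forms read off from the weights of $\mathfrak g$ and of $N$. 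For a minuscule coweight $e_i$, the adjoint summand $\mathfrak{gl}_n$ contributes exactly the off-diagonal factors $\tfrac{U_{i+1}-U_i}{U_{i+1}-U_i+k\ell}$ governing the rational $S_n$-intertwiners in our formula for $(i,i+1)$, while the framing $\CC^n\otimes\CC^\ell$ contributes the $\ell$ linear factors $\prod_{m=0}^{\ell-1}(U_i+\ell\hbar-s_m)$, once the $\ell$ mass parameters of the theory are identified with the Cherednik parameters $s_0,\dots,s_{\ell-1}$. Comparing term by term with \eqref{action1}, \eqref{action2} and the shift formulas of Proposition~\ref{prop:action} shows that the polynomial part together with the dressed minuscule monopoles of the Coulomb branch has the same image in $\sK'$ as $\CC[U_1,\dots,U_n]^{S_n}$ together with the symmetrizations of $x_1^{\ell-1}\sigma$, $y_n^{\ell-1}\tau$.

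To upgrade "same images on generators" to "same algebra" I would invoke generation statements on both sides: $e_1\mathsf{H}_1e_1$ is generated over $\CC[U_1,\dots,U_n]^{S_n}$ by those dressed elements (the $p=1$ case of Proposition~\ref{prop:psc_gen2}), and the Coulomb branch for $G=GL_n$ is generated over its polynomial part by the dressed monopoles for minuscule coweights (the $GL_n$ case of the generation results of \cite{BFN}; this also follows from the presentations of \cite{KoNa,BEF}). Hence both algebras equal the same subring of $\sK'$ — namely the principal Galois order of Proposition~\ref{prop:SSA=GGR} — which gives the isomorphism, naturally as $\CC[\hbar]$-algebras since the faithful representation comparison is valid over the parameter base.

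The main obstacle is the normalization bookkeeping in the abelianization step: translating BFN's monopole formula into the precise difference operators of Proposition~\ref{prop:action} — getting the direction and magnitude of the $\hbar$-shifts right (for instance the $U_i\mapsto U_i+\ell\hbar$ in \eqref{action1}), fixing the offsets in the identification of the $\ell$ masses with $s_0,\dots,s_{\ell-1}$, and checking that the off-diagonal denominators introduced by abelianization are cancelled exactly, so that the honest (non-localized) Coulomb branch lands in the honest spherical subalgebra rather than merely in $\sK'$. This matching is essentially the content of \cite{KoNa} (see also \cite[\S 4]{Webalt}) and in the write-up can be quoted; what genuinely needs care is only reconciling that normalization with our Proposition~\ref{prop:action}.
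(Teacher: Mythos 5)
Your proposal follows essentially the same route as the paper and its sources: the paper states this theorem without proof, as a recalled result of Kodera--Nakajima \cite{KoNa}, Braverman--Etingof--Finkelberg \cite{BEF} and \cite{Webalt}, and its own proof of the generalization to Procesi line defects (Theorem \ref{thm:procesi-ext}) proceeds exactly as you outline --- matching the generators of Proposition \ref{prop:psc_gen2} against explicit correspondences by comparing their actions on the GKLO/polynomial representation of Proposition \ref{prop:action} inside the common localized difference-operator ring. The two external inputs you lean on (faithfulness of the polynomial representation and generation of the quantized Coulomb branch of this quiver gauge theory by dressed minuscule monopole operators) are precisely the ingredients supplied by \cite{KoNa,BEF} and \cite[\S 4]{Webalt}, so your sketch is a faithful reconstruction of the cited proof rather than a new one.
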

This is a quiver gauge theory with quiver diagram:
\begin{equation*}
\tikz{
    \draw[thick] (.7,0) circle (19pt);
	\node[draw, thick, circle, inner sep=5pt,fill=white] (a) at (0,0) {$n$};
	\node[draw, thick,inner sep=5pt] (b) at (-2,0){$\ell$};
		\draw[thick] (a) -- (b) ;
}
\end{equation*}

\subsection{Vortex line operators}

We wish to enrich this picture by considering line operators in the A-twist of our gauge theory.  This category should be interpreted mathematically as the category of D-modules on the stack quotient $N((t))/G((t))$ which is one version of the loop space of the quotient $N/G$.  The easiest way to construct such a D-module is to consider a subspace $L\subset V((t))$ invariant under a subgroup $\mathcal{G}_0\subset G((t))$, and push forward the functions of $L$ by the map $L/\mathcal{G}_0\to N((t))/G((t))$.  In the notation of \cite{DGGH}, we take the Lagrangian $\mathcal{L}_0$ to be the conormal of $L$.  

It will be convenient for us to use the identification $(\C^n\otimes \C^\ell)((t))\cong \C^n((t))$; if $e_0,\dots, e_{\ell-1}$ are the standard basis vectors of $\C^\ell$, this is induced by the isomorphism 
\begin{equation}
\C^\ell((t))\cong \C((t))\qquad  t^me_i\mapsto t^{m\ell+i}\label{eq:l-iso}
\end{equation}
Let $\tl\colon \C^\ell((t))\to \C^\ell((t))$ be the map intertwined with multiplication by $t$ with this isomorphism; we'll also use this for the induced automorphism of $(\C^n\otimes \C^\ell)((t))$.  This corresponds to multiplication of elements of $\C^\ell((t))$ by the matrix
\[ \tl=\begin{bmatrix}
0 & 0 & 0&\cdots & 0& 0 & t\\
1 & 0 & 0&\cdots & 0& 0 & 0\\
0 & 1 & 0&\cdots & 0& 0 & 0\\
\vdots & \vdots& \vdots & \ddots &\vdots &\vdots&\vdots\\
0 & 0 & 0&\cdots & 0& 0 & 0\\
0 & 0 & 0&\cdots & 1& 0 & 0\\
0 & 0 & 0&\cdots & 0& 1 & 0
\end{bmatrix}. \]
We let $\tn$ be the $n\times n$ version of this matrix.  Note that $\tn$ generates a copy of $\Z$ in $G((t))$ which normalizes the Iwahori; in fact, this can be identified with the length 0 elements of the extended affine Weyl group.  

Let us describe the cast of operators of interest to us. Let $I$ be the standard Iwahori given by matrices in $G[[t]]$ which are upper-triangular mod $t$, and $\mathfrak{I}$ its Lie algebra as a subspace of $\mathfrak{g}[[t]]\subset N[[t]]$.   Let $u_k=\tl^{k\ell/p}(\C^\ell[[t]])$ and $U_k=\C^n\otimes u_k$.
\begin{itemize}
    \item If $L=N[[t]]$ and $\mathcal{G}_0=G[[t]]$, then the result is the trivial line defect $\mathbbm{1}$.  The endomorphisms of this line defect are precisely the BFN Coulomb branch.
    \item If $L=\mathfrak{I}\oplus U_k$ and $\mathcal{G}_0=I$, the result is the {\bf Procesi lines} $\mathbbm{p}_k$.  We call these ``Procesi lines'' since they behave quite similarly to Procesi bundles on Hilbert schemes.
    \item If $L=\mathfrak{gl}_n[[t]]\oplus U_k$ and $\mathcal{G}_0=G[[t]]$, the result is the {\bf spherical Procesi lines} $\mathbbm{sp}_k$.  
    \item If $L=\{0\}$ and $\mathcal{G}_0=G((t))$, then this is the {\bf GKLO line} $\mathbbm{o}$.  This name arises from its relation to the so-called GKLO representations (originally defined for Yangians in \cite{GKLO}).
\end{itemize}
Local operators at junctions of these defects can be computed directly, using the geometry of fiber products; see \cite[\S 5]{DGGH} for a discussion from a physical perspective.  For mathematicians, it is hopefully satisfactory to blindly apply the theorem that if $X_1 \to Y \leftarrow X_2$ is a pair of maps, and $M_1$ and $M_2$ the pushforward of the function D-modules, then
\[\Ext^{\bullet}(M_1,M_2)\cong H^{BM}_{\bullet}(X_1\times_{Y} X_2).\] That is, for $(L_1,\mathcal{G}_1)$ and $(L_2,\mathcal{G}_2)$ defining vortex defects $\mathbb{L}_1,\mathbb{L}_2$, then 
\begin{multline}\label{eq:line-ext}
    \Ext^{\bullet}(\mathbb{L}_1,\mathbb{L}_2) = H^{BM}_{\bullet}\Big(\frac{L_1}{\mathcal{G}_1}\times_{\frac{N((t))}{G((t))}}\frac{L_2}{\mathcal{G}_2}\Big)\\
    =H^{BM, \mathcal{G}_1}_{\bullet}\Big(\big\{(g\mathcal{G}_2,v(t))\in \frac{G((t))}{\mathcal{G}_2}\times L_1\mid g^{-1}v(t)\in L_2\big\}\Big).
\end{multline}
Adding an $\Omega$-background deforms this algebra by adding in equivariance for a circle action; we denote the resulting Ext space by $\Ext^{\bullet}_{\hbar}(\mathbb{L}_1,\mathbb{L}_2)$, where $\hbar$ is the equivariant parameter for the circle action.  The result depends on the choice of mass parameters.  These come from an element of the flavor Lie algebra $\mathfrak{gl}_{\ell}\times \C=\End_{GL_n}(N)$, which encodes how the circle acts on the matter $N$.


We typically interpret $N((t))/G((t))$ as the space of trivializable principal $G$-bundles on a formal punctured disk with a choice of section of the associated $N$-bundle, and $N[[t]]/G[[t]]$ as the space of such bundles with section on an unpunctured disk.  
In the case where $I=\mathcal{G}_1=\mathcal{G}_2$, this means that $L_1/I$ is the space of principal $G$-bundles on the formal disk with a $B$-reduction at the origin, with a meromorphic section whose behavior near the origin matches an element of $L_1$.  Taking fiber product can be interpreted as considering the space of principal bundles on the ``raviolo'' obtained by gluing two formal disks away from the origins, together a reduction of structure group to $B$ at each origin, with a meromorphic section of the associated $N$-bundle whose singularities at the 2 origins are controlled by $L_1$ and $L_2$ respectively.

Alternatively, $G((t))/I$ can be interpreted as the space $\mathcal{F}\ell$ of affine flags:
\[\mathcal{F}\ell=\{\cdots \subset \Lambda_i \subset \Lambda_{i-1} \subset \cdots \subset \C^n((t)) \mid \Lambda_i\text{ a lattice, } \Lambda_{i-1} /\Lambda_i=\C, \Lambda_{i+n}=t\Lambda_i.\}\] by acting with $G((t))$ on the standard affine flag $\Delta_\bullet$ with $\Delta_0=\C^n[[t]]$ and $\Delta_i=\tn^i\Delta_0$  the unique $I$-invariant lattice containing or contained in $\Delta_0$ with the correct codimension.  
      Using this perspective, we can see that:
    \begin{multline}
  \label{eq:1}
  \Ext_{\hbar}^{\bullet}(\mathbbm{p}_k, \mathbbm{p}_m)\\=H_{\bullet}^{BM,I\ltimes \C^*}(\{(\Lambda_\bullet, x(t), y(t))\in \mathcal{F}\ell\times \mathfrak{I}\times U_k\mid x(t)\cdot \Lambda_{i}\subset \Lambda_{i+1}, y(t)\in \Lambda_ 0\otimes u_m\})
\end{multline}

One very useful construction is the GKLO representation, the functor from line operators to vector spaces which sends $\mathbb{L}$ to 
\[\Ext^\bullet(\mathbbm{o},\mathbb{L})\cong H^*(B\mathcal{G}_0)\qquad \Ext_{\hbar}^\bullet(\mathbbm{o},\mathbb{L})\cong H^*(B\mathcal{G}_0)[\hbar].\]
In particular, if $\mathcal{G}_0=I$ then this is a polynomial ring in the Chern classes of the tautological line bundles, and if $\mathcal{G}_0=G[[t]]$, then it is the symmetric polynomials in these classes, which is a polynomial ring in the Chern classes of the tautological rank $n$ bundle.

\subsection{Connection to \texorpdfstring{$G(\ell,p,n)$}{G(l,p,n)}}

While \eqref{eq:line-ext} gives a geometric description of the local operators in these theories, the approach of \cite{WebSD, Webalt} can be used to compute these operators explicitly.  We can match the mass parameters of the theory with the parameters of the Cherednik algebra as in \cite{Webalt}.  More explicitly, the space $\frac{L_1}{\mathcal{G}_1}\times_{\frac{N((t))}{G((t))}}\frac{L_2}{\mathcal{G}_2}$ has:
\begin{enumerate}
\item an action of $\C^*$ by loop
  rotation with equivariant parameter $\ell \hbar$;
\item an action of $GL_\ell$ on 
  $\C^n\otimes \C^{\ell}$ with the action on the first factor trivial;  we will identify the  Chern roots of the tautological bundle
with the parameters $-s_i+\ell\hbar $;
\item an action of $\C^*$ by scalar multiplication on
  $\mathfrak{gl}_n$ with equivariant parameter $k$.
\end{enumerate}
We call these parameters $p$-cyclic if they satisfy the relations of Definition \ref{def:p-cyclic}, that is, if  $s_m + \ell \hbar/p = s_{m + \ell/p}$ for all $m$.  In terms of the loop action on $(\C^n\otimes \C^\ell)[[t]]$, this means that $\tl^{\ell/p}$ has weight $1/p$ under the loop action.    Thus, we can interpret specializing $\hbar$ and $s_k$ to complex numbers as giving us a flavor homomorphism $\C\to \mathfrak{gl}_\ell\times \C$ sending
\[1\mapsto \big(\operatorname{diag}(
\ell\hbar-s_{\ell-1}, \ell\hbar-s_{\ell-2}, \cdots, \ell\hbar-s_{1}, \ell\hbar-s_{0}), \hbar \big)
\]
Note our reversal of the order of subscripts due to the fact that $s_i$ appears with a minus sign.  Using the $p$ cyclic property, we have that 
\begin{multline}\label{eq:flavor}
    \operatorname{diag}(
\ell\hbar-s_{\ell-1}, \ell\hbar-s_{\ell-2}, \cdots, \ell\hbar-s_{1}, \ell\hbar-s_{0})\\
=\operatorname{diag}(
\frac{\ell\hbar}{p}-s_{\lp-1}, \cdots, \frac{\ell\hbar}{p}-s_{0}, \frac{2\ell\hbar}{p}-s_{\lp-1}, \cdots, \frac{2\ell\hbar}{p}-s_{0},\cdots ,  \ell\hbar-s_{\lp-1}, \cdots, \ell\hbar-s_{0})
\end{multline}
\begin{theorem}\label{thm:procesi-ext}
  For any $p$-cyclic mass parameters, the space $\Ext_{\hbar}^{\bullet}(\mathbbm{p}_k, \mathbbm{p}_m)$ is isomorphic to the $m-k$ degree part of the partially spherical Cherednik algebra $\mathsf{H}_p^{\operatorname{psph}}$ with the corresponding parameters. 
\end{theorem}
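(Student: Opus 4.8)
The plan is to compute $\Ext_\hbar^{\bullet}(\mathbbm{p}_k,\mathbbm{p}_m)$ directly from the geometric description and match it with $\mathsf{H}_p^{\operatorname{psph}}$ through the polynomial representation, following the strategy used for $G(\ell,1,n)$ in \cite{Webalt, WebSD}, with the $p$-cyclic hypothesis entering exactly through the flavor identity \eqref{eq:flavor}. First I would use \eqref{eq:1} to present $\Ext_\hbar^{\bullet}(\mathbbm{p}_k,\mathbbm{p}_m)$ as the $I\ltimes\C^*$-equivariant Borel--Moore homology of the explicit fibre product $Z_{k,m}\subset\mathcal{F}\ell\times\mathfrak{I}\times U_k$, observe that $Z_{k,m}$ carries an affine paving refining the Bruhat stratification of $\mathcal{F}\ell$ (so the homology is pure and free over $H^*_{I\ltimes\C^*}(\operatorname{pt})=\CC[U_1,\dots,U_n,\hbar]$), and record that convolution makes $\mathbf{P}:=\bigoplus_{k,m}\Ext_\hbar^{\bullet}(\mathbbm{p}_k,\mathbbm{p}_m)$ an associative algebra graded by $m-k$.

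Next I would bring in the GKLO representation as the comparison module. Because $\mathbbm{o}$ has $\mathcal{G}_0=G((t))$, we get $\Ext_\hbar^{\bullet}(\mathbbm{o},\mathbbm{p}_k)\cong H^*(BI)[\hbar]\cong\CC[U_1,\dots,U_n,\hbar]$, and composition of Ext groups turns $\bigoplus_k\CC[\mathbf{U},\hbar]$ into a $\mathbf{P}$-module. I would then show — exactly as one shows the BFN Coulomb branch acts faithfully on its GKLO representation — that after inverting $\hbar$ and the finitely many Euler-class denominators appearing below, this module is faithful, so that $\mathbf{P}$ embeds after localization into the skew monoid ring $\CC(U_1,\dots,U_n)\star(T\rtimes S_n)$ of Section \ref{sec:galois}; the input here is an abelianization/$\C^*$-fixed-point computation on $\mathcal{F}\ell$, now carried out with the Iwahori and the line defects $\mathbbm{p}_k$ in place of the trivial line.

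The heart of the argument is then the generator-by-generator comparison. The degree-$0$ piece $\Ext_\hbar^{\bullet}(\mathbbm{p}_k,\mathbbm{p}_k)$ is generated by the tautological classes $U_i$ (acting by multiplication) and the minimal length-one Schubert correspondences for the finite simple reflections, whose GKLO action is the usual Demazure-type operator and matches the $(i,i+1)$-formula of Proposition \ref{prop:action}. The degree-shifting generators come from the length-zero element $\tn$ of the extended affine Weyl group together with the lattice inclusions $u_k\subset u_{k+1}$ (resp.\ $u_{k+1}\subset u_k$) in the incidence condition $y(t)\in\Lambda_0\otimes u_m$: their action multiplies by the Euler class of the finite-rank normal bundle to that condition and shifts the $U_i$ by multiples of $\hbar$. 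Under the flavor dictionary of \eqref{eq:flavor} — loop rotation of weight $\ell\hbar$, Chern roots $-s_i+\ell\hbar$ of the tautological $\C^\ell$-bundle — this Euler class is exactly $\prod_{m=0}^{\ell-1}(U_1+\ell\hbar-s_m)$ and its cyclic analogues, so these classes act as $x_1^{\ell-1}\sigma$, $y_n^{\ell-1}\tau$, $\sigma^{n\ell/p}$, $\tau^{n\ell/p}$ and their products, recovering \eqref{action1}--\eqref{action5} and hence all the generators listed in Proposition \ref{prop:psc_gen2}; conversely these geometric classes generate $\mathbf{P}$. Since both $\mathsf{H}_p^{\operatorname{psph}}$ (by the principal Galois order result of Section \ref{sec:galois}) and $\mathbf{P}$ act faithfully on $\CC(U_1,\dots,U_n)$ with the same operators, the map on generators extends to an isomorphism of $\Z/p\Z$-graded algebras $\mathsf{H}_p^{\operatorname{psph}}\cong\mathbf{P}$; restricting to a fixed $(k,m)$ gives the degree-$(m-k)$ statement of Theorem \ref{thm:procesi-ext}.

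I expect the faithfulness/abelianization step to be the main obstacle: for the trivial line defect it is standard, but transporting it to the Iwahori and to the $\mathbbm{p}_k$ requires identifying which monopole classes survive and which denominators must be inverted. The Euler-class evaluation is routine once the conventions of \eqref{eq:flavor} are pinned down, though — as already in the $p=1$ case of \cite{Webalt} — the indexing bookkeeping (in particular the order reversal of the $s_i$) needs care.
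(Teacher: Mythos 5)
Your proposal follows essentially the same route as the paper's proof: assign explicit correspondences in $\mathcal{F}\ell\times\mathfrak{I}\times U_k$ to the generators of Proposition \ref{prop:psc_gen2}, compute their action on the GKLO representation via Euler classes of normal bundles using the flavor dictionary \eqref{eq:flavor}, and match against Proposition \ref{prop:action}, with faithfulness of the polynomial representation closing the argument. The paper's proof is somewhat terser — it inherits the degree-zero case, the generation statement, and the faithfulness of the GKLO module from \cite[Lem.~4.2]{Webalt} rather than re-deriving them via an affine paving and localization — but the substance is the same.
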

\begin{proof}
First, we note that this Ext space only depends on the value of $k-m\pmod p$.  If $m-k=qp$ for $q\in \Z$, then we have that multiplication by $t^qI\in G((t))$ gives an isomorphism between the spaces $\mathfrak{I}\oplus U_k$ and $\mathfrak{I}\oplus U_m$, and the graph of this isomorphism gives an isomorphism between the corresponding line operators.  
  
  As before, everywhere we write an element $x\in \mathsf{H}_p$, it is to be understood that that we mean its image $e'xe'$.
  The proof proceeds, as in \cite[Lem. 4.2]{Webalt} by comparing the action of the generators of Proposition \ref{prop:psc_gen2} with natural homology classes.  These are easily read off from the action formulas of Proposition \ref{prop:action}, with the degree 0 elements follow the isomorphism of \cite[Lem. 4.2]{Webalt}.  
  \begin{enumerate}
  \item When $k=m$, the elements $u_i-k\ell\hbar/p$ are sent to the Chern classes $c(\Lambda_i/\Lambda_{i+1})$ of tautological line bundles $\Lambda_i/\Lambda_{i+1}$ on the affine flag variety.  Note that this is compatible with the use of the graph of $t^qI$ between different line operators, since this element does not commute with loop rotation.  
  \item When $k=m$, the elements $r_i-1$ are sent to the classes of the subspace
    \begin{equation} \{(\Lambda_\bullet, x(t), y(t))\in \mathcal{F}\ell\times \mathfrak{I}\times U_k\mid \Lambda_j = \Delta_j \text{ for }j\not\equiv i\hspace{-3mm}\pmod n, x(t)\cdot \Delta_{i-1}\subset \Delta_{i+1} \} \end{equation}
  \item 
The element $y_n^{\ell-1}\tau $ is sent to the
shift 
correspondence \begin{equation}\label{eq:3-correspondence}
\{(\Lambda_\bullet, x(t), y(t))\in \mathcal{F}\ell\times \mathfrak{I}\times U_k\mid
\Lambda_j=\Delta_{j-1}\text{ for all }j\}\end{equation} and 
$x_1^{\ell-1}\sigma$ is sent to the
correspondence \begin{equation}\label{eq:3x-correspondence}\{(\Lambda_\bullet, x(t), y(t))\in \mathcal{F}\ell\times \mathfrak{I}\times U_k\mid
  \Lambda_j=\Delta_{j+1}\text{ for all }j\}.\end{equation}
\item When $k=m+1$, then $U_k\subset U_m$ and $\sigma^{n\ell/p}$ is sent to the correspondence
  \begin{equation}\label{eq:4-correspondence}\{(\Lambda_\bullet, x(t), y(t))\in \mathcal{F}\ell\times \mathfrak{I}\times U_m\mid
    \Lambda_j=\Delta_{j}\text{ for all }j\}.\end{equation}  This looks suspiciously like the identity correspondence, but of course, it is between two different spaces, so this is not even a meaningful possibility.
\item If $k=m-1$, we simply switch the sides of the correspondence, and send $\tau^{n\ell/p}$ to  the class of \begin{equation}\label{eq:5-correspondence}\{(\Lambda_\bullet, x(t), y(t))\in \mathcal{F}\ell\times \mathfrak{I}\times U_k\mid
\Lambda_j=\Delta_{j}\text{ for all }j\}.\end{equation}
\item For $c\in [1,n-1]$ and any $k-m=d\in [1,p-1]$, we send $\left( x_1^{d\ell/p-1} \sigma \right)^c \left( y_{n-i}^{\ell-d\ell/p-1} (n-i,\dots,n) \tau \right)^{n-c} $ to the homology class of
  \begin{equation}\label{eq:6-correspondence}\{(\Lambda_\bullet, x(t), y(t))\in \mathcal{F}\ell\times \mathfrak{I}\times (U_k\cap \tn^{n-c}U_m)\mid
\Lambda_j=\Delta_{j-n+c}\text{ for all }j\}.\end{equation}
\end{enumerate}
In order to check that these satisfy the desired relations, we will confirm that the action of these classes in the GKLO representation \[\Ext_\hbar^{\bullet}(\mathbbm{o},\mathbbm{p}_k)\cong \C[c(\Lambda_0/\Lambda_{1}),\cdots, c(\Lambda_{n-1}/\Lambda_{n}),\hbar]\] 
matches the formulas of Proposition \ref{prop:action}.  For the elements of degree 0, this is already proven in \cite[Lem. 4.2]{Webalt}.  The remaining cases, our cycles consist on an affine space $V$ sitting above a single point in the affine flag variety corresponding to a length 0 element $w$ of the extended affine Weyl group, so their action is easy to check.  They must act by an affine transformation of the Chern classes  $c(\Lambda_i/\Lambda_{i+1})$ corresponding to $w^{-1}$, and multiplication by the Euler class of the normal bundle to $w^{-1}V$ inside $\mathfrak{I}\oplus U_m$.  As usual, calculating this normal class in the $I\times \C^*$-equivariant homology of a contractible space simply requires decomposing the normal bundle under of the action of the torus $T$ and loop $\C^*$ and taking the product of the weights that appear with multiplicity (considered as degree 2 cohomology classes on the classifying space).  
\begin{enumerate}\setcounter{enumi}{2}
    \item While this is covered in \cite[Lem. 4.2]{Webalt}, it will be a useful warm-up to consider these cases.  In \eqref{eq:3-correspondence}, we have $w=\tn$, and $\tn^{-1}V=\mathfrak{I}\oplus U_k$ since $ U_k\subset \tn^{-1}U_k$.  Thus the normal bundle is trivial, and we have the desired action.  On the other hand, in \eqref{eq:3x-correspondence}, we have $w=\tn^{-1}$ and $\tn V=\mathfrak{I}\oplus \tn U_k$, since $\tn U_k\subset U_k$.  Thus, we have to multiply by the Euler class of the quotient $U_k/\tn U_k\cong \C^{\ell}$.  Since the flavor group $GL_\ell$ acts on this as a copy of the defining representation, and $T\times \C^*$ by a single character $\mu$, we have that the Euler class is $ \prod_{m=0}^{\ell-1} (\mu+\ell\hbar-s_m)$.  The character $\mu$ is that of the action on $t^k$ times the first basis vector in $\C^n$, which is exactly the image of $u_1$ under our map.  This completes the proof.
    \item In this case, $w=1$ and $V=U_k$, since $U_k\subset U_m$.  Thus, the Euler class of interest is that of $U_m/U_k$.  This is a sum of $\ell/p$ copies of $\C^n$, which we can identify with $\tl^m(\C^{\ell/p})\otimes \C^n$ where $GL_n$ acts as usual on the right hand side and the flavor and loop tori act by the usual action on the left hand side.  By \eqref{eq:flavor}, the weights for the flavor and loop tori $\tl^m(\C^{\ell/p})$ for correspond to the monomials \[(m+1)\ell\hbar/p-s_0, (m+1)\ell\hbar/p-s_1, \dots, (m+1)\ell\hbar/p-s_{\ell/p}.\]  By the $p$-cyclicity of the parameters, we have that these values are enough to characterize the full set of parameters for the Cherednik algebra.  The weights of $GL_n$ correspond to the tautological bundles, and thus to 
    \[u_1-m\ell\hbar/p, \dots, u_n-m\ell\hbar/p.\]
    Taking the product of all ways taking sums of these elements gives the weights of the tensor product.  The shift of the variables $U_i\mapsto U_i+\ell \hbar/p$ follows from the fact that tautological classes map to tautological classes, but these differ from the $u_i$'s by a shift.  
    \item In this case, $w=1$ and $V=U_m$, since $U_m\subset U_k$.  Thus the Euler class is trivial, and we only have the shift that comes from identifying tautological classes.
    \item In this case, $w=\tn^{d}$; this, together with the shift in identifying tautological classes and the $u_i$'s, this gives the desired affine transformation.  Thus, we need only calculate the Euler class for $U_m/w^{-1}V$.  We have $w^{-1}V=U_m\cap \tn^{c-n}U_k$; for simplicity, consider the case of $m=0,k=d$.  In this case $U_0=(\C^n\otimes \C^\ell)[[t]]$ and $\tn^{c-n}U_k$ is the $\C[[t]]$ submodule spanned by 
    \begin{align*}
        t (e_i&\otimes e_j) && \text{ for }i\in [1,c], j\in [1,\nicefrac{d\ell}{p}]\\
         e_i&\otimes e_j && \,\,\parbox{3in}{ for $i\in [c+1,n], j\in [1,\nicefrac{d\ell}{p}]$\\
    \mbox{}\hspace{.8cm} or $i\in [1,c], j\in [\nicefrac{d\ell}{p}+1,\ell]$}\\ 
         t^{-1} (e_i&\otimes e_j) && \text{ for }i\in [c+1,n], j\in [\nicefrac{d\ell}{p}+1,\ell]
    \end{align*}   Thus, the quotient $U_m/w^{-1}V$ is again a tensor product, which is of the form $\tl^m(\C^{d\ell/p})\otimes \C^c$ and the weights of this tensor product give the desired product, using \eqref{eq:flavor} again.  \qedhere
\end{enumerate}
\end{proof}

\begin{corollary}
  For any $p$-cyclic mass parameters, the space $\Ext_{\hbar}^{\bullet}(\mathbbm{sp}_k, \mathbbm{sp}_m)$ is isomorphic to the $m-k$ degree part of the spherical Cherednik algebra $\mathsf{H}_p^{\operatorname{sph}}$ with the corresponding parameters. 
\end{corollary}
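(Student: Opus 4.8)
The plan is to obtain this corollary from Theorem~\ref{thm:procesi-ext} by passing to $S_n$-invariants, in the same spirit in which Proposition~\ref{prop:SSA=GGR} was deduced from Proposition~\ref{prop:PSSA=GGR}. The geometric mechanism is the forgetful map $\mathcal{F}\ell=G((t))/I\to G((t))/G[[t]]$ to the affine Grassmannian, whose fibre is the flag variety $\GL_n/B$, together with the identification of $\mathbbm{sp}_k$ as the ``$S_n$-symmetric part'' of $\mathbbm{p}_k$.

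First I would record the relevant decomposition of $\mathbbm{p}_k$. Since Schur triangularization over $\C$ conjugates any element of $\mathfrak{gl}_n$ into the Borel, its reduction mod $t$ shows $G[[t]]\cdot\mathfrak{I}=\mathfrak{gl}_n[[t]]$; hence the map $(\mathfrak{I}\oplus U_k)/I\to(\mathfrak{gl}_n[[t]]\oplus U_k)/G[[t]]$ is surjective, and it is the loop-group version of the Grothendieck--Springer resolution, with fibres the Springer fibres of the $t^0$-coefficient. As this map is small, its pushforward of the structure D-module is the intersection cohomology D-module of the (smooth) base twisted by the rank-$n!$ local system carrying the regular representation of $W=S_n$, and its trivial-isotypic summand is the structure D-module of the base. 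Pushing everything forward to $N((t))/G((t))$, this exhibits $\mathbbm{sp}_k$ as the trivial-isotypic summand of $\mathbbm{p}_k$ under a natural Springer/monodromy $S_n$-action. Feeding this through the fibre-product formula \eqref{eq:line-ext} gives, for each $k,m$, an identification $\Ext^{\bullet}_{\hbar}(\mathbbm{sp}_k,\mathbbm{sp}_m)\cong e_{S_n}\,\Ext^{\bullet}_{\hbar}(\mathbbm{p}_k,\mathbbm{p}_m)\,e_{S_n}$, where $e_{S_n}$ is the symmetrizer idempotent of this Springer $S_n$ inside $\End(\mathbbm{p}_m)$ resp.\ $\End(\mathbbm{p}_k)$; in particular the $\Z/p\Z$-grading by $m-k$ is respected, since the Springer action on each $\mathbbm{p}_k$ is a degree-$0$ endomorphism.

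On the algebra side, the permutation generators $r_i$ of Proposition~\ref{prop:psc_gen2} give a copy of $\C S_n$ inside $\mathsf{H}_p^{\operatorname{psph}}=e'\mathsf{H}_pe'$ (with unit $e'$), and since $G(\ell,p,n)=A\rtimes S_n$ the symmetrizer $e_{S_n}=\tfrac1{n!}\sum_{w\in S_n}we'$ satisfies $e_{S_n}\mathsf{H}_p^{\operatorname{psph}}e_{S_n}=e\mathsf{H}_pe=\mathsf{H}_p^{\operatorname{sph}}$. So the corollary follows once one checks that the isomorphism of Theorem~\ref{thm:procesi-ext}, $\mathsf{H}_p^{\operatorname{psph}}\xrightarrow{\ \sim\ }\bigoplus_{k,m}\Ext^{\bullet}_{\hbar}(\mathbbm{p}_k,\mathbbm{p}_m)$, carries this algebra-side $\C S_n$ onto the geometric Springer $\C S_n$ above; equivalently, that the correspondences assigned to the $r_i$ in case (2) of the proof of Theorem~\ref{thm:procesi-ext} generate the Springer action. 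Applying $e_{S_n}$ to both sides then yields $\mathsf{H}_p^{\operatorname{sph}}\cong\bigoplus_{k,m}\Ext^{\bullet}_{\hbar}(\mathbbm{sp}_k,\mathbbm{sp}_m)$ matched up with the degree-$(m-k)$ piece.

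The step I expect to be the main obstacle is precisely this last identification of the two $S_n$-actions — equivalently, the geometric assertion that $\mathbbm{sp}_k$ is cut out of $\mathbbm{p}_k$ by a Springer action compatible with the explicit correspondences of Theorem~\ref{thm:procesi-ext}. A more pedestrian alternative, sidestepping any delicate statement about pushforwards of D-modules, is to simply repeat the proof of Theorem~\ref{thm:procesi-ext} in the spherical setting: replace $\mathcal{F}\ell$ by $G((t))/G[[t]]$, $\mathfrak{I}$ by $\mathfrak{gl}_n[[t]]$ and $I$ by $G[[t]]$ throughout; take as generators of $\mathsf{H}_p^{\operatorname{sph}}$ the $S_n$-symmetric combinations of the generators of Proposition~\ref{prop:psc_gen2} (power sums of the $u_i$, together with the $S_n$-symmetrized monopole correspondences of cases (3)--(6)); and verify the relations in the GKLO representation $\Ext^{\bullet}_{\hbar}(\mathbbm{o},\mathbbm{sp}_k)\cong\C[e_1,\dots,e_n,\hbar]$ — the symmetric polynomials in the $c(\Lambda_i/\Lambda_{i+1})$, i.e.\ the Chern classes of the tautological rank-$n$ bundle — using the action formulas of Proposition~\ref{prop:action} symmetrized over $S_n$. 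Either route leaves only routine computation once Theorem~\ref{thm:procesi-ext} is in hand.
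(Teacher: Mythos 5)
Your proposal is correct and follows the route the paper itself intends: the corollary is stated without proof as an immediate consequence of Theorem~\ref{thm:procesi-ext} by symmetrizing over $S_n$ (replacing $I$, $\mathfrak{I}$, $\mathcal{F}\ell$ by $G[[t]]$, $\mathfrak{gl}_n[[t]]$, the affine Grassmannian, and $e'$ by $e=e_{S_n}e'$), exactly parallel to how Proposition~\ref{prop:SSA=GGR} is deduced from Proposition~\ref{prop:PSSA=GGR}. Your identification of $\mathbbm{sp}_k$ with the trivial-isotypic summand of $\mathbbm{p}_k$, and of $e_{S_n}\mathsf{H}_p^{\operatorname{psph}}e_{S_n}$ with $\mathsf{H}_p^{\operatorname{sph}}$, supplies the justification the paper leaves implicit.
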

One peculiar seeming aspect of this description is that the space of morphisms only depends on the difference between $k$ and $m$;  this can be explained by the existence of an action of $\Z/p\Z$ on $V[[t]]/G[[t]]$ induced by the action of $\tl^{\lp}$; the action of $\tl^{\ell}=tI$ on this quotient is trivial since it is the action of an element of $G((t)).$ It's clear from the definition that we have isomorphisms
  \begin{equation*}
      (\tl^{\lp})^*\mathbbm{p}_k\cong \mathbbm{p}_{k-1}\qquad (\tl^{\lp})^*\mathbbm{sp}_k\cong \mathbbm{sp}_{k-1}
  \end{equation*}
and in particular $\mathbbm{p}_k\cong \mathbbm{p}_{k-p}$ and   $\mathbbm{sp}_k\cong \mathbbm{sp}_{k-p}$.  
\begin{remark}
The category of D-modules on $N((t))/G((t))$ has a factorization structure, which physicists would view as the operator product expansion of two line operators.  It seems that from this perspective $(\tl^{\lp})^*$ should be viewed as ``tensoring'' with $\mathbbm{sp}_{-1}$.  This operation fits into a larger physics context as we can view our 3d gauge theory as a boundary condition for 4d Yang-Mills for $GL_\ell$, and $\mathbbm{sp}_{-1}$ as the image in the boundary of a line operator in this 4d theory, which plays an important role in the physical interpretation of geometric Langlands.

It would be interesting to make this statement more precise, since it's closely related to the construction of resolutions of Coulomb branches in \cite{BFNline}, which uses a similar pullback construction.
\end{remark}

For $i,j\in [0,p-1]$, let 
\[\overline{i+j}=\begin{cases}i+j & i+j <p\\ i+j-p& i+j\geq p\end{cases}\]
\begin{theorem}\label{thm:lines-cherednik}
We have isomorphisms 
\[\mathsf{H}_p^{\operatorname{psph}}\cong \bigoplus_{i=0}^{p-1} \Ext_{\hbar}^{\bullet}(\mathbbm{p}_0,\mathbbm{p}_i)\qquad \mathsf{H}_p^{\operatorname{sph}}\cong\bigoplus_{i=0}^{p-1} \Ext_{\hbar}^{\bullet}(\mathbbm{sp}_0,\mathbbm{sp}_i)\]
with multiplication 
 $\Ext_{\hbar}^{\bullet}(\mathbbm{p}_0,\mathbbm{p}_i)\times \Ext_{\hbar}^{\bullet}(\mathbbm{p}_0,\mathbbm{p}_j)\to \Ext_{\hbar}^{\bullet}(\mathbbm{p}_0,\mathbbm{p}_{\overline{i+j}})$ given by composition after the identifications
 \begin{equation*}
      \Ext_{\hbar}^{\bullet}(\mathbbm{p}_0,\mathbbm{p}_j)\cong \Ext_{\hbar}^{\bullet}(\mathbbm{p}_i,\mathbbm{p}_{{i+j}})\qquad \mathbbm{p}_{\overline{i+j}}\cong \mathbbm{p}_{i+j}
 \end{equation*}
 induced by pullback (and similarly for $\mathbbm{sp}_*)$.
\end{theorem}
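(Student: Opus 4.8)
The plan is to assemble the isomorphism from the pieces already in hand, the key points being (i) Theorem~\ref{thm:procesi-ext}, which identifies $\Ext_{\hbar}^{\bullet}(\mathbbm{p}_k,\mathbbm{p}_m)$ with the degree-$(m-k)$ graded piece of $\mathsf{H}_p^{\operatorname{psph}}$, and (ii) the pullback isomorphisms $(\tl^{\lp})^*\mathbbm{p}_k\cong \mathbbm{p}_{k-1}$ established in the discussion after the Corollary, which identify $\mathbbm{p}_{\overline{i+j}}$ with $\mathbbm{p}_{i+j}$ and $\Ext_{\hbar}^{\bullet}(\mathbbm{p}_0,\mathbbm{p}_j)$ with $\Ext_{\hbar}^{\bullet}(\mathbbm{p}_i,\mathbbm{p}_{i+j})$. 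First I would recall that $\mathsf{H}_p^{\operatorname{psph}}$ carries a $\Z/p\Z$-grading (induced by the power of $\auto$ appearing, equivalently by the conjugation action of $\tilde A/A$), so that as a graded vector space $\mathsf{H}_p^{\operatorname{psph}}=\bigoplus_{i=0}^{p-1}(\mathsf{H}_p^{\operatorname{psph}})_i$. Theorem~\ref{thm:procesi-ext} gives a vector-space (indeed, graded-in-the-other-direction) identification $(\mathsf{H}_p^{\operatorname{psph}})_i\cong \Ext_{\hbar}^{\bullet}(\mathbbm{p}_0,\mathbbm{p}_i)$, which immediately yields the claimed direct-sum decomposition on the level of vector spaces.

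The substance is to check that this identification is a ring isomorphism when the right-hand side is given the stated multiplication. For this I would unwind the definition of the multiplication on $\bigoplus_i \Ext_{\hbar}^{\bullet}(\mathbbm{p}_0,\mathbbm{p}_i)$: given $f\in\Ext_{\hbar}^{\bullet}(\mathbbm{p}_0,\mathbbm{p}_i)$ and $g\in\Ext_{\hbar}^{\bullet}(\mathbbm{p}_0,\mathbbm{p}_j)$, transport $g$ along the pullback isomorphism to $\tilde g\in\Ext_{\hbar}^{\bullet}(\mathbbm{p}_i,\mathbbm{p}_{i+j})$, compose to get $\tilde g\circ f\in\Ext_{\hbar}^{\bullet}(\mathbbm{p}_0,\mathbbm{p}_{i+j})$, and finally transport along $\mathbbm{p}_{\overline{i+j}}\cong\mathbbm{p}_{i+j}$ back to $\Ext_{\hbar}^{\bullet}(\mathbbm{p}_0,\mathbbm{p}_{\overline{i+j}})$. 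I then want to match this with the product in $\mathsf{H}_p^{\operatorname{psph}}$, i.e.\ with the product $(\mathsf{H}_p^{\operatorname{psph}})_i\times(\mathsf{H}_p^{\operatorname{psph}})_j\to(\mathsf{H}_p^{\operatorname{psph}})_{\overline{i+j}}$. The natural way to organize this is via the Morita picture: $e'\tilde{\mathsf{H}}_1e'=\bigoplus_{m=0}^{p-1}(e'\mathsf{H}_1e')\auto^m$, and the Procesi lines $\mathbbm{p}_k$ are the geometric incarnations of the $\tilde{\mathsf{H}}_1$-module $e'\mathsf{H}_1e'\auto^{-k}$ (or the appropriate summand), so that composition of $\Ext$'s is literally the product in $e'\tilde{\mathsf{H}}_1e'$, and the pullback $(\tl^{\lp})^*$ implements multiplication by the degree-shifting unit that lowers the $\Z/p\Z$-degree by one. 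Concretely I would verify compatibility on the generators of Proposition~\ref{prop:psc_gen2}: the degree-$0$ generators $u_i, r_i, x_1^{\ell-1}\sigma, y_n^{\ell-1}\tau$ correspond to the self-$\Ext$'s of $\mathbbm{p}_k$ already treated in Theorem~\ref{thm:procesi-ext}, and the positive-degree generators $\sigma^{n\ell/p}$, the degree-$i$ generators, and their $\tau$-analogues are matched with the correspondences \eqref{eq:4-correspondence}, \eqref{eq:6-correspondence}, \eqref{eq:5-correspondence}; one checks that composing such a correspondence (after the $(\tl^{\lp})^*$-identification) with another reproduces the product of the corresponding Cherednik-algebra elements, which is exactly the content of the proof of Theorem~\ref{thm:procesi-ext} applied one degree at a time.

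The main obstacle I expect is \emph{bookkeeping of the identifications}, not any deep geometric input: one must be careful that the isomorphism $\Ext_{\hbar}^{\bullet}(\mathbbm{p}_0,\mathbbm{p}_j)\cong\Ext_{\hbar}^{\bullet}(\mathbbm{p}_i,\mathbbm{p}_{i+j})$ used in the multiplication is the one induced by $(\tl^{\lp})^*$ applied $i$ times, and that this is precisely the map that, under Theorem~\ref{thm:procesi-ext}, corresponds to the internal $\Z/p\Z$-grading shift on $\mathsf{H}_p^{\operatorname{psph}}$ (rather than, say, differing by a sign or a unit); here I would lean on the explicit description of $(\tl^{\lp})^*$ as pullback along $t^{q}I\in G((t))$ from the proof of Theorem~\ref{thm:procesi-ext} and on the fact that $\tl^{\ell}=tI$ acts trivially, so that the cocycle governing the associativity of the product $\Ext_{\hbar}^{\bullet}(\mathbbm{p}_0,\mathbbm{p}_i)\times\Ext_{\hbar}^{\bullet}(\mathbbm{p}_0,\mathbbm{p}_j)\to\Ext_{\hbar}^{\bullet}(\mathbbm{p}_0,\mathbbm{p}_{\overline{i+j}})$ is trivial and matches the evident associative structure on $\mathsf{H}_p^{\operatorname{psph}}$. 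Once these identifications are pinned down, the spherical statement follows by symmetrizing over $S_n$ exactly as in the passage from Theorem~\ref{thm:procesi-ext} to its Corollary, replacing $\mathbbm{p}_k$ by $\mathbbm{sp}_k$ and $I$ by $G[[t]]$ throughout.
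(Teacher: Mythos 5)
Your proposal is correct and follows essentially the same route as the paper: everything is reduced to Theorem \ref{thm:procesi-ext} plus a check that the identification is compatible with the pullback identifications, which the paper pins down exactly as you suggest, by noting that both sides act on the faithful polynomial (GKLO) representation by the same shift of the variables $U_i$ by $\ell\hbar$. The generator-by-generator verification you outline is more labor than the paper expends, since faithfulness of the polynomial representation makes the compatibility automatic once the actions are matched, but it is the same argument in substance.
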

\begin{proof}
The only point that needs to be confirmed here beyond Theorem \ref{thm:procesi-ext} is that the isomorphism of that theorem is compatible with pullback by $\tl^\ell$.  This is clear because both act by shifting the variables $U_i$ in the polynomial representation by $\ell\hbar$; the faithfulness of the polynomial representation assures that this uniquely characterizes the isomorphism.  
\end{proof}
This result opens the door to a geometric approach to the representation theory of $\mathsf{H}_p$, since for any line operator $\mathbb{L}$, we have that $\bigoplus_{i=0}^{p-1} \Ext_{\hbar}^{\bullet}(\mathbb{L},\mathbbm{p}_i)$ is a $\mathsf{H}_p^{\operatorname{psph}}$-module.  It is thus natural to ask which $\mathsf{H}_p^{\operatorname{psph}}$-modules arise this way.  This type of geometric realization of representations of Cherednik algebras appeared in work of Oblomkov and Yun \cite{ObYu}, but could benefit from considerably more exploration.

\excise{
For the generalized case of Cherednik algebras associated with $G(\ell,p,n)$, we propose that the corresponding physical theory has a 1d line defect. This line defect is the vortex operator described in \cite{AG}. Essentially, the 3d theory is coupled to a 1d supersymmetric theory with trivial gauge group in such a way that $k\ell/p$ of the hypermultiplet scalars vanish on the line defect for some $k=1,\dots,p$. The corresponding quiver diagram is:
\begin{equation*}
\tikz[scale=1.1]{
	\node[above left] at (4.5,-3.3) {3d};
	\node[below left] at (4.5,-3.3) {1d};
	\draw (-.28,.28) to[out=135,in=180] (0,.7);
	\draw (0,.7) to[out=0,in=45] (.28,.28);
	\draw (0,0) circle (0.4) node {$n$};
	\draw (-3.3,-2.3) rectangle (-2.7,-1.7);
	\node at (-3,-2){$\ell/p$};
	\draw (-.4,0) -- (-3,-1.7);
	\draw[dashed] (-3,-2.3) -- (-.4,-4);
	\draw (-2.3,-2.3) rectangle (-1.7,-1.7);
	\node at (-2,-2){$\ell/p$};
	\draw (-.35,-.19) -- (-2,-1.7);
	\draw[dashed] (-2,-2.3) -- (-.35,-3.81);
	\node at (-1,-2){$\cdots$};
	\draw (-.3,-2.3) rectangle (.3,-1.7);
	\node at (0,-2){$\ell/p$};
	\draw (0,-.4) -- (0,-1.7);
	\draw[dashed] (0,-2.3) -- (-0,-3.6);
	\draw (.7,-2.3) rectangle (1.3,-1.7);
	\node at (1,-2){$\ell/p$};
	\draw (.19,-.35) -- (1,-1.7);
	\node at (2,-2){$\cdots$};
	\draw (2.7,-2.3) rectangle (3.3,-1.7);
	\node at (3,-2){$\ell/p$};
	\draw (.4,0) -- (3,-1.7);
	\draw (0,-4) circle (0.4) node {$0$};
	\draw[very thick] (-4.5,-3.3) -- (4.5,-3.3);
	\draw[decoration={brace,mirror,raise=5pt},decorate] (1,-2.3) -- node[below=6pt] {$p-k$} (3,-2.3);
	\draw[decoration={brace,mirror,raise=5pt},decorate] (-3,-2.3) -- node[below=6pt] {$k$} (0,-2.3);	
}
\end{equation*}

Recall that the spherical subalgebra of the Cherednik algebra associated with $G(\ell,1,n)$ is isomorphic to the chiral ring of a 3d $\cN=4$ theory with gauge group $U(n)$ and $\ell$ hypermultiplets. The basic idea in understanding the isomorphism is understanding how to match the generators of the algebra and operators in the physical theory. The generators $u_i$ of $eHe$ correspond to the scalar fields $\varphi_a$. The monopole operators shift the scalar fields similarly to how $\sigma$ and $\tau$ shift $u_i$, so the monopole operators correspond to an appropriate power of $\tau$ or $\sigma$. The mass parameters are related to the parameters $s_m$ in $eHe$. The simple case of $n=1$ is shown in Appendix \ref{app:matching}.}

\bibliography{gen}
\bibliographystyle{amsalpha}
\end{document}